\pdfoutput=1
\documentclass[english, reqno]{amsart}
\usepackage{geometry}            % See geometry.pdf to learn the layout options. There are lots.
\usepackage{amssymb,amsmath,amsthm,amsfonts,color}
\usepackage{mathrsfs,dsfont, comment,mathscinet}
\usepackage{graphicx}
\usepackage{epstopdf}
\usepackage{mathtools}
\usepackage{babel}
\usepackage{enumerate,esint}

\usepackage{fixltx2e,amsmath}
\MakeRobust{\eqref}

\usepackage{indentfirst}
\usepackage{bm}
\usepackage{picinpar}
\usepackage{lipsum}

\usepackage[toc,page]{appendix}

\usepackage{etoolbox}
\usepackage{authblk}
\usepackage{amsaddr}

\allowdisplaybreaks %This command allows display breaks

\mathtoolsset{showonlyrefs} %This command will only number the equation which is referred later.

\makeatletter

%----------------------------------------------------------------------------------------%
\usepackage{fancyhdr}
 
\pagestyle{fancy}
\fancyhf{} %clears the header and footer, otherwise the elements of the default "plain" page style will appear.
\rhead{Section \thesubsection}
\lhead{Page \thepage}
%\cfoot{Page \thepage}

% see https://www.sharelatex.com/learn/Headers_and_footers for more options
%----------------------------------------------------------------------------------------%

%----------------------------------------------------------------------------------------%
% This command moves the data from the place of footnotes to the top of front page
\makeatletter
\patchcmd{\@maketitle}
  {\ifx\@empty\@dedicatory}
  {\ifx\@empty\@date \else {\vskip3ex \centering\footnotesize\@date\par\vskip1ex}\fi
   \ifx\@empty\@dedicatory}
  {}{}
\patchcmd{\@maketitle}
  {\ifx\@empty\@date\else \@footnotetext{\@setdate}\fi}
  {}{}{}
\makeatother
%----------------------------------------------------------------------------------------%

%%%%%%%%%%%%%%%%%            For Definitions, theorems, props, lemmas    %%%%%%%%%%%%%%%%%%%%%%%
\begingroup
\newtheorem{theorem}{Theorem}[section]
\newtheorem{lemma}[theorem]{Lemma}
\newtheorem{proposition}[theorem]{Proposition}

\newtheorem{define}[theorem]{Definition}
\endgroup

\theoremstyle{definition}
\begingroup

\newtheorem{notation}[theorem]{Notation}

\endgroup
%%%%%%%%%%%%%%%%%%%%%%%%%%%%%%%%%%%%%%%%%%%%%%%%%%%%%

%%%%%%%%%%%%%%%%%%%           For equations, aligns             %%%%%%%%%%%%%%%%%%%%%%%

\newcommand\ba[1]{\begin{align}\label{#1}}
\newcommand\ea{\end{align}}
\newcommand\bas{\begin{align*}}
\newcommand\eas{\end{align*}}

\newcommand\ee{\end{equation}}
\newcommand\be{\begin{equation}}
\newcommand\ees{\end{equation*}}
\newcommand\bes{\begin{equation*}}
%%%%%%%%%%%%%%%%%%%%%%%%%%%%%%%%%%%%%%%%%%%%%%%%%%%%%%%

%%%%%%%%%%%%%%%%%     General Shortcuts        %%%%%%%%%%%%%%%%%%%%%%
%\mathsurround=1pt
%\mathchardef\emptyset="001F

\newcommand{\e}{\varepsilon}

\newcommand{\Om}{\Omega}
\newcommand{\om}{\omega}
 
\newcommand{\R}{{\mathbb R}}

\newcommand{\rn}{{{\R}^N}}

\newcommand{\wto}{\rightharpoonup}

\newcommand{\wtoh}{\mathrel{\mathop{\to}\limits^{\mathcal H}}}

\newcommand{\into}{{\int_{\Omega}}}

\newcommand{\WW}{{\mathcal W}}

\newcommand{\HH}{{\mathcal H}}

\newcommand\norm[1]{\left\|#1\right\|}

\newcommand{\abs}[1]{\left\lvert#1\right\rvert} 
\newcommand{\fsp}[1]{\left(#1\right)} 
\newcommand{\fmp}[1]{\left[#1\right]}
\newcommand{\flp}[1]{\left\{#1\right\}}  
\newcommand{\fjp}[1]{\left<#1\right>}

\newcommand{\vp}{\varphi}

\newcommand{\limn}{\lim_{n\rightarrow\infty}}

\newcommand{\lime}{\lim_{\e\rightarrow0}}

\newcommand{\sumn}{{\sum_{n=0}^\infty}}

\newcommand{\seqn}[1]{\left\{#1\right\}_{n=1}^\infty}  
\newcommand{\seqk}[1]{\left\{#1\right\}_{k=1}^\infty}  
\newcommand{\seqe}[1]{\left\{#1\right\}_{\e>0}}

%------------------------Special Color----------------------------------------------
\definecolor{CMUred}{RGB}{153,0,0}
\definecolor{CMUgreen}{RGB}{0,135,81}
\definecolor{CMUblue}{RGB}{0,51,127}
%----------------------------------------------------------------------------------------

%----------------------------Special for this paper ------------------------------

\newcommand{\argmin}{{\operatorname{arg\,min}}}

\newcommand{\hnmo}{{\mathcal H^{N-1}}}

\newcommand{\egamma}{{\e}}

\newcommand{\dist}{{\operatorname{dist}}}

\newcommand{\epsn}{{\e(n(m))}}

\def\essinf{\mathop{\rm ess\, inf}}
\def\esssup{\mathop{\rm ess\, sup}}
\def\argmin{\mathop{\rm arg\, min}}

\DeclareRobustCommand{\om}{\omega}
\DeclareRobustCommand{\Om}{\Omega}

%\setcounter{section}{-1}
%\setcounter{part}{-1}
%\counterwithin{section}{part}
\numberwithin{equation}{section}

\newcommand{\normmm}[1]{{\left\vert\kern-0.25ex\left\vert\kern-0.25ex\left\vert #1 
    \right\vert\kern-0.25ex\right\vert\kern-0.25ex\right\vert}}

%----------------------------Special for this paper ------------------------------

\title{The Weighted Ambrosio - Tortorelli Approximation Scheme}
\author[1]{Irene Fonseca and PAN Liu}
\date{\today}        
 \address[Pan Liu]{Department of Mathematics, Carnegie Mellon University\\5000 Forbes Avenue, Pittsburgh, PA, 15213}

\setlength{\parindent}{0cm}
\begin{document}

\begin{abstract}
The Ambrosio-Tortorelli approximation scheme with weighted underlying metric is investigated. It is shown that it $\Gamma$-converges to a Mumford-Shah image segmentation functional depending on the weight $\om\, dx$, where $\om\in SBV(\Om)$, and on its value $\om^-$.
\end{abstract}
\maketitle
\thispagestyle{empty}%this command remove the page number at the title page
\tableofcontents

\section{Introduction and Main Results}\label{intro_sec}

A central problem in image processing is image denoising. Given an image $u_0$, we decompose it as
\bes
u_0=u_g+n
\ees
where $u_g$ represents a noisy-free ground truth picture, while $n$ encodes noise or textures. Examples of models for such noise distributions are Gaussian noise in Magnetic Resonance Tomography, and Poisson noise in radar measurements \cite{bovik2010handbook}. Variational PDE methods have proven to be efficient to remove the noise $n$ from $u_0$. Several successful variational PDEs have been proposed in the literature (see, for example \cite{lions1992denoising, rudin1993segmentation, rudin1994total, rudin1992nonlinear}) and, among these, the Mumford-Shah image segmentation functional
\begin{align}\label{mumfordshahori}
\begin{split}
&G(u,K):=\alpha\int_{\Om\setminus K}{\abs{\nabla u}^2}dx+\alpha\hnmo(K)+\into (u-u_0)^2dx\\
&\text{where } u\in W^{1,2}(\Om\setminus K),\,K\subset\Om\text{ closed in }\Om,
\end{split}
\end{align}
introduced in \cite{mumford1985boundary}, is one of the most successful approaches. By minimizing the functional \eqref{mumfordshahori} one tries to find a ``piecewise smooth'' approximation of $u_0$. The existence of such minimizers can be proved by using compactness and lower semicontinuity theorems in $SBV(\Om)$ (see \cite{ambrosiocompactness,  ambrosio1989variational, ambrosio1990existence,ambrosio2000functions}). Furthermore, regularity results in \cite{dal1992variational, de1989existence} give that minimizers $u$ satisfy 
\bes
u\in C^1(\Om\setminus \overline S_u)\text{ and }\hnmo(\overline S_u\cap\Om\setminus S_u)=0.
\ees
Here, as in what follows, $S_u$ stands for the jump set of $u$.\\\\
The parameter $\alpha>0$ in \eqref{mumfordshahori}, determined by the user, plays an important role. For example, choosing $\alpha>0$ too large will result in over-smoothing and the edges that should have been preserved will be lost, and choosing $\alpha>0$ too small may keep the noise un-removed. The choice of the ``best" parameter $\alpha$ then becomes an interesting task. In \cite{reyes2015structure} the authors proposed a training scheme by using bilevel learning optimization defined in machine learning, which is a semi-supervised learning scheme that optimally adapts itself to the given ``perfect data" (see \cite{chen2013revisiting, chen2014insights, domke2012generic, domke2013learning, tappen2007utilizing,tappen2007learning}). This learning scheme searches $\alpha>0$ such that the recovered image $u_\alpha$, obtained from\eqref{mumfordshahori}, best fits the given clean image $u_g$, measured in terms of the $L^2$-distance. A simplified bilevel learning scheme 
$(\mathcal B)$ from \cite{reyes2015structure} is the following: 
\begin{enumerate}[Level 1.]
\item
\be\label{result_level_bi}
\bar \alpha:=\argmin_{\alpha>0}\into \abs{u_{\alpha}-u_g}^2dx,
\ee
\item
\be
u_{\alpha}:=\argmin_{u\in SBV(\Om)}\flp{\int_{\Om}\alpha\abs{\nabla u}^2dx+\alpha\hnmo(S_u)+\into\abs{u-u_0}^2dx},
\ee
\end{enumerate}
In \cite{reyes2015structure} the authors proved that the above bilevel learning scheme has at least one solution $\bar \alpha\in(0,+\infty]$, and a small modification rules out the possibility of $\bar\alpha=+\infty$.\\\\
The model proposed in \cite{liu2016optimal} is aimed at improving the above learning scheme. It is a bilevel learning scheme which utilizes the scheme $(\mathcal B)$ in each subdomain of $\Om$, and searches for the best combination of different subdomains from which a recovered image $\bar u$, which best fits $u_g$, might be obtained.\\\\
To present the model, we first fix some notation. For $K\in\mathbb N$, $Q_K\subset \rn$ denotes a cube with its faces normal to the orthonormal basis of $\rn$, and with side-length greater than or equal to $1/K$. Define $\mathcal P_K$ to be a collection of finitely many $Q_K$ such that 
\bes
\mathcal P_K:=\flp{Q_K\subset \rn:\,\,Q_K\text{ are mutually disjoint},\,\,\Om\subset\bigcup Q_K},
\ees
and $\mathcal V_K$ denotes the collection of all possible $\mathcal P_K$. For $K=0$ we set $Q_0:=\Om$, hence $\mathcal P_0=\flp{\Om}$.\\\\
A simplified bilevel learning scheme $(\mathcal P)$ in \cite{liu2016optimal} is as follows:
\begin{enumerate}[Level 1.]
\item
\begin{align}
&\bar u:= \argmin_{K\geq 0,\,\mathcal P_K\in \mathcal V_K}\flp{\into\abs{u_g-u_{\mathcal P_K}}^2dx}\label{resu_lev_tri}\\
&\text{where }u_{\mathcal P_K}:=\argmin_{u\in SBV(\Om)}\flp{\into\alpha_{\mathcal P_K}(x)\abs{\nabla u}^2dx+\int_{S_u}\alpha_{\mathcal P_k}(x)d\hnmo+\int_\Om\abs{u-u_0}^2dx}\label{key_weight_jump}
\end{align}
\item
\begin{align}\label{alp_jump_intro}
\begin{split}
&\alpha_{\mathcal P_K}(x):=\alpha_{Q_K}\text{ for }x\in Q_K\in\mathcal P_K,\text{ where } \alpha_{Q_K}:=\argmin_{\alpha>0}\int_{Q_K} \abs{u_{\alpha}-u_g}^2dx,\\
&u_{\alpha}:=\argmin_{u\in SBV(Q_K\cap \Om)}\flp{\int_{Q_K\cap \Om}\alpha\abs{\nabla u}^2dx+\alpha\hnmo(S_u)+\int_{Q_K\cap \Om}\abs{u-u_0}^2dx}
\end{split}
\end{align}
\end{enumerate}
Scheme $(\mathcal P)$ allows us to perform the denoising procedure ``point-wisely", and it is an improvement of \eqref{result_level_bi}. Note that at step $K=0$, \eqref{resu_lev_tri} reduces to \eqref{result_level_bi}. It is well known that the Mumford-Shah model, as well as the ROF model in \cite{rudin1992nonlinear}, leads to undesirable phenomena like the staircasing effect (see \cite{aubert2006mathematical, chan2006total}). However, such staircasing effect is significantly mitigated in \eqref{resu_lev_tri}, according to numerical simulations in \cite{liu2016optimal} (a theoretical validation of such improvement is needed). We remark that the most important step is \eqref{key_weight_jump} for the following reasons:
\begin{enumerate}[1.]
\item
\eqref{key_weight_jump} is the bridge connecting level 1 and level 2;
\item
since $\alpha_{\mathcal P_K}$ is defined by locally optimizing the parameter $\alpha_{Q_K}$, we expect $u_{\mathcal P_K}$ be ``close" to $u_g$ locally in $Q_K$;
\item
the last integrand in \eqref{key_weight_jump} keeps $u_{\mathcal P_K}$ close to $u_0$ globally, hence we may expect $u_{\mathcal P_K}$ to have a good balance between local optimization and global optimization. 
\end{enumerate}
We may view \eqref{key_weight_jump} as a weighted version of \eqref{mumfordshahori} by changing the underlying metric from $dx$ to $\alpha_{\mathcal P_K}dx$. By the construction of $\alpha_{\mathcal P_k}$ in \eqref{alp_jump_intro}, we know it is a piecewise constant function and, since $K>0$ is finite, the discontinuity set of $\alpha_{\mathcal P_K}$ has finite $\hnmo$ measure. However, $\alpha_{\mathcal P_K}$ is only defined $\mathcal L^N$-a.e., and hence the term 
\bes
\int_{S_u}\alpha_{\mathcal P_k}(x)d\hnmo
\ees
might be ill-defined.\\\\
In this paper, we deal with the well-definess of \eqref{key_weight_jump} by modifying $\alpha_{\mathcal P_K}$ accordingly, and by building a sequence of functionals which $\Gamma$-converges to \eqref{key_weight_jump}. To be precise, we adopt the approximation scheme of Ambrosio and Tortorelli in \cite{ambrosio1990approximation} and change the underlying metric properly. In \eqref{mumfordshahori} Ambrosio and Tortorelli considered a sequence of functionals reminiscent of the Cahn-Hilliard approximation, and introduced a family of elliptic functionals
\be\label{AT_intro}
G_{\e}(u,v):=\int_\Omega \alpha\abs{\nabla u}^2v^2dx +\int_{\Omega}\alpha\fmp{\e\abs{\nabla v}^2+\frac{1}{4\e}{(v-1)^2}}dx+\into\fsp{u-u_0}^2dx,
\ee
where $u\in W^{1,2}(\Om)$, $(v-1)\in W_0^{1,2}(\Om)$, and $u_0\in L^2(\Om)$. The additional field $v$ plays the role of controlling variable on the gradient of $u$. In \cite{ambrosio1990approximation} a rigorous argument has been made to show that $G_\e\to G$ in the sense of $\Gamma$-convergence (\cite{attouch1984variational}), where $G$ is defined in \eqref{mumfordshahori}.\\\\
In view of \eqref{alp_jump_intro}, we fix a weight function $\om\in SBV(\Om)$ such that $\om$ is positive and $\hnmo(S_\om)<+\infty$. Our new (weighted version) Mumford-Shah image segmentation functional is defined as
\be\label{ATS_intro_wei}
E_\om(u):=\int_\Omega \abs{\nabla u}^2\om\,dx+ \int_{S_u} {\om^-}\,d\mathcal H^{N-1},
\ee
and the (weighted version) of Ambrosio - Tortorelli functionals are defined as
\be\label{AT_intro_wei}
E_{\om,\e}(u,v):=\int_\Omega \abs{\nabla u}^2v^2\om\,dx+\int_{\Omega}\fmp{\e\abs{\nabla v}^2+\frac{1}{4\e}{(v-1)^2}}\om\,dx.
\ee
It is natural to take $u\in GSBV_\om(\Om)$ in \eqref{ATS_intro_wei} (see Definition \ref{LpSobBVSBVdef}. For basic definitions and theorems of weighted spaces we refer to \cite{ambrosio2003some, ambrosio2004special,baldi2001weighted,bellettini1999bv,bouchitte1996energies,capogna1994geometric, franchi1997approximation, franchi2003regular}). Moreover, since $K\geq 0$ is finite and $\alpha_{Q_K}>0$ in \eqref{alp_jump_intro} , it is not restricted to assume that 
\be\label{lower_intro}
\text{essinf}\flp{\om(x),x\in\Om}\geq l,\text{ where }l>0\text{ is a constant}.
\ee
This condition implies that all weighted spaces considered in this paper are embedded in the corresponding non-weighted spaces, and hence we may apply some results that hold in the context of non-weighted spaces. For example, $BV_\om\subset BV$ and $W_\om^{1,2}\subset W^{1,2}$ (see Definition \ref{LpSobBVSBVdef}), and most theorems in \cite{ambrosio1990approximation} can be applied to $u\in SBV_\om(\Om)$ (for example, Theorem 2.3 in \cite{ambrosio1990approximation}). \\\\
Before we state our main result, we recall that similar problems have been studied for different types of weight functions $\om$ (see, for example \cite{baldi2003gamma, baldi2005mumford, 2016ArRMA.219.1383L}). In particular, \cite{baldi2003gamma,baldi2005mumford} treat a uniformly continuous and strong $A_\infty$ (defined in \cite{david1989strong}) weight function on Modica-Mortola and Mumford-Shah-type functionals, respectively, and in \cite{2016ArRMA.219.1383L} the authors considered a $C^{1,\beta}$-continuous weight function, with some other minor assumptions, in the one-dimensional Cahn-Hilliard model. \\\\
Our main result is the following:
\begin{theorem}\label{AT_n_intro}
Let $\Om\subset \rn$ be open bounded, let $\om\in SBV(\Om)\cap L^\infty(\Om)$, and let $\mathcal{E}_{\om,\e}$: $L^1_\om(\Om)\times L^1(\Om)\rightarrow[0,+\infty]$ be defined by
\begin{align}\label{e_w_ju_intro}
\mathcal{E}_{\om,\e}(u,v):=
\begin{cases}
E_{\om,\e}(u,v)&\text{if}\,\,(u,v)\in W_\om^{1,2}(\Om)\times W^{1,2}(\Om), \,0\leq v\leq 1,\\
+\infty &\text{otherwise}.
\end{cases}
\end{align}
Then the functionals $\mathcal{E}_{\om,\e}$ $\Gamma$-converge, with respect to the $L^1_\om\times L^1$ topology, to the functional
\begin{align}\label{etd_w_ju_intro}
\mathcal{E}_\om(u,v):=
\begin{cases}
E_\om(u)&\text{if}\,\,u\in GSBV_\om(\Om)\text{ and }v=1\,\,a.e.,\\
+\infty &\text{otherwise}.
\end{cases}
\end{align}
\end{theorem}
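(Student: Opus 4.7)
My plan is the standard three-step proof of $\Gamma$-convergence --- compactness, $\Gamma$-liminf, and construction of a recovery sequence --- adapted to the presence of the $SBV$ weight $\om$. The essential lower bound $\om\geq l>0$ (recalled in the introduction) allows one to reduce to the classical Ambrosio-Tortorelli analysis away from $S_\om$, while the $SBV$ structure of $\om$ is what forces the trace $\om^-$ to appear on the jump set. For compactness, the estimate $\int_\Om(v_\e-1)^2\om\,dx\leq 4\e\,\mathcal{E}_{\om,\e}(u_\e,v_\e)$ together with $\om\geq l$ forces $v_\e\to 1$ in $L^2(\Om)$; the bound on $\int_\Om v_\e^2|\nabla u_\e|^2\om\,dx$ then yields local $W_\om^{1,2}$-bounds for $u_\e$ on $\{v_\e\geq 1/2\}$, and a truncation-diagonal argument as in the classical AT produces a limit $u\in GSBV_\om(\Om)$ with $u_\e\to u$ in $L^1_\om(\Om)$.

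For the $\Gamma$-liminf I would associate to $(u_\e,v_\e)$ the Radon measures $\mu_\e\in\MM_b(\Om)$ defined by $\mu_\e(A):=E_{\om,\e}(u_\e,v_\e;A)$ and pass, up to subsequence, to a weak-$*$ limit $\mu$. Writing $\mu=\mu^a\Ln+\mu^s$ and decomposing $\mu^s$ with respect to $\hnmo\llcorner S_u$, the task reduces to showing $\mu^a(x_0)\geq\om(x_0)|\nabla u(x_0)|^2$ at $\Ln$-a.e.\ $x_0$ and $d\mu/d(\hnmo\llcorner S_u)(x_0)\geq\om^-(x_0)$ at $\hnmo$-a.e.\ $x_0\in S_u$. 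The bulk bound is obtained at approximate continuity points of $\om$ by freezing $\om\approx\om(x_0)$ on small balls and invoking the classical AT liminf. The jump bound comes from a slicing/blow-up argument: along lines transverse to $S_u$, $v_\e$ must pass through a deep well near the jump, and the Modica-Mortola inequality $\e|\nabla v|^2+(v-1)^2/(4\e)\geq|v-1||\nabla v|$ yields a transition cost of at least $\om^-(x_0)$, the infimum being realized when the transition is localized on the side where $\om$ attains its lower trace.

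For the $\Gamma$-limsup I would first reduce, by density in $GSBV_\om$, to $u$ with $\hnmo(\overline{S_u}\setminus S_u)=0$ and $S_u$ a finite union of $C^1$-pieces. The recovery sequence is then an asymmetric Ambrosio-Tortorelli construction: set $v_\e=0$ on $S_u$ and take $v_\e(t)=1-\exp(-t/(2\e))$ on a one-sided tubular neighborhood of $S_u$ of width $r_\e\to 0$, placed on the side where $\om=\om^-$; on the other side put $v_\e=0$ on a layer of width $o(\e)$ and glue smoothly so that $v_\e\in W^{1,2}(\Om)$. Take $u_\e=u$ outside a slightly larger neighborhood and interpolate smoothly across the jump. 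An explicit computation, using approximate continuity of $\om^-$ along $S_u$ and the bound $\om\in L^\infty$ to absorb the contribution of the wrong side, then gives $\limsup_\e\mathcal{E}_{\om,\e}(u_\e,v_\e)\leq E_\om(u)$.

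The principal technical obstacle is the case $\hnmo(S_u\cap S_\om)>0$, where $\om$ itself jumps at a jump point of $u$. Here one cannot freeze $\om$ at a single scalar value, so in the liminf the slicing argument must be run separately on each side of $S_\om$, and one must verify that the $SBV$ traces $\om^\pm$ are attained in the blow-up limit along $S_u\cap S_\om$. Dually, in the limsup the asymmetric placement of the Modica-Mortola profile must be performed in an $\hnmo$-measurable way, using the $\hnmo$-rectifiability of both $S_u$ and $S_\om$ to orient the normals consistently where they meet. Controlling the wrong-side energy contribution by an $o(1)$ quantity via $\om\in L^\infty$ is the technical crux that distinguishes this weighted statement from the classical unweighted Ambrosio-Tortorelli theorem.
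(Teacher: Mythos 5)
Your $\Gamma$-liminf strategy (localizing the energies as measures $\mu_\e$, passing to a weak-$*$ limit, and proving separate density lower bounds for the bulk and jump parts) is a legitimate alternative to the paper's route, which instead slices along a dense set of directions, applies a one-dimensional result together with the fact that $\liminf_n\essinf_{(x_n,y_n)}\om\geq\om^-(x)$ for intervals straddling $x$ (Lemma \ref{jump_jiuehuishi}), and recovers the full integral via superadditivity (Lemma \ref{slicing_single} and the supremum-of-measures argument). Either route can be made to work, and your observation that the essential infimum over a shrinking straddling neighborhood tends to $\om^-$ is exactly the mechanism the paper uses.

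The $\Gamma$-limsup, however, has a genuine gap at points of $S_u\cap S_\om$. In your construction the jump of $u_\e$ stays on $S_u$ (you ``interpolate smoothly across the jump'' and set $v_\e=0$ on $S_u$ and on a thin layer), so $v_\e$ must make a full transition from $0$ to $1$ on \emph{each} side of $S_u$. By the Modica--Mortola inequality $\e|\nabla v|^2+\tfrac{1}{4\e}(1-v)^2\geq|\nabla v|\,(1-v)$, any $W^{1,2}$ transition from $0$ to $1$ costs at least $\tfrac12$ in the unweighted energy \emph{regardless of the width over which it occurs}; weighting it by $\om$ on the unfavorable side gives a contribution of at least $\tfrac12\om^+(x_0)-o(1)$ per unit of $\hnmo$-measure (and in any case at least $\tfrac12 l>0$, by the uniform lower bound on $\om$). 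This cannot be absorbed as an $o(1)$ error using $\om\in L^\infty$: the $L^\infty$ bound controls the size of the wrong-side contribution but does not make it vanish, and compressing the wrong-side layer to width $o(\e)$ makes the gradient term $\e|\nabla v_\e|^2$ blow up. The missing idea is the paper's reflection trick (see \eqref{one_side_recovery_u} in one dimension and \eqref{approx_u_simplex_jump} in general): one first modifies $u$ itself by reflecting it across a hyperplane $T_{x_n,\nu_{S_\om}}(-t_n)$ placed strictly inside the half-space where $\om$ attains its trace $\om^-$, so that the jump set of the approximating function $\bar u_\tau$ is relocated entirely into the favorable region; only then is the standard symmetric profile $v_{\tau,\e}=\tilde v_\e\circ\operatorname{dist}(\cdot,S_{\bar u_\tau})$ applied, and both halves of the transition are weighted by $\om^-+O(\tau)$ (this is what Propositions \ref{SBV_Simple_jump_approx} and \ref{jump_ready_coro_limsup} quantify). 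Without displacing the jump of $u_\e$, the asymmetric-profile construction you describe yields at best $\tfrac12(\om^-+\om^+)$ on $S_u\cap S_\om$, which is strictly larger than the target $\om^-$ whenever $\om$ genuinely jumps there.
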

The proof of $\Gamma$-convergence consists of two steps. The first step is to prove the ``$\liminf$ inequality" 
\bes
\liminf_{\e\to 0}E_{\om,\e}(u_\e,v_\e)\geq E_\om(u)
\ees
for every sequence $u_\e\to u$, $v_\e\to v$. This is obtained in Section \ref{1_d_jump_intro} in the case $N=1$ by using most of the arguments proposed in \cite{ambrosio1990approximation}, and the properties of $SBV$ functions in one dimension (see Lemma \ref{jump_jiuehuishi}). The case $N>1$ is studied in Section \ref{ATA_nd}, and it uses a special slicing argument (see Lemma \ref{slicing_single}). \\\\
The second step is the construction of a recovery sequence $(u_\e\to u,v_\e\to 1)$ such that the term 
\be\label{jump_rec_intro}
\int_{\Omega}\fmp{\e\abs{\nabla v}^2+\frac{1}{4\e}{(v-1)^2}}\om\,dx
\ee
only captures the information of $\om^-$. We note that for small $\e>0$, \eqref{jump_rec_intro} only penalizes a $\e$-neighborhood around the jump point of $u$. By using fine properties of $SBV$ functions (see Theorem \ref{fine_properties_BV}), we are able to incorporate $u$ and $v$ in our model such that \eqref{jump_rec_intro} will only penalize along the direction $-\nu_{S_\om}$. This will be carried out in Lemma \ref{1d_jump_reco}.\\\\ 
We remark that the techniques we developed in this paper can be adapted to other functional models. For example, 
\begin{enumerate}[1.]
\item
the weighted Cahn-Hilliard model defined as
\bes
CH_{\om,\e}(u):=\int_I\fmp{\e\abs{\nabla u(x)}^2+\frac{1}{\e} W(u)}\om\,dx,
\ees
for $u\in W_\om^{1,2}(\Om)$ and with a double well potential function $W$: $\mathbb R\to [0,+\infty)$ such that $\flp{W=0}=\flp{0,1}$ with the $\Gamma$-limit
\bes
CH_{\om}(u):= c_W P_{\om}(u)
\ees
defined for $u=\chi_E \in BV_\om(\Om)$, where
\bes
c_W:=2\int_0^1\sqrt{W(s)}\,ds\text{ and }P_\om(u):=\int_{S_u}\om^-d\hnmo;
\ees
\item
higher order singular perturbation models defined by the $\Gamma$-limit
\bes
H_\om(u):=\int_\Omega \abs{\nabla u}^2\om\,dx+ \int_{S_u} {\om^-(x)}\,d\mathcal H^{N-1},
\ees
and approximation energies 
\bes
H_{\om,\e}(u,v):=\int_\Omega \abs{\nabla u}^2v^2\om\,dx+\frac{1}{C(k)}\int_{\Omega}\fmp{\e^{2k-1}\abs{\nabla^{\fsp{k}} v}^2+\frac{1}{\e}{(v-1)^2}}\om\,dx,
\ees
where 
\bes
C(k):=\min\flp{ \int_{\R^+} \abs{v^{(k)}}^2+(v-1)^2dx,\,\,v(0)=v'(0)=\cdots=v^{(k-1)}(0)=0,\,\lim_{t\to\infty}v(t)=1}.
\ees
\end{enumerate}
The analysis of items 1 and 2 above is forthcoming (see \cite{liu2016weightedhigher}).\\\\
This article is organized as follows: In Section \ref{Def_Preres} we introduce some definitions and we recall preliminary results. In Section \ref{O_D_Toy_Sec} we prove the one dimensional version of Theorem \ref{AT_n_intro}. Section \ref{T_M_D_C} is devoted to the proof of our main result.
\section{Definitions and Preliminary Results}\label{Def_Preres}
Throughout this paper, $\Omega\subset \rn$ is an open bounded set with Lipschitz boundary, and $I:=(-1,1)$.

\begin{define}
We say that $u\in BV(\Om)$ is a {special function of bounded variation}, and we write $u\in SBV(\Om)$, if the Cantor part of its derivative, $D^cu$, is zero, so that (see \cite{ambrosio2000functions}, $(3.89)$)
\begin{align}\label{SBV_decom}
Du=D^a u+D^j u=\nabla u \mathcal L^N\lfloor \Om +(u^+-u^-)\nu_u \mathcal H^{N-1}\lfloor S_u.
\end{align}
Moreover, we say that $u\in GSBV(\Om)$ if $K\wedge u\vee -K\in SBV(\Om)$ for all $K\in\mathbb N$.
\end{define}
Here we always identify $u\in SBV(\Om)$ with its approximation representative $\bar u$, where 
\bes
\bar u(x):=\frac12\fmp{u^+(x)+u^-(x)},
\ees
with
\bes
u^+(x):=\inf\flp{t\in\R:\,\lim_{r\to 0}\frac{\mathcal L^N(B(x,r)\cap\flp{u>t})}{r^N}=0},
\ees
and
\bes
u^-(x):=\sup\flp{t\in\R:\,\lim_{r\to 0}\frac{\mathcal L^N(B(x,r)\cap\flp{u<t})}{r^N}=0}.
\ees
We note that $\bar u$ is Borel measurable (see \cite{evans2015measure}, Lemma 1, page 210), and it can be shown that $\bar u=u$ $\mathcal L^N$-a.e. $x\in\Om$, and that 
\bes
(\bar u)^+(x)=u^+(x)\text{ and }(\bar u)^-(x)=u^-(x)
\ees
for $\hnmo$-a.e. $x\in\Om$ (see \cite{evans2015measure}, Corollary 1, page 216). Furthermore, we have that 
\be\label{neglesspos}
-<u^-(x)\leq u^+(x)<+\infty
\ee
for $\hnmo$-a.e. $x\in \Om$ (see  \cite{evans2015measure}, Theorem 2, page 211). The inequality \eqref{neglesspos} uniquely determines the sign of $\nu_u$ in \eqref{SBV_decom}.
\begin{define}\label{Muckenhoupt_Function_Space}(The weight function)
We say that $\omega$: $\Om\to (0,+\infty]$ belongs to ${\mathcal W}(\Om)$ if $\om\in L^1(\Om)$ and has a positive lower bound, i.e., there exists $l>0$ such that 
\be\label{pos_lower_ass}
\essinf\flp{\om(x),x\in\Om}\geq l.
\ee
\end{define}
Without loss of generality, we take $l=1$. Moreover, in this paper we will only consider the cases in which $\om$ is either a continuous function or a $SBV$ function. If $\om\in SBV$ then, in addition, we require that
\bes
\hnmo(S_\om)<\infty\text{ and }\hnmo(\overline {S_\om}\setminus S_\om)=0.
\ees
We next fix some notation which will be used throughout this paper. 
\begin{notation}\label{multi_not}
Let $\Gamma\subset \Om$ be a $\hnmo$-rectifiable set and let $x\in \Gamma$ be given.
\begin{enumerate}[1.]
\item
We denote by $\nu_\Gamma(x)$ a normal vector at $x$ with respect to $\Gamma$, and $Q_{\nu_\Gamma}(x,r)$ is the cube centered at $x$ with side length $r$ and two faces normal to $\nu_\Gamma(x)$;
\item
$T_{x,\nu_\Gamma}$ stands for the hyperplane normal to $\nu_\Gamma(x)$ and passing through $x$, and $\mathbb P_{x,\nu_\Gamma}$ stands for the projection operator from $\Gamma$ onto $T_{x,\nu_\Gamma}$;
\item
we define the hyperplane
\bes
T_{x,\nu_\Gamma}(t):=T_{x,\nu_\Gamma}+t\nu_\Gamma(x)
\ees
for $t\in\R$;
\item
we introduce the half-spaces
\bes
H_{\nu_\Gamma}(x)^+:=\flp{y\in\rn:\,\, \nu_\Gamma(x)\cdot(y-x)\geq 0}
\ees
and 
\bes
H_{\nu_\Gamma}(x)^-:=\flp{y\in\rn:\,\, \nu_\Gamma(x)\cdot(y-x)\leq 0}.
\ees
Moreover, we define the half-cubes
\bes
Q_{\nu_\Gamma}^\pm(x,r):=Q_{\nu_\Gamma}(x,r)\cap H_{\nu_\Gamma}(x)^\pm;
\ees
\item
for given $\tau>0$, we denote by $R_{\tau,\nu_\Gamma}(x,r)$ the part of $Q_{\nu_\Gamma}(x,r)$ which lies strictly between the two hyperplanes $T_{x,\nu_\Gamma}(-\tau r)$ and $T_{x,\nu_\Gamma}(\tau r)$;
\item
we set
\be\label{flatten_set}
A_\delta:=\flp{x\in\Om:\,\operatorname{dist}(x,A)<\delta}
\ee
for every $A\subset\Om$ and $\delta>0$.
\end{enumerate}
\end{notation}
\begin{theorem}[\cite{evans2015measure}, Theorem 3, page 213]\label{fine_properties_BV}
Assume that $u\in BV(\Om)$. Then
\begin{enumerate}[1.]
\item
for $\hnmo$-a.e. $x_0\in\Om\setminus S_u$,
\bes
\lim_{r\to 0}\fint_{B(x_0,r)}\abs{u(x)-\bar u(x_0)}^{\frac{N}{N-1}}dx=0;
\ees

\item
for $\hnmo$-a.e. $x_0\in S_u$, 
\bes
\lim_{r\to 0}\fint_{B(x_0,r)\cap H_{\nu_{S_u}}(x_0)^\pm}\abs{u(x)-u^\pm(x_0)}^{\frac{N}{N-1}}dx=0;
\ees
\item
for $\hnmo$ a.e. $x_0\in S_u$
\bes
\lime \frac{1}{\e^{N-1}} \int_{S_u\cap Q_{\nu_{S_u}}(x_0,\e)}\abs{u^+(x)-u^-(x)}d\hnmo(x)=\abs{u^+(x_0)-u^-(x_0)}.
\ees
\end{enumerate}
\end{theorem}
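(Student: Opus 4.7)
The plan is to prove the three items via a common blow-up framework, relying on the BV Poincar\'e--Sobolev inequality, the canonical decomposition $Du=\nabla u\,\Ln+(u^+-u^-)\nu_u\,\hnmo\lfloor S_u + D^c u$, the $\hnmo$-rectifiability of $S_u$ (Federer's structure theorem), and the Besicovitch differentiation theorem. In each case the task reduces to showing that a certain rescaled average of $u-c$ over a small set centered at $x_0$ vanishes, where $c$ is the appropriate precise value at $x_0$.

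For part $1$, I would apply the BV Poincar\'e--Sobolev inequality on the ball $B(x_0,r)$,
\bes
\Bigl(\fint_{B(x_0,r)}|u-(u)_{x_0,r}|^{\frac{N}{N-1}}\,dx\Bigr)^{\!\frac{N-1}{N}}\leq C\,\frac{|Du|(B(x_0,r))}{r^{N-1}},
\ees
and show that the right-hand side tends to $0$ for $\hnmo$-a.e.\ $x_0\in\Om\setminus S_u$. The absolutely continuous part contributes $|\nabla u|\Ln(B(x_0,r))/r^{N-1}\to 0$; the jump part contributes $0$ at $\hnmo$-a.e.\ point outside $S_u$ because $\hnmo\lfloor S_u$ has zero $(N{-}1)$-dimensional upper density there; and $|D^c u|(B(x_0,r))/r^{N-1}\to 0$ at $\hnmo$-a.e.\ $x_0$ because $D^c u$ vanishes on every $\hnmo$-finite set. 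Combining this with the Lebesgue differentiation of $u\,\Ln$ identifies the limit of the averages with $\bar u(x_0)$.

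For parts $2$ and $3$ I would work on $\hnmo$-a.e.\ $x_0\in S_u$ at which $S_u$ admits an approximate tangent hyperplane $T_{x_0,\nu_{S_u}}$. Part $3$ then follows by applying Besicovitch differentiation to the finite Radon measure $|u^+-u^-|\hnmo\lfloor S_u$ with respect to $\hnmo\lfloor S_u$ and combining with the rectifiability statement $\hnmo(S_u\cap Q_{\nu_{S_u}}(x_0,\e))/\e^{N-1}\to 1$. For part $2$, I would again apply the BV Poincar\'e--Sobolev inequality, this time on each half-ball $B(x_0,r)\cap H_{\nu_{S_u}}(x_0)^{\pm}$: the asymptotic tangency of $S_u$ to $T_{x_0,\nu_{S_u}}$ forces the jump contribution inside each open half-ball to be $o(r^{N-1})$, and the same density-zero arguments as in part $1$ handle the absolutely continuous and Cantor contributions. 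The resulting one-sided approximate limits must equal $u^\pm(x_0)$ by the very definition of those quantities together with inequality \eqref{neglesspos}.

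The main technical obstacle is controlling the Cantor part $|D^c u|$, which, unlike the absolutely continuous and jump parts, has no a~priori geometric support. The key classical input is that $|D^c u|$ vanishes on $\hnmo$-finite sets, so a standard exhaustion argument yields $|D^c u|(B(x_0,r))/r^{N-1}\to 0$ at $\hnmo$-a.e.\ $x_0$. A secondary subtlety is the passage from balls to cubes and half-balls in parts $2$ and $3$; this is handled by the Besicovitch covering theorem together with the rectifiability of $S_u$, which guarantees that the ratio of the mass on cubes $Q_{\nu_{S_u}}(x_0,r)$ to that on balls $B(x_0,r)$ has the correct geometric limit at $\hnmo$-a.e.\ $x_0\in S_u$.
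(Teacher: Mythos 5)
The paper does not prove this statement at all: it is imported verbatim from \cite{evans2015measure} (Theorem 3, page 213) and used as a black box, so there is no internal argument to measure your proposal against; what follows is an assessment of your outline on its own terms. Your treatment of parts 1 and 3 follows the standard route and is sound in outline: the comparison estimate $\hnmo(\{\Theta^{*,N-1}(\mu,\cdot)\geq t\})\leq Ct^{-1}\mu(\{\Theta^{*,N-1}(\mu,\cdot)\geq t\})$, applied separately to $|D^au|$, to the finite measure $|D^ju|=|u^+-u^-|\hnmo\lfloor S_u$ (note it is this measure, not $\hnmo\lfloor S_u$ itself, which may have infinite mass, that must be differentiated), and to $|D^cu|$, gives $|Du|(B(x_0,r))=o(r^{N-1})$ at $\hnmo$-a.e.\ $x_0\in\Om\setminus S_u$, after which Poincar\'e--Sobolev and a Cauchy argument on the means $(u)_{x_0,r}$ finish part 1; part 3 is indeed Besicovitch differentiation of $|u^+-u^-|\hnmo\lfloor S_u$ against $\hnmo\lfloor S_u$ combined with the density-one property of rectifiable sets in oriented cubes (compare Lemma \ref{lim_project_density_1}).

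Part 2, however, contains a genuine gap. You claim that approximate tangency of $S_u$ to $T_{x_0,\nu_{S_u}}$ forces the jump contribution inside each open half-ball to be $o(r^{N-1})$, so that Poincar\'e--Sobolev on $B(x_0,r)\cap H_{\nu_{S_u}}(x_0)^{\pm}$ closes the argument as in part 1. This is false: take $u=\chi_{B(0,1)}$, so that $S_u=\partial B(0,1)$. At every $x_0\in S_u$ one has $\langle y-x_0,x_0\rangle<0$ for all $y\in\partial B(0,1)\setminus\{x_0\}$, i.e.\ the whole jump set near $x_0$ lies strictly on one side of the tangent plane, so one of the two open half-balls carries $|D^ju|(B(x_0,r)\cap H_{\nu_{S_u}}(x_0)^{\mp})\sim \omega_{N-1}\,|u^+(x_0)-u^-(x_0)|\,r^{N-1}$, which is not $o(r^{N-1})$. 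Approximate tangency only controls the mass of $S_u$ outside slabs $\{|\langle y-x_0,\nu\rangle|<\delta r\}$; it says nothing about how $S_u$ distributes between the two sides of the tangent plane inside such a slab. Consequently your Poincar\'e bound on the half-ball only yields an oscillation of order $|u^+(x_0)-u^-(x_0)|$, which does not vanish. The statement is nonetheless true because the portion of the half-ball lying on the ``wrong side'' of $S_u$ has volume $o(r^N)$ and so does not affect the $L^{N/(N-1)}$ average; extracting this requires the genuine blow-up and compactness argument of \cite{evans2015measure}: rescale $u_r(y):=u(x_0+ry)$, use BV compactness to pass to an $L^1$ limit $v$, show that $Dv$ is carried by the hyperplane and directed along $\nu_{S_u}(x_0)$ so that $v$ is a two-valued step function, and only then identify the two values with $u^{\pm}(x_0)$ via \eqref{neglesspos} and the definitions of the approximate upper and lower limits.
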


\begin{lemma}\label{jump_jiuehuishi}
Let $\om\in SBV(I)$ be such that $\mathcal H^0(S_\om)<\infty$. For every $x\in I$ the following statements hold:
\begin{enumerate}[1.]
\item
if $\seqn{x_n}$ and $\seqn{y_n}\subset I$ are such that $x_n< x<  y_n$, $n\in \mathbb N$, and $\limn x_n=\limn y_n=x$, then
\be\label{lower_bdd_1dnd}
\liminf_{n\to\infty}\essinf_{y\in(x_n,y_n)} \om(y)\geq \om^-(x);
\ee
\item
\be\label{lower_bdd_2dnd}
\lim_{\substack{z_n\to x\\ \seqn{z_n}\subset H_{\nu_{S_\om}(x)}^\pm}} \bar \om(z_n)=\om^\pm(x);
\ee
\item
\be\label{lower_bdd_3dnd}
\limsup_{d_{\mathcal H}(K_n,x)\to 0}\esssup_{\substack{z\in K_n\\ K_n\subset\subset H_{\nu_{S_\om}(x)}^\pm}} \om(z)=\om^\pm(x),
\ee
where $K_n\subset\subset H_{\nu_{S_\om}(x)}^\pm$ and $d_\mathcal{H}$ denotes the Hausdorff distance (see Definition \ref{hausdorff_distance}).
\end{enumerate}
\end{lemma}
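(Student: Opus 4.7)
The plan is to exploit the fact that in one dimension an $SBV$ function with finite jump set is, away from its (finitely many) jumps, absolutely continuous on each connected component of $I\setminus S_\om$. This yields classical one-sided limits $\om(x-)$ and $\om(x+)$ at every $x\in I$, and by Theorem \ref{fine_properties_BV} (applied with $N=1$) these coincide with $\om^-(x)$ and $\om^+(x)$; the labeling is dictated by the convention $\om^-(x)\leq\om^+(x)$ together with the orientation $\nu_{S_\om}(x)$, so that the classical one-sided limit from the side $H^\pm_{\nu_{S_\om}(x)}$ equals $\om^\pm(x)$.

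For item (1), fix $x\in I$. Since $\mathcal H^0(S_\om)<\infty$, choose $\delta>0$ so that $(x-\delta,x+\delta)\cap S_\om\subseteq\flp{x}$. For $n$ large enough, $(x_n,y_n)\subset(x-\delta,x+\delta)$, so $\om$ agrees a.e.\ with a continuous function on each of the intervals $(x_n,x)$ and $(x,y_n)$. By one-sided continuity,
\begin{equation*}
\essinf_{y\in(x_n,x)}\om(y)\longrightarrow\om(x-),\qquad \essinf_{y\in(x,y_n)}\om(y)\longrightarrow\om(x+),
\end{equation*}
which combined give
\begin{equation*}
\liminf_{n\to\infty}\essinf_{y\in(x_n,y_n)}\om(y)\geq\min\flp{\om(x-),\om(x+)}=\om^-(x).
\end{equation*}

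For items (2) and (3) the same one-sided argument applies. If $\seqn{z_n}\subset H^\pm_{\nu_{S_\om}(x)}$ with $z_n\to x$, then for $n$ large $z_n$ belongs to a punctured one-sided neighborhood of $x$ that contains no jump point of $\om$, so $\bar\om$ coincides with its continuous representative there and $\bar\om(z_n)\to\om(x\pm)=\om^\pm(x)$. Likewise, if $K_n\subset\subset H^\pm_{\nu_{S_\om}(x)}$ with $d_\mathcal{H}(K_n,\flp{x})\to 0$, then for $n$ large $K_n$ lies in a compact one-sided neighborhood of $x$ free of jumps, so $\esssup_{z\in K_n}\om(z)=\sup_{z\in K_n}\om(z)$, and by continuity the right-hand side tends to $\om^\pm(x)$, giving the claimed $\limsup$ identity.

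The main delicate point is the sign bookkeeping at a jump: one must check that with $\nu_{S_\om}(x)$ fixed by the convention $\om^-\leq\om^+$, the classical one-sided limit from the side $H^\pm_{\nu_{S_\om}(x)}$ is indeed $\om^\pm(x)$. This follows directly from Theorem \ref{fine_properties_BV}(2) in the one-dimensional case, since half-ball averages reduce there to one-sided limits of the continuous representative. Beyond this identification, all three statements reduce to the absolute continuity (hence continuity) of $\om$ on each connected component of $I\setminus S_\om$; no further analytic tools are required.
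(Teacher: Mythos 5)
Your proof is correct and follows essentially the same route as the paper: reduce to the absolute continuity of $\om$ on the finitely many components of $I\setminus S_\om$, extract the classical one-sided limits at $x$, and identify them with $\om^\pm(x)$ via Theorem \ref{fine_properties_BV}(2) in the case $N=1$. The only cosmetic difference is that you obtain item (1) by splitting $(x_n,y_n)$ at $x$ and taking the minimum of the two one-sided limits, whereas the paper fixes the labeling $\bar\om(0^-)\le\bar\om(0^+)$ first and runs a single $\e$--$\delta$ estimate; both yield the same bound.
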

\begin{proof}
If $x\notin S_\om$, then there exists $\delta>0$ such that
\bes
S_\om\cap(x-\delta,x+\delta)=\varnothing,
\ees
and so $\om$ is absolutely continuous in $(x-\delta,x+\delta)$, and \eqref{lower_bdd_1dnd}-\eqref{lower_bdd_3dnd} are trivially satisfied with $\om(x)=\om^-(x)$ and with equality in place of the inequality in \eqref{lower_bdd_1dnd}.\\\\
Let $x\in S_\om$ and, without loss of generality, assume that $x=0$, and let $x_n$, $y_n\to 0$ with $x_n<0<y_n$ for all $n\in\mathbb N$. Since $\mathcal H^0(S_\om)<\infty$, choose $\bar r>0$ such that 
\bes
S_\om\cap (0-\bar r,0+\bar r)=0.
\ees
As $\bar\om$ is absolutely continuous in $(-\bar r,0)$ and $(0,\bar r)$, we may extend $\bar \om$ uniquely to $x=0$ from the left and the right (see Exercise $3.7$ $(1)$ in \cite{leoni2009first}) to define
\be\label{lemma_jump_jiuehuishi}
\bar \om(0^+):=\lim_{x\searrow0^+}\bar \om(x)\text{ and }\bar \om(0^-):=\lim_{x\nearrow0^-}\bar \om(x).
\ee
Assume that (the case $\bar \om(0^-)\geq \bar \om(0^+)$ can be treated similarly)  
\be\label{lemma_jump_jiuehuishi_1}
\bar\om(0^-)\leq\bar \om(0^+).
\ee
We first claim that 
\be\label{jjszmhis_one}
\liminf_{n\to\infty}\inf_{x\in(x_n,y_n)} \bar \om(x)\geq \bar\om(0^-).
\ee
Let $\e>0$ be given. By \eqref{lemma_jump_jiuehuishi} find $\bar r>\delta>0$ small enough such that 
\be\label{lemma_jump_jiuehuishi_2}
\abs{\bar \om(x)-\bar \om(0^-)}\leq \frac12\e\text{ for all }x\in(-\delta,0)\text{,  and }\abs{\bar \om(x)-\bar \om(0^+)}\leq \frac12\e\text{ for all }x\in(0,\delta).
\ee
This, together with \eqref{lemma_jump_jiuehuishi_1}, yields
\bes
\bar \om(x)\geq \bar \om(0^-)-\frac12\e,
\ees
for all $x\in (-\delta,\delta)$. Since $x_n\to 0$ and $y_n\to 0$, we may choose $n$ large enough such that $(x_n,y_n)\subset (-\delta,\delta)$ and hence
\bes
\inf_{x\in(x_n,y_n)} \bar \om(x)\geq \bar \om(0^-)-\e.
\ees
Thus, \eqref{jjszmhis_one} follows by the arbitrariness of $\e>0$.\\\\
We next claim that 
\be\label{plus_{n(m)}eg_equal}
\bar \om(0^\pm)=\om^\pm(0).
\ee
By Theorem \ref{fine_properties_BV} part $2$ and the fact that $\bar \om=\om$ $\mathcal L^1$-a.e., we have 
\bes
\om^-(0)=\lim_{r\to 0}\frac1r\int_{-r}^0 \om(t)\,dt= \lim_{r\to 0}\frac1r\int_{-r}^0\bar  \om(t)\,dt = \bar \om(0^-),
\ees
where at the last equality we used the properties of absolutely continuous function and the definition of $\bar \om(0^-)$. The equation $\bar\om(0^+)=\om^+(0)$ can be proved similarly.\\\\
Therefore
\bes
\liminf_{n\to\infty}\essinf_{x\in(x_n,y_n)} \om(x) = \liminf_{n\to\infty}\inf_{x\in(x_n,y_n)} \bar \om(x) \geq \bar \om(0^-)=\om^-(0),
\ees
which concludes \eqref{lower_bdd_1dnd}, and \eqref{lower_bdd_2dnd} and \eqref{lower_bdd_3dnd} hold by \eqref{lemma_jump_jiuehuishi} and \eqref{plus_{n(m)}eg_equal}.
\end{proof}
\begin{define}\label{LpSobBVSBVdef}(Weighted function spaces)
Let $\omega\in {\mathcal W(\Om)}$ and $1\leq p<\infty$:
\begin{enumerate}[1.]
\item
$L_\om^p(\Om)$ is the space of functions $u\in L^p(\Om)$ such that
\bes
\into \abs{u}^p\om\,dx<\infty,
\ees
endowed with the norm 
\bes
\norm{u-v}_{L^p_\om}:=\fsp{\into\abs{u-v}^p\om\,dx}^{\frac1p}
\ees
if $u$, $v\in L^p_\om(\Om)$;\\
\item
$W^{1,p}_\om(\Om)$ is the space of functions $u\in W^{1,p}(\Om)$ such that
\bes
u\in L^p_\om(\Om)\,\,\text{ and }\nabla u\in L_\om^p(\Om;\rn),
\ees
endowed with the norm 
\bes
\norm{u-v}_{W^{1,p}_\om}:=\norm{u-v}_{L^p_\om}+\norm{\nabla u-\nabla v}_{L^p_\om}
\ees
if $u$, $v\in W^{1,p}_\om(\Om)$;\\
\item
$BV_\omega(\Omega)$ is the space of functions $u\in BV(\Om)$ such that
\bes
u\in L^1_\om(\Om)\text{ and }\into \om \,d\abs{Du}<\infty,
\ees
endowed with the norm
\bes
\norm{u-v}_{BV_\om}:=\norm{u-v}_{L^1_\om}+\into\om\,d\abs{Du-Dv}
\ees
if $u,v\in BV_\om(\Om)$;
\item
$u\in SBV_\om(\Om)$ if $u\in BV_\om(\Om)\cap SBV(\Om)$, and $u\in GSBV_\om(\Om)$ if $K\wedge u\vee -K\in SBV_\om(\Om)$ for all $K\in\mathbb N$.
\end{enumerate}
\end{define}

\begin{lemma}\label{blow_up_kill_jump}
Let $\om\in\WW(\Om)$ be given, and suppose that $u\in SBV_\om(\Om)$. Then 
\bes
\HH^{N-1}(S_u\cap \flp{\om=+\infty})=0.
\ees
\end{lemma}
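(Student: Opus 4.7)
The natural strategy is a direct contradiction argument using the jump part of the decomposition of $|Du|$ together with the finiteness of $\int_\Om \om\,d|Du|$ built into the definition of $BV_\om(\Om)$.

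Since $u\in SBV_\om(\Om)\subset SBV(\Om)$, the decomposition \eqref{SBV_decom} gives
\begin{equation*}
|Du| = |\nabla u|\,\mathcal L^N\lfloor\Om + |u^+-u^-|\,\hnmo\lfloor S_u,
\end{equation*}
so the membership $u\in BV_\om(\Om)$ yields in particular
\begin{equation*}
\int_{S_u} \om(x)\,|u^+(x)-u^-(x)|\,d\hnmo(x) \le \int_\Om \om\,d|Du| < \infty.
\end{equation*}
Now suppose toward contradiction that $\hnmo(S_u\cap\{\om=+\infty\})>0$. By the very definition of the jump set $S_u$, we have $|u^+(x)-u^-(x)|>0$ for every $x\in S_u$, so we can write
\begin{equation*}
S_u\cap\{\om=+\infty\} = \bigcup_{k\in\N}\, \Bigl(S_u\cap\{\om=+\infty\}\cap\{|u^+-u^-|>1/k\}\Bigr),
\end{equation*}
and by countable subadditivity of $\hnmo$ there must exist some $k_0\in\N$ such that the set
$A:=S_u\cap\{\om=+\infty\}\cap\{|u^+-u^-|>1/k_0\}$ has positive $\hnmo$-measure.

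Restricting the integral to $A$ gives
\begin{equation*}
\int_{S_u}\om\,|u^+-u^-|\,d\hnmo \ge \int_A \om\,|u^+-u^-|\,d\hnmo \ge \frac{1}{k_0}\int_A \om\,d\hnmo = +\infty,
\end{equation*}
contradicting the finiteness established above. Hence $\hnmo(S_u\cap\{\om=+\infty\})=0$, as claimed.

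\textbf{Main obstacle.} The argument is essentially bookkeeping; the only subtle point is making sure the integral $\int_\Om\om\,d|Du|$ genuinely sees the jump contribution at points where $\om$ takes the value $+\infty$. Since $\om\colon\Om\to(0,+\infty]$ is defined pointwise (not merely $\mathcal L^N$-a.e.) in Definition \ref{Muckenhoupt_Function_Space}, the set $\{\om=+\infty\}$ is well-defined as a subset of $\Om$, and Borel measurability of $\om$ (inherited from $\om\in L^1$ once we fix a pointwise representative, or automatic in the $SBV$/continuous cases mentioned after the definition) makes the slicing by $\{|u^+-u^-|>1/k_0\}$ legitimate, so no further regularity input is needed.
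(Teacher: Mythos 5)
Your proof is correct and follows essentially the same route as the paper: both start from the finiteness of $\int_\Om\om\,d\abs{Du}$ in the definition of $BV_\om(\Om)$, use the decomposition \eqref{SBV_decom} to isolate the jump contribution $\int_{S_u}\abs{u^+-u^-}\om\,d\hnmo$, and conclude from $\abs{u^+-u^-}>0$ on $S_u$ that the set where $\om=+\infty$ must be $\hnmo$-null. Your layer-cake decomposition over $\{\abs{u^+-u^-}>1/k\}$ merely spells out the final implication that the paper leaves implicit.
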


\begin{proof}
By Definition \ref{LpSobBVSBVdef} we have 
\begin{align}\label{finite_jump_measure}
\begin{split}
+\infty>\into \om\, d\abs{Du}&= \into \abs{\nabla u}\om\,dx+\int_{S_u} \abs{u^+-u^-}\om\,d\HH^{N-1}\\
&\geq \int_{S_u\cap \flp{\om=+\infty}}\abs{u^+-u^-}\om\,d\HH^{N-1}.
\end{split}
\end{align}
Since $\abs{u^+-u^-}(x)>0$ for $\hnmo$-a.e. $x\in S_u$, it follows from \eqref{finite_jump_measure} that $\HH^{N-1}(S_u\cap \flp{\om=+\infty})=0$.
\end{proof}

\begin{lemma}
The space $L^2_\om$ is a Hilbert space endowed with the inner product 
\be\label{inner_prod_weighted}
(u,v)_{L^2_\om} :=(u,v\,\om)_{L^2} = \int u\,v\,\om\,dx.
\ee
\end{lemma}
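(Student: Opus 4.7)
The plan is to verify the three inner-product axioms and then completeness, viewing $L^2_\om(\Om)$ as the standard $L^2$ space with respect to the weighted Borel measure $\mu := \om\,d\LL^N$.

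First I would check that the formula in \eqref{inner_prod_weighted} actually defines a real number for every $u,v \in L^2_\om(\Om)$. By the Cauchy--Schwarz inequality applied to $|u|\sqrt{\om}$ and $|v|\sqrt{\om}$ (which are in $L^2(\Om)$ by definition of $L^2_\om$), one has
\bes
\int_\Om |uv|\,\om\,dx \;\leq\; \left(\int_\Om |u|^2\om\,dx\right)^{1/2}\left(\int_\Om |v|^2\om\,dx\right)^{1/2} \;=\; \|u\|_{L^2_\om}\|v\|_{L^2_\om} \;<\;\infty,
\ees
so the integral is absolutely convergent. Bilinearity and symmetry are then immediate from linearity of the integral. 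For positive definiteness, the lower bound \eqref{pos_lower_ass} is crucial: if $(u,u)_{L^2_\om}=\int_\Om |u|^2\om\,dx=0$, then since $\om\geq l>0$ $\LL^N$-a.e., we get $|u|^2 = 0$ $\LL^N$-a.e., hence $u=0$ in $L^2_\om(\Om)$.

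Next I would verify that the induced norm $\sqrt{(u,u)_{L^2_\om}}$ coincides with the norm $\|u\|_{L^2_\om}$ introduced in Definition~\ref{LpSobBVSBVdef}, which is trivial from the formula.

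It remains to prove completeness. The most transparent argument is to identify $L^2_\om(\Om)$ with $L^2(\Om,\mu)$, where $\mu$ is the positive Borel measure with density $\om$ with respect to $\LL^N$; since $\om\in L^1(\Om)$ and $\om\geq l$ a.e., $\mu$ is a $\sigma$-finite Borel measure and the standard Riesz--Fischer theorem gives completeness of $L^2(\Om,\mu)$. Alternatively, given a Cauchy sequence $\{u_n\}\subset L^2_\om(\Om)$, from $\om\geq l$ one has $\|u_n-u_m\|_{L^2}\leq l^{-1/2}\|u_n-u_m\|_{L^2_\om}$, so $\{u_n\}$ is Cauchy in $L^2(\Om)$ and converges to some $u\in L^2(\Om)$; a subsequence converges pointwise $\LL^N$-a.e., and Fatou's lemma applied to $|u_n - u_{n_k}|^2\om$ as $k\to\infty$ yields
\bes
\int_\Om |u_n - u|^2\om\,dx \;\leq\; \liminf_{k\to\infty}\int_\Om |u_n - u_{n_k}|^2\om\,dx,
\ees
which tends to $0$ as $n\to\infty$ by the Cauchy property in $L^2_\om$. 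In particular $u\in L^2_\om(\Om)$ and $u_n\to u$ in $L^2_\om(\Om)$.

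No step presents real difficulty; the only point requiring attention is the use of hypothesis \eqref{pos_lower_ass} both to get positive definiteness and to control the unweighted norm, which is what prevents the argument from degenerating when $\om$ vanishes.
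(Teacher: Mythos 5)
Your proof is correct, and it is slightly more detailed than the paper's at every step (well-definedness of the integral via Cauchy--Schwarz, explicit verification of the axioms). The one place where you genuinely diverge is the completeness argument. The paper exploits the isometry $u\mapsto u\sqrt{\om}$ of $L^2_\om$ into $L^2$: a Cauchy sequence $\{u_n\}$ gives a Cauchy sequence $\{u_n\sqrt{\om}\}$ in $L^2$ with limit $v$, and the limit in $L^2_\om$ is simply $v/\sqrt{\om}$, so no extraction of subsequences or Fatou is needed. You instead use only the continuous (non-isometric) embedding coming from $\om\geq l$ to produce a candidate limit in unweighted $L^2$, and then upgrade the convergence to $L^2_\om$ by passing to an a.e.-convergent subsequence and applying Fatou (equivalently, by identifying $L^2_\om$ with $L^2(\Om,\om\,d\mathcal L^N)$ and invoking Riesz--Fischer). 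Both routes are standard and both use the lower bound \eqref{pos_lower_ass} for positive definiteness; the paper's is shorter, while yours is the one that would survive if the isometric trick were unavailable, and it correctly isolates where the hypothesis $\om\geq l>0$ enters.
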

\begin{proof}
It is clear that \eqref{inner_prod_weighted} is an inner product. Also, $(u,u)_{L^2_\om} = (u\sqrt{\om},u\sqrt\om)_{L^2} \geq 0$, and if $(u,u)_{L^2_\om}=0$ then by \eqref{pos_lower_ass}
\bes
\into u^2\om\,dx\geq \into u^2dx=0,
\ees
and thus $u= 0$ $a.e.$\\\\
To see that $L^2_\om$ is complete, and therefore a Hilbert space, let $\seqn{u_n}$ be a Cauchy sequence in $L^2_\om$ and notice that $\seqn{u_n\sqrt{\om}}$ is a Cauchy sequence in $L^2$. Hence, there is a function $v\in L^2$ such that $u_n\sqrt{\om}\to v$ in $L^2$. Defining $u:=v/\sqrt{\om}$, we have that $u\in L^2_\om$ and $u_n\to u$ in $L^2_\om$.
\end{proof}

\begin{lemma}\label{L_2_weighted}
Let $\seqn{u_n}\subset W^{1,2}_\om(\Om)$ be such that $u_n\to u$ in $L^1_\om$ and
\begin{align*}
\sup\into \abs{\nabla u_n}^2\om\,dx<\infty.
\end{align*}
Then, for every measurable set $A\subset \Om$
\bes
\liminf_{n\to \infty} \int_A \abs{\nabla u_n}^2\om\,dx\geq \int_A \abs{\nabla u}^2\om\,dx,
\ees
and $u\in W_\om^{1,2}(\Om)$.
\end{lemma}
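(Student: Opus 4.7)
The plan is to reduce the weighted problem to an unweighted $L^2$ weak-convergence argument by the natural change of variables $g_n := \sqrt{\om}\,\nabla u_n$. Since $\om \ge 1$ (by the normalization $l=1$ in Definition \ref{Muckenhoupt_Function_Space}), the assumption $u_n \to u$ in $L^1_\om$ automatically yields $u_n \to u$ in $L^1(\Om)$, so $u \in L^1_{\loc}(\Om)$ and distributional derivatives of $u$ are well defined. Moreover, the hypothesis gives that $\{g_n\}$ is bounded in $L^2(\Om;\rn)$, and the reflexivity of $L^2$ provides a (not relabeled) subsequence and a function $w \in L^2(\Om;\rn)$ with $g_n \wc w$ in $L^2(\Om;\rn)$.

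The main step is to identify the weak limit as $w = \sqrt{\om}\,\nabla u$ almost everywhere. To this end, I would test against an arbitrary $\varphi \in C_c^\infty(\Om;\rn)$. On the one hand, since $\varphi$ is bounded with compact support and $\om \in L^1(\Om)$, the function $\varphi\sqrt{\om}$ lies in $L^2(\Om;\rn)$, and the weak convergence $g_n \wc w$ yields
\begin{equation*}
\int_\Om (\varphi\sqrt{\om})\cdot \nabla u_n\,dx = \int_\Om \varphi\cdot g_n\,dx \longrightarrow \int_\Om \varphi\cdot w\,dx.
\end{equation*}
On the other hand, integration by parts together with $u_n \to u$ in $L^1(\Om)$ gives
\begin{equation*}
\int_\Om (\varphi\sqrt{\om})\cdot \nabla u_n\,dx = -\int_\Om u_n\,\divg(\varphi\sqrt{\om})\,dx \longrightarrow -\int_\Om u\,\divg(\varphi\sqrt{\om})\,dx,
\end{equation*}
which identifies the distributional divergence of $w$ in terms of $u$ and matches $\int_\Om \varphi\cdot \sqrt{\om}\,\nabla u\,dx$ provided $\nabla u \in L^1_{\loc}$. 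To justify this last point I would run the same test against $\varphi\in C_c^\infty(\Om;\rn)$ with weight $1$ in place of $\sqrt\om$: since $\{\nabla u_n\}$ is bounded in $L^2(\Om;\rn)$ (again using $\om \ge 1$), up to a further subsequence $\nabla u_n \wc h$ in $L^2$, and the same duality argument identifies $h = \nabla u$ in the distributional sense. Consequently $\nabla u \in L^2(\Om;\rn)$ and the previous computation yields $w = \sqrt{\om}\,\nabla u$ a.e.

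Once $w = \sqrt{\om}\,\nabla u$ is established, the bound $\int_\Om |w|^2\,dx \le \liminf_n \int_\Om |\nabla u_n|^2\om\,dx < \infty$ shows that $\nabla u \in L^2_\om(\Om;\rn)$, so $u \in W^{1,2}_\om(\Om)$. For the localized lower bound I would observe that for any measurable $A \subset \Om$ and any $\varphi \in L^2(\Om;\rn)$, the function $\chi_A \varphi$ still lies in $L^2$, hence $\chi_A g_n \wc \chi_A w$ in $L^2(\Om;\rn)$. Weak lower semicontinuity of the $L^2$-norm then yields
\begin{equation*}
\int_A |\nabla u|^2\om\,dx = \int_A |w|^2\,dx \le \liminf_{n\to\infty} \int_A |g_n|^2\,dx = \liminf_{n\to\infty}\int_A |\nabla u_n|^2\om\,dx.
\end{equation*}

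The only non-routine point is that the weak limits in $L^2$ were extracted along a subsequence, whereas the conclusion concerns the full $\liminf$. I would handle this with the standard Urysohn-type argument: given any subsequence of the original sequence, extract a further subsequence along which $\int_A|\nabla u_{n_k}|^2\om\,dx$ converges to its $\liminf$ and along which the weak convergence $g_{n_k}\wc \sqrt{\om}\,\nabla u$ holds; the inequality just proved then applies and yields the desired bound for the full sequence. This is the only delicate bookkeeping step; the heart of the argument is simply recognizing that $\sqrt{\om}\,\nabla u_n$ is the correct object to work with in order to transfer the weighted problem into a standard $L^2$ weak-compactness and lower semicontinuity statement.
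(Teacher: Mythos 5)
Your argument is correct in substance and reaches the same conclusion, but by a slightly different and more elementary route than the paper. The paper's proof simply observes that, since $\om\geq 1$, the sequence $\seqn{\nabla u_n}$ is bounded in the unweighted $L^2(\Om;\rn)$ while $u_n\to u$ in $L^1(\Om)$, so $\nabla u_n\wc\nabla u$ in $L^2(\Om;\rn)$, and then invokes a general sequential weak lower semicontinuity theorem for convex integrands (Fonseca--Leoni, Theorem 6.3.7) applied to $(x,\xi)\mapsto\om(x)\abs{\xi}^2$ on each measurable $A$. Your substitution $g_n:=\sqrt{\om}\,\nabla u_n$ replaces that citation by the bare weak lower semicontinuity of the $L^2$-norm, at the price of having to identify the weak limit of $g_n$; the localization to $A$ and the subsequence/Urysohn bookkeeping at the end are the same in both arguments (the paper leaves them implicit). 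The two proofs buy the same statement with different currency: the paper outsources the convexity machinery to a cited theorem, you do it by hand via the change of variables.

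One intermediate step of yours should be rewritten, although it does not sink the proof. The integration by parts $\into(\varphi\sqrt{\om})\cdot\nabla u_n\,dx=-\into u_n\,\divg(\varphi\sqrt{\om})\,dx$ is not legitimate for a general weight $\om\in\mathcal W(\Om)$: $\om$ is only assumed to lie in $L^1(\Om)$ (in the relevant sections, in $SBV$), so $\sqrt{\om}$ need not be weakly differentiable, and even when $\om\in SBV$ the object $\divg(\varphi\sqrt{\om})$ is a measure, against which mere $L^1$-convergence of $u_n$ gives no passage to the limit. Fortunately you do not need this step. Once you have established, by the unweighted duality argument with smooth test functions, that $\nabla u_n\wc\nabla u$ in $L^2(\Om;\rn)$, you may test this weak convergence directly against $\varphi\sqrt{\om}$, which lies in $L^2(\Om;\rn)$ because $\om\in L^1(\Om)$ and $\varphi$ is bounded with compact support. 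This yields $\into\varphi\cdot w\,dx=\lim_{n}\into(\varphi\sqrt{\om})\cdot\nabla u_n\,dx=\into(\varphi\sqrt{\om})\cdot\nabla u\,dx$ for every $\varphi\in C_c^\infty(\Om;\rn)$, and since $\sqrt{\om}\,\nabla u\in L^1_{\loc}(\Om;\rn)$ this identifies $w=\sqrt{\om}\,\nabla u$ a.e. With that replacement your proof is complete.
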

\begin{proof}
By \eqref{pos_lower_ass} we have that $\seqn{\nabla u_n}$ is uniformly bounded in $L^2(\Om, \rn)$ and $u_n\to u$ in $L^1(\Om)$. Hence $\nabla u_n\wto \nabla u$ in $L^2(\Om;\rn)$, and using standard lower semi-continuity of convex energies (see \cite{fonseca2015modern}, Theorem 6.3.7), we conclude that 
\bes
+\infty>\liminf_{n\to \infty} \int_A \abs{\nabla u_n}^2\om\,dx\geq \int_A \abs{\nabla u}^2\om\,dx,
\ees
for every measurable subset $A\subset \Om$. In particular, with $A=\Om$ and using the fact that $1\leq \om$ a.e., we deduce that $u\in W_\om^{1,2}(\Om)$.
\end{proof}

\begin{lemma}\label{compact_energy}
Let $u\in L^1_\om(\Om)$ be such that 
\be\label{compact_energy1}
\int_\Om\abs{\nabla u}^2\om\,dx+\int_{S_u}\om\,d\hnmo<+\infty.
\ee
Then $\hnmo(S_u)<+\infty$ and $u\in GSBV_\om(\Om)$.
\end{lemma}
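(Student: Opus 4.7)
The plan is to reduce the statement to the unweighted setting, apply a standard characterization of $GSBV$ via truncations, and then restore the weight at the end. The lower bound $\omega\geq 1$ a.e.\ supplied by Definition~\ref{Muckenhoupt_Function_Space} together with $\omega\in L^1(\Om)$ will make both reductions essentially immediate.

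\emph{Step 1: unweighted bounds.} Since $\om(x)\ge 1$ for a.e.\ $x\in\Om$, the hypothesis \eqref{compact_energy1} gives at once
\[
\mathcal H^{N-1}(S_u)\le \int_{S_u}\om\, d\mathcal H^{N-1}<+\infty,
\qquad \int_\Om |\nabla u|^2\, dx\le \int_\Om |\nabla u|^2\om\, dx<+\infty.
\]
This yields the first assertion of the lemma and places $u$ in the unweighted framework to which the closure theorems of \cite{ambrosio1990approximation, ambrosio2000functions} apply.

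\emph{Step 2: truncations lie in $SBV(\Om)$.} For $K\in\mathbb N$ set $u_K:=(-K)\vee u\wedge K\in L^\infty(\Om)$. Standard properties of truncations give $\nabla u_K=\nabla u\,\chi_{\{|u|<K\}}$ and $S_{u_K}\subset S_u$ with $|u_K^+-u_K^-|\le |u^+-u^-|\wedge 2K$. Consequently
\[
\int_\Om|\nabla u_K|^2\, dx+\mathcal H^{N-1}(S_{u_K})<+\infty,
\]
and since $u_K$ is bounded, Ambrosio's $SBV$ closure/compactness theorem (Theorem~2.3 of \cite{ambrosio1990approximation}, cited explicitly in the excerpt) together with the characterization of $SBV$ via approximate gradients and jump sets (see \cite{ambrosio2000functions}) yields $u_K\in SBV(\Om)$.

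\emph{Step 3: the weighted conditions.} It remains to check that $u_K\in BV_\om(\Om)$, so that $u_K\in SBV_\om(\Om)$. Because $u_K$ is bounded and $\om\in L^1(\Om)$, we have $u_K\in L^1_\om(\Om)$. For the weighted total variation, split $|Du_K|=|\nabla u_K|\,\mathcal L^N+|u_K^+-u_K^-|\,\mathcal H^{N-1}\lfloor S_{u_K}$ and estimate each piece. By Cauchy--Schwarz and $\om\in L^1(\Om)$,
\[
\int_\Om|\nabla u_K|\,\om\, dx
=\int_\Om|\nabla u_K|\sqrt\om\cdot\sqrt\om\, dx
\le\Bigl(\int_\Om|\nabla u_K|^2\om\, dx\Bigr)^{1/2}\Bigl(\int_\Om\om\, dx\Bigr)^{1/2},
\]
which is finite since $|\nabla u_K|\le|\nabla u|$ and \eqref{compact_energy1} holds. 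For the jump part,
\[
\int_{S_{u_K}}|u_K^+-u_K^-|\,\om\, d\mathcal H^{N-1}
\le 2K\int_{S_u}\om\, d\mathcal H^{N-1}<+\infty.
\]
Hence $\int_\Om\om\, d|Du_K|<+\infty$, so $u_K\in SBV_\om(\Om)$ for every $K\in\mathbb N$. By Definition~\ref{LpSobBVSBVdef}(4) this gives $u\in GSBV_\om(\Om)$.

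\emph{Main obstacle.} The only delicate point is Step~2: interpreting $\nabla u$ and $S_u$ in \eqref{compact_energy1} for a mere $L^1_\om$ function, and invoking the $SBV$ closure theorem on the truncations. One reads \eqref{compact_energy1} as the implicit assumption that the approximate gradient exists $\mathcal L^N$-a.e.\ and that $S_u$ is $(\mathcal H^{N-1}, N-1)$-rectifiable; once this is granted, the boundedness of $u_K$ and the unweighted bounds from Step~1 put us squarely inside the hypotheses of Ambrosio's theorem, and the rest of the argument is routine.
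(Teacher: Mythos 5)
Your proof is correct and follows essentially the same route as the paper: reduce to the unweighted setting via $\om\ge 1$, invoke the results of \cite{ambrosio1990approximation} for the truncations, and then verify the weighted conditions using the bound $|u_K^+-u_K^-|\le 2K$ on the jump part. You are in fact slightly more thorough than the paper, which only checks the jump contribution to $\int_\Om\om\,d|Du_K|$ and omits the Cauchy--Schwarz estimate for the absolutely continuous part that you supply in Step~3.
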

\begin{proof}
By \eqref{compact_energy1} and \eqref{pos_lower_ass}
\bes
\int_\Om\abs{\nabla u}^2\,dx+\hnmo(S_u)<+\infty,
\ees
and hence by \cite{ambrosio1990approximation} we have that $u\in GSBV(\Om)$. To show that $u\in GSBV_\om(\Om)$, we only need to verify that 
\bes
\int_{S_{u_K}}\abs{u_K^+-u_K^-}\om\,d\hnmo<+\infty
\ees
for every $K\in\mathbb N$ and with $u_K:=K\wedge u\vee -K$. Indeed, by \eqref{compact_energy1}
\bes
\int_{S_{u_K}}\abs{u_K^+-u_K^-}\om\,d\hnmo\leq 2K\int_{S_{u_K}}\om\,d\hnmo\leq 2K\int_{S_{u}}\om\,d\hnmo<+\infty.
\ees
\end{proof}

\section{The One Dimensional Case}\label{O_D_Toy_Sec}

\subsection{The Case $\om\in \mathcal W(I)\cap C(I)$}\label{ATAc_1d}$\,$\\

Let $\om\in\mathcal W(I)\cap  C(I)$ be given. Consider the functionals
\begin{align}\label{ATH_weighted_c}
E_{\om,\e}(u,v):=\int_I v^2\abs{u'}^2\om \,dx +\int_{I}\left[\frac{\e}{2}\abs{ v'}^2+\frac{1}{2\e}{(v-1)^2}\right]\om\, dx
\end{align}
for $(u,v)\in W_\om^{1,2}(I)\times W^{1,2}(I)$, and let
\be\label{AT_weighted_c}
E_\om(u):=\int_I \abs{u'}^2\om\,dx+ \sum_{x\in S_u}\om(x)
\ee
be defined for $u\in GSBV_\om(I)$ (Note that $E_{1,\e}(u,v)$ and $E_1(u)$ are, respectively, the non-weighted Ambrosio-Tortorelli approximation scheme and Mumford-Shah functional studied in \cite{ambrosio1990approximation}).
\begin{theorem}[$\Gamma$-Convergence]\label{gamma_c_1d}
Let $\mathcal{E}_{\om,\e}$: $L^1_\om(I)\times L^1(I)\rightarrow[0,+\infty]$ be defined by
\begin{align}\label{esd_weighted}
\mathcal{E}_{\om,\e}(u,v):=
\begin{cases}
E_{\om,\e}(u,v)&\text{if}\,\,(u,v)\in W_\om^{1,2}(I)\times W^{1,2}(I), \,0\leq v\leq 1,\\
+\infty &\text{otherwise}.
\end{cases}
\end{align}
Then the functionals $\mathcal{E}_{\om,\e}$ $\Gamma$-converge, with respect to the $L^1_\om\times L^1$ topology, to the functional
\begin{align}\label{etd_weighted}
\mathcal{E}_\om(u,v):=
\begin{cases}
E_\om(u)&\text{if}\,\,u\in GSBV_\om(I)\text{ and }v=1\,\,a.e.,\\
+\infty &\text{otherwise}.
\end{cases}
\end{align}
\end{theorem}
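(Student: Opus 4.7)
The plan is to prove the two standard ingredients of $\Gamma$-convergence: the liminf inequality and the existence of a recovery sequence, following the scheme of Ambrosio--Tortorelli \cite{ambrosio1990approximation} but carrying the weight $\om$ through every estimate. Because $\om$ is continuous with $\om\geq 1$, there are no jumps in $\om$ to worry about and $\om^-(x)=\om(x)$ everywhere, so the weight can be treated locally as essentially constant.

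For the liminf inequality, I would take sequences $u_\e\to u$ in $L^1_\om(I)$ and $v_\e\to v$ in $L^1(I)$ with $\liminf_\e \mathcal{E}_{\om,\e}(u_\e,v_\e)=:L<\infty$ (otherwise the claim is trivial). Extract a subsequence realizing $L$ and such that $0\leq v_\e\leq 1$. Since $\om\geq 1$, the penalization $\int_I (v_\e-1)^2/(2\e)\,\om\,dx\leq L$ immediately forces $v_\e\to 1$ in $L^2(I)$, hence $v=1$ a.e. By a standard truncation argument it suffices to prove the inequality for $u$ bounded; then Lemma \ref{compact_energy} together with the uniform bound $\int_I v_\e^2|u_\e'|^2\om\,dx + \int_I[\e|v_\e'|^2/2 + (v_\e-1)^2/(2\e)]\,dx\leq L$ gives $u\in SBV_\om(I)$. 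For the bulk part, one notes that $v_\e\to 1$ in $L^2_\om$, so $v_\e u_\e'\wc u'$ in $L^2_\om$ by the usual argument, and lower semicontinuity (Lemma \ref{L_2_weighted}) yields $\liminf_\e \int_I v_\e^2|u_\e'|^2\om\,dx\geq \int_I |u'|^2\om\,dx$. For the jump part, localize around each $x_j\in S_u$: on a small interval $(x_j-\delta,x_j+\delta)$ where $\om$ oscillates by at most $\eta$, one applies Young's inequality
\[
\frac{\e}{2}|v_\e'|^2+\frac{1}{2\e}(v_\e-1)^2\geq |v_\e'|\,|v_\e-1|
\]
and uses the change of variables $s=v_\e(x)$ to bound $\int_{x_j-\delta}^{x_j+\delta}[\cdots]\om\,dx$ below by $(\om(x_j)-\eta)\int_{\min v_\e}^{1}(1-s)\,ds$ computed on each monotone branch of $v_\e$. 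Since $u_\e\to u$ forces $\min v_\e\to 0$ near $x_j$ (otherwise $u_\e$ would stay bounded in $W^{1,2}$ across the jump, contradicting $u_\e\to u$), each branch contributes $\frac12(\om(x_j)-\eta)$, totalling $\om(x_j)-\eta$; summing over $S_u$ (finite because $\om\geq 1$) and letting $\eta\to 0$ gives the desired jump inequality. Disjoint localization around the jump points, combined with the bulk estimate on the complement, yields $\liminf_\e E_{\om,\e}(u_\e,v_\e)\geq E_\om(u)$.

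For the recovery sequence, by a standard density/truncation argument I reduce to $u\in SBV_\om(I)\cap L^\infty(I)$ with a finite jump set $S_u=\{x_1,\dots,x_M\}$ and $u$ smooth away from $S_u$. Around each $x_j$ I insert the optimal Modica--Mortola profile: set $v_\e(x)=1-\exp(-|x-x_j|/\e)$ on $(x_j-\sqrt\e,x_j+\sqrt\e)$ cut off smoothly to equal $1$ outside, and let $u_\e$ interpolate linearly (or keep $u_\e=u$ outside the transition zone and extend across $x_j$ by a bounded affine interpolation on $(x_j-\e^2,x_j+\e^2)$). A direct computation, exploiting the continuity of $\om$ to replace $\om(x)$ by $\om(x_j)+o(1)$ on the transition interval, gives
\[
\int_{x_j-\sqrt\e}^{x_j+\sqrt\e}\Bigl[\frac{\e}{2}|v_\e'|^2+\frac{1}{2\e}(v_\e-1)^2\Bigr]\om\,dx=\om(x_j)+o(1),
\]
while the bulk contribution from $v_\e^2|u_\e'|^2\om$ on the transition zone vanishes because $v_\e^2\to 0$ fast enough to beat the blow-up of $|u_\e'|^2$ (this is the point where $u_\e$ must be chosen so that $v_\e u_\e'\to 0$ in $L^2_\om$ near $x_j$; the original Ambrosio--Tortorelli choice works verbatim). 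Outside the transition zones, $u_\e=u$ and $v_\e=1$, so those contributions converge to $\int_I |u'|^2\om\,dx$. Summing, $\limsup_\e E_{\om,\e}(u_\e,v_\e)\leq \int_I|u'|^2\om\,dx+\sum_j \om(x_j)=E_\om(u)$.

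The main obstacle is the localized liminf at each jump point: we must carry the weight $\om$ inside the Young--inequality/change-of-variable step without losing constants and show that $v_\e$ is really forced down to $0$ near each $x_j$. The latter is where continuity of $\om$ helps, because it allows us to compare with the classical unweighted argument on arbitrarily small intervals. Extending from $SBV_\om$ to $GSBV_\om$ is then routine via truncation $u_K=K\wedge u\vee -K$, passing to the limit in $K$ on both inequalities.
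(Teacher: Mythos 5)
Your proposal follows essentially the same route as the paper: the liminf is obtained by the Ambrosio--Tortorelli localization (good set where $v_\e^2\geq\eta$ for the bulk term, Young's inequality plus the forced dip of $v_\e$ to $0$ at each jump for the surface term, with the weight handled by $\inf_{(t^1,t^2)}\om\to\om(x_j)$ by continuity), and the limsup by inserting a near-optimal profile around each jump point with $\sup\om\to\om(x_j)$ on the shrinking transition interval, followed by truncation for $GSBV_\om$. The only point where your construction as literally parametrized would fail is the recovery sequence: with the pure profile $v_\e=1-e^{-\abs{x-x_j}/\e}$ (no dead zone) and $u_\e$ affine on $(x_j-\e^2,x_j+\e^2)$, one has $v_\e\approx\abs{x-x_j}/\e$ there and $\abs{u_\e'}\approx\abs{[u]}/(2\e^2)$, so $\int v_\e^2\abs{u_\e'}^2\,dx\approx\abs{[u]}^2/6$ does \emph{not} vanish, leaving a spurious term in the upper bound. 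You correctly flag that $u_\e$ must be chosen so that $v_\e u_\e'\to0$; the paper's fix is precisely a dead zone $\xi_\e=o(\e)$ on which $v_\e\equiv0$ and inside which $u$ is interpolated (an interpolation interval of length $o(\e^2)$, e.g.\ $\e^3$, would also work), so with that adjustment your argument matches the paper's.
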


We begin with an auxiliary proposition.
\begin{proposition}\label{osl_small_contral}
Let $\flp{v_\e}_{\e>0}\subset W^{1,2}(I)$ be such that $0\leq v_\e\leq 1$, $v_\e\to1$ in $L^1(I)$ and pointwise a.e., and
\be\label{osl_small_contral_weighted}
\limsup_{\e\to0}\int_{I}\left[\frac{\e}{2}\abs{ v_\e'}^2+\frac{1}{2\e}(v_\e-1)^2\right]\,dx <\infty.
\ee
Then for arbitrary $0<\eta<1$ there exists an open set $H_\eta\subset I$ satisfying: 
\begin{enumerate}[1.]
\item
the set $I\setminus H_\eta$ is a collection of finitely many points in $I$;
\item
for every set $K$ compactly contained in $H_\eta$, we have $K\subset B_\e^\eta$ for $\e>0$ small enough, where 
\be\label{1_d_level_set}
B_\e^\eta:=\flp{x\in I:\,v^2_\e(x)\geq \eta}.
\ee
\end{enumerate}
\end{proposition}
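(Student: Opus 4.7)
The strategy follows the classical Modica--Mortola ``counting of transitions'' idea: the Ambrosio--Tortorelli integrand bounds the number of times $v_\e$ can drop below $\sqrt{\eta}$ and recover, while the $L^1$-convergence $v_\e\to 1$ forces each such drop to occupy shorter and shorter intervals. Once both facts are established, a diagonal extraction localizes the surviving drops at finitely many points, and these points constitute $I\setminus H_\eta$.

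\textbf{Step 1 (energy bound on the number of drops).} Set $\beta:=\sqrt{\eta}$, fix any $\beta'\in(\beta,1)$, and let $M$ denote a uniform bound for the energy in \eqref{osl_small_contral_weighted}. Since $v_\e\in W^{1,2}(I)\hookrightarrow C(I)$, the set $A_\e:=\flp{x\in I:\,v_\e(x)<\beta'}$ is open, and $v_\e=\beta'$ at each of its component endpoints that lies in $I$. Call a component of $A_\e$ \emph{bad} if it meets $\tilde A_\e:=\flp{v_\e<\beta}$. With $\psi(s):=s-s^2/2$ strictly increasing on $[0,1]$, Young's inequality yields
\bes
\frac{\e}{2}\abs{v_\e'}^2+\frac{1}{2\e}(v_\e-1)^2\ \geq\ (1-v_\e)\abs{v_\e'}\ =\ \abs{[\psi(v_\e)]'}.
\ees
On an interior bad component $(a,b)$, $v_\e$ starts and ends at $\beta'$ and dips at some point to a value $\leq\beta$, so the fundamental theorem of calculus forces an energy contribution of at least $2[\psi(\beta')-\psi(\beta)]=:2c_0>0$; a bad component touching $\partial I$ contributes at least $c_0$, and there are at most two such. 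Hence the number of bad components is uniformly bounded by some integer $N_0=N_0(M,\beta,\beta')$.

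\textbf{Step 2 (shrinking of the bad set).} Because $v_\e\leq 1$, $L^1$-convergence gives $\int_I(1-v_\e)\,dx\to 0$, hence $\int_I(1-v_\e)^2\,dx\to 0$. Since $(1-v_\e)^2\geq(1-\beta')^2$ on $A_\e$, we get $\abs{A_\e}\to 0$, so every component of $A_\e$ has length tending to $0$.

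\textbf{Step 3 (extraction of $H_\eta$ and verification).} Choose any sequence $\e_n\to 0$. By Step 1 the number of bad components is at most $N_0$ for each large $n$; passing to a subsequence we may assume it equals some $N_*\leq N_0$. Labelling the bad components by their left endpoints and extracting a further subsequence, these $N_*$ left endpoints converge in $\overline{I}$, and by Step 2 the matching right endpoints converge to the same limits $\bar x_1,\dots,\bar x_{N_*}\in\overline{I}$. Set
\bes
Z:=\flp{\bar x_i:\,\bar x_i\in I}\quad\text{and}\quad H_\eta:=I\setminus Z,
\ees
which is open with finite complement. For $K\subset\subset H_\eta$ put $d:=\dist(K,Z\cup\partial I)>0$; for $n$ large, every bad component lies in the $d/2$-neighborhood of its limit $\bar x_i\in Z\cup\partial I$, hence is disjoint from $K$. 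Therefore $\tilde A_{\e_n}\cap K\subset A_{\e_n}\cap K=\varnothing$, so $v_{\e_n}^2\geq\eta$ on $K$, i.e.\ $K\subset B_{\e_n}^\eta$.

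The only genuine difficulty is the bookkeeping in Step 1, namely isolating a single positive constant $c_0$ per drop and handling the at-most-two components touching $\partial I$ separately. A mild subtlety is that $H_\eta$ is constructed along a subsequence of $\e\to 0$; this is exactly what is needed in the subsequent $\Gamma$-convergence argument, where one always works along a given sequence $\e_n\to 0$.
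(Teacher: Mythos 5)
Your proof is correct, and it reaches the conclusion by a genuinely different route from the paper's own argument (Lemma \ref{osl_small_contral_proof} in the Appendix). The paper works with $c_\e=(1-v_\e)^2$, observes that the energy bounds $\int_I\abs{c_\e'}\,dx$, applies the coarea formula to find a level $\delta_\e\in(\sigma,\delta)$ whose level set has boundedly many points, and then tracks the \emph{good} set $A_\e^{\delta_\e}=\{c_\e\le\delta_\e\}$ (where $v_\e$ is close to $1$): it has a bounded number of closed-interval components, whose Hausdorff limits (Lemma \ref{hausdorff_conv_int}) cover $I$ up to finitely many points. You instead track the complementary \emph{bad} set where $v_\e$ drops below $\sqrt{\eta}$ and count transitions directly via the Modica--Mortola bound $\tfrac{\e}{2}\abs{v_\e'}^2+\tfrac{1}{2\e}(1-v_\e)^2\ge\abs{[\psi(v_\e)]'}$ with $\psi(s)=s-s^2/2$; note $\psi(s)=\tfrac12\bigl(1-(1-s)^2\bigr)$, so the underlying $BV$ bound is literally the paper's bound on $c_\e$ in disguise. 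Each drop costs a fixed quantum $2c_0$ of energy, so there are boundedly many bad components; they shrink by $L^1$-convergence, and their limit points form $I\setminus H_\eta$. Your version avoids both the coarea formula and the auxiliary Hausdorff-convergence lemma, replacing them by Bolzano--Weierstrass on endpoints, at the price of a small case analysis for components touching $\partial I$. Both constructions produce $H_\eta$ only along a subsequence of $\e\to0$, which, as you note, is exactly how the proposition is invoked in Proposition \ref{liminf_part_1d_c}.

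One line in your Step 3 is imprecise: the claim $A_{\e_n}\cap K=\varnothing$ is not justified, since a good component of $A_{\e_n}$ (on which $\sqrt{\eta}\le v_{\e_n}<\beta'$ throughout) is not localized by your argument and may well meet $K$. What your argument does give, and what suffices, is that $\tilde A_{\e_n}=\{v_{\e_n}<\sqrt{\eta}\}$ is contained in the union of the bad components, each of which is disjoint from $K$ for $n$ large; hence $\tilde A_{\e_n}\cap K=\varnothing$, so $v_{\e_n}^2\ge\eta$ on $K$ and $K\subset B_{\e_n}^\eta$. This is a one-line repair, not a gap in the strategy.
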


Proposition \ref{osl_small_contral} is adapted from \cite{ambrosio1990approximation}, page 1020-1021 (see Lemma \ref{osl_small_contral_proof}).
\begin{proposition}\label{liminf_part_1d_c}\emph{($\Gamma$-$\liminf$)}
For $u\in L^1_\om(I)$, let
\begin{align*}
E_\om^-(u):=\inf&\flp{\liminf_{\e\to 0} E_{\om,\e}(u_\e,v_\e):\right.\\
&\left.\,\,\,(u_\e,v_\e)\in W^{1,2}_\om(I)\times W^{1,2}(I), u_\e\to u\text{ in }L^1_\om,\, v_\e\to1\text{ in }L^1,\,0\leq v_\e\leq 1}.
\end{align*}
We have
\bes
E_\om^-(u)\geq E_\om(u).
\ees
\end{proposition}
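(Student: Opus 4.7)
The plan is to follow the one-dimensional Ambrosio--Tortorelli liminf argument, with the weight $\om$ inserted throughout. One may assume $L := \liminf_{\e \to 0} E_{\om,\e}(u_\e, v_\e) < +\infty$, extract a subsequence (not relabeled) realizing $L$, and further extract so that $v_\e \to 1$ pointwise $a.e.$ in $I$. Writing $E_{\om,\e}(u_\e,v_\e) = I_\e^1 + I_\e^2$ with
\[
I_\e^1 := \int_I v_\e^2 |u_\e'|^2 \om\,dx, \qquad I_\e^2 := \int_I \left[\tfrac{\e}{2}|v_\e'|^2 + \tfrac{1}{2\e}(v_\e-1)^2\right] \om\,dx,
\]
the plan is to show that the liminfs of $I_\e^1$ and $I_\e^2$ produce $\int_I |u'|^2 \om\,dx$ and $\sum_{x_0 \in S_u}\om(x_0)$, respectively.

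For the bulk part, fix $\rho \in (0,1)$ and apply Proposition \ref{osl_small_contral} to obtain an open set $H_\rho \subset I$ with $I \setminus H_\rho$ finite such that $v_\e^2 \geq \rho$ on any compact $K \subset H_\rho$ for $\e$ small. Then $\int_K |u_\e'|^2 \om\,dx \leq L/\rho$ uniformly in $\e$, so Lemma \ref{L_2_weighted} gives $u \in W^{1,2}_\om(K)$ and
\[
\liminf_{\e \to 0}\int_K v_\e^2 |u_\e'|^2\om\,dx \geq \rho \int_K |u'|^2\om\,dx.
\]
Exhausting $H_\rho$ by compacts $K$ and using $\Lo(I\setminus H_\rho) = 0$ yields $\liminf I_\e^1 \geq \rho \int_I |u'|^2\om\,dx$.

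For the surface part, since $0 \leq v_\e \leq 1$, AM--GM gives
\[
\tfrac{\e}{2}|v_\e'|^2 + \tfrac{1}{2\e}(v_\e-1)^2 \geq |v_\e'(1-v_\e)| = |(\Phi(v_\e))'|, \qquad \Phi(t) := t - \tfrac{t^2}{2}.
\]
Near each $x_0 \in S_u$, choose $\delta > 0$ so that $[x_0-\delta, x_0+\delta] \cap S_u = \{x_0\}$, $\om \geq \om(x_0) - \eta_1$ on the interval (by continuity of $\om$), and $v_\e(x_0\pm \delta) \to 1$ (which holds for $\Lo$-a.e.\ $\delta$). The key observation is that some $y_\e \in (x_0-\delta,x_0+\delta)$ must satisfy $v_\e(y_\e) \to 0$; otherwise a uniform lower bound $v_\e \geq \sigma > 0$ on the interval, combined with Lemma \ref{L_2_weighted}, would force $u \in W^{1,2}_\om((x_0-\delta,x_0+\delta))$, contradicting the jump at $x_0$. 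Since $\Phi(1) = 1/2$ and $\Phi(0) = 0$, the total variation of $\Phi(v_\e)$ over the interval is asymptotically at least $1$, whence
\[
\liminf_{\e \to 0}\int_{x_0-\delta}^{x_0+\delta}\left[\tfrac{\e}{2}|v_\e'|^2 + \tfrac{1}{2\e}(v_\e-1)^2\right]\om\,dx \geq \om(x_0) - \eta_1.
\]
Summing over the finitely many (a posteriori, by Lemma \ref{compact_energy}) points of $S_u$ and letting $\eta_1 \to 0$ yields $\liminf I_\e^2 \geq \sum_{x_0 \in S_u}\om(x_0)$.

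Combining the two estimates on a disjoint decomposition of $I$ into small neighborhoods of $S_u \cup (I\setminus H_\rho)$ and its complement, and finally letting $\rho \to 1$, produces $L \geq E_\om(u)$. Membership $u \in GSBV_\om(I)$ is obtained by first running the whole argument on truncations $K \wedge u_\e \vee -K$ and using Lemma \ref{compact_energy} to place each truncation of $u$ in $SBV_\om(I)$. The main technical obstacle is the coordination of the bulk and surface parts near the jumps: one must show that $v_\e$ dips to $0$ at some interior point of each small neighborhood of a jump and that each such dip contributes exactly once to the Modica--Mortola variation, so that the weighted AM--GM chain extracts precisely $\om(x_0)$ per jump. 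In the present continuous-$\om$ case this is easy because $\om(x_0^-) = \om(x_0^+) = \om(x_0)$, but the analogous step in the $SBV$-weighted setting treated later must handle the mismatch between $\om^-$ and $\om^+$ and will rely on Lemma \ref{jump_jiuehuishi}.
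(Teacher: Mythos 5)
Your proposal is correct and follows essentially the same route as the paper: the bulk term is handled via Proposition \ref{osl_small_contral} and Lemma \ref{L_2_weighted} on compacts of $H_\eta$, and the surface term via the Modica--Mortola/AM--GM bound applied on small intervals around each jump where $v_\e$ is forced to dip to $0$ (your $\Phi(t)=t-t^2/2$ is the paper's $\tfrac12(1-v)^2$ up to an additive constant), with the weight extracted as $\inf\om\to\om(x_0)$ by continuity. The only point to make explicit is the preliminary comparison $E_{1,\e}\leq E_{\om,\e}$ (using $\om\geq 1$), which gives $u\in GSBV(I)$ and $\mathcal H^0(S_u)<+\infty$ from the unweighted Ambrosio--Tortorelli result before one can legitimately speak of $S_u$; your ``a posteriori by Lemma \ref{compact_energy}'' remark would otherwise be slightly circular.
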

\begin{proof}
If $E_\om^-(u)=+\infty$ then there is nothing to prove. Assume that $M:=E_\om^-(u)<\infty$. Choose $u_\e$ and $v_\e$ admissible for $E_\om^-(u)$ such that 
\bes
\lim_{\e\to 0}E_{\om,\e}(u_\e,v_\e) = E_\om^-(u)<\infty,
\ees
and note that $v_\e\to 1$ in $L^1(I)$. Since $\inf_{x\in\Om}\om(x)\geq 1$, we have 
\bes
\liminf_{\e\to 0}E_{1,\e}(u_\e,v_\e)\leq \liminf_{\e\to 0}E_{\om,\e}(u_\e,v_\e)<+\infty,
\ees
and by \cite{ambrosio1990approximation} we obtain that
\be\label{use_la_nonsm0}
u\in GSBV(I)\text{ and }\mathcal H^0(S_u)<+\infty.
\ee
Let $\bar \e>0$ be sufficiently small so that, for all $0<\e<\bar \e$,
\bes
E_{\om,\e}(u_\e,v_\e)\leq M+1.
\ees
We claim, separately, that
\be\label{use_la_nonsm1}
\int_I\abs{u'}^2\om\,dx\leq \liminf_{\e\to 0}\int_I\abs{u_\e'}^2v_\e^2\,\om\,dx<+\infty,
\ee
and 
\be\label{use_la_nonsm2}
\sum_{x\in S_u} \om(x)\leq \liminf_{\e\to 0}\int_I \fmp{\frac{1}{2}\e\abs{v_\e'}^2+\frac{1}{2\e}(1-v_\e)^2}\om\,dx<+\infty.
\ee
Note that \eqref{use_la_nonsm1}, \eqref{use_la_nonsm2}, and Lemma \ref{compact_energy} will yield $u\in GSBV_\om(I)$.\\\\
Up to the extraction of a (not relabeled) subsequence, we have $u_\e\to u$ and $v_\e\to 1$ \emph{a.e.} in $I$ with
\bes
\limsup_{\e\to 0}\int_I \fmp{\frac{1}{2}\e\abs{v_\e'}^2+\frac{1}{2\e}(1-v_\e)^2}\,dx \leq \limsup_{\e\to 0}\int_I \fmp{\frac{1}{2}\e\abs{v_\e'}^2+\frac{1}{2\e}(1-v_\e)^2}\om\,dx <+\infty.
\ees
Therefore, up to the extraction of a (not relabeled) subsequence, we can apply Proposition \ref{osl_small_contral} and deduce that, for a fixed $\eta\in(1/2,1)$, there exists an open set $H_\eta$ such that the set $I\setminus H_\eta$ contains only a finite number of points, and for every compact subset $K\subset\subset H_\eta$, $K$ is contained in $B_\e^\eta$ for $0<\e<\e(K)$, where $B_\e^\eta$ is defined in \eqref{1_d_level_set}. We have
\begin{align}\label{liminf_2}
\begin{split}
&\int_{K}\abs{u'}^2\om\,dx\leq \liminf_{\e\to 0}\int_{K} \abs{u_\e'}^2\om\,dx\\
&\leq \frac{1}{\eta}\liminf_{\e\to 0}\int_{K} v_\e^2\abs{u_\e'}^2\om\,dx\leq \frac{1}{\eta}\liminf_{\e\to 0}\int_I v_\e^2\abs{u_\e'}^2\om\,dx,
\end{split}
\end{align}
where we used Lemma \ref{L_2_weighted} in the first inequality. By letting $K\nearrow H_\eta$ on the left hand side of \eqref{liminf_2} first and then $\eta\nearrow 1$ on the right hand side, we proved that
\be\label{contpart1}
\int_{I}\abs{u'}^2\om\,dx\leq\liminf_{\e\to 0}\int_I v_\e^2\abs{u_\e'}^2\om\,dx,
\ee
where we used the fact that $\abs{I\setminus H_\eta}=0$.\\\\
We claim that $S_u\subset I\setminus H_\eta$. Indeed, if there is $x_0\in S_u\cap  H_\eta$, since $H_\eta$ is open there exists an open interval $I'_0$ containing $x_0$ and compactly contained in $H_\eta$ such that for $0<\e<\e_0'$ 
\bes
\int_{I_0'} \abs{u'_\e}^2dx\leq\int_{I_0'} \abs{u'_\e}^2\om\,dx\leq \frac{1}{\eta} \int_I v_\e^2\abs{u_\e'}^2\om\,dx\leq 2(M+1).
\ees
Thus $u\in W^{1,2}(I_0')$, and hence is continuous at $x_0$, which contradicts the fact that $x_0\in S_u$.\\\\
Let $t\in S_u$, and for simplicity assume that $t=0$. We claim that there exist $\seqn{t_n^1}$, $\seqn{t_n^2}$, and  $\seqn{s_n}$ such that $-1<t_n^1<s_n<t_n^2<1$,
\begin{align*}
\limn t_n^1=\limn t_n^2 =\limn s_{n} =0,
\end{align*}
and, up to the extraction of a subsequence of $\seqe{v_{\e}}$,
\be\label{lisfinding}
 \limn v_{\e(n)}(t_n^1)=\limn v_{\e(n)}(t_n^2) = 1,\,\text{ and } \limn v_{\e(n)}(s_{n})=0.
\ee
Because $I\setminus H_\eta$ is discrete and $0\in I\setminus H_\eta$, we may choose $\delta_0>0$ small enough such that 
\bes
(-2\delta_0,2\delta_0)\cap (I\setminus H_\eta) = \flp{0}.
\ees 
We claim that
\be\label{keng_die_seq_1}
\limsup_{\delta\to 0^+}\limsup_{\e\to 0^+}\inf_{x\in I_\delta}v_\e(x)=0,
\ee
where $I_{\delta}:=(-\delta,\delta)$. Assume that 
\bes
\limsup_{\delta\to 0^+}\limsup_{\e\to 0^+}\inf_{x\in I_\delta}v_\e(x)=:\alpha>0.
\ees
Then there exists $0<\delta_{\alpha}<\delta_0$ such that
\bes
\limsup_{\e\to 0^+}\inf_{x\in I_{\delta_{\alpha}}}v_\e(x)\geq\frac{2}{3}\alpha>0.
\ees
Up to the extraction of a subsequence of $\flp{v_\e}_{\e>0}$, there exists $\e^{\delta_{\alpha}}_0>0$ such that 
\bes
\inf_{x\in I_{\delta_{\alpha}}}v_\e(x)\geq \frac1{2}{\alpha}>0,
\ees
for all $0<\e<\e^{\delta_{\alpha}}_0$, and we have
\begin{multline*}
\int_{I_{\delta_{\alpha}}}\abs{u'}^2dx\leq \int_{I_{\delta_{\alpha}}} \abs{u'}^2\om \,dx\\
\leq \liminf_{\e\to 0}\int_{I_{\delta_{\alpha}}} \abs{u'_\e}^2\om\,dx\leq  \liminf_{\e\to 0}\frac{2}{\alpha} \int_{I_{\delta_{\alpha}}} \abs{u_\e'}^2v_\e^2\,\om\,dx\leq \liminf_{\e\to 0} \frac{2}{\alpha} \int_I \abs{u_\e'}^2v_\e^2\,\om\,dx< \frac{2}{\alpha}(M+1).
\end{multline*}
Hence $u\in W^{1,2}({I_{\delta_{\alpha}}})$ and so $u$ is continuous at $0\in S_u$, and we reduce a contradiction. Therefore, in view of \eqref{keng_die_seq_1} we may find $\delta_n\to 0^+$, $\e(n)\to 0^+$, and $s_{n}\in (-\delta_n,\delta_n)$ such that 
\bes
\limn s_{n}=0\text{ and }\limn v_{\e(n)}(s_{n})=0.
\ees
We claim that for all $\tau\in(0,1/2)$,
\be\label{keng_die_seq_2}
\lim_{n\to \infty}\fmp{\inf_{x\in (s_{n}-\tau,s_{n})}(1-v_{\e(n)}(x))+\inf_{y\in (s_{n},s_{n}+\tau)}(1-v_{\e(n)}(x))}=0.
\ee
To reach a contradiction, assume that there exists $\tau\in(0,1/2)$ such that 
\bes
\limsup_{n\to \infty}\fmp{\inf_{x\in (s_{n}-\tau,s_{n})}(1-v_{\e(n)}(x))+\inf_{x\in (s_{n},s_{n}+\tau)}(1-v_{\e(n)}(x))}=:\beta>0.
\ees
Without loss of generality, suppose that
\bes
\limsup_{n\to\infty}\inf_{x\in (s_{n}-\tau,s_{n})}(1-v_{\e(n)}(x))\geq\frac12{\beta}>0.
\ees
Then
\bes
\liminf_{n\to \infty}\sup_{x\in (s_{n}-\tau,s_{n})} v_{\e(n)}(x) \leq 1-\frac12\beta,
\ees
which implies that
\be\label{shangbufengding}
\sup_{x\in (s_{n_k}-\tau,s_{n_k})} v_{\e(n_k)}(x)\leq1-\frac13\beta
\ee
for a subsequence $\seqk{\e(n_k)}\subset \seqn{\e(n)}$. However, \eqref{shangbufengding} contradicts the fact that $v_{\e(n_k)}(x)\to 1$ a.e. since for $k$ large enough so that $\abs{s_{n_k}}<\tau/4$ it holds
\bes
\fsp{s_{n_k}-\tau,s_{n_k}}\supset \fsp{-\frac34\tau,-\frac\tau4}.
\ees
Therefore, in view of \eqref{keng_die_seq_2} we may find $t_m^1\in(s_{n(m)}-1/m,s_{n(m)})$ and $t_m^2\in(s_{n(m)},s_{n(m)}+1/m)$ such that 
\bes
\limn t_m^1=\limn t_m^2=0\text{ and }\limn v_{\e({n(m)})}(t_m^1)=\limn v_{\e({n(m)})}(t_m^2)=1.
\ees
We next show that 
\bes
 \liminf_{m\to \infty}\int_{t_m^1}^{s_{n(m)}} \fmp{\frac{1}{2}{\e(n(m))}\abs{(v_{\e(n(m))})'}^2+\frac{1}{2\e(n(m))}(1- v_{\e(n(m))})^2}dx\geq \frac{1}{2}.
\ees
Indeed, we have
\begin{align*}
&\liminf_{m\to\infty}\int_{t_m^1}^{s_{n(m)}} \fmp{\frac{1}{2}{\e(n(m))}\abs{(v_\epsn)'}^2+\frac{1}{2\e(n(m))}(1- v_\epsn)^2}dx \\
&\geq \liminf_{m\to\infty}\int_{t_m^1}^{s_{n(m)}} (1-v_\epsn)\abs{v_\epsn'}dx
\geq \liminf_{m\to\infty} \abs{\int_{t_m^1}^{s_{n(m)}} (1-v_\epsn){v_\epsn'}dx}\\
 &= \liminf_{m\to\infty} \frac12 \abs{ \int_{t_m^1}^{s_{n(m)}} \frac{d}{dt}(1-v_\epsn)^2dx}\\
 &=\frac12 \limn \fmp{(1-v_{\e(n(m))}(s_{n(m)}))^2-(1-v_{\e(n(m))}(t_m^1))^2}=\frac12,
\end{align*}
where we used \eqref{lisfinding}. Similarly, we obtain 
\bes
 \liminf_{m\to \infty}\int_{s_{n(m)}}^{t_m^2} \fmp{\frac{1}{2}{\e(n(m))}\abs{(v_\epsn)'}^2+\frac{1}{2\epsn}(1- v_\epsn)^2}dx\geq \frac{1}{2}.
\ees
We observe that, since $\om$ is positive,
\begin{align}\label{revel_1d_c_lower}
\begin{split}
&\int_{t_m^1}^{t_m^2} \fmp{\frac{1}{2}\epsn\abs{v_\epsn'}^2+\frac{1}{2\epsn}(1-v_\epsn)^2}\om(x)\,dx\\
\geq& \fsp{ \inf_{r\in (t_m^1,t_m^2)}{\om(r)}}\cdot \left\{\int_{t_m^1}^{s_{n(m)}} \fmp{\frac{1}{2}\epsn\abs{v_\epsn'}^2+\frac{1}{2\epsn}(1-v_\epsn)^2}dx\right.\\
&\left.+\int_{s_{n(m)}}^{t_m^2} \fmp{\frac{1}{2}\epsn\abs{(v_\epsn)'}^2+\frac{1}{2\epsn}(1- v_\epsn)^2}dx\right\},
\end{split}
\end{align}
and so
\begin{align*}
\liminf_{m\to \infty}&\int_{t_m^1}^{t_m^2} \fmp{\frac{1}{2}\epsn\abs{v_\epsn'}^2+\frac{1}{2\epsn}(1-v_\epsn)^2}\om(x)\,dx\\
\geq & \fsp{\liminf_{m\to \infty}  \inf_{r\in (t_m^1,t_m^2)}{\om(r)}}\liminf_{n\to \infty}\flp{ \int_{t_m^1}^{s_{n(m)}} \fmp{\frac{1}{2\epsn}(1- v_\epsn)^2+\frac{\e}{2}\abs{(v_\epsn)'}^2}dx\right.\\
&\left.+\int_{s_{n(m)}}^{t_m^2} \fmp{\frac{1}{2}\epsn\abs{v_\epsn'}^2+\frac{1}{2\epsn}(1-v_\epsn)^2}dx}\\
\geq& \fsp{\frac12+\frac12} {\om(0)}=\om(0),
\end{align*}
where we used the fact that $\om$ is continuous at $0$.\\\\
Finally, since $S_u\subset I\setminus H_\eta$, by \eqref{use_la_nonsm0} we have that $S_u$ is a finite collection of points, and we may repeat the above argument for all $t\in S_u$ by partitioning $I$ into non-overlaping intervals where there is at most one point of $S_u$, to deduce that 
\be\label{contpart2}
\liminf_{\e\to 0}\int_I \fmp{\frac{1}{2}\e\abs{v_\e'}^2+\frac{1}{2\e}(1-v_\e)^2}\om(x)\,dx\geq \sum_{x\in S_u} \om(x).
\ee
In view of \eqref{contpart1} and \eqref{contpart2}, we conclude that 
\bes
\liminf_{\e\to 0} E_{\om,\e}(u_\e,v_\e)\geq E_\om(u).
\ees
\end{proof}

\begin{proposition}\label{limsup_1d_c}\emph{($\Gamma$-$\limsup$)}
For $u\in L^1_\om(I)\cap L^\infty(I)$, let
\begin{align*}
E_\om^+(u):=\inf&\flp{\limsup_{\e\to 0} E_{\om,\e}(u_\e,v_\e):\right.\\
&\left.\,\,\,(u_\e,v_\e)\in W^{1,2}_\om(I)\times W^{1,2}(I), u_\e\to u\text{ in }L^1_\om,\, v_\e\to1\text{ in }L^1,\,0\leq v_\e\leq 1}.
\end{align*}
We have
\be\label{cubnone1}
E_\om^+(u)\leq E_\om(u).
\ee
\end{proposition}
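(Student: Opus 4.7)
The plan is to produce an explicit recovery sequence $(u_\e, v_\e)$ by combining the classical one-dimensional Ambrosio--Tortorelli transition profile with a short linear bridge across each jump. Assuming $E_\omega(u) < +\infty$, Lemma \ref{compact_energy} together with $u \in L^\infty(I)$ gives $u \in SBV_\omega(I)$ with a finite jump set $S_u = \flp{x_1, \ldots, x_M}$, and $u$ admits a continuous $H^1$-representative on each component of $I \setminus S_u$. Fix $\eta > 0$ and, using continuity of $\omega$, pick $\delta > 0$ so small that the intervals $J_j := (x_j - \delta, x_j + \delta)$ are pairwise disjoint, contained in $I$, and $\abs{\omega(x) - \omega(x_j)} \leq \eta$ on $J_j$ for each $j$.

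Define the transition field
\bes
v_\e(x) := 1 - \exp\!\fsp{-\dist(x, S_u)/\e},
\ees
so that $v_\e \in W^{1,2}(I) \cap C(I)$, $0 \leq v_\e \leq 1$, $v_\e(x_j) = 0$, and $v_\e \to 1$ in $L^1(I)$ by dominated convergence. A direct calculation with the explicit form $v_\e(x_j + t) = 1 - e^{-\abs{t}/\e}$ on $J_j$ yields
\bes
\int_{J_j}\fmp{\frac{\e}{2}\abs{v_\e'}^2 + \frac{1}{2\e}(v_\e - 1)^2} dx = 1 - e^{-2\delta/\e},
\ees
while on $I \setminus \bigcup_j J_j$ the same integrand equals $(1/\e)\,e^{-2\dist(x,S_u)/\e} \leq (1/\e)\,e^{-2\delta/\e}$. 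Multiplying by $\omega$, using $\omega \leq \omega(x_j) + \eta$ on $J_j$ and $\omega \in L^1(I)$ on the tail,
\bes
\limsup_{\e \to 0}\int_I\fmp{\frac{\e}{2}\abs{v_\e'}^2 + \frac{1}{2\e}(v_\e - 1)^2}\omega\, dx \leq \sum_{j=1}^M \omega(x_j) + M\eta.
\ees

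For the bulk term, pick $\mu_\e$ with $\mu_\e \to 0$ and $\mu_\e/\e^2 \to 0$ (say $\mu_\e := \e^3$), and set $u_\e := u$ off $B_\e := \bigcup_j (x_j - \mu_\e, x_j + \mu_\e)$, extended affinely across each bridge to match the endpoint values $u(x_j \pm \mu_\e)$. Then $u_\e \in W^{1,2}_\omega(I)$ (since $\omega$ is bounded on each $J_j$) and $u_\e \to u$ in $L^1_\omega(I)$, because $u_\e = u$ off a set of measure $2M\mu_\e$ with both functions bounded. Since $v_\e \to 1$ uniformly on compact subsets of $I \setminus S_u$ and $v_\e^2 \abs{u'}^2 \omega \leq \abs{u'}^2\omega \in L^1(I)$, dominated convergence gives
\bes
\int_{I \setminus B_\e} v_\e^2 \abs{u'}^2 \omega\, dx \to \int_I \abs{u'}^2 \omega\, dx.
\ees
On each bridge the bounds $v_\e \leq \mu_\e/\e$ (from $1 - e^{-t} \leq t$), $\abs{u_\e'} \leq ([u]_j + o(1))/(2\mu_\e)$, and $\omega \leq \omega(x_j) + \eta$ yield
\bes
\int_{x_j - \mu_\e}^{x_j + \mu_\e} v_\e^2 \abs{u_\e'}^2 \omega\, dx \leq \fsp{\omega(x_j) + \eta}\fsp{[u]_j + o(1)}^2 \frac{\mu_\e}{2\e^2} \to 0.
\ees

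Adding the two contributions gives $\limsup_{\e \to 0} E_{\omega,\e}(u_\e, v_\e) \leq E_\omega(u) + M\eta$; a standard diagonal procedure in $\eta \to 0$ (the construction depends on $\delta = \delta(\eta)$ only) delivers \eqref{cubnone1}. The main subtlety is the tuning of the two scales: $\mu_\e$ must be small enough relative to $\e^2$ so that $v_\e^2 \leq (\mu_\e/\e)^2$ absorbs the $1/\mu_\e^2$ blow-up of the bridge gradient, and continuity of $\omega$ at each $x_j$ is precisely what lets the Modica--Mortola contribution near $x_j$ be read off as the symmetric value $\omega(x_j)$ --- in the $SBV$-weight setting of the main theorem this symmetric reading will have to be replaced by the one-sided trace $\omega^-(x_j)$ via an asymmetric bridge construction.
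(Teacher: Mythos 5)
Your proposal is correct and follows essentially the same route as the paper: an explicit recovery sequence built from a one-dimensional Ambrosio--Tortorelli transition profile localized in small intervals around the (finitely many) jump points, with continuity of $\om$ used to read off the value $\om(x_j)$ from the Modica--Mortola term. The only (harmless) variations are that you use the exact exponential profile $1-e^{-\dist(x,S_u)/\e}$ and absorb the bridge gradient via the scale separation $\mu_\e/\e^2\to 0$, whereas the paper uses a near-optimal profile with cost $1+\eta$ together with a plateau $\{v_\e=0\}$ of width $\xi_\e=o(\e)$ on which $u_\e$ is affine, so that the cross term vanishes identically there.
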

\begin{proof}
Without loss of generality, assume that $E_\om(u)<\infty$. Then by Lemma \ref{compact_energy} we have $u\in GSBV_\om(I)$  and $\mathcal H^0(S_u)<\infty$. To prove \eqref{cubnone1}, we show that there exist $\seqe{u_\e}\subset W_\om^{1,2}(I)$ and $\seqe{v_\e}\subset W^{1,2}(I)$ such that $u_\e\to u$ in $L_\om^1$, $v_\e\to 1$ in $L^1$, $0\leq v_\e\leq 1$, and 
\be\label{cubnone}
\limsup_{\e\to 0}E_{\om,\e}(u_\e,v_\e)\leq E_\om(u).
\ee
\underline{Step 1}: Assume that $S_u=\flp{0}$. \\\\
Fix $\eta>0$, and let $T>0$ and $v_0\in W^{1,2}(0,T)$ be such that 
\be\label{recovery_basic}
0\leq v_0\leq 1\,\,\,\,\,\text{ and }\int_0^T \fmp{(1-v_0)^2+\abs{v_0'}^2}\,dx\leq 1+\eta,
\ee
with $v_0(0)=0$ and $v_0(T)=1$.\\\\
For $\xi_\e = o(\e)$ we define
\be\label{recovery_seq}
v_\e(x):=
\begin{cases}
0 & \text{ if }\abs{x}\leq \xi_\e,\\
v_0\fsp{\frac{\abs{x}-\xi_\e}{\e}}& \text{ if }\xi_\e< \abs{x}<\xi_\e+\e T,\\
1 & \text{ if }\abs{x}\geq \xi_\e+\e T.
\end{cases}
\ee
Since $\norm{v_\e}_{L^\infty(I)}\leq 1$, by Lebesgue Dominated Convergence Theorem we have $v_\e\to 1$ in $L^1$. Let
\be\label{recovery_seq_u}
u_{\e}(x):=
\begin{cases}
u(x) & \text{ if }\abs{x}\geq \frac{1}{2}\xi_\e,\\
\text{affine from $u\fsp{-\frac12\xi_\e}$ to $u\fsp{\frac12\xi_\e}$}& \text{ if }\abs{x}<\frac12\xi_\e.
\end{cases}
\ee
and we observe that (recall in assumption we have $u\in L^\infty(I)$)
\bes
\norm{u_\e}_{L^\infty(I)}\leq \norm{u}_{L^\infty(I)},
\ees
and
\bes
\int_I \norm{u}_{L^\infty(I)}\om\,dx<\infty.
\ees
Therefore, by Lebesgue Dominated Convergence Theorem we deduce that $u_\e\to u$ in $L^1_\om$. Moreover, by \eqref{recovery_seq} and \eqref{recovery_seq_u} we observe that 
\be\label{zero_eating_jump}
v_\e^2\abs{u_\e'}^2=
\begin{cases}
v_\e^2\abs{u'}^2&\text{ if }x\geq \abs{\xi_\e},\\
0&\text{ if }x< \abs{\xi_\e},
\end{cases}
\ee
and so $v_\e^2\abs{u_\e'}^2\leq \abs{u'}^2$. Since $E_\om(u)<\infty$ we have $u'\in L_\om^2(I)$, by Lebesgue Dominated Convergence Theorem we obtain
\bes
\lim_{\e\to 0}\int_I {v_\e^2\abs{u_\e'}^2}\om\,dx = \int_I \abs{u'}^2\om\,dx.
\ees
Next, since $\om$ is positive we have
\begin{align*}
&\int_{I}\left[\frac{\e}{2}\abs{ v_\e'}^2+\frac{1}{2\e}{(v_\e-1)^2}\right]\om(x)\, dx \\
=& \int_{-\xi_\e-\e T}^{-\xi_\e}\left[\frac{\e}{2}\abs{ v_\e'}^2+\frac{1}{2\e}{(v_\e-1)^2}\right]\om(x)\, dx +  \int^{\xi_\e+\e T}_{\xi_\e}\left[\frac{\e}{2}\abs{ v_\e'}^2+\frac{1}{2\e}{(v_\e-1)^2}\right]\om(x)\, dx+\frac{1}{2\e}\int_{-\xi_\e}^{\xi_\e}\om(x)dx \\
\leq& \fsp{\sup_{t\in(-\xi_\e-\e T, \xi_\e+\e T)}\om(t)}\cdot\flp{ \int_{-\xi_\e-\e T}^{-\xi_\e}\left[\frac{\e}{2}\abs{ v_\e'}^2+\frac{1}{2\e}{(v_\e-1)^2}\right]\, dx \right.\\
&\left.+  \int^{\xi_\e+\e T}_{\xi_\e}\left[\frac{\e}{2}\abs{ v_\e'}^2+\frac{1}{2\e}{(v_\e-1)^2}\right]\, dx}+\frac{\xi_\e}{\e}\norm{\om}_{L^\infty}.
\end{align*}
We obtain
\begin{align}\label{limsup_cal}
\begin{split}
\limsup_{\e\to 0}&\int_{I}\left[\frac{\e}{2}\abs{ v_\e'}^2+\frac{1}{2\e}{(v_\e-1)^2}\right]\om(x)\, dx \\
\leq &\limsup_{\e\to 0}\fsp{\sup_{t\in(-\xi_\e-\e T, \xi_\e+\e T)}\om(t)}\cdot\\
&\limsup_{\e\to 0}\flp{ \int_{-\xi_\e-\e T}^{-\xi_\e}\left[\frac{\e}{2}\abs{ v_\e'}^2+\frac{1}{2\e}{(v_\e-1)^2}\right]\, dx +  \int^{\xi_\e+\e T}_{\xi_\e}\left[\frac{\e}{2}\abs{ v_\e'}^2+\frac{1}{2\e}{(v_\e-1)^2}\right]\, dx}\\
\leq&\, \om(0)(1+\eta),
\end{split}
\end{align}
where we used \eqref{recovery_basic}.\\\\
We conclude that
\bes
\limsup_{\e\to 0}E_{\om,\e}(u_\e,v_\e)\leq \int_I \abs{u'}^2\om\,dx + \om(0)(1+\eta),
\ees
and \eqref{cubnone} follows by the arbitrariness of $\eta$.\\\\
\underline{Step 2:} In the general case in which $S_u$ is finite, we obtain $u_\e$ by repeating the construction in Step 1 (see \eqref{recovery_seq_u}) in small non-overlapping intervals centered at each point in $S_u$. To obtain $v_\e$, we repeat the construction \eqref{recovery_seq} in those intervals and extend by $1$ in the complement of the union of those intervals. Hence, by Step 1 we have
\bes
\limsup_{\e\to 0}E_{\om,\e}(u_\e,v_\e)\leq \int_I \abs{u'}^2\om\,dx + (1+\eta)\sum_{x\in S_u}\om(x),
\ees
and again \eqref{cubnone} follows by letting $\eta\to 0^+$.
\end{proof}
\begin{proof}[Proof of Theorem \ref{gamma_c_1d}]
The $\liminf$ inequality follows from Proposition \ref{liminf_part_1d_c}. For the $\limsup$ inequality, we note that for any given $u\in GSBV_\om$ such that $E_\om(u)<+\infty$, by Lebesgue Monotone Convergence Theorem we have that
\bes
E_\om(u)=\lim_{K\to\infty}E_\om(K\wedge u\vee -K),
\ees
and hence a diagonal argument together with Proposition \ref{limsup_1d_c} conclude the proof.
\end{proof}

%\begin{remark}
%Note that we did not use the full strength of assuming that $\om\in C(I)$. Indeed $\om\in C((S_u)_{\delta})$ for $\delta$-neighborhood of $S_u$ for $\delta>0$ small would have been enough.\\
%\end{remark}

\subsection{The Case $\om\in\mathcal W(I)\cap SBV(I)$}\label{1_d_jump_intro}$\,$\\\\
Consider the functionals
\begin{align}\label{ATH_weighted}
E_{\om,\e}(u,v):=\int_I \abs{u'}^2v^2\om \,dx +\int_{I}\left[\frac{\e}{2}\abs{ v'}^2+\frac{1}{2\e}{(v-1)^2}\right]\om\, dx
\end{align}
for $(u,v)\in W_\om^{1,2}(I)\times W^{1,2}(I)$, and for $u\in GSBV_\om(I)$ let
\be\label{AT_weighted}
E_\om(u):=\int_I \abs{u'}^2\om\,dx+ \sum_{x\in S_u}\om^-(x).
\ee
We note that if $\om\in \mathcal W(I)\cap SBV(I)$ and $\om$ is continuous in a neighborhood of $S_u$, for $u\in GSBV_\om(I)$, then
\bes
\sum_{x\in S_u}\om^-(x)=\sum_{x\in S_u}\om(x)
\ees
and Theorem \ref{gamma_c_1d} still holds.\\\\
Here we study the case in which $\om$ is no longer continuous on a neighborhood of $S_u$. We recall that $\om\in SBV(I)$ implies that $\om\in L^\infty(I)$ and by definition of $\om\in\mathcal W(I)$, we have $\mathcal H^0( S_\om)<\infty$. Also, we note that $\om^-$ is defined $\mathcal H^0$-a.e, hence everywhere in $I$.
\begin{theorem}\label{gamma_1d_jump}
Let $\mathcal{E}_\e$: $L^1_\om(I)\times L^1(I)\rightarrow[0,+\infty]$ be defined by
\begin{align}\label{esd_weighted}
\mathcal{E}_{\om,\e}(u,v):=
\begin{cases}
E_{\om,\e}(u,v)&\text{if}\,\,(u,v)\in W_\om^{1,2}(I)\times W^{1,2}(I), \,0\leq v\leq 1,\\
+\infty &\text{otherwise}.
\end{cases}
\end{align}
Then the functionals $\mathcal{E}_{\om,\e}$ $\Gamma$-converge, with respect to the $L^1_\om\times L^1$ topology, to the functional
\begin{align}\label{etd_weighted}
\mathcal{E}_\om(u,v):=
\begin{cases}
E_\om(u)&\text{if}\,\,u\in GSBV_\om(I)\text{ and }v=1\,\,a.e.,\\
+\infty &\text{otherwise}.
\end{cases}
\end{align}
\end{theorem}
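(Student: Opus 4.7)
The strategy is to parallel the proof of Theorem \ref{gamma_c_1d} (i.e.\ Propositions \ref{liminf_part_1d_c} and \ref{limsup_1d_c}), and identify the exactly two places where the discontinuity of $\om$ forces a modification: (a) the final step of the $\liminf$ chain on each jump point of $u$, where the continuity of $\om$ was used, and (b) the position of the transition window of the recovery $v_\e$ in the $\limsup$, which must now be pushed to the $-$ side of $\om$.

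\textbf{Step 1 ($\Gamma$-$\liminf$).} I follow Proposition \ref{liminf_part_1d_c} verbatim: the same compactness and subsequence extractions yield $u\in GSBV(I)$ with $\mathcal H^0(S_u)<\infty$, the bulk estimate \eqref{contpart1} is weight-blind (only Lemma \ref{L_2_weighted} is used), and $S_u\subset I\setminus H_\eta$ follows as before. For a fixed $t\in S_u$, the whole machinery producing the sequences $t_m^1<s_{n(m)}<t_m^2$ with $v_{\e(n(m))}(t_m^i)\to 1$, $v_{\e(n(m))}(s_{n(m)})\to 0$, together with the AM--GM bound that gives Modica--Mortola cost $\geq \tfrac12+\tfrac12=1$ per side of $s_{n(m)}$, is unchanged. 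The \emph{only} modification occurs in \eqref{revel_1d_c_lower}: instead of $\liminf_m\inf_{(t_m^1,t_m^2)}\om(r)=\om(t)$, I invoke Lemma \ref{jump_jiuehuishi}(1) (since $t_m^1\to t$, $t_m^2\to t$ with $t_m^1<t<t_m^2$) to conclude
\bes
\liminf_{m\to\infty}\essinf_{r\in (t_m^1,t_m^2)}\om(r)\geq \om^-(t).
\ees
Summing over the finite set $S_u$ gives $\liminf_\e E_{\om,\e}(u_\e,v_\e)\geq \int_I|u'|^2\om\,dx+\sum_{x\in S_u}\om^-(x)$.

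\textbf{Step 2 ($\Gamma$-$\limsup$).} By Lemma \ref{compact_energy} we may assume $u\in SBV_\om(I)\cap L^\infty(I)$ with $\mathcal H^0(S_u)<\infty$; a truncation and diagonal argument then extends to $GSBV_\om(I)$. Treating one jump at a time, assume $S_u=\{0\}$. If $0\notin S_\om$ then $\om$ is continuous near $0$, $\om^-(0)=\om(0)$, and Proposition \ref{limsup_1d_c} applies directly. Otherwise, by the convention \eqref{neglesspos} applied to $\om$ we have $\om^-(0)\leq \om^+(0)$, and writing $\nu:=\nu_{S_\om}(0)\in\{\pm1\}$, the minus side is $H_\nu^-(0)$. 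Choose $\alpha_\e\to 0^+$ with $\alpha_\e\gg \xi_\e+\e T$ (e.g.\ $\alpha_\e=\sqrt{\e}$) and set $\bar x_\e:=-\alpha_\e\nu\in H_\nu^-(0)$. I run the symmetric construction of Proposition \ref{limsup_1d_c} centered at $\bar x_\e$ instead of $0$: i.e., work with the shifted function $\tilde u(\cdot):=u(\cdot+\bar x_\e)$ (whose only jump is at $\bar x_\e$) and define $v_\e$ by \eqref{recovery_seq} with origin replaced by $\bar x_\e$, $u_\e$ by \eqref{recovery_seq_u} with the affine patch over $[\bar x_\e-\tfrac12\xi_\e,\bar x_\e+\tfrac12\xi_\e]$. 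For $\e$ small the entire support $K_\e:=[\bar x_\e-\xi_\e-\e T,\bar x_\e+\xi_\e+\e T]$ of $\nabla v_\e$ and of $v_\e-1$ is a compact subset of $H_\nu^-(0)$ with $d_{\mathcal H}(K_\e,\{0\})\to 0$.

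\textbf{Passage to the limit.} Since $\om\in L^\infty(I)$, $L^1$-continuity of translations gives $u_\e\to u$ in $L^1_\om$ and $v_\e\to 1$ in $L^1$. The gradient part is handled via the change of variables $y=x+\bar x_\e$ together with Lebesgue dominated convergence (using $|u'|^2\om\in L^1$ and $\om\in L^\infty$), yielding
\bes
\lim_{\e\to 0}\int_I v_\e^2|u_\e'|^2\om\,dx=\int_I|u'|^2\om\,dx.
\ees
For the Modica--Mortola part, factor out the supremum of $\om$ over $K_\e$ exactly as in \eqref{limsup_cal}, and apply Lemma \ref{jump_jiuehuishi}(3) to $K_\e\subset\subset H_\nu^-(0)$ to obtain
\bes
\limsup_{\e\to 0}\esssup_{x\in K_\e}\om(x)\leq \om^-(0).
\ees
Combined with the one-dimensional profile estimate \eqref{recovery_basic} bounding the unweighted Modica--Mortola energy by $1+\eta$, this gives cost $\leq \om^-(0)(1+\eta)$ per jump. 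Letting $\eta\to 0$, and in the general case repeating the construction on disjoint shrinking intervals around each point of $S_u$, completes the recovery sequence.

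\textbf{Main obstacle.} The delicate point is (b): ensuring that the transition window of $v_\e$ is \emph{strictly contained} in the minus side of $\om$. This is what forces the scaling $\alpha_\e\gg\xi_\e+\e T$ rather than merely $\alpha_\e\to 0$, and it is what permits invoking Lemma \ref{jump_jiuehuishi}(3) (rather than only (2), which controls only pointwise limits). One must also verify that shifting $u$ does not inflate the bulk energy, which uses $\om\in L^\infty(I)$ in an essential way (this is why the hypothesis $\om\in L^\infty(I)$ is kept in Theorem \ref{AT_n_intro}); without it, the translation of $u$ could concentrate mass against a blow-up of $\om$ and the bulk limit could fail.
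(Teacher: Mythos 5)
Your $\Gamma$-$\liminf$ is the paper's argument verbatim: everything up to \eqref{revel_1d_c_lower} is unchanged, and the single substitution of Lemma \ref{jump_jiuehuishi}(1) for the continuity of $\om$, giving $\liminf_m\essinf_{(t_m^1,t_m^2)}\om\geq\om^-(t)$, is exactly what Proposition \ref{liminf_part_1d} does. Your $\Gamma$-$\limsup$ is correct in its key idea but realizes it differently. The paper (Proposition \ref{1d_jump_reco}) keeps the $v_\e$-profile at scale $O(\xi_\e+\e T)$ and shifts it by $2\xi_\e+\e T$ into $H^-_{\nu_{S_\om}}(0)$, and then \emph{reflects} $u$ locally, $\tilde u_\e(x)=u(-x)$ on $[-2\xi_\e-\e T,0]$, so that the relocated jump of $u$, and the single seam created by the modification, both land inside $\{v_\e=0\}$ where the affine patch costs nothing. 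You instead \emph{translate} $u$ by $\alpha_\e=\sqrt\e\gg\xi_\e+\e T$ and recentre the symmetric construction of Proposition \ref{limsup_1d_c} at $\bar x_\e$; the Modica--Mortola window $K_\e$ is then a compact subset of $H^-_{\nu_{S_\om}}(0)$ with $d_{\mathcal H}(K_\e,\{0\})\to0$, and Lemma \ref{jump_jiuehuishi}(3) yields $\limsup_\e\esssup_{K_\e}\om\leq\om^-(0)$ --- indeed (3), not (2), is the statement one actually needs here, and your citation is the more accurate one. Both routes work; the reflection buys you the fact that the only discontinuity introduced by the surgery sits under $\{v_\e=0\}$, while the translation buys a construction that is literally the continuous-case one recentred at a moving point.

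The one step you should supply is the matching of the translated and untranslated copies of $u$. A global translation leaves a sliver of width $\alpha_\e$ at one end of $I$ where $u(\cdot-\bar x_\e)$ is undefined (extend by the $W^{1,2}$ trace there), and in Step~2 the translation must be local to each shrinking interval around a jump point (the orientation of $\nu_{S_\om}$ varies from jump to jump), which creates seams at the interval endpoints where $u(x-\bar x_\e)\neq u(x)$; since $v_\e=1$ there, these must be bridged affinely and their energy estimated, e.g.\ using the H\"older-$\tfrac12$ bound $|u(x-\bar x_\e)-u(x)|\lesssim\sqrt{\alpha_\e}$ away from $S_u$ and a patch of width $\e^{1/4}$, giving a contribution $O(\e^{1/4})$. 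This is routine but not automatic, and it is precisely the complication the paper's reflection is designed to avoid. Finally, note the sign slip: with $\bar x_\e=-\alpha_\e\nu$ the function whose jump sits at $\bar x_\e$ is $u(\cdot-\bar x_\e)$, not $u(\cdot+\bar x_\e)$.
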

%\begin{remark}\label{Jump_energy_finite_lllllll}
%It will follow from the proof that if $\mathcal E_\om(u)<+\infty$ then $S_u$ is finite, and by Lemma \ref{blow_up_kill_jump}, we have 
%\bes
%\sum_{x\in S_u}\om(x)<+\infty.
%\ees
%\end{remark}
The proof of Theorem \ref{gamma_1d_jump} will be split into two propositions.

\begin{proposition}\label{liminf_part_1d}\emph{($\Gamma$-$\liminf$)}
For $u\in L^1_\om(I)$, let
\begin{align*}
E_\om^-(u):=\inf&\flp{\liminf_{\e\to 0} E_{\om,\e}(u_\e,v_\e):\right.\\
&\left.\,\,\,(u_\e,v_\e)\in W^{1,2}_\om(I)\times W^{1,2}(I),\, u_\e\to u \text{ in }L^1_\om, v_\e\to1\text{ in }L^1,\,0\leq v_\e\leq 1}.
\end{align*}
We have
\bes
E_\om^-(u)\geq E_\om(u).s_{n(m)}
\ees
\end{proposition}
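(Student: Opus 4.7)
The plan is to follow the structure of the proof of Proposition \ref{liminf_part_1d_c} and, at each step where continuity of $\om$ was used, replace the estimate by the essential infimum bound supplied by Lemma \ref{jump_jiuehuishi}. First I would assume $M := E_\om^-(u) < \infty$ and select an admissible sequence $(u_\e, v_\e)$ with $E_{\om,\e}(u_\e, v_\e) \to M$. Since $\om \geq 1$ a.e., the unweighted energy $E_{1,\e}(u_\e, v_\e)$ remains bounded, so by \cite{ambrosio1990approximation} one has $u \in GSBV(I)$ with $\mathcal{H}^0(S_u) < \infty$. The gradient inequality
\bes
\int_I \abs{u'}^2 \om \, dx \leq \liminf_{\e \to 0} \int_I \abs{u_\e'}^2 v_\e^2 \om \, dx
\ees
then carries over verbatim from the continuous case: apply Proposition \ref{osl_small_contral}, restrict to a compact subset of $H_\eta$, use Lemma \ref{L_2_weighted} together with the bound $v_\e^2 \geq \eta$ on $B_\e^\eta$, and pass $K \nearrow H_\eta$ and then $\eta \nearrow 1$. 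This step is insensitive to the jumps of $\om$.

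For the jump contribution I would fix $t \in S_u$, translate so $t = 0$, and reproduce the construction leading to \eqref{keng_die_seq_1}--\eqref{keng_die_seq_2} to produce sequences $t_m^1 < s_{n(m)} < t_m^2$ with all three converging to $0$, $v_{\e(n(m))}(s_{n(m)}) \to 0$, and $v_{\e(n(m))}(t_m^i) \to 1$ for $i = 1, 2$. The derivation of these sequences uses only the Cahn--Hilliard type bound on $v_\e$ and the a.e.\ convergence $v_\e \to 1$, both of which are available without continuity of $\om$. The elementary computation giving $\liminf_m \int_{t_m^1}^{s_{n(m)}}\fmp{\frac{\e}{2}\abs{v_\e'}^2 + \frac{1}{2\e}(1-v_\e)^2}\,dx \geq 1/2$ (and similarly on $(s_{n(m)}, t_m^2)$) is also unaffected. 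The essential new ingredient is to factor out the essential infimum in place of the pointwise infimum used in \eqref{revel_1d_c_lower}:
\bes
\int_{t_m^1}^{t_m^2}\fmp{\frac{\e}{2}\abs{v_\e'}^2 + \frac{1}{2\e}(1-v_\e)^2}\om\, dx \geq \essinf_{r \in (t_m^1, t_m^2)}\om(r) \cdot \int_{t_m^1}^{t_m^2}\fmp{\frac{\e}{2}\abs{v_\e'}^2 + \frac{1}{2\e}(1-v_\e)^2}\, dx.
\ees
Taking $\liminf_{m \to \infty}$ and invoking Lemma \ref{jump_jiuehuishi} would then contribute $\om^-(0)$ at the jump. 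Since $S_u$ is finite, summing over $x \in S_u$ yields the jump lower bound $\sum_{x \in S_u}\om^-(x)$; together with the gradient estimate this gives $E_\om^-(u) \geq E_\om(u)$, and Lemma \ref{compact_energy} then certifies $u \in GSBV_\om(I)$ a posteriori.

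The main obstacle will be ensuring that the essential infimum extracts exactly $\om^-(0)$ (and not some strictly larger value) when $0 \in S_\om$. I would handle this by passing to a subsequence reducing to one of three cases: either the interval $(t_m^1, t_m^2)$ straddles $0$ for infinitely many $m$, in which case Lemma \ref{jump_jiuehuishi} part 1 applies directly; or the intervals lie eventually in $(0, +\infty)$; or eventually in $(-\infty, 0)$. Because $\mathcal{H}^0(S_\om) < \infty$, on a small one-sided neighborhood $\om$ agrees a.e.\ with an absolutely continuous function, and by Lemma \ref{jump_jiuehuishi} part 2 the corresponding essinf tends to one of the one-sided traces $\om^+(0)$ or $\om^-(0)$. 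In both sub-cases the lower bound by $\om^-(0)$ is respected because \eqref{neglesspos} gives $\om^-(0) \leq \om^+(0)$. Combining the three cases with the finite summation over $S_u$ completes the argument.
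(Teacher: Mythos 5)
Your proposal is correct and follows essentially the same route as the paper: the paper's proof of Proposition \ref{liminf_part_1d} simply reruns the argument of Proposition \ref{liminf_part_1d_c} up to \eqref{revel_1d_c_lower}, replaces the pointwise infimum by $\essinf_{(t_m^1,t_m^2)}\om$, and invokes \eqref{lower_bdd_1dnd} to extract $\om^-(0)$ before summing over the finitely many points of $S_u$. Your extra case analysis on whether $(t_m^1,t_m^2)$ actually straddles the jump point (using the one-sided limits and $\om^-(0)\leq\om^+(0)$ when it does not) is a sound and welcome refinement of a step the paper leaves implicit, since its construction only guarantees $t_m^1<s_{n(m)}<t_m^2\to 0$ and not $t_m^1<0<t_m^2$ as Lemma \ref{jump_jiuehuishi} part 1 formally requires.
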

\begin{proof}
Without lose of generality, assume that $E_\om^-(u)<+\infty$. We use the same arguments of the proof of Proposition \ref{liminf_part_1d_c} until \eqref{revel_1d_c_lower}. In particular, \eqref{use_la_nonsm0} and \eqref{use_la_nonsm1} still hold, that is
\bes
\mathcal H^0(S_u)<+\infty\text{ and }\int_I\abs{u'}^2\om\,dx\leq \liminf_{\e\to 0}\int_I\abs{u_\e'}^2v_\e^2\,\om\,dx.
\ees
Invoking \eqref{revel_1d_c_lower}, we have
\begin{align*}
&\liminf_{m\to \infty}\int_{t_m^1}^{t_m^2} \fmp{\frac{1}{2}\epsn\abs{v_\epsn'}^2+\frac{1}{2\epsn}(1-v_\epsn)^2}\om(x)\,dx\\
&\geq \fsp{\liminf_{m\to \infty}  \essinf_{r\in (t_m^1,t_m^2)}{\om(r)}}\cdot\liminf_{n\to \infty}\flp{ \int_{t_m^1}^{s_{n(m)}} \fmp{\frac{1}{2}\epsn\abs{(v_\epsn)'}^2+\frac{1}{2\epsn}(1- v_\epsn)^2}dx\right.\\
&\left.\,\,\,\,\,\,\,\,+\int_{s_{n(m)}}^{t_m^2} \fmp{\frac{1}{2}\epsn\abs{(v_\epsn)'}^2+\frac{1}{2\epsn}(1- v_\epsn)^2}dx}\\
&\geq \,\om^-(0)\fsp{\frac12+\frac12}=\om^-(0),
\end{align*}
where the last step is justified by \eqref{lower_bdd_1dnd}.\\\\
Since $S_u$ is finite, we may repeat the above argument for all $t\in S_u$ by partitioning $I$ into finitely many non-overlapping intervals where there is at most one point of $S_u$, to conclude that 
\bes
\liminf_{\e\to 0}\int_I \fmp{\frac{1}{2}\e\abs{v_\e'}^2+\frac{1}{2\e}(1-v_\e)^2}\om(x)\,dx\geq \sum_{x\in S_u} \om^-(x),
\ees
as desired.
\end{proof}
The construction of the recovery sequence uses a reflection argument nearby points of $S_\om\cap S_u$.

\begin{proposition}\emph{($\Gamma$-$\limsup$)}\label{1d_jump_reco}
For $u\in L^1_\om(I)\cap L^\infty(I)$, let
\begin{align*}
E_\om^+(u):=\inf&\flp{\limsup_{\e\to 0} E_{\om,\e}(u_\e,v_\e):\right.\\
&\left.\,\,\,(u_\e,v_\e)\in W^{1,2}_\om(I)\times W^{1,2}(I), u_\e\to u\text{ in }L^1_\om,\, v_\e\to1\text{ in }L^1,\,0\leq v_\e\leq 1}.
\end{align*}
We have
\be\label{jump_case_woyaosisisis1}
E_\om^+(u)\leq E_\om(u).
\ee
\end{proposition}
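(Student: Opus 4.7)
The plan is to adapt the construction of Proposition \ref{limsup_1d_c} to cope with the jumps of $\om$, using a shift of the Modica-Mortola profile near points of $S_u\cap S_\om$. As a preliminary reduction, since $E_\om(u)<+\infty$, Lemma \ref{compact_energy} gives $u\in GSBV_\om(I)$ with $\mathcal H^0(S_u)<+\infty$, so by a partition of $I$ into finitely many disjoint open subintervals, each containing at most one element of $S_u$, it suffices to treat the model case $S_u=\{0\}$; the general case then follows by summing individual contributions. In the model case, if $0\notin S_\om$ then $\om$ is continuous near $0$, $\om^-(0)=\om(0)$, and the symmetric construction \eqref{recovery_seq}--\eqref{recovery_seq_u} of Proposition \ref{limsup_1d_c} applies directly. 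The only genuinely new situation is $0\in S_u\cap S_\om$, which I treat below.

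Using the convention $\om^-\leq\om^+$, suppose without loss of generality that the $\om^-$ trace is attained from the left, so $H_{\nu_{S_\om}}(0)^-\cap I=(-1,0]$. The key idea is to shift the Modica-Mortola transition of $v_\e$ entirely into the left half-line, and simultaneously to modify $u$ so that its effective jump coincides with the vanishing set of $v_\e$. Fix $T>0$ and $v_0$ as in \eqref{recovery_basic}, take $\xi_\e=o(\e)$, and set $r_\e:=\e(T+1)$, $x_\e:=-r_\e$. Define $v_\e$ by the two-sided Modica-Mortola profile \eqref{recovery_seq} but centered at $x_\e$ instead of $0$, so that $\{v_\e<1\}\subset[x_\e-\xi_\e-\e T,\,x_\e+\xi_\e+\e T]\subset(-\e(2T+1)-\xi_\e,\,-\e+\xi_\e)\subset(-1,0)$ for all $\e$ small enough. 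Define $u_\e$ to equal $u$ on $(-1,x_\e-\xi_\e/2]$ and on $(0,1)$, to be affine from $u(x_\e-\xi_\e/2)$ to $u(0^+)$ on $[x_\e-\xi_\e/2,x_\e+\xi_\e/2]$, and to be constantly equal to $u(0^+)$ on $[x_\e+\xi_\e/2,0]$. Then $u_\e\in W_\om^{1,2}(I)$, the affine block of $u_\e$ is contained in $\{v_\e=0\}$, and $u_\e(0^-)=u(0^+)=u_\e(0^+)$, so that $u_\e$ is continuous at $0$.

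The convergences $v_\e\to1$ in $L^1(I)$ and $u_\e\to u$ in $L_\om^1(I)$ follow at once from dominated convergence: $\mathrm{supp}(1-v_\e)$ has length $O(\e)$, the difference $u_\e-u$ is supported in an interval of length $O(r_\e)$ and is bounded by $2\|u\|_{L^\infty}$, and $\om\in L^\infty(I)$ because $\om\in SBV(I)$ on a bounded interval. For the gradient part of $E_{\om,\e}$, $v_\e^2|u_\e'|^2=0$ on the affine block (where $v_\e=0$) and on $[x_\e+\xi_\e/2,0]$ (where $u_\e'=0$), while elsewhere $v_\e^2|u_\e'|^2\leq|u'|^2$, so dominated convergence gives $\int_I v_\e^2|u_\e'|^2\om\,dx\to\int_I|u'|^2\om\,dx$. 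For the Modica-Mortola part, the vanishing zone contributes at most $(\xi_\e/\e)\|\om\|_{L^\infty}=o(1)$, and on each of the two transition layers the substitution $s=(|x-x_\e|-\xi_\e)/\e$ combined with \eqref{recovery_basic} bounds the contribution by $\tfrac{1+\eta}{2}\cdot\esssup_{\text{layer}}\om$.

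The main obstacle, and the reason for the shift, is controlling these essential suprema by $\om^-(0)$ rather than by a value mixing $\om^-(0)$ and $\om^+(0)$. By construction both transition layers are compactly contained in $H_{\nu_{S_\om}}(0)^-$ and converge to $\{0\}$ in Hausdorff distance as $\e\to0$, so Lemma \ref{jump_jiuehuishi}(3) yields
\begin{equation*}
\limsup_{\e\to0}\,\esssup_{x\in[x_\e-\xi_\e-\e T,\,x_\e+\xi_\e+\e T]}\om(x)\leq\om^-(0),
\end{equation*}
which is precisely the value charged by $E_\om$ at the jump. Combining the two estimates gives $\limsup_{\e\to0}E_{\om,\e}(u_\e,v_\e)\leq\int_I|u'|^2\om\,dx+(1+\eta)\om^-(0)$. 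Iterating the shifted construction on mutually disjoint neighborhoods of the finitely many points of $S_u$ (so the profiles do not overlap) and letting $\eta\to0^+$ via a standard diagonal argument in $\e$ then yields \eqref{jump_case_woyaosisisis1}.
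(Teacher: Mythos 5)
Your proposal is correct and follows essentially the same route as the paper: reduce to a single jump, shift the optimal profile of $v_\e$ entirely into the half-line $H_{\nu_{S_\om}}(0)^-$ so that the transition layers converge to $\{0\}$ from the $\om^-$ side, relocate the discontinuity of $u$ into the zero set of $v_\e$, and control the weight on the layers via Lemma \ref{jump_jiuehuishi}. The only (harmless) cosmetic difference is that you bridge $u$ to the constant $u(0^+)$ on $[x_\e+\xi_\e/2,0]$, whereas the paper uses the reflection $u(-x)$ there; both place the new jump inside $\{v_\e=0\}$ and yield the same bound $\int_I\abs{u'}^2\om\,dx+(1+\eta)\om^-(0)$.
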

\begin{proof}
To prove \eqref{jump_case_woyaosisisis1}, we only need to explicitly construct a sequence $\seqe{(u_\e,v_\e)}\subset W^{1,2}_\om(I)\times W^{1,2}(I)$ such that $u_\e\to u$\text{ in }$L^1_\om$, $v_\e\to1$\text{ in }$L^1$, $0\leq v_\e\leq 1$, and 
\be\label{jump_case_woyaosisisis}
\limsup_{\e\to 0}E_{\om,\e}(u_\e,v_\e)\leq E_\om(u).
\ee
\underline{Step 1}: Assume that $\flp{0}=S_u \subset S_\om $.\\\\
Recall that we always identify $\om$ with its approximation representative $\bar \om$, and by \eqref{lower_bdd_2dnd} we may assume that (the converse situation may be dealt with similarly)
\be\label{direction_negative}
\lim_{t\nearrow 0^-}\om(t)=\om^-(0)\text{ and }\lim_{t\searrow 0^+}\om(t)=\om^+(0).
\ee
Fix $\eta>0$. For $\e>0$ small enough, and with $\xi_\e =o(\e)$, as in \eqref{recovery_basic}, \eqref{recovery_seq} let
\bes
\tilde v_\e(x):=
\begin{cases}
0 & \text{ if }\abs{x}\leq \xi_\e\\
 v_0\fsp{\frac{\abs{x}-\xi_\e}{\e}}& \text{ if }\xi_\e< \abs{x}<\xi_\e+\e T\\
1 & \text{ if }\abs{x}\geq \xi_\e+\e T,
\end{cases}
\ees
and define
\be\label{one_side_recovery_v}
v_\e(x):=\tilde v_\e(x+2\xi_\e+\e T).
\ee
Note that from \eqref{recovery_seq} $v_\e\to 1$ a.e., and since $0\leq v_\e\leq 1$, by Lebesgue Dominated Convergence Theorem we have $v_\e\to v$ in $L^1$. We also note that 
\be\label{0zrea}
\frac{\e}2\abs{v'_\e(x)}^2+\frac{1}{2\e}(1-v_\e(x))^2=0 
\ee
if $x\in (-1, -3\xi_\e-2\e T)\cup(-\xi_\e,1)$, and if $x\in(-3\xi_\e-\e T,-\xi_\e-\e T)$ then
\be\label{00zrea}
v_\e(x)=0.
\ee
Set
\bes
\tilde u_\e(x):=
\begin{cases}
u(x)&\text{ if }x\in (-1, -2\xi_\e-\e T)\cup (0,1),\\
u(-x)&\text{ if } x\in [-2\xi_\e-\e T,0].
\end{cases}
\ees 
Observe that $\tilde u_\e(x)$ is continuous at $0$ since $\tilde u_\e^+(0) = \tilde u_\e^-(0)=u^+(0)$ by the definition of $\tilde u_\e(x)$, and $\tilde u_\e$ may only jump at $t={-2\xi_\e-\e T}$ but not at $t={0}$ where $u$ jumps.\\\\
We define the recovery sequence
\be\label{one_side_recovery_u}
u_\e(x):=
\begin{cases}
\tilde u_\e(x)&\text{ if }x\in I\setminus [-2.5\xi_\e-\e T,-1.5\xi_\e-\e T],\\
\text{affine from }\tilde u_\e(-2.5\xi_\e-\e T)\text{ to }\tilde u_\e(-1.5\xi_\e-\e T)&\text{ if } x\in [-2.5\xi_\e-\e T,-1.5\xi_\e-\e T].
\end{cases}
\ee
We claim that 
\be\label{1djrecovl1}
\lime\int_I\abs{u_\e-u}\om \,dx=0
\ee
and
\be\label{1djrecovl2}
\limsup_{\e\to 0}\int_I \abs{u_\e'}^2v_\e^2\,\om\,dx \leq \int_I \abs{u'}^2\om\,dx.
\ee
To show \eqref{1djrecovl1}, we observe that 
\bes
\lime\int_I\abs{u_\e-u}\om \,dx\leq \lime\int_{-2.5\xi_\e-\e T}^{0}\abs{u_\e-u}\om \,dx \leq \lime2\norm{u}_{L^\infty}\norm{\om}_{L^\infty}(2.5\xi_\e+\e T)=0.
\ees
We next prove \eqref{1djrecovl2}. By \eqref{0zrea} we have
\bes
\int_I \abs{u_\e'}^2 v_\e^2\,\om \,dx\leq\int_I \abs{u'}^2\om\, dx + \norm{\om}_{L^\infty}\int_{-\xi_\e-\e T}^0 \abs{u'(-x)}^2dx,
\ees
and so
\be\label{also_used_in_multi}
\limsup_{\e\to 0}\int_I \abs{u_\e'}^2 v_\e^2\,\om \,dx\leq \int_I \abs{u'}^2\om \,dx,
\ee
since $u'\in L_\om^2(I)$, and we conclude that $u'\in L^2(I)$.\\\\
On the other hand, by \eqref{0zrea} and \eqref{00zrea},
\begin{align*}
\int_{I}&\left[\frac{\e}{2}\abs{ v_\e'}^2+\frac{1}{2\e}{(v_\e-1)^2}\right]\om(x)\, dx \\
&= \int_{-3\xi_\e-2\e T}^{-\xi_\e}\left[\frac{\e}{2}\abs{ v_\e'}^2+\frac{1}{2\e}{(v_\e-1)^2}\right]\om(x)\, dx \\
&\leq \fsp{\esssup_{t\in\fsp{{-3\xi_\e-2\e T},{-\xi_\e}}}\om(t)}  \int_{-3\xi_\e-2\e T}^{-\xi_\e}\left[\frac{\e}{2}\abs{ v_\e'}^2+\frac{1}{2\e}{(v_\e-1)^2}\right]\, dx\\
&=\fsp{\esssup_{t\in\fsp{{-3\xi_\e-2\e T},{-\xi_\e}}}\om(t)}  \int_{-\xi_\e-\e T}^{\xi_\e+\e T}\left[\frac{\e}{2}\abs{ \tilde v_\e'}^2+\frac{1}{2\e}{(\tilde v_\e-1)^2}\right]\, dx.
\end{align*}
Therefore,
\begin{align*}
&\limsup_{\e\to 0}\int_{I}\left[\frac{\e}{2}\abs{ v_\e'}^2+\frac{1}{2\e}{(v_\e-1)^2}\right]\om(x)\, dx \\
\leq &\limsup_{\e\to 0}\fsp{\esssup_{t\in\fsp{{-3\xi_\e-2\e T},{-\xi_\e}}}\om(t)}\flp{\limsup_{\e\to0}\int_{-\xi_\e-\e T}^{\xi_\e+\e T}\left[\frac{\e}{2}\abs{ \tilde v_\e'}^2+\frac{1}{2\e}{(\tilde v_\e-1)^2}\right]\, dx}\\
\leq &\,\om^-(0)(1+\eta),
\end{align*}
where at the last inequality we used the definition of $\tilde v_\e$, \eqref{recovery_basic}, and \eqref{lower_bdd_2dnd}.\\\\
We conclude that
\bes
\limsup_{\e\to 0}E_{\om,\e}(u_\e,v_\e)
\leq \int_I \abs{u'}^2\om\,dx + \om^-(0)(1+\eta),
\ees
and \eqref{jump_case_woyaosisisis} follows due to the arbitrariness of $\eta$.\\\\
\underline{Step 2:} In the general case, we recall that $S_u$ is finite. We may obtain $u_\e$ and $v_\e$ by repeating the construction in Step 1 in small non-overlapping intervals centered at every point of $S_u\cap S_\om$, and by repeating the construction in Step 1 in Lemma  \ref{limsup_1d_c} in those non-overlaping intervals centered at points of $S_u\setminus S_\om$. Hence, we have 
\bes
\limsup_{\e\to 0}E_{\om,\e}(u_\e,v_\e)\leq \int_I \abs{u'}^2\om\,dx + (1+\eta)\sum_{x\in S_u}\om^-(x),
\ees
and \eqref{jump_case_woyaosisisis} follows due to the arbitrariness of $\eta$.
\end{proof}
\begin{proof}[Proof of Theorem \ref{gamma_1d_jump}]
The proof follows that of Theorem \ref{gamma_c_1d}, using Proposition \ref{liminf_part_1d} and Proposition \ref{1d_jump_reco}, in place of Proposition \ref{liminf_part_1d_c} and \ref{limsup_1d_c}, respectively.
\end{proof}
\section{The Multi-Dimensional Case}\label{T_M_D_C}
\subsection{One-Dimensional Restrictions and Slicing Properties}$\,$\\\\
Let $\mathcal S^{N-1}$ be the unit sphere in $\rn$ and let $\nu\in \mathcal S^{N-1}$ be a fixed direction. We set
\begin{align}\label{slicing_notation}
\begin{cases}
\Pi_\nu: = \flp{x\in\rn:\,\fjp{x,\nu}=0};\\
\Om_{x,\nu}^1:=\flp{t\in\R:\, x+t\nu\in\Om}\,\,\text{ for }x\in\Pi_\nu;\\
\Om_{x,\nu}:=\flp{y=x+t\nu:\,\,t\in\R}\cap\Om;\\
\Om_\nu: = \flp{x\in\Pi_\nu:\,\Om_{x,\nu}\neq \varnothing}.
\end{cases}
\end{align}
We also define the 1-d restriction function $u_{x,\nu}$ of the function $u$ as
\be\label{1_d_restric}
u_{x,\nu}(t) := u(x+t\nu),\,\,x\in\Om_\nu,\,\,t\in\Om_{x,\nu}^1.
\ee
We recall the result below from \cite{ambrosio1990approximation}, Theorem $3.3$.
\begin{theorem}\label{slice_deri_dirc}
Let $\nu\in \mathcal S^{N-1}$ be given, and assume that $u\in W^{1,2}(\Omega)$. Then, for $\hnmo$-a.e. $x\in\Omega_\nu$, $u_{x,\nu}$ belongs to $W^{1,2}(\Om_{x,\nu})$ and 
\bes
u_{x,\nu}'(t)= \fjp{\nabla u(x+t\nu),\nu}.
\ees
\end{theorem}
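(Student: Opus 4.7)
The plan is to prove this by combining the density of smooth functions in $W^{1,2}(\Om)$ with Fubini's theorem. First I would perform a rigid rotation to reduce to the case $\nu = e_N$; this preserves $W^{1,2}$ regularity, and under the resulting identification $\Pi_\nu \cong \R^{N-1}$, the measure $\hnmo\lfloor \Pi_\nu$ coincides with $(N{-}1)$-dimensional Lebesgue measure, so ``$\hnmo$-a.e.'' may be replaced by Lebesgue almost every $x \in \Om_\nu$ and Fubini applies directly.

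Next I would handle the smooth case. For $u \in C^\infty(\Om)\cap W^{1,2}(\Om)$ the classical chain rule gives, pointwise,
\bes
u_{x,\nu}'(t)=\fjp{\nabla u(x+t\nu),\nu}
\ees
for every $x \in \Om_\nu$ and every $t \in \Om_{x,\nu}^1$. Applying Fubini to the nonnegative functions $|u|^2$ and $|\nabla u|^2$, which lie in $L^1(\Om)$, I obtain $u_{x,\nu} \in W^{1,2}(\Om_{x,\nu}^1)$ for $\hnmo$-a.e.\ $x \in \Om_\nu$, which settles the smooth case.

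The final step is to pass to general $u\in W^{1,2}(\Om)$. By Meyers--Serrin, choose $u_n \in C^\infty(\Om)\cap W^{1,2}(\Om)$ with $u_n \to u$ in $W^{1,2}(\Om)$. The integrand
\bes
\Phi_n(x,t):=|u_n(x+t\nu)-u(x+t\nu)|^2+|\nabla u_n(x+t\nu)-\nabla u(x+t\nu)|^2
\ees
satisfies $\int_{\Om_\nu}\int_{\Om_{x,\nu}^1}\Phi_n(x,t)\,dt\,d\hnmo(x)\to 0$ by Fubini, so extracting a subsequence (not relabelled) yields $\hnmo$-a.e.\ $x\in\Om_\nu$ for which $\int_{\Om_{x,\nu}^1}\Phi_n(x,t)\,dt\to 0$. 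For each such $x$, the smooth-case identity gives $(u_n)_{x,\nu}'(t)=\fjp{\nabla u_n(x+t\nu),\nu}$, hence
\bes
(u_n)_{x,\nu}\to u_{x,\nu}\quad\text{in }L^2(\Om_{x,\nu}^1),\qquad (u_n)_{x,\nu}'\to \fjp{\nabla u(x+t\nu),\nu}\quad\text{in }L^2(\Om_{x,\nu}^1).
\ees
Passing to the limit in the weak derivative relation $\int (u_n)_{x,\nu}\varphi'\,dt = -\int (u_n)_{x,\nu}'\varphi\,dt$ for $\varphi\in C_c^\infty(\Om_{x,\nu}^1)$, the completeness of $W^{1,2}(\Om_{x,\nu}^1)$ together with the two $L^2$-convergences yields $u_{x,\nu}\in W^{1,2}(\Om_{x,\nu}^1)$ with the desired formula for the derivative.

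The only technical subtlety is keeping track that the Fubini-type exceptional set has $\hnmo$-measure zero (not merely Lebesgue-null in some larger ambient space), but this is ensured by the reduction to $\nu=e_N$. Aside from that, the argument is essentially a combination of density, the chain rule on smooth functions, and completeness of Sobolev spaces in one variable.
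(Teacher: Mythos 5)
The paper does not prove this statement at all: it is quoted verbatim from Ambrosio--Tortorelli (Theorem 3.3 of \cite{ambrosio1990approximation}), so there is no in-paper argument to compare against. Your proof is correct and is the standard one: rotate so that $\nu=e_N$, verify the identity classically for smooth functions with Fubini controlling the slice-wise $W^{1,2}$ norms, then use Meyers--Serrin density together with a subsequence along which $\int_{\Om_{x,\nu}^1}\Phi_n(x,t)\,dt\to 0$ for $\hnmo$-a.e.\ $x$, and close the weak-derivative relation by passing to the limit in the integration-by-parts identity. The only points worth stating explicitly are the inequality $\abs{\fjp{\nabla u_n-\nabla u,\nu}}\leq\abs{\nabla u_n-\nabla u}$, which is what lets $\Phi_n$ control the convergence of the sliced derivatives, and the observation that the conclusion concerns $u$ itself, so extracting a subsequence of the approximating $u_n$ costs nothing. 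With those noted, the argument is complete.
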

\begin{lemma}\label{lower_drop_energy}
Let $\om\in\mathcal W(\Om)$ and $u\in W^{1,p}_\om(\Om)$, for $p\in [1,\infty)$, be given. If $\nu\in\mathcal S^{N-1}$ and $v\in W^{1,p}(\Om)$ is nonnegative, then
\bes
\into \abs{\nabla u}^pv^p\,\om\,dx \geq  \int_{\Om_\nu}\int_{\Om^1_{x,\nu}}\abs{ u'_{x,\nu}(t)}^pv^p_{x,\nu}(t)\,\om_{x,\nu}(t)\,dtdx.
\ees
\end{lemma}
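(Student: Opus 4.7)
The strategy is to reduce the multi-dimensional integral on the left-hand side to an iterated integral via Fubini, then invoke the slicing identity of Theorem \ref{slice_deri_dirc} to recognize the directional derivative, and finally use the elementary bound $|\langle \nabla u, \nu\rangle| \le |\nabla u|$ (valid since $\nu \in \mathcal{S}^{N-1}$).

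First, by the lower bound assumption \eqref{pos_lower_ass} on $\om$, we have $W^{1,p}_\om(\Om) \subset W^{1,p}(\Om)$, so $u \in W^{1,p}(\Om)$ and Theorem \ref{slice_deri_dirc} applies: for $\hnmo$-a.e.\ $x \in \Om_\nu$, $u_{x,\nu} \in W^{1,p}(\Om^1_{x,\nu})$ with
\bes
u'_{x,\nu}(t) = \fjp{\nabla u(x+t\nu), \nu}.
\ees
Since $|\nu| = 1$, the Cauchy-Schwarz inequality yields
\bes
|u'_{x,\nu}(t)| = |\fjp{\nabla u(x+t\nu), \nu}| \le |\nabla u(x+t\nu)|
\ees
for $\hnmo$-a.e.\ $x \in \Om_\nu$ and a.e.\ $t \in \Om^1_{x,\nu}$.

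Next I would change variables via the decomposition $\rn = \Pi_\nu \oplus \mathbb{R}\nu$. Since the map $(x,t) \mapsto x + t\nu$ is an isometry from $\Pi_\nu \times \mathbb{R}$ onto $\rn$, Fubini's theorem gives
\bes
\int_\Om |\nabla u|^p v^p\, \om\, dx = \int_{\Om_\nu} \int_{\Om^1_{x,\nu}} |\nabla u(x+t\nu)|^p\, v(x+t\nu)^p\, \om(x+t\nu)\, dt\, d\hnmo(x).
\ees
Using the definitions $v_{x,\nu}(t) = v(x+t\nu)$ and $\om_{x,\nu}(t) = \om(x+t\nu)$ (cf.\ \eqref{1_d_restric}), together with the pointwise bound above and the nonnegativity of $v$ and $\om$, one obtains
\bes
\int_\Om |\nabla u|^p v^p\, \om\, dx \ge \int_{\Om_\nu} \int_{\Om^1_{x,\nu}} |u'_{x,\nu}(t)|^p\, v_{x,\nu}(t)^p\, \om_{x,\nu}(t)\, dt\, d\hnmo(x),
\ees
which is the claimed inequality.

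The calculation is essentially routine; the only subtle point is confirming that Theorem \ref{slice_deri_dirc} is applicable in the weighted setting, which is immediate from the embedding $W^{1,p}_\om \subset W^{1,p}$ provided by \eqref{pos_lower_ass}. No Jensen-type or absolute continuity arguments beyond Fubini and the directional derivative identity are required.
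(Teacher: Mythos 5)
Your proposal is correct and follows essentially the same route as the paper's proof: the embedding $W^{1,p}_\om(\Om)\subset W^{1,p}(\Om)$ from \eqref{pos_lower_ass}, Fubini's Theorem to slice the integral along the direction $\nu$, Theorem \ref{slice_deri_dirc} to identify $u'_{x,\nu}(t)=\fjp{\nabla u(x+t\nu),\nu}$, and the pointwise bound $\abs{\fjp{\nabla u,\nu}}\leq\abs{\nabla u}$ combined with the nonnegativity of $v$ and $\om$. No further comment is needed.
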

\begin{proof}
Since $\essinf_{\Om}\om\geq 1$, we have $W_\om^{1,p}(\Om)\subset W^{1,p}(\Om)$. Given $\nu\in\mathcal S^{N-1}$ and a nonnegative function $v\in W^{1,p}(\Om)$, by Fubini's Theorem and Theorem \ref{slice_deri_dirc} we have 
\begin{align*}
\into \abs{\nabla u}^pv^p\,\om\,dx&= \int_{\Om_\nu}\int_{\Om^1_{x,\nu}}\abs{\nabla u}^pv^p\,\om \,dt\,d\hnmo(x)\\
&\geq \int_{\Om_\nu}\int_{\Om^1_{x,\nu}}\abs{ \fjp{\nabla u(x+tv),\nu}}^pv^p_{x,\nu}(t)\,\om_{x,\nu}(t) \,dtd\hnmo(x) \\
&= \int_{\Om_\nu}\int_{\Om^1_{x,\nu}}\abs{u_{x,\nu}'(t)}^pv^p_{x,\nu}(t)\,\om_{x,\nu}(t)\,dtd\hnmo(x),
\end{align*}
where we used the fact that 
\bes
\abs{u_{x,\nu}'(t)}=\abs{\fjp{\nabla u(x+t\nu),\nu}}\leq \abs{\nabla u(x+t\nu)}
\ees
$\hnmo$-a.e. $x\in\Om_\nu$.
\end{proof}

%\begin{remark}
%Recall that in this note we only consider $\om$ is continuos or a $SBV$ function. If $\om\in SBV(\Om)$, we have the precise representation $\bar \om$ of $\om$ is defined $\hnmo$-a.e. and hence $\om_{x,\nu}$ is well defined for $\hnmo$-a.e. $x\in\Om_\nu$.
%\end{remark}

%\begin{remark}\label{qipadelebesguepoint}
%Let $\Gamma\subset \Om$ be a $\hnmo$-rectifiable set, then by setting $\lambda:=\hnmo\lfloor \Gamma$, $\mu:=\hnmo$, and $C:=Q_{0,\nu_\Gamma(x_0)}(0,1)$ in Theorem \ref{change_to_cube_2}, we have
%\bes
%\lim_{r\to 0}\frac{\hnmo(Q_{x_0,\nu_\Gamma}(x_0,r)\cap \Gamma)}{r^{N-1}}=1
%\ees
%for $\hnmo$-a.e. $x_0\in \Gamma$.\\\\
%%
%Moreover, if $\om\in SBV(\Om)\cap L^\infty(\Om)$, then by setting $f:=\om^\pm$ and $\mu:=\hnmo\lfloor S_\om$ in Theorem \ref{change_to_cube_3} we have that
%\bes
%\lim_{r\to 0}\fint_{Q_{x_0,\nu_{S_\om}}(x_0,r)\cap S_\om}\abs{\om^\pm(x)-\om^\pm(x_0)}d\hnmo=0,
%\ees
%for $\hnmo$ a.e. $x_0\in S_\om$.
%\end{remark}

\begin{proposition}\label{project_lemma_lip_h}
Let $\nu\in \mathcal S^{N-1}$ be a fixed direction, $\Gamma\subset \rn$ be such that $\hnmo(\Gamma)<\infty$, and $\mathbb P_\nu$: $\rn\to \Pi_\nu$ be a projection operator, where by \eqref{slicing_notation} $\Pi_\nu\subset \rn$ is a hyperplane in $\R^{N-1}$. Then
\be\label{project_lemma_lip_1}
\hnmo(\mathbb P_\nu(\Gamma)) \leq \hnmo(\Gamma),
\ee
and for $\hnmo$-a.e. $x\in\Pi_\nu$,
\be\label{project_lemma_lip_2}
\mathcal H^0(\Om_{x,\nu}\cap \Gamma)<+\infty.
\ee
\end{proposition}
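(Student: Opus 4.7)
The plan is to exploit two facts about the orthogonal projection $\mathbb{P}_\nu \colon \rn \to \Pi_\nu$: it is $1$-Lipschitz, and its fibers are precisely the lines parallel to $\nu$. Part \eqref{project_lemma_lip_1} is then immediate, while part \eqref{project_lemma_lip_2} is essentially a weak form of Eilenberg's coarea inequality, which I plan to establish directly by an elementary covering-plus-Fatou argument.

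For \eqref{project_lemma_lip_1} I would invoke the standard estimate $\mathcal{H}^s(f(A)) \leq (\operatorname{Lip} f)^s\, \mathcal{H}^s(A)$ valid for any Lipschitz map $f$ between metric spaces (see, e.g., \cite{ambrosio2000functions}), applied with $f = \mathbb{P}_\nu$, $s = N-1$, and $A = \Gamma$; since $\operatorname{Lip}(\mathbb{P}_\nu) = 1$, the bound follows at once.

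For \eqref{project_lemma_lip_2}, the strategy is as follows. For each $k \in \mathbb{N}$, using $\hnmo(\Gamma) < +\infty$ I would cover $\Gamma$ by closed sets $\flp{E_i^k}_i$ of diameter less than $1/k$ satisfying
\[
\sum_i c_{N-1} \bigl( \tfrac{1}{2}\operatorname{diam} E_i^k \bigr)^{N-1} \leq \hnmo(\Gamma) + 1,
\]
with $c_{N-1}$ the volume of the unit ball in $\R^{N-1}$. Introducing
\[
N_k(y) := \#\flp{i : \Om_{y,\nu} \cap E_i^k \neq \varnothing} = \sum_i \chi_{\mathbb{P}_\nu(E_i^k)}(y),
\]
the $1$-Lipschitz property of $\mathbb{P}_\nu$ together with the isodiametric inequality on $\Pi_\nu$ would yield
\[
\int_{\Pi_\nu} N_k \, d\hnmo \leq \sum_i c_{N-1} \bigl( \tfrac{1}{2}\operatorname{diam} E_i^k \bigr)^{N-1} \leq \hnmo(\Gamma) + 1
\]
uniformly in $k$. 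I would then conclude by contradiction: if $A := \flp{y \in \Pi_\nu : \mathcal{H}^0(\Om_{y,\nu} \cap \Gamma) = +\infty}$ had positive $\hnmo$-measure, then for every $y \in A$ and every $M \in \mathbb{N}$, selecting $M$ distinct points of $\Om_{y,\nu} \cap \Gamma$ and choosing $1/k$ smaller than their minimum pairwise distance would force these $M$ points into $M$ distinct $E_i^k$, so that $N_k(y) \geq M$. Hence $\liminf_k N_k = +\infty$ on $A$, contradicting the uniform integral bound via Fatou's lemma, and forcing $\hnmo(A) = 0$.

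The only delicate point I anticipate is the measurability of $N_k$, which I would sidestep by insisting each $E_i^k$ be closed (so $\mathbb{P}_\nu(E_i^k)$ is closed in $\Pi_\nu$, hence Borel); the rest is bookkeeping. A more high-powered alternative is to quote Eilenberg's inequality directly, which would reduce \eqref{project_lemma_lip_2} to a one-line observation, but the elementary argument above stays self-contained.
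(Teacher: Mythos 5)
Your proof of \eqref{project_lemma_lip_1} coincides with the paper's: both rest on the fact that $\mathbb P_\nu$ is $1$-Lipschitz and on the standard estimate for Hausdorff measures under Lipschitz maps. For \eqref{project_lemma_lip_2} you take a genuinely different route: the paper simply cites the coarea formula (Theorem 2.93 in \cite{ambrosio2000functions}) for the Lipschitz map $\mathbb P_\nu$, which bounds $\int_{\Pi_\nu}\mathcal H^0\fsp{\Gamma\cap\mathbb P_\nu^{-1}(y)}\,d\hnmo(y)$ by a constant times $\hnmo(\Gamma)$ and so forces the integrand to be finite for $\hnmo$-a.e.\ $y$, whereas you reprove the underlying Eilenberg-type inequality from scratch via fine covers, the isodiametric inequality, and Fatou's lemma. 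Your argument is correct and, if anything, more robust: the coarea formula in the cited form is stated for countably $\hnmo$-rectifiable sets, while the proposition only assumes $\hnmo(\Gamma)<\infty$, and your covering argument (like Eilenberg's inequality itself) needs no rectifiability. Two cosmetic points. First, your $E_i^k$ are closed \emph{and} of diameter less than $1/k$, hence compact, so their projections are compact; that is the clean reason they are Borel --- the projection of a merely closed, unbounded set need not be closed. Second, since $\Om_{y,\nu}$ is the line through $y$ intersected with $\Om$ while $\mathbb P_\nu(E_i^k)$ records intersections with the whole line, your definition of $N_k(y)$ should read as an inequality $N_k(y)\leq\sum_i\chi_{\mathbb P_\nu(E_i^k)}(y)$ rather than an equality when $\Gamma\not\subset\Om$; this is the direction you use, so nothing breaks.
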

\begin{proof}
Note that \eqref{project_lemma_lip_1} follows immediately from Theorem $7.5$ in \cite{mattila1999geometry} since $\mathbb P_\nu$ is a Lipschitz map with Lipschitz constant less or equal to one. To show \eqref{project_lemma_lip_2}, we apply co-area formula (see \cite{ambrosio2000functions}, Theorem 2.93) with $\mathbb P_\nu$ and again since $\mathbb P_\nu$ is a Lipschitz map with Lipschitz constant less or equal to one, we are done.
\end{proof}
Set $x=(x',x_N)\in\rn$, where 
\be\label{N-1fromN}
x'\in\R^{N-1}\text{ denotes the first }N-1\text{ component of }x\in\rn,
\ee
and given $u$: $\R^{N-1}\to \R$ and $G\subset \R^{N-1}$, we define the graph of $u$ over $G$ as 
\bes
F(u;G):=\flp{(x',x_N)\in\rn:\,x'\in G,\,x_N=u(x')}.
\ees
If $u$ is Lipschitz, then we call $F(u;G)$ a Lipschitz -$(N-1)$-graph.

\begin{lemma}\label{lim_project_density_1}
Let $\Gamma\subset \rn$ be a $\hnmo$-rectifiable set, and let $\mathbb P_{x,\nu_\Gamma}$: $\rn\to T_{x,\nu_\Gamma}$ be a projection operator for $x\in\Gamma$. Then
\be\label{project_density_1}
\lim_{r\to 0}\frac{\hnmo(\mathbb P_{x_0,\nu_\Gamma}(\Gamma\cap Q_{\nu_\Gamma}(x_0,r)))}{r^{N-1}}=1
\ee
for $\hnmo$-a.e. $x_0\in\Gamma$.
\end{lemma}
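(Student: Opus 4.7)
The plan is to sandwich the ratio in \eqref{project_density_1} between two quantities tending to $1$. Since $\mathbb P_{x_0,\nu_\Gamma}$ is $1$-Lipschitz, the upper bound follows from \eqref{project_lemma_lip_1} combined with the standard density-one property of $\hnmo$-rectifiable sets; for the lower bound I will exploit the local Lipschitz-graph structure of $\Gamma$ and the fact that, at $\hnmo$-a.e.\ $x_0\in\Gamma$, the approximate tangent plane of $\Gamma$ at $x_0$ coincides with the tangent plane to the underlying graph.

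\textbf{Upper bound.} Applying \eqref{project_lemma_lip_1} to $\Gamma\cap Q_{\nu_\Gamma}(x_0,r)$ gives
\bes
\hnmo\fsp{\mathbb P_{x_0,\nu_\Gamma}\fsp{\Gamma\cap Q_{\nu_\Gamma}(x_0,r)}}\leq \hnmo\fsp{\Gamma\cap Q_{\nu_\Gamma}(x_0,r)}.
\ees
By the classical $(N-1)$-density theorem for rectifiable sets (see \cite{mattila1999geometry}), at $\hnmo$-a.e.\ $x_0\in\Gamma$ one has
\bes
\lim_{r\to 0}\frac{\hnmo\fsp{\Gamma\cap Q_{\nu_\Gamma}(x_0,r)}}{r^{N-1}}=1,
\ees
where the denominator $r^{N-1}$ is precisely $\hnmo\fsp{T_{x_0,\nu_\Gamma}\cap Q_{\nu_\Gamma}(x_0,r)}$. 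Dividing and passing to the $\limsup$ yields $\limsup\leq 1$.

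\textbf{Lower bound.} By the structure theorem for $\hnmo$-rectifiable sets (see \cite{mattila1999geometry}), modulo a $\hnmo$-null set one may write $\Gamma=\bigcup_i F(f_i;G_i)$ with each $f_i$ Lipschitz on a Borel set $G_i$ contained in a hyperplane. At $\hnmo$-a.e.\ $x_0\in\Gamma$ exactly one index $i$ satisfies $x_0\in F(f_i;G_i)$, the tangent plane of $F(f_i;G_i)$ at $x_0$ coincides with $T_{x_0,\nu_\Gamma}$, and $x_0':=\mathbb P_{x_0,\nu_\Gamma}(x_0)$ is a point of $(N-1)$-density one of $G_i$ in $T_{x_0,\nu_\Gamma}$. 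Choosing coordinates so that $\nu_\Gamma=e_N$, Rademacher's theorem combined with this tangent-plane identification yields $\nabla f_i(x_0')=0$. Fixing $\eta\in(0,1)$, for $r$ small enough the graph of $f_i$ restricted to $Q_{\nu_\Gamma}(x_0,(1-\eta)r)\cap T_{x_0,\nu_\Gamma}$ lies inside $Q_{\nu_\Gamma}(x_0,r)$ and projects onto $Q_{\nu_\Gamma}(x_0,(1-\eta)r)\cap T_{x_0,\nu_\Gamma}\cap G_i$. By density of $G_i$ at $x_0'$, this projection has $\hnmo$-measure at least $(1-\eta)^{N}r^{N-1}$ for all sufficiently small $r$, so dividing by $r^{N-1}$ and letting $\eta\to 0$ gives $\liminf\geq 1$.

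\textbf{Main obstacle.} The key technical input is the $\hnmo$-a.e.\ identification of the approximate tangent plane of $\Gamma$ with the classical tangent plane to the Lipschitz graph $F(f_i;G_i)$ provided by $\nabla f_i$; this must be handled carefully since $f_i$ is merely Lipschitz and $\nabla f_i$ exists only $\mathcal L^{N-1}$-a.e.\ in the classical sense. Once this structural identification is in hand, the differentiability of $f_i$ at $x_0'$ together with the density of $G_i$ at $x_0'$ close both inequalities.
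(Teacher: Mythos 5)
Your proof is correct and follows essentially the same route as the paper's: the upper bound via the $1$-Lipschitz projection combined with the $(N-1)$-density theorem, and the lower bound via a graph decomposition of $\Gamma$, differentiability of the graphing function at $x_0'$, and a comparison of the projected measure with $r^{N-1}$. The only real difference is that the paper decomposes $\Gamma$ into $C^1$ graphs (Theorem 2.76 in \cite{ambrosio2000functions}) rather than Lipschitz ones, so that $\abs{\nabla l_{i_0}}<\e$ holds on a whole neighborhood of $x_0'$ and the area formula yields $\hnmo(\mathbb P_{x_0,\nu_\Gamma}(\Gamma_{i_0}\cap Q_{\nu_\Gamma}(x_0,r)))\geq \hnmo(\Gamma_{i_0}\cap Q_{\nu_\Gamma}(x_0,r))/\sqrt{1+\e^2}$ directly, which sidesteps the Rademacher/approximate-tangent-plane identification you flag as the main obstacle.
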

\begin{proof}
By Proposition \ref{project_lemma_lip_h} we have 
\be\label{limsup_project_density1}
\limsup_{r\to 0}\frac{\hnmo(\mathbb P_{x_0,\nu_\Gamma}(\Gamma\cap Q_{\nu_\Gamma}(x_0,r)))}{r^{N-1}}\leq \limsup_{r\to 0}\frac{\hnmo(\Gamma\cap Q_{\nu_\Gamma}(x_0,r))}{r^{N-1}}=1
\ee
for a.e. $x_0\in\Gamma$. By Theorem $2.76$ in \cite{ambrosio2000functions} we may write 
\bes
\Gamma = \Gamma_0\cup\bigcup_{i=1}^\infty \Gamma_i
\ees
as a disjoint union with $\hnmo(\Gamma_0)=0$, $\Gamma_i=(N_i,l_i(N_i))$ where $l_i:$ $\R^{N-1}\to \R$ is of class $C^1$ and $N_i\subset \R^{N-1}$.\\\\
Let $x_0\in \Gamma_{i_0}$ for some $i_0\in\mathbb N$ and, without loss of generality, let  $(-\nabla l_{i_0}(x'_0),1)=\nu_\Gamma(x_0)$, with $x_0$ a point of density one in $\Gamma_0$ (see Exercise 10.6 in \cite{maggi2012sets}). Up to a rotation and a translation, we may assume that $\nabla l_{i_0}(x'_0)=(0,0,\ldots, 0)\in\R^{N-1}$, $x_0=(0,0,\ldots,0)$, and $\mathbb P_{x_0,\nu_\Gamma}$: $\Gamma_{i_0}\to \R^{N-1}\times\flp{0}$. Therefore, for $r>0$ small enough, 
\bes
\Gamma_{i_0}\cap Q_{\nu_\Gamma}(x_0,r)=(\mathbb P_{x_0,\nu_\Gamma}\fsp{\Gamma_{i_0}\cap Q_{\nu_\Gamma}(x_0,r)},\,l_{i_0}((\mathbb P_{x_0,\nu_\Gamma}\fsp{\Gamma_{i_0}\cap Q_{\nu_\Gamma}(x_0,r)})')),
\ees
and by Theorem $9.1$ in \cite{mattila1999geometry} we obtain that, 
\bes
\hnmo(\Gamma_{i_0}\cap Q_{\nu_\Gamma}(x_0,r))=\int_{\mathbb P_{x_0,\nu_\Gamma}\fsp{\Gamma_{i_0}\cap Q_{\nu_\Gamma}(x_0,r)}}\sqrt{1+\abs{\nabla l_{i_0}(x')}^2}d\hnmo(x').
\ees
Since $l_{i_0}$ is of class $C^1$ and $\nabla l_{i_0}(x_0)={0}$, for $\e>0$ choose $r_\e>0$ such that $\abs{\nabla l_{i_0}(x)}<\e$ for all $0<r<r_\e$. Therefore, we have that 
\begin{align*}
\hnmo({\mathbb P_{x_0,\nu_\Gamma}\fsp{\Gamma\cap Q_{\nu_\Gamma}(x_0,r)}})&\geq \hnmo({\mathbb P_{x_0,\nu_\Gamma}\fsp{\Gamma_{i_0}\cap Q_{\nu_\Gamma}(x_0,r)}})\\
&\geq \frac{1}{\sqrt{1+\e^2}}\int_{\mathbb P_{x_0,\nu_\Gamma}\fsp{\Gamma_{i_0}\cap Q_{\nu_\Gamma}(x_0,r)}}\sqrt{1+\abs{\nabla l_{i_0}(x')}^2}dx' \\
&= \frac{1}{\sqrt{1+\e^2}}\hnmo(\Gamma_{i_0}\cap Q_{\nu_\Gamma}(x_0,r)).
\end{align*}
We obtain
\begin{align*}
\liminf_{r\to 0}\frac{\hnmo({\mathbb P_{x_0,\nu_\Gamma}\fsp{\Gamma\cap Q_{\nu_\Gamma}(x_0,r)}})}{r^{N-1}}
\geq \liminf_{r\to 0}\frac{1}{\sqrt{1+\e^2}}\frac{\hnmo(\Gamma_{i_0}\cap Q_{\nu_\Gamma}(x_0,r))}{r^{N-1}} =\frac{1}{\sqrt{1+\e^2}}.
\end{align*}
By the arbitrariness of $\e>0$, we deduce that
\bes
\liminf_{r\to 0}\frac{\hnmo({\mathbb P_{x_0,\nu_\Gamma}\fsp{\Gamma\cap Q_{\nu_\Gamma}(x_0,r)}})}{r^{N-1}}\geq 1,
\ees  
and, in view of \eqref{limsup_project_density1}, we conclude that 
\bes
\lim_{r\to 0}\frac{\hnmo({\mathbb P_{x_0,\nu_\Gamma}\fsp{\Gamma\cap Q_{\nu_\Gamma}(x_0,r)}})}{r^{N-1}}=1.
\ees
\end{proof}
\begin{lemma}\label{single_selection}
Let $Q:=(-1,1)^N$ and let $\Gamma\subset Q$ be a $\hnmo$-rectifiable set such that $\hnmo(\Gamma)<\infty$ and 
\be\label{single_selection_nonempty}
\mathcal H^0(\Gamma\cap \fsp{\flp{x'}\times (-1,1)})\geq 1
\ee
for $\hnmo$-a.e. $x'\in  (-1,1)^{N-1}$. Then there exists a $\hnmo$-measurable subset $\Gamma'\subset \Gamma$ such that
\be\label{single_selection_one}
\mathcal H^0(\Gamma'\cap \fsp{\flp{x'}\times (-1,1)})= 1.
\ee
for $\hnmo$-a.e. $x'\in (-1,1)^{N-1}$.
\end{lemma}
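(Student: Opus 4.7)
The plan is to choose, for $\hnmo$-a.e.\ vertical fibre $\flp{x'} \times (-1,1)$ meeting $\Gamma$, the point of smallest $x_N$-coordinate; the challenge is to make this selection in a $\hnmo$-measurable way.

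First, I would apply Proposition \ref{project_lemma_lip_h} with $\nu = e_N$ to obtain that $\HH^0\fsp{\Gamma \cap (\flp{x'} \times (-1,1))} < \infty$ for $\hnmo$-a.e.\ $x' \in (-1,1)^{N-1}$. Combined with hypothesis \eqref{single_selection_nonempty} this yields a set $D \subset (-1,1)^{N-1}$ of full $\hnmo$-measure such that for every $x' \in D$ the slice $\Gamma \cap (\flp{x'} \times (-1,1))$ is nonempty and finite, so in particular $\inf\flp{t : (x',t) \in \Gamma}$ is attained.

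Next, as in the proof of Lemma \ref{lim_project_density_1}, I would invoke Theorem 2.76 of \cite{ambrosio2000functions} to write
$$\Gamma = \Gamma_0 \cup \bigcup_{i=1}^\infty \Gamma_i,\qquad \hnmo(\Gamma_0) = 0,$$
with each $\Gamma_i$ a $C^1$ graph over a hyperplane $T_i$ of unit normal $\nu_i$. I would split the indices into the class~A of $i$ with $\nu_i \cdot e_N \neq 0$ and the class~B of $i$ with $\nu_i \cdot e_N = 0$. For $i \in A$, $T_i$ is transverse to $e_N$, so $\mathbb P_\nu|_{T_i}$ is an affine isomorphism onto $\Pi_\nu$ and, after reparameterization, $\Gamma_i = F(\tilde l_i; E_i)$ with $E_i \subset (-1,1)^{N-1}$ Borel and $\tilde l_i : E_i \to (-1,1)$ Lipschitz. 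For $i \in B$, $T_i$ contains the direction $e_N$, hence $\mathbb P_\nu(\Gamma_i)$ lies in a hyperplane of $\Pi_\nu$ and is $\hnmo$-null. Setting $N_B := \mathbb P_\nu(\Gamma_0) \cup \bigcup_{i \in B} \mathbb P_\nu(\Gamma_i)$, which is $\hnmo$-null by Proposition \ref{project_lemma_lip_h}, one has
$$\Gamma \cap (\flp{x'} \times (-1,1)) = \flp{(x', \tilde l_i(x')) : i \in A,\ x' \in E_i}$$
for every $x' \in D \setminus N_B$.

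Finally, I would extend each $\tilde l_i$ by $+\infty$ off $E_i$ and define
$$g(x') := \inf_{i \in A} \tilde l_i(x'),\qquad \Gamma' := \flp{(x', g(x')) : x' \in D \setminus N_B} = \bigcup_{i \in A}\flp{(x', \tilde l_i(x')) : x' \in E_i \cap \flp{\tilde l_i = g}}.$$
As a countable infimum of Borel functions $g$ is Borel, and each summand is the Borel graph of a continuous function over a Borel domain, so $\Gamma'$ is a Borel, hence $\hnmo$-measurable, subset of $\Gamma$, while $\HH^0(\Gamma' \cap (\flp{x'} \times (-1,1))) = 1$ for every $x' \in D \setminus N_B$, yielding \eqref{single_selection_one}. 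The principal obstacle is the graph decomposition step: starting from the Ambrosio-Fusco-Pallara theorem, one must arrange that all transverse pieces are presented as graphs in the $e_N$-direction over Borel subsets of $(-1,1)^{N-1}$ and that the remaining pieces project to an $\hnmo$-null set; once this is in place, the measurable selection and \eqref{single_selection_one} follow from standard facts about Borel functions.
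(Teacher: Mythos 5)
Your overall strategy---pick the lowest point of each vertical fibre and show the resulting graph is $\hnmo$-measurable---is the natural one, and your first step (finiteness and nonemptiness of a.e.\ slice via Proposition \ref{project_lemma_lip_h}) matches the paper. The paper then simply observes that the slices are nonempty, finite, hence closed, and invokes the measurable selection theorem for closed-valued multifunctions (Corollary 1.1, p.~237 of Ekeland--Temam); it does not attempt an explicit construction. Your attempt to replace that citation by a hands-on construction contains a genuine gap: the class A/B dichotomy based on the normal $\nu_i$ of the \emph{base hyperplane} $T_i$ does not deliver either of the two conclusions you need. For class A, a $C^1$ graph over a tilted hyperplane transverse to $e_N$ need not be a graph in the $e_N$-direction: in $\R^2$ take $T_i=\operatorname{span}\fsp{(1,1)/\sqrt2}$, $\nu_i=(1,-1)/\sqrt2$ and $l_i(s)=-s^2$; the resulting curve has a vertical tangent at $s=1/2$ and nearby vertical lines meet it twice, so no reparameterization as $F(\tilde l_i;E_i)$ with $\tilde l_i$ single-valued (let alone Lipschitz) exists. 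For class B the claim is also false: the parabola $\flp{(t^2,t):t\in(-1,1)}$ is a $C^1$ graph over the $y$-axis (so $\nu_i=e_1$, $\nu_i\cdot e_2=0$), yet its projection onto $\Pi_{e_2}$ is $[0,1)$, which is far from $\hnmo$-null. Consequently the identity $\Gamma\cap\fsp{\flp{x'}\times(-1,1)}=\flp{(x',\tilde l_i(x')):i\in A,\ x'\in E_i}$ is not established, and $g$ is not exhibited as a countable infimum of Borel functions.

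The dichotomy that does work is the pointwise one on $\Gamma$ itself: split $\Gamma$ (up to an $\hnmo$-null set) into $V:=\flp{x\in\Gamma:\ \nu_\Gamma(x)\cdot e_N=0}$ and $\Gamma\setminus V$. On $V$ the tangential Jacobian of $\mathbb P_{e_N}$ vanishes, so the generalized area formula gives $\hnmo(\mathbb P_{e_N}(V))=0$; on $\Gamma\setminus V$ one can, after further subdividing each $\Gamma_i$ away from its vertical-tangency set, obtain countably many Lipschitz graphs in the $e_N$-direction, and then your infimum argument goes through. Alternatively, and more in the spirit of the paper, you can avoid the decomposition entirely: either cite a measurable selection theorem as the paper does, or note that by the coarea formula the map $x'\mapsto\mathcal H^0\fsp{\Gamma\cap\fsp{\flp{x'}\times(-1,s)}}$ is $\hnmo$-measurable for each rational $s$, which makes $g(x')=\inf\flp{s:\ \mathcal H^0\fsp{\Gamma\cap\fsp{\flp{x'}\times(-1,s)}}\geq 1}$ measurable directly. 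As written, however, the measurability step of your proof does not hold up.
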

\begin{proof}
By Lemma \ref{project_lemma_lip_h} we have 
\be
\mathcal H^0(\Gamma'\cap \fsp{\flp{x'}\times (-1,1)})<+\infty
\ee
for $\hnmo$-a.e. $x'\in (-1,1)^{N-1}$. Thus, for $\hnmo$-a.e. $x'\in (-1,1)^{N-1}$, the set 
\bes
\Gamma_{x'}:=\Gamma\cap \fsp{\flp{x'}\times (-1,1)}
\ees
is a finite collection of singletons, hence closed, and by \eqref{single_selection_nonempty} is non-empty. Applying Corollary 1.1 in \cite{ekeland1976convex}, page $237$, we obtain a $\hnmo$ measurable subset $\Gamma'\subset \Gamma$ which satisfies \eqref{single_selection_one}.
\end{proof}

\begin{lemma}\label{slicing_single}
Let $\tau>0$ and $\eta>0$ be given. Let $u\in SBV(\Om)$ and assume that $\mathcal H^{N-1}(S_u)<\infty$. The following statements hold:
\begin{enumerate}[1.]
\item
there exist a set $S\subset S_u$ with $\mathcal H^{N-1}(S_u\setminus S)<\eta$, and a countable collection $\mathcal Q$ of mutually disjoint open cubes centered on elements of $S_u$ such that 
\bes
\bigcup_{Q\in\mathcal Q}Q\subset \Om,
\ees
and
\bes
\mathcal H^{N-1}\fsp{S\setminus \bigcup_{Q\in \mathcal Q}Q}=0;
\ees 
\item
for every $Q\in \mathcal Q$ there exists a direction vector $\nu_Q\in\mathcal S^{N-1}$ such that 
\be\label{slicing_property_single}
\mathcal H^0(S\cap Q_{x,\nu_Q})=1,
\ee
for $\mathcal H^{N-1}$ a.e. $x\in Q\cap S$;
\item
for every $Q\in\mathcal Q$, $S\cap Q$ is contained in a Lipschitz $(N-1)$- graph $\Gamma_Q$ with Lipschitz constant less than $\tau$.
\end{enumerate}
\end{lemma}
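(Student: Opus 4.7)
The plan is to exploit the $\hnmo$-rectifiable structure of $S_u$ together with Federer-type density theorems and then to extract, via a Vitali covering argument, a disjoint family of cubes supported at ``good'' points of $S_u$ where the jump set is nearly a flat $C^1$-graph.

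First I would apply the structure theorem for rectifiable sets (Theorem $2.76$ in \cite{ambrosio2000functions}) to write $S_u=N_0\cup\bigcup_{i=1}^\infty \Gamma_i$ as a disjoint union with $\hnmo(N_0)=0$, where each $\Gamma_i$ is, after a suitable rotation, the graph $F(l_i;K_i)$ of some $l_i\in C^1(\R^{N-1})$ over $K_i\subset\R^{N-1}$. Combining standard density theorems with Lemma \ref{lim_project_density_1}, one obtains a full $\hnmo$-measure subset $G\subset S_u$ of \emph{good} points $x_0$ at which $x_0$ belongs to exactly one $\Gamma_{i_0}$, the densities satisfy $\Theta^{N-1}(\Gamma_{i_0},x_0)=1$ and $\Theta^{N-1}(S_u\setminus \Gamma_{i_0},x_0)=0$, and $\nu_{S_u}(x_0)$ coincides with the unit normal to $\Gamma_{i_0}$ at $x_0$.

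Next, for $x_0\in G\cap\Gamma_{i_0}$ and a preassigned $\delta>0$, after choosing coordinates so that $\nu_{S_u}(x_0)=e_N$ (hence $\nabla l_{i_0}(x_0')=0$), the $C^1$-continuity of $\nabla l_{i_0}$ yields an $r_0(x_0,\tau,\delta)>0$ such that for all $r<r_0$ the restricted graph of $l_{i_0}$ over the $(N-1)$-cube $x_0'+(-r/2,r/2)^{N-1}$ is a Lipschitz $(N-1)$-graph with constant less than $\tau$, is contained in $Q_{\nu_{S_u}(x_0)}(x_0,r)\subset\Om$, and satisfies $\hnmo\bigl((S_u\setminus\Gamma_{i_0})\cap Q_{\nu_{S_u}(x_0)}(x_0,r)\bigr)\leq \delta r^{N-1}$. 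The collection of such admissible cubes is a fine cover of $G$, so a Vitali-type extraction produces a countable disjoint subfamily $\mathcal Q$ with $\hnmo(G\setminus\bigcup_{Q\in\mathcal Q}Q)=0$ and $Q\subset\Om$ for each $Q\in\mathcal Q$. For each such $Q$ with center $x_Q\in\Gamma_{i(Q)}$ I would set $\nu_Q:=\nu_{S_u}(x_Q)$, $\Gamma_Q:=F(l_{i(Q)};\,\cdot)\cap Q$, and define $S:=\bigcup_{Q\in\mathcal Q}\bigl(\Gamma_{i(Q)}\cap Q\bigr)$.

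Items $2$ and $3$ then follow immediately: $S\cap Q\subset \Gamma_Q$, which is Lipschitz with constant less than $\tau$, and any line parallel to $\nu_Q$ meets the graph $\Gamma_Q$ in at most one point and contains any $x\in S\cap Q$, giving $\mathcal H^0(S\cap Q_{x,\nu_Q})=1$. Item $1$ follows from the disjointness of $\mathcal Q$ and the density-one property $\hnmo(S_u\cap Q)\geq c\, r_Q^{N-1}$, which yields
\begin{equation*}
\hnmo(S_u\setminus S)\leq \sum_{Q\in\mathcal Q}\hnmo\bigl((S_u\setminus \Gamma_{i(Q)})\cap Q\bigr)\leq \delta\sum_{Q\in\mathcal Q}r_Q^{N-1}\leq C\delta\,\hnmo(S_u)<\eta
\end{equation*}
for $\delta$ sufficiently small. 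The hard part is the Vitali extraction with cubes whose \emph{orientation varies with the center}: one must invoke a Besicovitch-Vitali covering adapted to $\hnmo\lfloor S_u$ and rely on the uniform local upper-density estimate $\hnmo(S_u\cap Q_{\nu}(x,r))\leq C r^{N-1}$ to make the summation close up.
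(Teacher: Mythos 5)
Your proof is correct, but it follows a genuinely different route from the paper's. The paper does not simply intersect $S_u$ with the covering graph inside each cube: it first classifies the lines $\fmp{Q(x_0,r)}_{x,\nu(x_0)}$ into \emph{good} ones (meeting the reduced set $S_2$ exactly once) and \emph{bad} ones (meeting it at least twice), uses the measurable-selection Lemma \ref{single_selection} to prove $\hnmo(S_b(x_0,r))\geq 2\hnmo(T_b(x_0,r))$, and combines this with the projection-density Lemma \ref{lim_project_density_1} to show that the bad part has vanishing $(N-1)$-density; three successive Egoroff extractions ($S_2$, $S_3$, $S_4$) then uniformize the admissible radii before Besicovitch is applied, and $S$ is obtained by deleting both the bad slices and the good-slice points lying off $\Gamma_{x_Q}$. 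You instead get the single-intersection property \eqref{slicing_property_single} for free from the fact that $S\cap Q$ lies on a Lipschitz graph over $\nu_Q^{\perp}$ with constant $\tau<1$ (a line parallel to $\nu_Q$ meets such a graph at most once, and it passes through $x$ itself), and you control the discarded mass $\sum_Q\hnmo\fsp{(S_u\setminus\Gamma_{i(Q)})\cap Q}$ directly via the density-zero property of $S_u\setminus\Gamma_{i_0}$ at $\hnmo$-a.e.\ point of $\Gamma_{i_0}$ together with the packing bound $\sum_Q r_Q^{N-1}\leq C\,\hnmo(S_u)$ coming from disjointness and the lower density bound. This is shorter and avoids the selection lemma entirely; what the paper's good/bad bookkeeping buys is the extra information, reused later in \eqref{single_int_tmp} through the sets $T_g$, that the retained points sit on lines meeting the larger set $S_2$ --- not merely $S$ --- exactly once, which is not needed for the lemma as stated. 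The only steps you should make explicit are that the re-graphing of $\Gamma_{i_0}$ over the tilted plane $T_{x_0,\nu_{S_u}(x_0)}$ is legitimate (the $C^1$ regularity and $\nabla l_{i_0}(x_0')=0$ after rotation give Lipschitz constant $<\tau$ on $Q_{\nu}(x_0,r)$ for $r$ small), and that the Besicovitch--Vitali extraction applies to centered cubes of varying orientation; the paper relies on both of these as well.
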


\begin{proof}
Let $\tau,\eta>0$ be given. By Theorem $2.76$ in \cite{ambrosio2000functions}, there exist countably many Lipschitz $(N-1)$- graphs $\Gamma_i\subset \rn$ such that  (up to a rotation and a translation)
\be
\Gamma_i=\flp{(x',x_N):\,\,x'\in N_i,\,\, x_N=l_i(x')} 
\ee
with $N_i\subset \R^{N-1}$, $l_i$: $\R^{N-1}\to \R$ of class $C^1$, $\abs{\nabla l_i}<\tau$ for all $i\in\mathbb N$, and
\be\label{lip_graph_0}
\hnmo\fsp{S_u\setminus \bigcup_{i=1}^\infty\Gamma_i}=0.
\ee

Without lose of generality, we assume that 
\be\label{lip_notinter}
\hnmo(\Gamma_i\cap \Gamma_{i'})=0\text{ if }i\neq {i'}\in\mathbb N\text{, and }\hnmo(\Gamma_i)>0.
\ee
We denote by $\mathcal P$ the collection of Lipschitz $(N-1)$-graphs $\Gamma_i$ in \eqref{lip_graph_0}-\eqref{lip_notinter}. By \eqref{lip_notinter}, for $\hnmo$- a.e. $x\in S_u$ there exists only one $\Gamma\in \mathcal P$ such that $x\in \Gamma$, and we denote such $\Gamma$ by $\Gamma_x$ and we write
\bes
\Gamma_x=\flp{(y',y_N):\,\,y'\in N_x\subset \R^{N-1},\,\,y_N=l_x(y')}.
\ees
For simplicity of notation, in what follows we will abbreviate $\nu_{\Gamma_x}(x)=\nu_{S_u}(x)$ by $\nu(x)$, $Q_{\nu_{S_u}}(x,r)$ by $Q(x,r)$, and $T_{x,\nu_{S_u}}$ by $T_x$.\\\\
We also note that $\hnmo(\Gamma\cap S_u)<\hnmo(S_u)<\infty$ for each $\Gamma\in\mathcal P$. Then $\hnmo$- a.e. $x$ has density 1 in $\Gamma_x\cap S_u$ (see Theorem $2.63$ in \cite{ambrosio2000functions}). Denote by $S_1$ the set of points such that $S_u$ has density $1$ at $x$ and 
\be\label{lip_xiao}
\lim_{r\to 0}\frac{\mathcal H^{N-1}(S_u\cap\Gamma_x\cap Q_{\nu_{\Gamma_x}}(x,r))}{r^{N-1}}=1. 
\ee
Then $\hnmo(S_u\setminus S_1)=0$.\\\\
Define
\be\label{density_one_function}
f_r(x) := \frac{\mathcal H^{N-1}(S_1\cap Q(x,r))}{r^{N-1}}.
\ee
Since $f_r(x)\to 1$ as $r\to 0^+$ for $x\in S_1$, by Egoroff's Theorem there exists a set $S_2\subset S_1$ such that $\mathcal H^{N-1}(S_1\setminus S_2)<\eta/4$ and $f_r\to 1$ uniformly on $S_2$. Find $r_1>0$ such that 
\bes
\frac{\mathcal H^{N-1}(S_1\cap Q(x,r))}{r^{N-1}}\geq \frac{1}{2},
\ees
i.e.,
\be\label{big_enough_dadada_cover}
{\mathcal H^{N-1}(S_1\cap Q(x,r))}\geq \frac{1}{2}{r^{N-1}}
\ee
for all $0<r<r_1$ and $x\in S_2$. Since $S_2\subset S_1$, $S_2$ is also $\hnmo$-rectifiable and so $\hnmo$ a.e. $x\in S_2$ has density one. Without loss of generality, we assume that every point in $S_2$ has density one and satisfies \eqref{project_density_1} in Lemma \ref{lim_project_density_1}.\\\\
%
%We claim that the amount of $x\in S_3$ such that $\mathcal H^{0}(Q_x^r\cap S_3)\geq 2$ for any  can not be too many.\\\\
%
Let $x_0\in S_2$ be given and recall \eqref{slicing_notation}. We define 
\begin{align}\label{bad_good_points}
\begin{split}
&T_b(x_0,r):=\flp{x\in Q(x_0,r)\cap T_{x_0}:\,\, \mathcal H^{0}(\fmp{Q(x_0,r)}_{x,\nu(x_0)}\cap S_2)\geq 2},\\
&T_g(x_0,r):=\flp{x\in Q(x_0,r)\cap T_{x_0}:\,\, \mathcal H^{0}(\fmp{Q(x_0,r)}_{x,\nu(x_0)}\cap S_2)=1},\\
&S_b(x_0,r):=\bigcup_{x\in T_b(x_0,r)}\fsp{S_2\cap \fmp{Q(x_0,r)}_{x,\nu(x_0)}},\\
&S_g(x_0,r):=\bigcup_{x\in T_g(x_0,r)}\fsp{S_2\cap \fmp{Q(x_0,r)}_{x,\nu(x_0)}}.
\end{split}
\end{align}
Note that 
\be\label{disjoint_slice}
T_b(x_0,r)\cap T_g(x_0,r)=\emptyset\text{ and }S_b(x_0,r)\cap S_g(x_0,r)=\emptyset,
\ee
and by Proposition \ref{project_lemma_lip_h} we have
\be\label{jiuduoyidian}
\mathcal H^{N-1}(S_g(x_0,r)) \geq \mathcal H^{N-1}(T_g(x_0,r)).
\ee
We claim that
\be\label{duoleliangbei}
\mathcal H^{N-1}(S_b(x_0,r))\geq 2\mathcal H^{N-1}(T_b(x_0,r)).
\ee
By Lemma \ref{single_selection} there exists a measurable selection $S_b^1\subset S_b(x_0,r)$ such that 
\bes
\hnmo(S_b^1(x_0,r)\cap \fmp{Q(x_0,r)}_{x,\nu(x_0)})=1
\ees
for $\hnmo$-a.e. $x\in T_b(x_0,r)$. We define 
\be\label{def_1jump_2jump}
S_b^2(x_0,r):=S_b(x_0,r)\setminus S_b^1(x_0,r).
\ee
By the definition of $S_b(x_0,r)$ in \eqref{bad_good_points}, we have 
\bes
 \mathcal H^{0}(\fmp{Q(x_0,r)}_{x,\nu(x_0)}\cap S_b^1(x_0,r))\geq 1\text{ and }  \mathcal H^{0}(\fmp{Q(x_0,r)}_{x,\nu(x_0)}\cap S_b^2(x_0,r))\geq 1
\ees
for all $x\in T_b(x_0,r)$. We observe that 
\bes
\mathcal H^{N-1}(S_b(x_0,r))=\mathcal H^{N-1}(S_b^1(x_0,r))+\mathcal H^{N-1}(S_b^2(x_0,r))\geq 2\mathcal H^{N-1}(T_b(x_0,r))
\ees
by Proposition \ref{project_lemma_lip_h} and we deduce \eqref{duoleliangbei}.\\\\
We next show that 
\be\label{still_density_1}
\lim_{r\to 0}\frac{\mathcal H^{N-1}(S_b(x_0,r))}{r^{N-1}}=0.
\ee
Indeed, since $T_{x_0}$ is the tangent hyperplane to $S_2$ at $x_0$,
\bes
T_b(x_0,r)\cup T_g(x_0,r)=\mathbb P_{x_0,\nu_{S_u}}(S_2\cap Q(x_0,r)),
\ees
and by Lemma \ref{lim_project_density_1} it follows that 
\be\label{tangent_density_1}
\lim_{r\to 0} \frac{\mathcal H^{N-1}(T_b(x_0,r)\cup T_g(x_0,r))}{r^{N-1}} =1.
\ee
On the other hand, in view of \eqref{disjoint_slice}, \eqref{jiuduoyidian}, and \eqref{duoleliangbei}, we deduce that
\begin{align*}
\mathcal H^{N-1}(S_b(x_0,r)\cup S_g(x_0,r)) &= \mathcal H^{N-1}(S_b(x_0,r))+ \mathcal H^{N-1}(S_g(x_0,r)) \\
&\geq  2\mathcal H^{N-1}(T_b(x_0,r))+ \mathcal H^{N-1}(T_g(x_0,r)).
\end{align*}
That is, 
\begin{align}
\mathcal H^{N-1}(T_b(x_0,r))
&\leq \mathcal H^{N-1}(S_b(x_0,r)\cup S_g(x_0,r)) - \fmp{\mathcal H^{N-1}(T_b(x_0,r))+ \mathcal H^{N-1}(T_g(x_0,r))}\\
&=\mathcal H^{N-1}(S_b(x_0,r)\cup S_g(x_0,r)) -\mathcal H^{N-1}(T_b(x_0,r)\cup T_g(x_0,r)).\label{more_point}
\end{align}
Since $x_0\in S_2$ has density $1$, we have
\be\label{density_one_s_2}
\lim_{r\to 0} \frac{\mathcal H^{N-1}(S_b(x_0,r)\cup S_g(x_0,r)) }{r^{N-1}} =\lim_{r\to 0}\frac{\hnmo(S_2\cap Q(x_0,r))}{r^{N-1}}=1.
\ee
In view of \eqref{tangent_density_1}, \eqref{more_point}, and \eqref{density_one_s_2}, we conclude that 
\begin{align*}
&\limsup_{r\to 0}\frac{\mathcal H^{N-1}(T_b(x_0,r))}{r^{N-1}}\\
&\leq\lim_{r\to 0}\frac{\mathcal H^{N-1}(S_b(x_0,r)\cup S_g(x_0,r)) }{r^{N-1}}-\lim_{r\to 0}\frac{\mathcal H^{N-1}(T_b(x_0,r)\cup T_g(x_0,r))}{r^{N-1}}=0,
\end{align*}
which implies that
\bes
\lim_{r\to 0} \frac{\mathcal H^{N-1}(T_b(x_0,r))}{r^{N-1}}=0.
\ees
This, together with \eqref{disjoint_slice} and \eqref{tangent_density_1}, yields
\bes
\lim_{r\to 0}\frac{\mathcal H^{N-1}(T_g(x_0,r))}{r^{N-1}}=1,
\ees
and so by \eqref{jiuduoyidian} we have
\bes
\liminf_{r\to 0}\frac{\mathcal H^{N-1}(S_g(x_0,r))}{r^{N-1}}\geq \lim_{r\to 0}\frac{\mathcal H^{N-1}(T_g(x_0,r))}{r^{N-1}}=1,
\ees
while by \eqref{density_one_s_2}
\bes
\limsup_{r\to 0}\frac{\mathcal H^{N-1}(S_g(x_0,r))}{r^{N-1}}\leq \lim_{r\to 0}\frac{\mathcal H^{N-1}(S_b(x_0,r)\cup S_g(x_0,r)) }{r^{N-1}} =1,
\ees
and we conclude that
\bes
\lim_{r\to 0}\frac{\mathcal H^{N-1}(S_g(x_0,r))}{r^{N-1}}=1.
\ees
Now, also in view of \eqref{disjoint_slice} and \eqref{density_one_s_2}, we deduce \eqref{still_density_1}.\\\\
We define, for $x\in S_2$,
\bes
g_r(x):=\frac{\mathcal H^{N-1}(S_b(x,r))}{r^{N-1}}.
\ees
By \eqref{still_density_1} we have $\lim_{r\to 0} g_r(x)=0$ for all $x\in S_2$, therefore by Egoroff's Theorem there exists a set $S_3\subset S_2$ such that 
\bes
\hnmo(S_2\setminus S_3)<\frac{\eta}{4}
\ees
and $g_r\to 0$ uniformly on $S_3$. Choose $0<r_2<r_1$ such that 
\be\label{bad_small_enough}
\frac{\mathcal H^{N-1}(S_b(x,r))}{r^{N-1}}<\frac{\eta}{16}\frac{1}{\mathcal H^{N-1}(S_u)}
\ee
for all $x\in S_3$ and $0<r<r_2$. We claim that, for $x\in S_3$ and the corresponding $\Gamma_x\in \mathcal P$,
\be\label{lip_daxiao}
\lim_{r\to 0}\frac{\hnmo\fsp{{S_g(x,r)}\setminus\fmp{S_u\cap\Gamma_x\cap Q(x,r)}}}{r^{N-1}}=0.
\ee
Suppose that 
\bes
0<\limsup_{r\to 0}\frac{\hnmo\fsp{{S_g(x,r)}\setminus\fmp{S_u\cap\Gamma_x\cap Q(x,r)}}}{r^{N-1}}=:\delta.
\ees
By \eqref{lip_xiao}, and the fact that $\Gamma_x\subset S_u$, we have that
\begin{align*}
1=&\lim_{r\to 0}\frac{\hnmo(S_u\cap Q(x,r))}{r^{N-1}}\\
=&\lim_{r\to 0}\frac{\hnmo\fsp{\fsp{\fmp{S_u\cap Q(x,r)}\setminus\fmp{S_u\cap\Gamma_x\cap Q(x,r)}}\cup \fmp{S_u\cap\Gamma_x\cap Q(x,r)}}}{r^{N-1}}\\
\geq & \limsup_{r\to 0}\frac{\hnmo\fsp{\fmp{S_g(x,r)}\setminus\fmp{S_u\cap\Gamma_x\cap Q(x,r)}}}{r^{N-1}}\\
&+\lim_{r\to 0}\frac{\hnmo{\fmp{S_u\cap\Gamma_x\cap Q(x,r)}}}{r^{N-1}}\\
= &\, \delta+1>1,
\end{align*}
which is a contradiction.\\\\
We define, for $x\in S_3$, 
\bes
h_r(x):=\frac{\hnmo\fsp{{S_g(x,r)}\setminus\fmp{S_u\cap\Gamma_x\cap Q(x,r)}}}{r^{N-1}}.
\ees
By \eqref{lip_daxiao} $\lim_{r\to 0} h_r(x)=0$ for all $x\in S_3$, therefore by Egoroff's Theorem there exists a set of $S_4\subset S_3$ such that 
\bes
\hnmo(S_3\setminus S_4)<\frac\eta4,
\ees
and $h_r\to0$ uniformly on $S_4$. Choose $0<r_3<r_2$ such that 
\be\label{out_sm_enough}
\frac{\hnmo\fsp{{S_g(x,r)}\setminus\fmp{S_u\cap\Gamma_x\cap Q(x,r)}}}{r^{N-1}}<\frac{\eta}{16}\frac{1}{\hnmo(S_u)}
\ee
for all $x\in S_4$ and $0<r<r_3$, and let
\begin{align}\label{fine_cover_S3}
\mathcal Q':=\flp{Q(x,r):\,\,x\in S_4,\,0<r<r_3}.
\end{align}
By Besicovitch's Covering Theorem we may extract a countable collection $\mathcal Q$ of mutually disjoint cubes from $\mathcal Q'$ such that 
\be
\bigcup_{Q\in \mathcal Q}Q\subset \Om\text{ and }\mathcal H^{N-1}\fsp{S_4\setminus \fsp{\bigcup_{Q\in\mathcal Q}Q}}=0.
\ee
Define
\be\label{define_S}
S:=S_4\setminus \fmp{\fsp{\bigcup_{Q\in\mathcal Q}S_b(x_Q, r_Q)}\cup\fsp{\bigcup_{Q\in\mathcal Q}\fmp{S_g(x_Q, r_Q)\setminus \fsp{S_u\cap\Gamma_{x_Q}\cap Q}}}},
\ee
where $x_Q$ is the center of cube $Q$ and $r_Q$ is the side length of $Q$. Note that the set $S$ satisfies properties 2 and 3 in the statement of Lemma \ref{slicing_single}. Finally, we show that
\bes
\hnmo(S_u\setminus S)<\eta.
\ees
Indeed, in view of \eqref{bad_small_enough} and \eqref{out_sm_enough}, and using the fact that the cubes $Q\in\mathcal Q$ are mutually disjoint, we have
\be\label{step_mid_eps1}
\hnmo\fsp{\bigcup_{Q\in\mathcal Q}S_b(x_Q,r_Q)} = \sum_{Q\in\mathcal Q}\hnmo(S_b(x_Q,r_Q))\leq \frac{\eta}{16\hnmo(S_u)}\sum_{Q\in\mathcal Q} r_Q^{N-1},
\ee
and 
\begin{multline}\label{step_mid_eps2}
\hnmo\fsp{\bigcup_{Q\in\mathcal Q}\fmp{S_g(x_Q, r_Q)\setminus \fsp{S_u\cap\Gamma_{x_Q}\cap Q}}}\\ = \sum_{Q\in\mathcal Q}\hnmo({S_g(x_Q, r_Q)\setminus \fsp{S_u\cap\Gamma_{x_Q}\cap Q}})\leq \frac{\eta}{16\hnmo(S_u)}\sum_{Q\in\mathcal Q} r_Q^{N-1}.
\end{multline}
By \eqref{big_enough_dadada_cover} we obtain
\be\label{step_mid_fin_eps}
\sum_{Q\in\mathcal Q} \frac{1}{2}r_Q^{N-1}\leq \sum_{Q\in \mathcal Q}{\mathcal H^{N-1}(S_1\cap Q)} = \hnmo\fsp{\bigcup_{Q\in \mathcal Q} S_u\cap Q}\leq\hnmo(S_u).
\ee
Using \eqref{step_mid_eps1}, \eqref{step_mid_eps2}, and \eqref{step_mid_fin_eps}, we deduce that 
\bes
\hnmo\fsp{\bigcup_{Q\in\mathcal Q}S_b(x_Q,r_Q)} \leq \frac{\eta}{8},
\ees
and
\bes
\hnmo\fsp{\bigcup_{Q\in\mathcal Q}\fmp{S_g(x_Q, r_Q)\setminus \fsp{S_u\cap\Gamma_{x_Q}\cap Q}}}\leq \frac\eta8,
\ees
and so by \eqref{define_S} we get
\bes
\hnmo(S_4\setminus S)\leq\frac{\eta}{4}.
\ees
Since $S\subset S_4\subset S_3\subset S_2\subset S_1\subset S_u$, we conclude that
\begin{align}
\begin{split}
&\hnmo(S_u\setminus S)\\
\leq& \hnmo(S_u\setminus S_1)+\hnmo(S_1\setminus S_2)+\hnmo(S_2\setminus S_3)+\hnmo(S_3\setminus S_4)+\hnmo(S_4\setminus S)\\
\leq&\frac\eta4+\frac\eta4+\frac\eta4+\frac\eta4 = \eta.
\end{split}
\end{align}
\end{proof}
\begin{lemma}\label{cont_arb_dic_slice}
Let $\om\in C(\Om)$ be nonnegative, let $\Gamma\subset\Om$ be a $\hnmo$-rectifiable set, and let $\tau\in(0,1)$ be given. Then for $\hnmo$-a.e. $x_0\in \Gamma$, there exists $r_0:=r_0(x_0)>0$ such that for each $0<r<r_0$ there exist $t_0\in (-\tau r/4, \tau r/4)$ and $0<t_{0,r}=t_{0,r}(t_0,\tau,x_0,r)<\abs{t_0}$ such that 
\begin{multline}\label{up_est_cont_lem}
\sup_{0<t\leq t_{0,r}}\frac{1}{\abs{I(t_0,t)}}\int_{I(t_0,t)}\int_{Q_{\nu_{\Gamma}}(x_0,r)\cap T_{x_0,\nu_{\Gamma}}(l)}{\om(x)}\,d\hnmo d l\\
\leq \int_{ Q_{\nu_{\Gamma}}(x_0,r)\cap \Gamma}\om(x)\,d\hnmo+(1+\om(x_0))O(\tau) r^{N-1},
\end{multline}
where $I(t_0,t):=(t_0-t, t_0+t)$,  $T_{x_0,\nu_\delta}( l):= T_{x_0,\nu_{\Gamma}}+l\nu_\Gamma$.
\end{lemma}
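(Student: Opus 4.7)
The plan is to combine the continuity of $\om$ (which controls its oscillation on small cubes) with the unit $(N-1)$-dimensional density of the rectifiable set $\Gamma$ at $\hnmo$-a.e.\ point. The key observation is that, for small $r$, both sides of \eqref{up_est_cont_lem} are close to $\om(x_0) r^{N-1}$, and the discrepancy will be absorbed by the $(1+\om(x_0)) O(\tau) r^{N-1}$ error.

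First, I would isolate a ``good set'' $\Gamma^{*}\subset\Gamma$ of full $\hnmo$-measure on which $\Gamma$ has unit density with respect to cubes oriented along $\nu_\Gamma$, namely
\bes
\lim_{r\to 0}\frac{\hnmo(\Gamma\cap Q_{\nu_\Gamma}(x_0,r))}{r^{N-1}}=1.
\ees
This is classical for $\hnmo$-rectifiable sets and was already exploited in the proof of Lemma~\ref{slicing_single} (cf.\ \eqref{density_one_s_2}). For fixed $x_0\in\Gamma^{*}$, I would then choose $r_0=r_0(x_0,\tau)>0$ small enough so that $Q_{\nu_\Gamma}(x_0,r_0)\subset\Om$ and that, for every $0<r\leq r_0$, both the oscillation bound
\bes
|\om(x)-\om(x_0)|\leq \tau\quad\text{for all }x\in Q_{\nu_\Gamma}(x_0,r)
\ees
(by continuity of $\om$) and the density bound
\bes
(1-\tau) r^{N-1}\leq \hnmo(\Gamma\cap Q_{\nu_\Gamma}(x_0,r))
\ees
hold.

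Given $0<r<r_0$, I would pick any $t_0\in(-\tau r/4,\tau r/4)\setminus\{0\}$ and set $t_{0,r}:=|t_0|/2$, which satisfies $0<t_{0,r}<|t_0|$ as required. For every $0<t\leq t_{0,r}$, the slab $I(t_0,t)$ is contained in $(-\tau r/2,\tau r/2)\subset(-r/2,r/2)$, so for every $l\in I(t_0,t)$ the hyperplane $T_{x_0,\nu_\Gamma}(l)$ intersects $Q_{\nu_\Gamma}(x_0,r)$ in an $(N-1)$-dimensional cube of side $r$, which has $\hnmo$-measure equal to $r^{N-1}$. The oscillation estimate then yields
\bes
\int_{Q_{\nu_\Gamma}(x_0,r)\cap T_{x_0,\nu_\Gamma}(l)}\om\,d\hnmo\leq (\om(x_0)+\tau)\, r^{N-1}
\ees
uniformly in $l$; averaging over $l\in I(t_0,t)$ and taking the supremum over $t$ bounds the left-hand side of \eqref{up_est_cont_lem} by $(\om(x_0)+\tau)r^{N-1}$. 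For the first term on the right-hand side, combining oscillation and density gives
\bes
\int_{Q_{\nu_\Gamma}(x_0,r)\cap \Gamma}\om\,d\hnmo\geq (\om(x_0)-\tau)(1-\tau)\, r^{N-1}.
\ees
Subtracting, the excess is bounded by
\bes
\bigl[(\om(x_0)+\tau)-(\om(x_0)-\tau)(1-\tau)\bigr]\, r^{N-1} = (2\tau-\tau^2+\om(x_0)\tau)\, r^{N-1}\leq (1+\om(x_0))\,O(\tau)\, r^{N-1},
\ees
which is precisely \eqref{up_est_cont_lem}.

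The main conceptual point is the simultaneous control of the oscillation of $\om$ and the density of $\Gamma$; once these two ingredients are in hand the inequality is immediate, so I do not expect a serious obstacle. The constraint $0<t_{0,r}<|t_0|$ forces the slab $I(t_0,t)$ to lie strictly on one side of $T_{x_0,\nu_\Gamma}$, which will presumably be exploited in subsequent applications of the lemma but plays no active role in the estimate itself.
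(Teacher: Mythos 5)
Your proposal is correct, but it takes a genuinely simpler route than the paper. The paper never uses a pointwise oscillation bound on $\om$: it only controls the \emph{averaged} oscillation, $\fint_{Q_{\nu_\Gamma}(x_0,r)}\abs{\om-\om(x_0)}\,dx\leq\tau^2$ and $\int_{Q_{\nu_\Gamma}(x_0,r)\cap\Gamma}\abs{\om-\om(x_0)}\,d\hnmo\leq O(\tau)r^{N-1}$, and must therefore \emph{select} a good slice height: Fubini plus the mean value theorem produce a positive-measure set $A\subset(-\tau r/4,\tau r/4)$ of heights $t_0$ whose slice integral of $\abs{\om-\om(x_0)}$ is at most $2\tau r^{N-1}$, and a Lebesgue-point argument then controls the supremum of the averages over small windows $I(t_0,t)$. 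You instead exploit continuity of $\om$ at $x_0$ to get the uniform bound $\abs{\om(x)-\om(x_0)}\leq\tau$ on the whole cube, after which \emph{every} slice satisfies $\int_{Q\cap T_{x_0,\nu_\Gamma}(l)}\om\,d\hnmo\leq(\om(x_0)+\tau)r^{N-1}$ and the choice of $t_0$ and $t_{0,r}$ is immaterial; combined with the lower density bound for $\hnmo(\Gamma\cap Q)$ this closes the estimate with the same error $(1+\om(x_0))O(\tau)r^{N-1}$. Your argument is valid and shorter for $\om\in C(\Om)$. What the paper's heavier machinery buys is that the identical template transfers to Proposition \ref{SBV_Simple_jump_approx}, where $\om\in SBV(\Om)$ admits no pointwise oscillation bound and one must both average to find $t_0$ and confine the window $I(t_0,t)$ to one side of $S_\om$ to capture $\om^-$; in that setting your observation that the constraint $0<t_{0,r}<\abs{t_0}$ ``plays no active role'' fails --- it is exactly the one-sidedness that becomes essential there.
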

\begin{proof}
Fix $x_0\in \Gamma$ with density $1$ and let $\tau>0$ be given. There exists $r_1>0$ such that 
\be\label{length_a_equal_cont}
 \frac{1}{1+\tau^2}\leq\frac{\mathcal H^{N-1}(\Gamma\cap Q_{\nu_\Gamma}(x_0,r))}{r^{N-1}}\leq 1+\tau^2,
\ee
for all $0<r<r_1$. Since by continuity of $\om$ we have that 
\bes
\lim_{r\to 0}\fint_{Q_{\nu_\Gamma}(x_0,r)}\abs{ \om(x)- \om(x_0)}dx=0,
\ees
and 
\bes
\lim_{r\to 0}\fint_{Q_{\nu_\Gamma}(x_0,r)\cap \Gamma}\abs{ \om(x)- \om(x_0)}d\hnmo=0,
\ees
we may choose $0<r_2<r_1$ such that for all $0<r<r_2$
\begin{align*}
\fint_{Q_{\nu_\Gamma}(x_0,r)}\abs{ \om(x)- \om(x_0)}dx\leq \tau^2, 
\end{align*}
and
\be\label{phase_two_cont}
\int_{Q_{\nu_\Gamma}(x_0,r)\cap \Gamma} \abs{ \om(x)- \om(x_0)}d\hnmo\leq \frac{\tau}{1+\tau^2} \hnmo(Q_{\nu_\Gamma}(x_0,r)\cap\Gamma)\leq O(\tau) r^{N-1},
\ee
where we used \eqref{length_a_equal_cont}. \\\\
Therefore
\bes
\int_{-\tau r/4}^{\tau r/4} \int_{Q_{\nu_\Gamma}(x_0,r)\cap T_{x_0,\nu_\Gamma}(t)}\abs{ \om(x)- \om(x_0)}d\hnmo dt\leq \int_{Q_{\nu_\Gamma}(x_0,r)}\abs{ \om(x)- \om(x_0)}dx\leq \tau^2 r^N,
\ees
and by the Mean Value Theorem there exists a set $A\subset (-\tau r/4,\tau r/4)$ with positive 1 dimensional Lebesgue measure such that for every $t\in A$, 
\be\label{phase_four_cont}
\int_{Q_{\nu_\Gamma}(x_0,r)\cap T_{x_0,\nu_\Gamma}(t)}\abs{ \om(x)- \om(x_0)}d\hnmo \leq 2\tau r^{N-1}.
\ee
If $t_0\in A$ then we have, by the continuity of $\om$,
\be\label{lebesgue_point_line}
\lim_{t\to 0}\frac{1}{\abs{{I(t_0,t)}}}\int_{{I(t_0,t)}}\int_{Q_{\nu_\Gamma}(x_0,r)\cap T_{x_0,\nu_\Gamma}(l)}{ \om(x)}d\hnmo dl= \int_{Q_{\nu_\Gamma}(x_0,r)\cap T_{x_0,\nu_\Gamma}(t_0)}{ \om(x)}d\hnmo,
\ee
hence there exists $t_{0,r}>0$, depending on $r$, $t_0$, $\tau$, and $x_0$, such that ${I(t_0,t_{0,r})}\subset (-\tau r/2,\tau r/2)$ and
\begin{multline}\label{phase_one_cont}
\sup_{0<t\leq t_{0,r}}\frac{1}{\abs{{I(t_0,t)}}}\int_{{I(t_0,t)}}\int_{Q_{\nu_\Gamma}(x_0,r)\cap T_{x_0,\nu_\Gamma}(l)}{\om(x)}d\hnmo dl\\
\leq \int_{Q_{\nu_\Gamma}(x_0,r)\cap T_{x_0,\nu_\Gamma}(t_0)}{ \om(x)}d\hnmo + O(\tau) r^{N-1}.
\end{multline}
Moreover, since
\bes
\hnmo\fmp{{Q_{\nu_\Gamma}(x_0,r)\cap T_{x_0,\nu_\Gamma}(t_0)}}= \hnmo\fmp{Q_{\nu_\Gamma}(x_0,r)\cap T_{x_0,\nu_\Gamma}},
\ees
we have
\begin{align}\label{phase_three_cont}
\begin{split}
{\int_{Q_{\nu_\Gamma}(x_0,r)\cap T_{x_0,\nu_\Gamma}(t_0)} \om(x_0)d\hnmo} &= {\int_{Q_{\nu_\Gamma}(x_0,r)\cap T_{x_0,\nu_\Gamma}} \om(x_0)d\hnmo}\\
&= \om(x_0) r^{N-1}\leq (1+\tau^2)\int_{Q_{\nu_\Gamma}(x_0,r)\cap \Gamma} \om(x_0)d\hnmo\\
&\leq \int_{Q_{\nu_\Gamma}(x_0,r)\cap \Gamma} \om(x_0)d\hnmo + O(\tau)r^{N-1},
\end{split}
\end{align}
where in the last inequality we used \eqref{length_a_equal_cont}, the non-negativeness of $\om$.\\\\
By \eqref{phase_one_cont}, \eqref{phase_four_cont}, in this order, for every $r\leq  r_2$ there exist $t_0\in (-\tau r/4,\tau r/4)$ and $0<t_{0,r}<\abs{t_0}$, depending on $t_0$, $\tau$, $x_0$ and $r$, such that 
\begin{align}
\sup_{0<t\leq t_{0,r}}\frac{1}{\abs{{I(t_0,t)}}}&\int_{{I(t_0,t)}}\int_{Q_{\nu_\Gamma}(x_0,r)\cap T_{x_0,\nu_\Gamma(x_0)}(l)}{ \om(x)}\,d\hnmo dl\\
&\leq \om(x_0)\hnmo\fsp{Q_{\nu_\Gamma}(x_0,r)\cap T_{x_0,\nu_\Gamma(x_0)}}+O(\tau)r^{N-1}=\om(x_0)r^{N-1}+O(\tau)r^{N-1}\\
&\leq\om(x_0)(1+\tau^2)\hnmo\fsp{\Gamma\cap Q_{\nu_\Gamma}(x_0,r)}+O(\tau)r^{N-1}\\
\end{align}
where we used \eqref{length_a_equal_cont} in the last inequality. Finally, by \eqref{phase_two_cont} we conclude that 
\begin{multline}\label{easy_part_done_here}
\sup_{0<t\leq t_{0,r}}\frac{1}{\abs{{I(t_0,t)}}}\int_{{I(t_0,t)}}\int_{Q_{\nu_\Gamma}(x_0,r)\cap T_{x_0,\nu_\Gamma(x_0)}(l)}{ \om(x)}\,d\hnmo dl\\
\leq \int_{Q_{\nu_\Gamma}(x_0,r)\cap \Gamma} \om(x)\,d\hnmo+(1+\om(x_0))O(\tau) r^{N-1},
\end{multline}
as desired.
\end{proof}

\begin{proposition}\label{cont_control_jump}
Let $\om\in C(\Om)$ be nonnegative, let $\Gamma\subset\Om$ be a $\hnmo$-rectifiable set with $\hnmo(\Gamma)<+\infty$, and let $\tau\in(0,1)$ be given. Then there exist a set $S\subset\Om$ and a countable family of disjoint cubes $\mathcal F=\flp{Q_{\nu_\Gamma}(x_n,r_n)}_{n=1}^{\infty}$ with $r_n\leq \tau$, for all $n\in\mathbb N$, such that the following hold:
\begin{enumerate}[1.]
\item
$\hnmo(\Gamma\setminus S)<\tau$, $S\subset \bigcup_{n=1}^\infty Q_{\nu_\Gamma}(x_n,r_n)$;
\item
$\hnmo\fsp{S\cap Q_{\nu_\Gamma}(x_n,r)}\leq (1+\tau^2)r^{N-1}$ for all $0<r<r_n$;
\item
$S\cap Q_{\nu_\Gamma}(x_n,r_n)\subset R_{\tau/2,\nu_\Gamma}(x_n,r_n)$;
\item
if $0<\kappa<1$ then for every $n\in\mathbb N$ there exist $t^\kappa_n\in (-\kappa r_n/4,\kappa r_n/4)$ and $0<t^\kappa_{x_n,r_n}<\abs{t_n^\kappa}$, depending on $\tau$, $x_n$, and $\kappa r_n$, such that
\begin{multline}\label{upper_sup_est_cont}
\sup_{0<t\leq t^\kappa_{x_n,r_n}}\frac{1}{\abs{{I(t^\kappa_n,t)}}}\int_{{I(t^\kappa_n,t)}}\int_{Q_{\nu_\Gamma}(x_n,\kappa r_n)\cap T_{x_n,\nu_\Gamma}( l)}{\om(x)}d\hnmo d l\\
\leq \int_{\Gamma\cap Q_{\nu_\Gamma}(x_n,\kappa r_n)}\om(x)d\hnmo+  (1+\om(x_n))O(\tau) (\kappa r_n)^{N-1},
\end{multline}
where ${I(t^\kappa_n,t)}:=(t_n^\kappa-t, t_n^\kappa+t)$. 
\end{enumerate}
\end{proposition}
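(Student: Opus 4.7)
Plan of proof: The strategy is parallel to the proof of Lemma~\ref{slicing_single}, combining rectifiability of $\Gamma$, Egoroff's theorem, Besicovitch's covering theorem and Lemma~\ref{cont_arb_dic_slice}. By Theorem~2.76 of \cite{ambrosio2000functions} I first write $\Gamma = \Gamma_0\cup\bigcup_{i\ge 1}\Gamma_i$ with $\hnmo(\Gamma_0)=0$ and each $\Gamma_i$ an essentially pairwise disjoint $C^1$ $(N-1)$-graph $\{x_N=l_i(x')\}$ over $N_i\subset \R^{N-1}$. For $\hnmo$-a.e.\ $x_0\in\Gamma$ there is a unique $i(x_0)$ with $x_0\in\Gamma_{i(x_0)}$, and the point has density one, an approximate tangent plane with normal $\nu_\Gamma(x_0)$, and after rotating so that $\nu_\Gamma(x_0)=e_N$ we have $\nabla l_{i(x_0)}(x_0')=0$.

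At every such $x_0$ three quantities vanish as $r\to 0$: (i) $\sup_{|y'-x_0'|\le r\sqrt{N-1}}|\nabla l_{i(x_0)}(y')|$, by continuity of the gradient; (ii) $|r^{-(N-1)}\hnmo(\Gamma\cap Q_{\nu_\Gamma}(x_0,r)) - 1|$, by density one together with Lemma~\ref{lim_project_density_1} and the area formula; and (iii) the defect in \eqref{up_est_cont_lem} provided by Lemma~\ref{cont_arb_dic_slice}. The first quantity controls the strip containment $\Gamma\cap Q_{\nu_\Gamma}(x_0,r)\subset R_{\tau/2,\nu_\Gamma}(x_0,r)$, since $l_{i(x_0)}$ varies by less than $\tau r/2$ across the base of the cube; the second gives the density bound $\hnmo(\Gamma\cap Q_{\nu_\Gamma}(x_0,r))\le (1+\tau^2)r^{N-1}$. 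Applying Egoroff's theorem to these three pointwise-small quantities on the set of good points, I extract a $\hnmo$-measurable subset $S^*\subset\Gamma$ with $\hnmo(\Gamma\setminus S^*)<\tau$ and a single threshold $r_0<\tau$ such that all three bounds hold for every $x_0\in S^*$ and every $0<r<r_0$, in particular at every radius $\kappa r$ with $\kappa\in(0,1)$.

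The collection $\{Q_{\nu_\Gamma}(x_0,r):\,x_0\in S^*,\,0<r<r_0\}$ is then a fine cover of $S^*$; by Besicovitch's covering theorem I extract a pairwise disjoint countable subfamily $\mathcal F=\{Q_{\nu_\Gamma}(x_n,r_n)\}_{n\in\N}$, with $r_n<r_0\le\tau$, covering $S^*$ up to a $\hnmo$-null set. Setting $S:=S^*\cap\bigcup_n Q_{\nu_\Gamma}(x_n,r_n)$, property (1) follows from $\hnmo(\Gamma\setminus S)\le\hnmo(\Gamma\setminus S^*)<\tau$; properties (2) and (3) follow from the uniform bounds at every radius $0<r\le r_n<r_0$; and (4) is Lemma~\ref{cont_arb_dic_slice} invoked at each $x_n$ at scale $\kappa r_n<r_0$, whence $t^\kappa_n$ and $t^\kappa_{x_n,r_n}$.

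The main technical obstacle is the Besicovitch extraction, since the cubes in the fine cover carry the varying orientations $\nu_\Gamma(x_0)$. Because $\nu_\Gamma$ is continuous on each $C^1$ piece $\Gamma_i$, one can run the covering argument piece by piece after a local rotation, possibly after a further Egoroff-style truncation that keeps the directional variation within each $\Gamma_i$ small enough for a standard Vitali-type extraction of axis-aligned cubes, and then take the countable union; alternatively one can invoke a Morse--Besicovitch version of the covering theorem for closely directed cubes. In either case disjointness is preserved and the measure lost from $\Gamma$ remains below $\tau$.
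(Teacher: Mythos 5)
Your overall architecture is the same as the paper's: decompose $\Gamma$ into Lipschitz/$C^1$ graphs via Theorem~2.76 of \cite{ambrosio2000functions}, use Egoroff to make the pointwise density and tangency estimates uniform on a subset losing at most $\tau$ in measure, extract a disjoint countable subfamily by a Besicovitch-type covering argument, and invoke Lemma~\ref{cont_arb_dic_slice} at each center for item~4. Your remarks about the orientation issue in the Besicovitch step are if anything more careful than the paper, which applies the covering theorem without comment.

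There is, however, one genuine gap: your justification of the strip containment in item~3. You argue that since $\nabla l_{i(x_0)}(x_0')=0$ and $l_{i(x_0)}$ is $C^1$, the graph varies by less than $\tau r/2$ across the base of $Q_{\nu_\Gamma}(x_0,r)$, and you conclude $\Gamma\cap Q_{\nu_\Gamma}(x_0,r)\subset R_{\tau/2,\nu_\Gamma}(x_0,r)$. But this only controls the points of $\Gamma_{i(x_0)}$ inside the cube. The cube may also contain points of $S$ lying on \emph{other} graphs $\Gamma_j$, $j\neq i(x_0)$, and these can sit anywhere in $Q_{\nu_\Gamma}(x_0,r)$, well outside $R_{\tau/2,\nu_\Gamma}(x_0,r)$, no matter how small $r$ is (a nearly orthogonal piece of $\Gamma_j$ passing close to $x_0$ is not excluded by density one or by the measure bound $\hnmo(\Gamma\cap Q)\leq(1+\tau^2)r^{N-1}$, since it may carry very little measure). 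The conclusion of item~3 is only about $S\cap Q$, so the fix is to build the extra separation into $S$ itself: this is exactly what the paper does by setting $L_i:=S_2\cap\Gamma_i$, passing to subsets $L_i'\subset L_i$ with $\hnmo(L_i\setminus L_i')<\tau 2^{-i}/3$ and $d_{ij}:=\operatorname{dist}(L_i',L_j')>0$, defining $S:=\bigcup_i L_i'$, and then restricting to radii $r<d/2$ with $d:=\min_{i\neq j}d_{ij}$, so that a cube centered on $L_i'$ meets no other $L_j'$. (This requires the decomposition to be reduced to \emph{finitely} many graphs first, which is why the paper takes $S_1$ compact and contained in finitely many Lipschitz graphs with constant below $\tau/(2\sqrt N)$, rather than working with the full countable $C^1$ decomposition as you do.) Without some such separation step your set $S^*$ does not satisfy item~3, so you should insert it between the Egoroff stage and the covering stage.
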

\begin{proof}
Let $\tau\in(0,1)$ and $\kappa\in(0,1)$ be given. Since $\hnmo(\Gamma)<\infty$, there exists  $S_1\subset \Gamma$ such that $\hnmo(\Gamma\setminus S_1)<\tau/3$, $S_1$ is compact and contained in a finite union of $(N-1)$-Lipschitz graphs $\Gamma_i$, $i=1,\ldots, M$, with Lipschitz constants less than $\tau/(2\sqrt{N})$. \\\\
Moreover, since $\hnmo$ a.e. $x\in S_1$ a point of density one, by Egorov's Theorem, we may find $S_2\subset S_1$ such that $\hnmo(S_1\setminus S_2)<\tau/3$ and there exists $r_1>0$ such that for all $0<r<r_1$ and $x\in S_2$,
\bes
\hnmo\fsp{S_1\cap Q_{\nu_\Gamma}(x,r)}\leq (1+\tau^2)r^{N-1}.
\ees
Let $L_i:=S_2\cap \Gamma_i$ and without lose of generality we assume that $L_i$ are mutually disjoint. Let $L_i'\subset L_i$ be such that 
\bes
\hnmo(L_i\setminus L_i')<\frac{\tau}{3}\frac{1}{2^i}\text{ and }d_{ij}:=\operatorname{dist}(L_i',L_j')>0
\ees
for $i\neq j$. We observe that
\bes
\hnmo\fsp{S_2\setminus \bigcup_{i=1}^M L_i'}<\frac\tau3\text{ and }d:=\min_{i\neq j}\flp{d_{ij}}>0.
\ees 
Define
\bes
S:=\bigcup_{i=1}^M L_i'.
\ees
We claim that there exists $0<r_2<\min\flp{\tau^2,d/2,r_1}$ such that for every $0<r<r_2$ and every $x$, $y\in S$ with $\abs{x-y}<\sqrt{N}r$ we have
\bes
 S\cap Q_{\nu_\Gamma}(x,r)\subset R_{\tau/2,\nu_\Gamma}(x,r),
\ees
where we are using the notation introduced in Notation \ref{multi_not}. Indeed, to verify this inclusion, we write (up to a rotation)
\bes
S\cap Q_{\nu_\Gamma}(x,r)=\flp{(y', \l_x(y')):\,\, y\in T_{x,\nu_\Gamma}\cap Q_{\nu_\Gamma}(x,r)}\subset \Gamma_{x}
\ees
where $y'$. Assuming, without loss of generality, that $x=0$ and $l_x(0)=0$, we have for all $y\in T_{0,\nu_\Gamma}\cap Q_{\nu_\Gamma}(0,r)$ 
\bes
\abs{l_0(y)}\leq\norm{\nabla l_0}_{L^\infty}\leq\frac{1}{2}\tau r
\ees
because for every $y\in S\cap Q_{\nu_\Gamma}(0,r)$ we have $\abs{y}<\sqrt{N}r$.\\\\
Next, for $\hnmo$-a.e. $x\in S$ we may find $r_2(x)>0$ such that $Q_{\nu_\Gamma}(x,r_3)\subset \Om$ and $\kappa r_2(x)\leq r_0(x)$ where $r_0(x)$ is determined in Lemma \ref{cont_arb_dic_slice}. Let $\bar r_0(x):=\min\flp{r_1,r_2(x)}$. The collection
\bes
\mathcal F':=\flp{Q_{\nu_\Gamma}(x,r):\,\,x\in S, \,\,r<\bar r_0(x)}
\ees
is a fine cover for $S$, and so by Besicovitch's Covering Theorem we may obtain a countable sub-collection $\mathcal F\subset \mathcal F'$ with pairwise disjoint cubes such that
\bes
S\subset \bigcup_{Q_{\nu_\Gamma}(x_n,r_n)\in\mathcal F}Q_{\nu_\Gamma}(x_n,r_n).
\ees
For each $Q_{\nu_\Gamma}(x_n,r_n)\in\mathcal F$ we apply Lemma \ref{cont_arb_dic_slice} to obtain $t^\kappa_n\in (-\kappa r_n/4,\kappa r_n/4)$ and $t^\kappa_{x_n,r_n}>0$, depending on $t_{x_n}^\kappa$, $\tau$, $\kappa r_n$, and $x_n$, such that \eqref{upper_sup_est_cont} hold.\\\\
Finally, we observe that 
\bes
\hnmo(\Gamma\setminus S)\leq \hnmo(\Gamma\setminus S_1)+\hnmo(S_1\setminus S_2)+\hnmo(S_2\setminus S)\leq \tau,
\ees
and this completes the proof.
\end{proof}

\begin{proposition}\label{cont_ready_coro_limsup}
Let $\om\in C(\Om)$ be nonnegative, let $\Gamma\subset\Om$ be $\hnmo$-rectifiable with $\hnmo(\Gamma)<+\infty$, and let $\tau\in(0,1)$ be given. There exists a $\hnmo$-rectifiable set $S\subset \Gamma$ and a countable family of disjoint cubes $\mathcal F=\flp{Q_{\nu_\Gamma}(x_n,r_n)}_{n=1}^{\infty}$ with $r_n< \tau$ such that the following hold:
\begin{enumerate}[1.]
\item
\be\label{cont_sdfsfsdfs}
\hnmo(\Gamma\setminus S)<\tau,\,S\subset \bigcup_{n=1}^{\infty}Q_{\nu_\Gamma}(x_n,r_n),\text{ and }S\cap Q_{\nu_\Gamma}(x_n,r_n)\subset R_{\tau/2,\nu_\Gamma}(x_n,r_n);
\ee
\item
\be\label{cont_sdfsfsdfs2}
\hnmo\fsp{S\cap Q_{\nu_\Gamma}(x_n,r_n)}\leq (1+\tau^2)r_n^{N-1};
\ee
\item
for $n\neq m$
\be\label{disjoint_col_c}
\operatorname{dist}(Q_{\nu_\Gamma}(x_n,r_n),Q_{\nu_\Gamma}(x_m,r_m))>0;
\ee
\item
\be\label{cont_coro_finitesum}
\sum_{n=1}^{+\infty} r_n^{N-1}\leq 4\hnmo(\Gamma)
\ee
\item
for each $n\in\mathbb N$ there exist $t_{n}\in (-\tau { r_n}/4,\tau{ r_n}/4)$ and $0<t_{x_n,r_n}<\abs{t_n}$, depending on $\tau$, $r_n$, and $x_n$, such that $T_{x_n,\nu_\Gamma}(t_n\pm t_{x_n,r_n})\subset R_{\tau/2,\nu_\Gamma}(x_n,{ r_n})$ and
\begin{multline}\label{upper_sup_ready_limsup_cont}
\sup_{0<t\leq t_{x_n,r_n}}\frac{1}{\abs{{I(t_n,t)}}}\int_{{I(t_n,t)}}\int_{Q_{\nu_\Gamma}(x_n,{r_n})\cap T_{x_n,\nu_\Gamma}( l)}{\om(x)}d\hnmo d l\\
\leq \int_{S\cap Q_{\nu_\Gamma}(x_n,{ r_n})}\om d\hnmo+  (1+\om(x_n))\tau { r_n}^{N-1},
\end{multline}
where $I(t_n,t):=(t_n-t, t_n+t)$.
\end{enumerate}
\end{proposition}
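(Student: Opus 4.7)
The plan is to derive Proposition \ref{cont_ready_coro_limsup} from Proposition \ref{cont_control_jump} by shrinking each cube and refining the choice of radii. First I would apply Proposition \ref{cont_control_jump} with a parameter $\tau' \in (0,\tau)$ chosen small enough that the $O(\tau')$ contribution in its item 4 is dominated by $\tau$ divided by a suitable dimensional constant; this produces a set $S_0 \subset \Om$ with $\hnmo(\Gamma \setminus S_0) < \tau'$ and a countable family of pairwise disjoint cubes $\{Q_{\nu_\Gamma}(x_n, R_n)\}_{n=1}^\infty$ with $R_n \leq \tau'$, satisfying items 1--4 of that proposition. Then I would set $r_n := R_n/2$ and define $S := S_0 \cap \bigcup_n Q_{\nu_\Gamma}(x_n, r_n)$, so that $S \subset \bigcup_n Q_{\nu_\Gamma}(x_n, r_n)$ holds automatically; moreover, since $Q_{\nu_\Gamma}(x_n, r_n)$ lies strictly inside $Q_{\nu_\Gamma}(x_n, R_n)$ at distance $R_n/4$ from its boundary while the original cubes are pairwise disjoint, compactness forces $\operatorname{dist}(Q_{\nu_\Gamma}(x_n, r_n), Q_{\nu_\Gamma}(x_m, r_m)) > 0$ for $n \neq m$, yielding item 3.

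For the remaining properties I would proceed as follows. Item 2 of the new statement follows from item 2 of Proposition \ref{cont_control_jump} applied with $r = r_n < R_n$ together with the inclusion $S \subset S_0$; the slab inclusion $S \cap Q_{\nu_\Gamma}(x_n, r_n) \subset R_{\tau/2, \nu_\Gamma}(x_n, r_n)$ from item 1 is preserved under the shrinking because the local Lipschitz graphs on which $S_0$ resides have Lipschitz constant bounded by a multiple of $\tau$. The residual estimate $\hnmo(\Gamma \setminus S) < \tau$ in item 1 comes from combining $\hnmo(\Gamma \setminus S_0) < \tau'$ with $\hnmo(S_0 \cap Q_{\nu_\Gamma}(x_n, R_n) \setminus Q_{\nu_\Gamma}(x_n, r_n)) \leq \tau/2^{n+1}$, the latter secured at the initial selection of $R_n$ by inner regularity of the finite measure $\hnmo \lfloor S_0$. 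For item 4, I would bake into the Egoroff step of the proof of Proposition \ref{cont_control_jump} the density-one property $\hnmo(\Gamma \cap Q_{\nu_\Gamma}(x, r))/r^{N-1} \to 1$ as $r \to 0$ at each $x \in S_0$, thereby forcing $\hnmo(\Gamma \cap Q_{\nu_\Gamma}(x_n, R_n)) \geq R_n^{N-1}/2$; summing over the disjoint family gives $\sum_n r_n^{N-1} \leq \sum_n R_n^{N-1} \leq 2 \hnmo(\Gamma) \leq 4 \hnmo(\Gamma)$.

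Finally, for item 5, I would apply item 4 of Proposition \ref{cont_control_jump} to each $n$ with ratio $\kappa = r_n/R_n = 1/2$; tracing the bounds inherited from Lemma \ref{cont_arb_dic_slice} in terms of the shrunken cube size $r_n$ yields $t_n \in (-\tau r_n/4, \tau r_n/4)$ and $0 < t_{x_n, r_n} < |t_n|$ together with the slicing estimate up to an error $(1 + \om(x_n)) O(\tau') r_n^{N-1}$, while the inclusion $T_{x_n, \nu_\Gamma}(t_n \pm t_{x_n, r_n}) \subset R_{\tau/2, \nu_\Gamma}(x_n, r_n)$ is immediate from $|t_n \pm t_{x_n, r_n}| < 2|t_n| < \tau r_n/2$. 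To exchange $\Gamma$ for $S$ on the right-hand side, I would use the continuity of $\om$ and the two-sided density control $(1-o(1))\, r_n^{N-1} \leq \hnmo(S \cap Q_{\nu_\Gamma}(x_n, r_n)) \leq \hnmo(\Gamma \cap Q_{\nu_\Gamma}(x_n, r_n)) \leq (1 + \tau^2) r_n^{N-1}$ to absorb $\int_{(\Gamma \setminus S) \cap Q_{\nu_\Gamma}(x_n, r_n)} \om\, d\hnmo$ into the $(1 + \om(x_n)) \tau r_n^{N-1}$ term. The main obstacle will be orchestrating these Egoroff-type refinements—upper and lower density bounds, the slab containment, and the local modulus of continuity of $\om$—uniformly in $n$ so that the per-cube errors fit within the prescribed budget $\tau$.
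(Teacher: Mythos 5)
Your overall strategy is the same as the paper's: apply Proposition \ref{cont_control_jump}, shrink each cube so that the shrunken cubes are at positive distance from one another, and inherit items 1, 2, 4 and the slicing estimate. However, there is a genuine gap in your choice $r_n := R_n/2$. At each center $x_n$ the set $\Gamma$ (and hence $S_0$) has $(N-1)$-density one, so for all sufficiently small $R_n$ one has $\hnmo\bigl(S_0\cap Q_{\nu_\Gamma}(x_n,R_n)\bigr)\approx R_n^{N-1}$ while $\hnmo\bigl(S_0\cap Q_{\nu_\Gamma}(x_n,R_n/2)\bigr)\approx 2^{-(N-1)}R_n^{N-1}$. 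Thus the annulus $Q_{\nu_\Gamma}(x_n,R_n)\setminus Q_{\nu_\Gamma}(x_n,R_n/2)$ necessarily carries a \emph{fixed positive fraction} (about $1-2^{1-N}$) of the mass of $S_0$ in the cube; this cannot be made smaller than $\tau/2^{n+1}$ by any choice of $R_n$, and ``inner regularity of $\hnmo\lfloor S_0$'' is of no help here. Summing over the disjoint cubes, your set $S:=S_0\cap\bigcup_n Q_{\nu_\Gamma}(x_n,R_n/2)$ loses a quantity comparable to $\hnmo(\Gamma)$, so the estimate $\hnmo(\Gamma\setminus S)<\tau$ in item 1 fails.

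The fix, which is what the paper does, is to choose the shrinking ratio $\kappa$ \emph{after} the countable disjoint cover $\flp{Q_{\nu_\Gamma}(x_n,R_n)}_n$ has been fixed: since $\bigcup_n Q_{\nu_\Gamma}(x_n,\kappa R_n)\nearrow\bigcup_n Q_{\nu_\Gamma}(x_n,R_n)\supset S_0$ as $\kappa\nearrow 1$ and $\hnmo\lfloor S_0$ is a finite measure, continuity from below yields a single $\kappa\in(0,1)$ with $\hnmo\fsp{S_0\setminus\bigcup_n Q_{\nu_\Gamma}(x_n,\kappa R_n)}<\tau/2$; one then sets $r_n:=\kappa R_n$ and invokes item 4 of Proposition \ref{cont_control_jump} with this $\kappa$. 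This $\kappa$ depends on the whole cover and in general cannot be taken equal to $1/2$. The remaining ingredients of your argument are sound and in two places sharper than the paper's own write-up: your derivation of \eqref{cont_coro_finitesum} from the uniform lower density bound $\hnmo(\Gamma\cap Q_{\nu_\Gamma}(x_n,R_n))\geq R_n^{N-1}/2$ together with disjointness is exactly the missing justification, and your concern about replacing $\int_{\Gamma\cap Q}\om\,d\hnmo$ by $\int_{S\cap Q}\om\,d\hnmo$ in item 5 is legitimate (since $S\subset\Gamma$ this replacement makes the bound stronger); note, though, that in the only place \eqref{upper_sup_ready_limsup_cont} is used (Proposition \ref{limsup_n_c}) the version with $\Gamma$ on the right-hand side already suffices, because the cubes are disjoint and one sums $\int_{\Gamma\cap Q_n}\om\,d\hnmo\leq\int_{S_u}\om\,d\hnmo$.
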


\begin{proof}
We apply items 1, 2, and 3 in Proposition \ref{cont_control_jump} to obtain a countable collection $\flp{Q_{\nu_\Gamma}(x_n,r'_n)}_{n=1}^\infty$ and a set $S'\subset \Gamma$ such that 
\bes
\hnmo(\Gamma\setminus S')<\frac\tau2,\,\,S' \subset\bigcup_{n=1}^\infty Q_{\nu_\Gamma}(x_n,r'_n),\,\, S'\cap Q_{\nu_\Gamma}(x_n,r'_n)\subset R_{\tau/2,\nu_\Gamma}(x_n,r_n'),
\ees
and 
\bes
\hnmo\fsp{S\cap Q_{\nu_\Gamma}(x_n,r)}\leq (1+\tau^2)r^{N-1}
\ees
for all $0<r<r'_n$. Find $0<\kappa<1$ such that 
\bes
\hnmo\fsp{S'\setminus \bigcup_{n=1}^\infty Q_{\nu_\Gamma}(x_n,\kappa r'_n)}<\frac\tau2,
\ees
and let
\bes
S:=S'\cap \fsp{\bigcup_{n=1}^\infty Q_{\nu_\Gamma}(x_n,\kappa r'_n)}.
\ees
Then
\bes
S\subset \bigcup_{n=1}^\infty Q_{\nu_\Gamma}(x_n,\kappa r'_n)
\ees
and
\bes
\hnmo(\Gamma\setminus S)\leq \hnmo(\Gamma\setminus S')+\hnmo(S'\setminus S)\leq \frac\tau2+\frac\tau2=\tau.
\ees
Note that the collection $\flp{Q_{\nu_\Gamma}(x_n,\kappa r'_n)}_{n=1}^{\infty}$ satisfies \eqref{disjoint_col_c}. Next, we apply item $4$ in Proposition \ref{cont_control_jump} with such $\kappa>0$ to find $t_n^\kappa$, $t_{x_n,r'_n}^\kappa$ such that \eqref{upper_sup_est_cont} holds. It suffices to set $r_n:=\kappa r_n'$, $t_n:=t_n^\kappa$, and $t_{x_n,r_n}:=t_{x_n,r_n'}^\kappa$.\\
\end{proof}

\subsection{The Case $\om\in \mathcal W(\Om)\cap C(\Om)$}\label{ATAc_nd}$\,$\\\\
Consider the functionals
\bes
E_{\om,\e}(u,v):=\int_\Omega v^2\abs{\nabla u}^2\om\,dx +\int_{\Omega}\left[\e\abs{\nabla v}^2+\frac{1}{4\e}{(v-1)^2}\right]\om\,dx
\ees
for $(u,v)\in W_\om^{1,2}(\Om)\times W^{1,2}(\Om)$, and let
\bes
E_\om(u):=\int_\Omega \abs{\nabla u}^2\om\,dx+ \int_{S_u} {\om(x)}\,d\mathcal H^{N-1}, 
\ees
be defined for $u\in GSBV_\om(\Om)$.
\begin{theorem}\label{ATc_n_case}
Let $\om\in\mathcal W(\Om)\cap C(\Om)\cap L^\infty(\Om)$ be given. Let $\mathcal{E}_{\om,\e}$: $L^1_\om(\Om)\times L^1(\Om)\rightarrow[0,+\infty]$ be defined by
\begin{align}\label{esd_weighted}
\mathcal{E}_{\om,\e}(u,v):=
\begin{cases}
E_{\om,\e}(u,v)&\text{if}\,\,(u,v)\in W_\om^{1,2}(\Om)\times W^{1,2}(\Om), \,0\leq v\leq 1,\\
+\infty &\text{otherwise}.
\end{cases}
\end{align}
Then the functionals $\mathcal{E}_{\om,\e}$ $\Gamma$-converge, with respect to the $L^1_\om\times L^1$ topology, to the functional
\begin{align}\label{etd_weighted}
\mathcal{E}_\om(u,v):=
\begin{cases}
E_\om(u)&\text{if}\,\,u\in GSBV_\om(\Om)\text{ and }v=1\,\,a.e.,\\
+\infty &\text{otherwise}.
\end{cases}
\end{align}

\end{theorem}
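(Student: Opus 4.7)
My plan is to follow the standard $\Gamma$-convergence template: establish the $\liminf$ inequality by a slicing argument reducing to the one-dimensional Theorem \ref{gamma_c_1d}, and the $\limsup$ inequality by an explicit construction based on the covering in Proposition \ref{cont_ready_coro_limsup}. A truncation argument together with Lemma \ref{compact_energy} will let me reduce the $\limsup$ step to the case $u\in SBV_\om(\Om)\cap L^\infty(\Om)$ with $\hnmo(S_u)<\infty$, after which a diagonal argument recovers the general $GSBV_\om$ setting.

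For the $\liminf$ inequality, let $(u_\e,v_\e)$ be admissible and assume the energies are uniformly bounded. Fix $\nu\in\mathcal S^{N-1}$. By Lemma \ref{lower_drop_energy} applied to both summands (the bound for the second summand is obtained by the same Fubini/Theorem \ref{slice_deri_dirc} argument since $\abs{v'_{\e,x,\nu}}\leq\abs{\nabla v_\e(x+t\nu)}$), Fatou's Lemma yields
\bes
\liminf_{\e\to 0}E_{\om,\e}(u_\e,v_\e)\geq \int_{\Om_\nu}\liminf_{\e\to 0}\mathcal E_{\om_{x,\nu},\e}^{1d}(u_{\e,x,\nu},v_{\e,x,\nu})\,d\hnmo(x).
\ees
For $\hnmo$-a.e.\ $x\in\Om_\nu$ we have $v_{\e,x,\nu}\to 1$ in $L^1$ and $u_{\e,x,\nu}\to u_{x,\nu}$ in $L^1_{\om_{x,\nu}}$, so Theorem \ref{gamma_c_1d} gives the one-dimensional $\liminf$ on each slice. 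Combining this with the standard slicing identity $S_{u_{x,\nu}}=(S_u)_{x,\nu}$ for $SBV$ (Theorem 3.108 of Ambrosio--Fusco--Pallara) and integrating with the area formula produces
\bes
\liminf_{\e\to 0}E_{\om,\e}(u_\e,v_\e)\geq \int_\Om \abs{\fjp{\nabla u,\nu}}^2\om\,dx+\int_{S_u}\om\,\abs{\fjp{\nu_u,\nu}}\,d\hnmo.
\ees
Taking the supremum over a countable dense family of directions in $\mathcal S^{N-1}$ and applying a Buttazzo--Dal Maso-type localization to transform this family of one-direction bounds into a bound on the full energy gives $\liminf_{\e}E_{\om,\e}(u_\e,v_\e)\geq E_\om(u)$, as required.

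For the $\limsup$, fix $\tau\in(0,1)$ and apply Proposition \ref{cont_ready_coro_limsup} with $\Gamma=S_u$ to produce a countable family of pairwise disjoint cubes $Q_n=Q_{\nu_{S_u}}(x_n,r_n)$, parameters $t_n\in(-\tau r_n/4,\tau r_n/4)$ and $t_{x_n,r_n}>0$, and a set $S\subset S_u$ satisfying \eqref{cont_sdfsfsdfs}--\eqref{upper_sup_ready_limsup_cont}. Inside each $Q_n$, I will mimic the 1D construction of Proposition \ref{limsup_1d_c} in the direction $\nu_n:=\nu_{S_u}(x_n)$: set $v_\e$ equal to the rescaled Modica--Mortola optimal profile $v_0((\text{dist}(\cdot,T_{x_n,\nu_n}(t_n))-\xi_\e)/\e)$ on $Q_n$, pinching to $0$ on the slab $T_{x_n,\nu_n}(t_n\pm t_{x_n,r_n})$ which by construction contains $S\cap Q_n$, and set $u_\e$ equal to $u$ away from that slab and to an affine interpolation across it. Outside $\bigcup_n Q_n$ set $v_\e=1$ and $u_\e=u$. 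The Cahn--Hilliard contribution in $Q_n$ is then controlled via Fubini along $\nu_n$ and \eqref{upper_sup_ready_limsup_cont} by $\int_{S\cap Q_n}\om\,d\hnmo+(1+\om(x_n))\tau r_n^{N-1}$; summing over $n$ and using \eqref{cont_coro_finitesum} bounds the total error by $C\tau \hnmo(S_u)$, while $\hnmo(S_u\setminus S)<\tau$ controls the jump contribution missed. The bulk term $\int v_\e^2\abs{\nabla u_\e}^2\om\,dx$ is dominated by $\int\abs{\nabla u}^2\om\,dx$ up to vanishing error by exactly the argument leading to \eqref{also_used_in_multi}. Letting $\tau\to 0$ by a diagonal procedure proves $E_\om^+(u)\leq E_\om(u)$. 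The main obstacle is the synchronization in step 3: the slab on which $v_\e\equiv 0$ must simultaneously (a) cover $S_u\cap Q_n$, (b) lie inside $R_{\tau/2,\nu_n}(x_n,r_n)$ so that the bulk modification of $u_\e$ does not spill over into neighboring cubes, and (c) satisfy the averaged bound \eqref{upper_sup_ready_limsup_cont} so that the continuity of $\om$ actually converts the slab-integral into the surface integral $\int_{S\cap Q_n}\om\,d\hnmo$; all three requirements are precisely what Proposition \ref{cont_ready_coro_limsup} is designed to deliver, so the remaining work is mainly bookkeeping.
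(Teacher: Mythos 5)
Your $\liminf$ argument is essentially sound, though it takes a different route for the surface term than the paper: you invoke the one-dimensional theorem slice-by-slice, the coarea formula for rectifiable sets, and a localization over a dense family of directions (the supremum-of-measures lemma), whereas Proposition \ref{liminf_part_c} uses that lemma only for the bulk term and recovers the surface term through the cube decomposition of Lemma \ref{slicing_single}, in which each cube carries a single direction along which almost every line meets $S$ exactly once. For a continuous weight both routes yield $\int_{S_u}\om\,d\hnmo$; the paper's heavier machinery is what later survives the passage to $\om\in SBV(\Om)$, where the limit density $\om^-$ is orientation-dependent and the sup-over-directions argument no longer applies verbatim.

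The $\limsup$ construction, however, has a genuine gap. You pinch $v_\e$ to zero only on the thin slab bounded by $T_{x_n,\nu_n}(t_n\pm t_{x_n,r_n})$ and assert that this slab ``by construction contains $S\cap Q_n$''. It does not: Proposition \ref{cont_ready_coro_limsup} only guarantees that both $S\cap Q_n$ and that slab are contained in $R_{\tau/2,\nu_n}(x_n,r_n)$; since $t_n\in(-\tau r_n/4,\tau r_n/4)$ and $0<t_{x_n,r_n}<\abs{t_n}$, the slab sits about the level $t_n$ with half-width less than $\tau r_n/4$, while $S\cap Q_n$ is a Lipschitz graph whose vertical excursion about the level $0$ is of order $\tau r_n$. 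Consequently, on the part of $S_u\cap Q_n$ lying outside the slab your $u_\e$ still jumps while $v_\e$ is bounded away from $0$ (indeed $v_\e\to1$ there as $\e\to0$ for fixed $\tau$), so $u_\e\notin W^{1,2}(\Om)$; and if you instead interpolate affinely across a slab wide enough to contain the graph, the interpolation gradient is of order $\abs{u^+-u^-}/(\tau r_n)$ on a region of measure comparable to $\tau r_n^{N}$ where $v_\e\approx1$, so the bulk term picks up a contribution of order $r_n^{N-2}$ per cube, which does not vanish. The missing idea is the flattening step \eqref{approx_u_simplex_jump_cont}: one first builds $\bar u_\tau$ by reflecting $u$ across $T_{x_n,\nu_{S_u}}(\pm\tau r_n)$ into the middle region so that $S_{\bar u_\tau}\cap Q_n$ lies on the flat set $T_{x_n,\nu_{S_u}}(t_n)\cap Q_n$, and only then defines $v_{\tau,\e}$ as a profile of $\operatorname{dist}(\cdot,S_{\bar u_\tau})$ and $u_{\tau,\e}=(1-\vp_\e)\bar u_\tau$ with a cutoff $\vp_\e$. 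The reflection is also what makes the bulk estimate \eqref{bar_u_ready_cont2} (your appeal to \eqref{also_used_in_multi}) work, since the modified gradient energy is then controlled by that of $u$ on the same cube. The role of $t_n$ and $t_{x_n,r_n}$ is to select a good level at which to place this flattened jump set so that \eqref{upper_sup_ready_limsup_cont} converts slab averages of $\om$ into $\int_{S\cap Q_n}\om\,d\hnmo$ --- not to cover the original jump set.
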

Theorem \ref{ATc_n_case} will be proved in two propositions.
\begin{proposition}\label{liminf_part_c}\emph{($\Gamma$-$\liminf$)}
For $\om\in \mathcal W(\Om)\cap C(\Om)$ and $u\in L^1_\om(\Om)$, let
\begin{align*}
E_\om^-(u):=\inf&\flp{\liminf_{\e\to 0} E_{\om,\e}(u_\e,v_\e):\right.\\
&\,\,\,\left.(u_\e,v_\e)\in W^{1,2}_\om(\Om)\times W^{1,2}(\Om), \,u_\e\to u \text{ in }L^1_\om,\, v_\e\to1 \text{ in }L^1,\,0\leq v_\e\leq 1}.
\end{align*}
We have 
\bes
E_\om^-(u)\geq E_\om(u).
\ees
\end{proposition}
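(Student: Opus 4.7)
If $E_\om^-(u) = +\infty$ there is nothing to prove. Otherwise fix a sequence $(u_\e, v_\e)$ admissible for $E_\om^-(u)$ realising the infimum. The plan is the classical slicing/De Giorgi strategy: reduce to the one-dimensional case of Proposition \ref{liminf_part_1d_c} along every direction $\nu \in \mathcal S^{N-1}$, then localise and exploit the continuity of $\om$ to recover the full $N$-dimensional functional via a Radon-measure derivation argument.

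First, fix $\nu \in \mathcal S^{N-1}$. By Lemma \ref{lower_drop_energy} (with $p=2$), Theorem \ref{slice_deri_dirc}, and Fubini's theorem applied to all three terms of $E_{\om,\e}$,
\begin{align*}
E_{\om,\e}(u_\e, v_\e) \geq \int_{\Om_\nu} \int_{\Om^1_{x,\nu}} \Bigl[(v_\e)_{x,\nu}^2 |(u_\e)_{x,\nu}'|^2 + \e |(v_\e)_{x,\nu}'|^2 + \tfrac{1}{4\e}((v_\e)_{x,\nu} - 1)^2\Bigr]\om_{x,\nu}\, dt\, d\HH^{N-1}(x).
\end{align*}
A further Fubini step after subsequence extraction gives $(u_\e)_{x,\nu} \to u_{x,\nu}$ in $L^1_{\om_{x,\nu}}$ and $(v_\e)_{x,\nu} \to 1$ in $L^1$ for $\HH^{N-1}$-a.e.\ $x \in \Om_\nu$. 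The one-dimensional argument in the proof of Proposition \ref{liminf_part_1d_c} carries over verbatim to the constants $\e$, $1/(4\e)$ since the AM-GM transition cost $2\sqrt{\e \cdot (4\e)^{-1}} = 1$ is unchanged, so Fatou's lemma yields
\begin{align*}
\liminf_{\e \to 0} E_{\om,\e}(u_\e, v_\e) \geq \int_{\Om_\nu}\Bigl[\int_{\Om^1_{x,\nu}} |u_{x,\nu}'|^2 \om_{x,\nu}\, dt + \sum_{t \in S_{u_{x,\nu}}} \om_{x,\nu}(t)\Bigr] d\HH^{N-1}(x),
\end{align*}
and the standard $SBV$ slicing characterisation places $u \in GSBV_\om(\Om)$.

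Next, the slicing identities
\begin{align*}
\int_{\Om_\nu}\int_{\Om^1_{x,\nu}} |u_{x,\nu}'|^2 \om_{x,\nu}\, dt\, d\HH^{N-1}(x) = \int_\Om |\langle \nabla u, \nu\rangle|^2 \om\, dx,
\end{align*}
\begin{align*}
\int_{\Om_\nu} \sum_{t \in S_{u_{x,\nu}}} \om_{x,\nu}(t)\, d\HH^{N-1}(x) = \int_{S_u} |\langle \nu, \nu_{S_u}\rangle|\,\om\, d\HH^{N-1}
\end{align*}
turn the slicewise inequality into a directional bound valid for every $\nu$. Introducing the positive Radon measures $\mu_\e := [v_\e^2|\nabla u_\e|^2 + \e|\nabla v_\e|^2 + (4\e)^{-1}(v_\e-1)^2]\,\om\, \mathcal L^N$, extracting (up to subsequence) a weak-$*$ limit $\mu$, and localising the argument above to every open $A \subset \Om$, one obtains
\begin{align*}
\mu(A) \geq \int_A |\langle \nabla u, \nu\rangle|^2 \om\, dx + \int_{S_u \cap A} |\langle \nu, \nu_{S_u}\rangle|\,\om\, d\HH^{N-1} \qquad \text{for every } \nu \in \mathcal S^{N-1}.
\end{align*}
Applying Besicovitch's derivation theorem with respect to $\mathcal L^N$ and to $\HH^{N-1}\lfloor S_u$ and optimising over a countable dense family $\{\nu_k\} \subset \mathcal S^{N-1}$ on each side yields $d\mu^a/d\mathcal L^N \geq |\nabla u|^2 \om$ a.e.\ and $d\mu/d(\HH^{N-1}\lfloor S_u) \geq \om$ $\HH^{N-1}$-a.e.\ on $S_u$, whence $\mu(\Om) \geq E_\om(u)$.

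The main obstacle is the last density calculation on $S_u$: the directional estimate only delivers the factor $|\langle \nu, \nu_{S_u}(x_0)\rangle|\,\om(x_0)$, so recovering the full $\om(x_0)$ requires taking a supremum over a countable dense set of directions while simultaneously shrinking the cubes $Q_\nu(x_0, r)$. Continuity of $\om$ at $x_0$ is essential here, as it lets one replace $\om(x)$ by $\om(x_0) + o(1)$ on each small cube before selecting $\nu_k \to \nu_{S_u}(x_0)$; this is precisely why the hypothesis $\om \in C(\Om)$ appears in the statement and is the direct analogue of the sup-measure argument for the unweighted case in \cite{ambrosio1990approximation}.
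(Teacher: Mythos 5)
Your argument is correct in substance, and for the surface term it takes a genuinely different route from the paper. The gradient term is handled the same way in both proofs: slice via Lemma \ref{lower_drop_energy}, apply Fatou and the one-dimensional result, truncate, and recover $\int_A|\nabla u|^2\om\,dx$ from the directional bounds $\int_A|\langle\nabla u,\nu\rangle|^2\om\,dx$ over a dense family of directions (the paper invokes the superadditive set-function lemma, Lemma 15.2 in \cite{braides2002gamma}, which is the same device as your Besicovitch derivation). For the jump term, however, the paper does \emph{not} use the classical identity $\int_{\Om_\nu}\sum_{t\in S_{u_{x,\nu}}}\om_{x,\nu}(t)\,d\hnmo=\int_{S_u}|\langle\nu,\nu_{S_u}\rangle|\,\om\,d\hnmo$ followed by a supremum over directions. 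Instead it builds the covering machinery of Lemma \ref{slicing_single}: $S_u$ is covered, up to an $\eta$-small remainder, by disjoint cubes in each of which $S_u$ is a Lipschitz graph with constant $<\tau$ and almost every line in the single direction $\nu_Q$ meets it exactly once; the area formula then gives the factor $1/\sqrt{1+\tau^2}$ and one lets $\tau,\eta\to0$. Your direction-supremum argument is shorter and, as a bonus, does not discard an $\eta$-portion of $S_u$, so it avoids the paper's $\|\om\|_{L^\infty}\eta$ error term and the subsequent truncation $\om_k$. What the paper's heavier construction buys is reusability: the same cube decomposition with a single, nearly normal slicing direction per cube is exactly what is needed in Section \ref{ATA_nd} for $\om\in SBV$, where the limit density is the one-sided trace $\om^-$ and a supremum over arbitrary directions would not isolate the correct side of $S_\om$.

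Two small points to tighten. First, as written, $\mu(A)\ge\lambda_\nu(A)$ for open $A$ does not follow from weak-$*$ convergence of $\mu_\e$ (lower semicontinuity on open sets gives $\mu(A)\le\liminf_\e\mu_\e(A)$, the wrong direction); either work directly with the superadditive set function $A\mapsto\liminf_\e\mu_\e(A)$ as the paper does, or restrict the derivation to cubes with $\mu$-negligible boundary. Second, the continuity of $\om$ is not really needed in the final density computation on $S_u$ (the pointwise supremum $\sup_k|\langle\nu_k,\nu_{S_u}(x)\rangle|\,\om(x)=\om(x)$ requires nothing of $\om$); it is needed earlier, to guarantee that each restriction $\om_{x,\nu}$ is continuous so that Proposition \ref{liminf_part_1d_c} returns $\om_{x,\nu}(t)$ rather than the one-sided value $\om_{x,\nu}^-(t)$ at each slice jump, and to make the pointwise evaluation of $\om$ on the $\hnmo$-null set $S_{u_{x,\nu}}$ meaningful.
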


\begin{proof}
Without loss of generality, we assume that $M:=E_\om^-(u)<\infty$. Let $\seqe{(u_\e,v_\e)}\subset W^{1,2}_\om(\Om)\times W^{1,2}(\Om)$ be such that 
\be\label{lower_conv_seq}
u_\e\to u\text{ in }L^1_\om,\,\, v_\e\to 1\text{ in }L^1(\Om),\text{ and}\lim_{\e\to 0}E_{\om,\e}(u_\e,v_\e) = E_\om^-(u)<\infty.
\ee
Since $\inf_{x\in\Om}\om(x)\geq 1$, we have 
\bes
\liminf_{\e\to 0} E_{1,\e}(u_\e,v_\e)\leq \liminf_{\e\to 0} E_{\om,\e}(u_\e,v_\e)<\infty,
\ees
and by \cite{ambrosio1990approximation} we deduce that 
\be\label{famitsfinite}
u\in GSBV(\Om)\text{ and }\hnmo(S_u)<\infty.
\ee
We prove separately that 
\be\label{first_part_ATCw_m}
\liminf_{\e\to 0} \int_\Om\abs{\nabla u_\e}^2v_\e\,\om\,dx\geq \into \abs{\nabla u}^2\om\,dx,
\ee
and 
\be\label{second_part_ATCw_m}
\liminf_{\e\to 0} \into \fsp{\e\abs{\nabla v_\e}^2+\frac{1}{4\e}(1-v_\e)^2}\om\,dx\geq \int_{S_u}\om(x)d\hnmo.
\ee
Let $A$ be an open subset of $\Omega$. Fix $\nu\in \mathcal S^{N-1}$, and define $A_{x,\nu}$, $A^1_{x,\nu}$, and $A_\nu$ as in \eqref{slicing_notation}. For $K\in\R^+$, set $u_K:=K\wedge u\vee -K$, and observe that, by Fubini's Theorem,
\begin{align*}
\liminf_{\e\to 0} \int_A\abs{\nabla u_\e}^2v_\e^2\,\om\,dx
&\geq \liminf_{\e\to 0}\int_{A_\nu}\int_{A^1_{x,\nu}}  \abs{(u_\e)'_{x,\nu}}^2(v_\e)_{x,\nu}^2\,\om_{x,\nu}\,dt \,d\hnmo(x)\\
&\geq \int_{A_\nu}  \liminf_{\e\to 0}  \int_{A^1_{x,\nu}} \abs{(u_\e)'_{x,\nu}}^2(v_\e)_{x,\nu}^2\,\om_{x,\nu}\,dt\,d\hnmo(x)\\
&\geq\int_{A_\nu}\int_{A^1_{x,\nu}} \abs{u'_{x,\nu}}^2\om_{x,\nu}\,dt\,d\hnmo(x) \\
&\geq \int_{A_\nu}\int_{A^1_{x,\nu}} \abs{(u_K)'_{x,\nu}}^2\om_{x,\nu}\,dt\,d\hnmo(x),
\end{align*}
where in the first inequality we used Lemma \ref{lower_drop_energy}, in the second inequality we used Fatou's Lemma, and in the third inequality we used \eqref{use_la_nonsm1}. Since $u_K\in L^\infty(\Om)\cap SBV_\om(\Om)\subset L^\infty(\Om)\cap SBV(\Om)$, we may apply  Theorem 2.3 in \cite{ambrosio1990approximation} to $u_K$ to obtain
\bes
\liminf_{\e\to 0} \int_A\abs{\nabla u_\e}^2v_\e^2\,\om\,dx\geq \int_{A_\nu}\int_{A^1_{x,\nu}} \abs{(u_K)'_{x,\nu}}^2\om_{x,\nu}\,dt\,d\hnmo(x)\geq \int_A \abs{\fjp{\nabla u_K(x),\nu}}^2\om\,dx.
\ees
Letting $K\to\infty$ and using Lebesgue Monotone Convergence Theorem we have 
\be\label{lfslicecont}
\liminf_{\e\to 0} \int_A\abs{\nabla u_\e}^2v_\e^2\,\om\,dx\geq \int_A \abs{\fjp{\nabla u(x),\nu}}^2\om\,dx.
\ee
Let
\bes
\phi_n(x):= \abs{\fjp{\nabla u(x),\nu_n}}^2\om\text{ for }\mathcal L^N \text{-a.e. }x\in\Om,
\ees
where $\seqn{\nu_n}$ is a dense subset of $\mathcal S^{N-1}$, and let
\bes
\mu(A):=\liminf_{\e\to 0} \int_A\abs{\nabla u_\e}^2v_\e^2\,\om\,dx.
\ees
Then $\mu$ is a positive function, super-additivity on open sets $A$, $B$, with disjoint closures, since 
\begin{align*}
\mu(A\cup B)&=\liminf_{\e\to 0} \int_{A\cup B}\abs{\nabla u_\e}^2v_\e^2\,\om\,dx =\liminf_{\e\to 0} \fsp{  \int_A\abs{\nabla u_\e}^2v_\e^2\,\om\,dx+\int_B\abs{\nabla u_\e}^2v_\e^2\,\om\,dx}\\
&\geq \liminf_{\e\to 0}\int_A\abs{\nabla u_\e}^2v_\e^2\,\om\,dx+\liminf_{\e\to 0}\int_B\abs{\nabla u_\e}^2v_\e^2\,\om\,dx=\mu(A)+\mu(B).
\end{align*}
Hence by Lemma $15.2$ in \cite{braides2002gamma}, together with \eqref{lfslicecont}, we conclude \eqref{first_part_ATCw_m}.\\\\
Now we prove \eqref{second_part_ATCw_m}. Assume first that $\om\in L^\infty(\Om)$. For any open set $A\subset \Om$ and $\nu\in\mathcal S^{N-1}$, by Fubini's Theorem and Fatou's Lemma we have
\begin{align}\label{liminf_cont_later_slice}
\begin{split}
\liminf_{\e\to 0}& \int_A \fsp{{\e}\abs{\nabla v_\e}^2+\frac{1}{4\e}(1-v_\e)^2}\om\,dx\\
&\geq\liminf_{\e\to 0}\int_{A_\nu}\int_{A^1_{x,\nu}}  \left[\e\abs{(v_\e)'_{x,\nu}}^2+\frac{1}{4\e}{\fsp{1-(v_\e)_{x,\nu}}^2}\right]\om_{x,\nu}\,dtd\hnmo(x)\\
&\geq 
\int_{A_\nu} { \liminf_{\e\to 0}  \int_{A^1_{x,\nu}}\left[\e\abs{ (v_\e)'_{x,\nu}}^2+\frac{1}{4\e}{\fsp{1-(v_\e)_{x,\nu}}^2}\right]\om_{x,\nu}\,dt }d\hnmo(x)\\
&\geq  \int_{A_\nu}\fmp{\sum_{t\in S_{u_{x,\nu}}\cap A^1_{x,\nu}} {\om_{x,\nu}(t)}}d\hnmo(x),
\end{split}
\end{align}
where the last inequality follows from \eqref{contpart2}.\\\\
%
%We shall focus on proving  
%\be\label{slicing_jump}
%\int_{A_\nu}\sum_{t\in S_{u_x}\cap A_x} {\om_{x,\nu}(t)}d\Omega_\nu \geq \int_{S\cap A}{\om(x)}d\mathcal H^{N-1},
%\ee
%for some special set $A$ and some carefully designed $\nu$ for each $A$, and $S$ will be chosen according to Lemma \ref{slicing_single}.\\\\
%
Next, given arbitrary $\tau>0$ and $\eta>0$ we choose a set $S\subset S_u$ and a collection $\mathcal Q$ of mutually disjoint cubes according to Lemma \ref{slicing_single} with respect to $S_u$. Fix one such cube $Q_{\nu_{S}}(x_0,r_0)\in \mathcal Q$. By Lemma \ref{slicing_single} we have
\bes
\hnmo(\fmp{Q_{\nu_{S}}(x_0,r_0)}_{x,\nu_S}\cap S)=1
\ees
for $\hnmo$-a.e. $x\in Q_{\nu_{S}}(x_0,r_0)\cap S$, and $Q_{\nu_{S}}(x_0,r_0)\cap S\subset \Gamma_{x_0}$ such that, up to a rotation and a translation,
\be\label{C1curvec}
\Gamma_{x_0}=\flp{(y',l_{x_0}(y')):\,\, y\in T_{x_0,\nu_{S}}\cap Q_{\nu_{S}}(x_0,r_0)}\text{ and }\norm{\nabla l_{x_0}}_{L^\infty}<\tau,
\ee
where $y'$ denotes the first $N-1$ components  of $y\in T_{x_0,\nu_{S}}\cap Q_{\nu_{S}}(x_0,r_0)$.\\\\
In \eqref{liminf_cont_later_slice} set $A=Q_{\nu_{S}}(x_0,r_0)$ and $\nu=\nu_S(x_0)$ and, using the same notation as in the proof of Lemma \ref{slicing_single}, we obtain
\begin{align}\label{single_int_tmp}
\int_{\fmp{Q_{\nu_{S}}(x_0,r_0)}_{\nu_S(x_0)}} &\fsp{\sum_{t\in S_{u_{x,\nu_S(x_0)}}\cap \fmp{Q_{\nu_{S}}(x_0,r_0)}_{x,\nu_S(x_0)}} \om_{x,\nu_S(x_0)}(t)}d\mathcal H^{N-1}(x) \\
&\geq\int_{T_g(x_0,r_0)} \fsp{\sum_{t\in S_{u_{x,\nu_S(x_0)}}\cap \fmp{Q_{\nu_{S}}(x_0,r_0)}_{x,\nu_S(x_0)}\cap S} \om_{x,\nu_{S}(x_0)}(t)}d\mathcal H^{N-1}(x)\\
&=\int_{T_g(x_0,r_0)}  \om(x)\,d\mathcal H^{N-1}(x)=\int_{T_g(x_0,r_0)} \om(x',l_{x_0}(x'))d\mathcal L^{N-1}(x'),
\end{align}
where the first inequality is due to the positivity of $\om$ and the last equality is because $ Q_{\nu_{S}}(x_0,r_0)\cap S\subset \Gamma_{x_0}$ which is defined in \eqref{C1curvec}.\\\\
Next, by Theorem 9.1 in \cite{maggi2012sets} and since $\om\in C(\Om)$, we have that
\begin{align}
\int_{Q_{\nu_{S}}(x_0,r_0)\cap S}\om \,d\hnmo&=\int_{T_{x_0,\nu_{S}}\cap Q_{\nu_{S}}(x_0,r_0)}\om(x',l_{x_0}(x'))\sqrt{1+\abs{\nabla l_{x_0}(x')}^2}dx'\\
&\leq \sqrt{1+\tau^2} \int_{T_{x_0,\nu_{S}}\cap Q_{\nu_{S}}(x_0,r_0)}\om(x',l_{x_0}(x'))dx',
\end{align}
which, together with \eqref{single_int_tmp}, yields
\begin{multline}\label{single_int}
\int_{\fmp{Q_{\nu_{S}}(x_0,r_0)}_{\nu_S(x_0)}} \fsp{\sum_{t\in S_{u_{x,\nu_S(x_0)}}\cap \fmp{Q_{\nu_{S}}(x_0,r_0)}_{x,\nu_S(x_0)}} \om_{x,\nu_S(x_0)}(t)}\,d\mathcal H^{N-1}(x)\\
\geq \frac1{\sqrt{1+\tau^2}}\int_{Q_{\nu_{S}}(x_0,r_0)\cap S}\om \,d\hnmo.
\end{multline}
Since cubes in $\mathcal Q$ are pairwise disjoint and $\hnmo(S\setminus \cup_{Q\in\mathcal Q}Q)=0$, by \eqref{liminf_cont_later_slice}, \eqref{single_int_tmp}, and \eqref{single_int} we have
\begin{align*}
\liminf_{\e\to0}& \int_{\cup_{Q\in\mathcal Q} Q} \left[\e\abs{\nabla v_\e}^2+\frac{1}{4\e}{(v_\e-1)^2}\right]\om\,dx\\&\geq  \sum_{Q\in\mathcal Q} \liminf_{\e\to 0}  \int_{Q} \left[\e\abs{\nabla v_\e}^2+\frac{1}{4\e}{(v_\e-1)^2}\right]\om\,dx\\
&\geq\frac{1}{\sqrt{1+\tau^2}} \sum_{Q\in\mathcal Q} \int_{S\cap Q} {\om}\,d\mathcal H^{N-1} =\frac{1}{\sqrt{1+\tau^2}}\int_S {\om}\,d\mathcal H^{N-1}\\
&\geq \frac{1}{\sqrt{1+\tau^2}}\fsp{\int_{S_u} {\om}\,d\mathcal H^{N-1} - \norm{\om}_{L^\infty}\eta}.
\end{align*}
Therefore
\begin{align*}
&\liminf_{\e\to0} \into \left[\e\abs{\nabla v_\e}^2+\frac{1}{4\e}{(v_\e-1)^2}\right]\om\,dx\\
&\geq \liminf_{\e\to0} \int_{\cup_{Q\in\mathcal Q} Q}\left[\e\abs{\nabla v_\e}^2+\frac{1}{4\e}{(v_\e-1)^2}\right]\om\,dx\geq
\frac{1}{\sqrt{1+\tau^2}}\fsp{ \int_{S_u} {\om(x)}d\mathcal H^{N-1} - \norm{\om}_{L^\infty}\eta},
\end{align*}
and \eqref{second_part_ATCw_m} follows from the arbitrariness of $\eta$ and $\tau$, and the fact that $\eta$ and $\tau$ are independent. \\\\
We now remove the assumption that $\om\in L^\infty$. Define for each $k>0$,
\bes
\om_k(x):=
\begin{cases}
\om&\text{ if }\om\leq k,\\
k&\text{ otherwise. }
\end{cases}
\ees
We have 
\begin{align*}
\liminf_{\e\to0}& \into \left[\e\abs{\nabla v_\e}^2+\frac{1}{4\e}{(v_\e-1)^2}\right]\om\,dx\\
&\geq \liminf_{\e\to0} \into \left[\e\abs{\nabla v_\e}^2+\frac{1}{4\e}{(v_\e-1)^2}\right]\om_k\,dx\geq \int_{S_u} {\om_k(x)}d\mathcal H^{N-1},
\end{align*}
and we conclude 
\bes
\liminf_{\e\to0} \into \left[\e\abs{\nabla v_\e}^2+\frac{1}{4\e}{(v_\e-1)^2}\right]\om\,dx\geq \int_{S_u} {\om(x)}d\mathcal H^{N-1}
\ees
by letting $k\nearrow \infty$ and using Lebesgue Monotone Convergence Theorem.
\end{proof}
\begin{proposition}\label{limsup_n_c}\emph{($\Gamma$-$\limsup$)}
For $\om\in \mathcal W(\Om)\cap C(\Om)\cap L^\infty(\Om)$ and $u\in L^1_\om(\Om)\cap L^\infty(\Om)$, let
\begin{align*}
E_\om^+(u):=\inf&\flp{\limsup_{\e\to 0} E_{\om,\e}(u_\e,v_\e):\right.\\
&\left.\,\,\,(u_\e,v_\e)\in W^{1,2}_\om(\Om)\times W^{1,2}(\Om), u_\e\to u\text{ in }L^1_\om,\, v_\e\to1\text{ in }L^1,\,0\leq v_\e\leq 1}.
\end{align*}
We have
\be\label{cont_upper_bdd_need}
E_\om^+(u)\leq E_\om(u).
\ee
\end{proposition}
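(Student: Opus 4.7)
The plan is to construct an explicit recovery sequence by combining a multi-dimensional version of the one-dimensional AT profile from \eqref{recovery_seq}-\eqref{recovery_seq_u} with the cubic covering of $S_u$ provided by Proposition \ref{cont_ready_coro_limsup}. We may assume $E_\om(u)<+\infty$, so by Lemma \ref{compact_energy} $u\in GSBV_\om(\Om)$ with $\hnmo(S_u)<+\infty$; since $u\in L^\infty(\Om)$, truncation is not needed. Fix $\eta>0$ and $\tau\in (0,1)$, and apply Proposition \ref{cont_ready_coro_limsup} with $\Gamma=S_u$ to obtain $S\subset S_u$ and a countable family of pairwise disjoint cubes $\{Q_n=Q_{\nu_{S_u}}(x_n,r_n)\}$ with $r_n<\tau$, $\hnmo(S_u\setminus S)<\tau$, $S\cap Q_n\subset R_{\tau/2,\nu_{S_u}}(x_n,r_n)$, $\sum r_n^{N-1}\leq 4\hnmo(S_u)$, and scalars $t_n$, $t_{x_n,r_n}$ realizing \eqref{upper_sup_ready_limsup_cont}, with $T_{x_n,\nu_{S_u}}(t_n\pm t_{x_n,r_n})\subset R_{\tau/2,\nu_{S_u}}(x_n,r_n)$.

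Next, choose $T>0$ and $v_0\in W^{1,2}(0,T)$ realizing \eqref{recovery_basic}, and $\xi_\e=o(\e)$ with $\xi_\e+\e T<t_{x_n,r_n}$ for every $n$ once $\e$ is small. In each cube $Q_n$, write $d_n(x):=\fjp{x-x_n,\nu_{S_u}(x_n)}-t_n$ and set $v_\e(x):=\tilde v_\e(d_n(x))$ where $\tilde v_\e$ is the symmetric AT profile centered at $0$ as in \eqref{recovery_seq}; define $v_\e\equiv 1$ outside $\bigcup_n Q_n$. By the inclusion of the transition region inside $R_{\tau/2,\nu_{S_u}}(x_n,r_n)$, the profiles match across $\partial Q_n$, giving $v_\e\in W^{1,2}(\Om)$ with $0\leq v_\e\leq 1$ and $v_\e\to 1$ in $L^1(\Om)$ by dominated convergence. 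For $u_\e$, inside each $Q_n$ leave $u_\e=u$ where $|d_n(x)|\geq \xi_\e/2$, and along each line parallel to $\nu_{S_u}(x_n)$ replace $u$ by the affine interpolant between its two endpoint values on $\{|d_n(x)|=\xi_\e/2\}$; outside all cubes, set $u_\e=u$. This reproduces the 1-D construction \eqref{recovery_seq_u} slicewise, ensures $u_\e\in W^{1,2}_\om(\Om)$, and forces $v_\e^2|\nabla u_\e|^2=0$ wherever the modification occurs and $v_\e^2|\nabla u_\e|^2=v_\e^2|\nabla u|^2\leq |\nabla u|^2$ otherwise, exactly as in \eqref{zero_eating_jump}. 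Since $\norm{u_\e}_{L^\infty}\leq \norm{u}_{L^\infty}$ and the modification region has volume tending to $0$, dominated convergence yields $u_\e\to u$ in $L^1_\om$.

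For the gradient term, dominated convergence applied with majorant $|\nabla u|^2\om\in L^1(\Om)$ gives $\limsup_\e\int_\Om |\nabla u_\e|^2v_\e^2\om\,dx\leq \int_\Om|\nabla u|^2\om\,dx$. The perturbation term is where \eqref{upper_sup_ready_limsup_cont} is used. In each $Q_n$, Fubini in the direction $\nu_{S_u}(x_n)$ gives
\begin{align*}
\int_{Q_n}\fmp{\e|\nabla v_\e|^2+\tfrac{1}{4\e}(v_\e-1)^2}\om\,dx
= \int_{I(t_n,\xi_\e+\e T)} \fmp{\e|\tilde v_\e'(l)|^2+\tfrac{1}{4\e}(\tilde v_\e(l)-1)^2}\left(\int_{Q_n\cap T_{x_n,\nu_{S_u}}(l)}\om\,d\hnmo\right)dl,
\end{align*}
and bounding the inner integral by the supremum-average appearing in \eqref{upper_sup_ready_limsup_cont} (which is licit because the support of the $l$-integrand lies in $I(t_n,t_{x_n,r_n})$) converts this into $(\int_{S\cap Q_n}\om\,d\hnmo+(1+\om(x_n))\tau r_n^{N-1})$ times the 1-D AT energy of $\tilde v_\e$, which is at most $1+\eta$ by \eqref{recovery_basic}. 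Summing over $n$ using \eqref{cont_coro_finitesum} together with $\om\in L^\infty$ yields
\begin{align*}
\limsup_{\e\to 0}\int_\Om\fmp{\e|\nabla v_\e|^2+\tfrac{1}{4\e}(v_\e-1)^2}\om\,dx\leq (1+\eta)\fsp{\int_{S_u}\om\,d\hnmo+C(1+\norm{\om}_{L^\infty})\hnmo(S_u)\,\tau+\norm{\om}_{L^\infty}\tau},
\end{align*}
the last correction absorbing the contribution of $S_u\setminus S$.

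Letting $\eta\to 0$ and then $\tau\to 0$ along a diagonal sequence gives \eqref{cont_upper_bdd_need}. The main technical obstacle is verifying that the slicewise affine construction produces an admissible $u_\e\in W^{1,2}_\om(\Om)$ and checking the $W^{1,2}$ matching of $v_\e$ at $\partial Q_n$; both are ensured by the inclusion $R_{\tau/2,\nu_{S_u}}(x_n,r_n)\supset\{|d_n|\leq \xi_\e+\e T\}$ granted by Proposition \ref{cont_ready_coro_limsup} and by the $L^\infty$-bound on $u$. The only place the continuity of $\om$ is used is in the averaging estimate \eqref{upper_sup_ready_limsup_cont} which supplies the correct weight $\om(x_n)\sim\om$ on $S_u\cap Q_n$ up to $O(\tau)$.
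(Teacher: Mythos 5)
There is a genuine gap, and it is fatal to the construction as written: your $u_\e$ is not an admissible competitor. Proposition \ref{cont_ready_coro_limsup} only gives $S\cap Q_n\subset R_{\tau/2,\nu_{S_u}}(x_n,r_n)$, a slab of \emph{fixed} width $\tau r_n$ independent of $\e$; the jump set of $u$ inside $Q_n$ is spread throughout this slab (it sits on a Lipschitz graph with small constant, not on the single hyperplane $T_{x_n,\nu_{S_u}}(t_n)$). Your modification region $\flp{\abs{d_n}<\xi_\e/2}$ and your transition region $\flp{\abs{d_n}\leq \xi_\e+\e T}$ both collapse onto that hyperplane as $\e\to 0$, so for small $\e$ almost all of $S_u\cap Q_n$ lies where you have set $u_\e=u$ and $v_\e=1$. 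At those points $u_\e$ still jumps, so $u_\e\notin W^{1,2}_\om(\Om)$ and $\mathcal E_{\om,\e}(u_\e,v_\e)=+\infty$. This is precisely why the one-dimensional argument does not transfer slicewise: in 1-D the jump is a single point captured by the shrinking interval, whereas in $N$ dimensions the jump set is an $(N-1)$-dimensional set that no $\e$-dependent slab contains. The missing idea is the flattening step \eqref{approx_u_simplex_jump_cont} of the paper: one first builds $\bar u_\tau$ by reflecting $u$ across the planes $T_{x_n,\nu_{S_u}}(\pm\tau r_n)$ inside $U_{t_n}^\pm$, which relocates the jump of $u$ in $Q_n$ onto the flat piece $T_{x_n,\nu_{S_u}}(t_n)\cap Q_n$ (plus lateral boundary pieces and $S_u\setminus F_\tau$, both of $\hnmo$-measure $O(\tau)$), at the cost of an $O(\tau)$ error in the Dirichlet term \eqref{bar_u_ready_cont2}. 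Only after this flattening can a one-dimensional profile in the normal direction annihilate the entire jump set.

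A second, related defect is that your $v_\e$, defined cube by cube as a function of the signed distance to $T_{x_n,\nu_{S_u}}(t_n)$ and extended by $1$ outside $\bigcup_n Q_n$, is discontinuous across the lateral faces of $\partial Q_n$ (where the transition region $\flp{\abs{d_n}\leq\xi_\e+\e T}$ meets $\partial Q_n$, the interior values are $<1$ while the exterior value is $1$), so $v_\e\notin W^{1,2}(\Om)$. The paper avoids this by taking $v_{\tau,\e}=\tilde v_\e\circ\operatorname{dist}(\cdot,S_{\bar u_\tau})$ globally, which is automatically Lipschitz, and by accounting for the lateral pieces $\bigcup_n\partial Q_n\cap\overline{U_n}$ and for $S_u\setminus F_\tau$ through the set $L_0$ with the estimate \eqref{AT_nimenzhongyuyouyongle}; the contribution of $S_u\setminus S$ cannot simply be ``absorbed'' while leaving $v_\e\equiv 1$ there, since $u$ jumps on that set too. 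Your use of \eqref{upper_sup_ready_limsup_cont} to convert the sliced perturbation energy into $\int_{S\cap Q_n}\om\,d\hnmo+O(\tau)r_n^{N-1}$ is the right mechanism and matches the paper's \eqref{cont_key_step}--\eqref{whatsleftover2}, but it only becomes applicable once the two structural problems above are repaired.
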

\begin{proof}
If $E_\om(u)=\infty$ then there is nothing to prove. Assume that $E_\om(u)<+\infty$ so that by Lemma \ref{compact_energy} we have that $u\in GSBV_\om(\Om)$ and $\hnmo(S_u)<\infty$. By assumption $u\in L^\infty(\Om)$, thus $u\in SBV_\om(\Om)$.\\\\
Let $\tau\in(0,2/9)$ be given. Apply Proposition \ref{cont_ready_coro_limsup} to $\om$ and $\Gamma=S_u$ to obtain a set $S_\tau\subset S_u$, a countable collection $\mathcal F_\tau=\seqn{Q_{\nu_{S_u}}(x_n,r_n)}$ of mutually disjoint cubes with $r_n<\tau$, and corresponding 
\be\label{range_tn_c}
t_n\in(-\tau r_n/4,\tau r_n/4)
\ee
and $t_{x_n,r_n}$ so that items 1-5 in Proposition \ref{cont_ready_coro_limsup} hold. Extract a finite collection $\mathcal T_\tau=\flp{{Q_{\nu_{S_u}}(x_n,r_n)}}_{n=1}^{M_\tau}$ from $\mathcal F_\tau$ with $M_\tau>0$ large enough such that 
\bes
\hnmo\fmp{S_\tau\setminus \bigcup_{n=1}^{M_\tau} Q_{\nu_{S_u}}(x_n,r_n)}<\tau,
\ees
and we define
\be\label{what_left_F_cont}
F_\tau:=S_\tau\cap \fmp{\bigcup_{n=1}^{M_\tau} Q_{\nu_{S_u}}(x_n,r_n)},
\ee
which implies that 
\be\label{qu_left_F_cont}
\hnmo\fsp{S_u\setminus F_\tau}\leq \hnmo(S_u\setminus S_\tau)+\hnmo(S_\tau\setminus F_\tau)<2\tau.
\ee
Let $U_n$ be the part of $Q_{\nu_{S_u}}(x_n, r_n)$ which lies between $T_{x_n,\nu_{S_u}}(\pm \tau r_n)$, $U_n^+$ be the part above $T_{x_n,\nu_{S_u}}(\tau r_n)$ and $U_n^-$ be the part below $T_{x_n,\nu_{S_u}}(-\tau r_n)$. Moreover, let $U_{t_n}^+$ be the part of $U_n$ which lies above $T_{x_n,\nu_{S_u}}(t_n)$, and $U_{t_n}^-$ be the part below $T_{x_n,\nu_{S_u}}(t_n)$. \\\\
We claim that if $x\in U_{t_n}^\pm$,
\be\label{upper_right_place}
x\pm2\,{\operatorname{dist}\fsp{x,T_{x_n,\nu_{S_u}}(\pm\tau r_n)}\nu_{S_u}(x_n)}\in U_n^\pm\subset Q_{\nu_{S_u}}(x_n,r_n)\setminus R_{\tau/2,\nu_{S_u}}(x_n,r_n).
\ee
Let $x\in U_{t_n}^+$ (the case in which $x\in U_{t_n}^-$ can be handled similarly), we need to prove that 
\be
\tau r_n <\dist\fsp{x+2\,{\operatorname{dist}(x,T_{x_n,\nu_{S_u}}(\tau r_n)) \nu_{S_u}(x_n)},\, T_{x_n,\nu_{Su}}}<\frac{r_n}2.
\ee
Note that
\bes
\operatorname{dist}\fsp{x,T_{x_n,\nu_{S_u}}(\tau r_n)}=\tau r_n-(x-\mathbb P_{x_n, \nu_{S_u}}(x))\nu_{S_u}(x_n),
\ees
and since $x\in U_{t_n}^+$, we have that
\be\label{pos_dis_proj}
\fsp{x-\mathbb P_{x_n, \nu_{S_u}}(x)}\nu_{S_u}(x_n)\in (t_n,\tau r_n).
\ee
Hence, together with \eqref{range_tn_c}, we observe that
\begin{align}\label{range_x_proj}
\tau r_n&\leq2\operatorname{dist}\fsp{x,T_{x_n,\nu_{S_u}}(\tau r_n)}+ \fsp{x-\mathbb P_{x_n, \nu_{S_u}}(x)}\nu_{S_u}(x_n) \\
&=2\tau r_n-(x-\mathbb P_{x_n, \nu_{S_u}}(x))\nu_{S_u}(x_n)\leq 2\tau r_n-\fsp{-\frac{\tau r_n}{4}}= \frac94\tau r_n<\frac12 r_n.
\end{align}
From the definition of projection operator $\mathbb P_{x_n,\nu_{S_u}}$ we have
\bes
\mathbb P_{x_n, \nu_{S_u}}\fmp{x+2\,\dist\fsp{T_{x_n,\nu_{S_u}}(\tau r_n)}\nu_{S_u}(x_n)} = \mathbb P_{x_n, \nu_{S_u}}(x),
\ees
and hence
\begin{align}\label{big_proj_com}
&\operatorname{dist}\fsp{x+2\,{\operatorname{dist}(x,T_{x_n,\nu_{S_u}}(\tau r_n)) \nu_{S_u}(x_n)},\, T_{x_n,\nu_{Su}}}\\
&=\fsp{\fmp{x+2\,{\operatorname{dist}\fsp{x,T_{x_n,\nu_{S_u}}(\tau r_n)}}\nu_{S_u}(x_n)}\right.\\
&\,\,\,\,\,\,\left.-\mathbb P_{x_n, \nu_{S_u}}\fmp{x+2\,\dist\fsp{T_{x_n,\nu_{S_u}}(\tau r_n)}\nu_{S_u}(x_n)} } \nu_{S_u}(x_n)\\
&=\fsp{2\,{\operatorname{dist}\fsp{x,T_{x_n,\nu_{S_u}}(\tau r_n)}}\nu_{S_u}(x_n)}\nu_{S_u}(x_n) \\
&\,\,\,\,\,\,+ \fsp{x-\mathbb P_{x_n, \nu_{S_u}}\fmp{x+2\,\dist\fsp{T_{x_n,\nu_{S_u}}(\tau r_n)}\nu_{S_u}(x_n)} } \nu_{S_u}(x_n)\\
&=2\operatorname{dist}\fsp{x,T_{x_n,\nu_{S_u}}(\tau r_n)} + \fsp{x-\mathbb P_{x_n, \nu_{S_u}}(x)}\nu_{S_u}(x_n),
\end{align}
and by \eqref{range_x_proj} we conclude \eqref{upper_right_place}.\\\\
We define $\bar u_\tau$ in $Q_{\nu_{S_u}}(x_n,r_n)$ as follows (see Figure \ref{figue_approx_u_simplex_jump_cont}): 
\be\label{approx_u_simplex_jump_cont}
\bar u_\tau(x):=
\begin{cases}
u(x) & \text{ if }x\in U_n^+\cup U_n^-\\
u\fsp{x+2\,{\operatorname{dist}(x,T_{x_n,\nu_{S_u}}(\tau r_n)) \nu_{S_u}(x_n)}}&\text{ if }x\in U_{t_n}^+,\\
u\fsp{x-2\,{\operatorname{dist}(x,T_{x_n,\nu_{S_u}}(-\tau r_n)) \nu_{S_u}(x_n)}}&\text{ if }x\in U_{t_n}^-,
\end{cases}
\ee
and 
\be\label{only_diff_set}
\bar u_\tau(x):=u(x)\text{ if }x\in \Om\setminus \fsp{\bigcup_{n=1}^{M_\tau} Q_{\nu_{S_u}}(x_n,r_n)}.
\ee
\begin{figure}[h]
\includegraphics[width=0.8\textwidth]{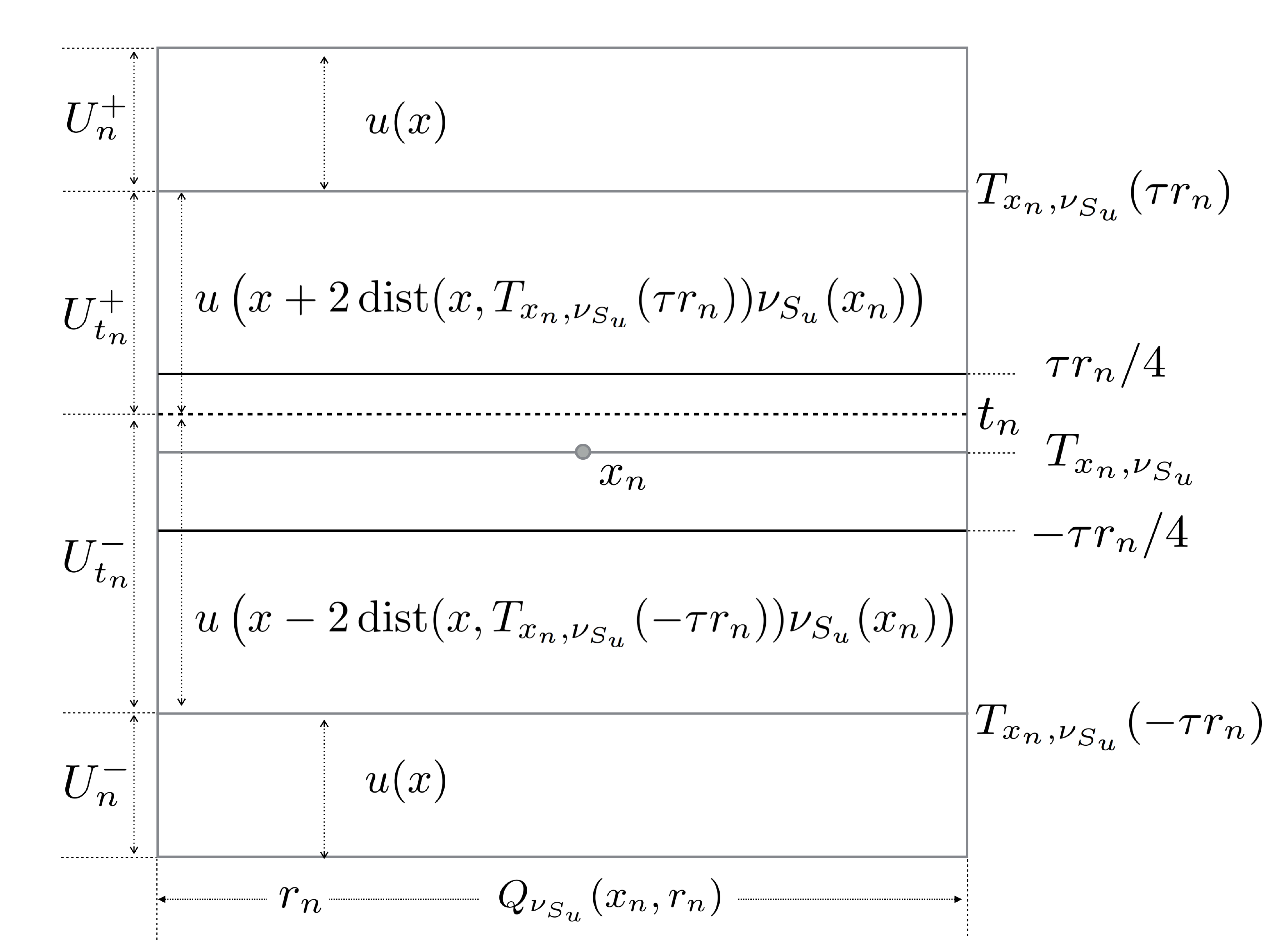}
\caption{Construction of $\bar u_\tau(x)$ in (\ref{approx_u_simplex_jump_cont})}\label{figue_approx_u_simplex_jump_cont}
\end{figure}

We observe that, as $\tau\to 0$, and since $0<r_n<\tau$,
\begin{align}\label{diff_goes_z}
\begin{split}
\mathcal L^N&\fsp{\flp{x\in\Om,\,\, u(x)\neq \bar u_\tau(x)}}=\mathcal L^N\fsp{\bigcup_{n=1}^{M_\tau}U_{t_n}^+\cup U_{t_n}^-}\\
&\leq \sum_{n=1}^{M_\tau}\mathcal L^N\fsp{U_{t_n}^+\cup U_{t_n}^-}= \sum_{n=1}^{M_\tau} \fsp{r_n^{N-1}2\tau r_n}\leq 2\tau^2\sum_{n=1}^{M_\tau} r_n^{N-1}\leq  8\tau^2\hnmo(S_u)\to 0,
\end{split}
\end{align}
where the last inequality follows from \eqref{cont_coro_finitesum}. Moreover, using the same computation, we deduce that 
\be\label{bdiff_goes_z}
\mathcal L^N \fsp{\bigcup_{n=1}^{M_\tau} Q_{\nu_{S_u}}(x_n,r_n)}\leq \tau \sum_{n=1}^{M_\tau}r_n^{N-1}\leq 4\tau\hnmo(S_u)=O(\tau)\to 0.
\ee
Hence, in view of \eqref{diff_goes_z}, we have 
\be\label{bar_u_ready_cont}
\bar u_\tau\to u\,\text{ and }\nabla \bar u_\tau\to\nabla u \text{ in measure},
\ee
and, since in $U_{t_n}^+\cup U_{t_n}^-$ $\bar u_\tau$ is the reflection of $u$ from $Q_{\nu_{S_u}}(x_n,r_n)\setminus U_{t_n}^+\cup U_{t_n}^-$, we observe that 
\begin{align}\label{bar_u_ready_cont2}
\begin{split}
\into \abs{\nabla \bar u_\tau}^2\om\,dx&\leq \int_{\Om\setminus \flp{u(x)\neq \bar u_\tau(x)}}\abs{\nabla u}^2\om\,dx+\norm{\om}_{L^\infty}\int_{\flp{u(x)\neq \bar u_\tau(x)}}\abs{\nabla \bar u_\tau}^2dx\\
&\leq \int_{\Om\setminus \flp{u(x)\neq \bar u_\tau(x)}}\abs{\nabla u}^2\om\,dx+2\norm{\om}_{L^\infty}\sum_{n=1}^{M_\tau}\int_{Q_{\nu_{S_u}}(x_n,r_n)}\abs{\nabla u}^2dx\\
&= \int_{\Om\setminus \flp{u(x)\neq \bar u_\tau(x)}}\abs{\nabla u}^2\om\,dx+2\norm{\om}_{L^\infty}\int_{\bigcup_{n=1}^{M_\tau}Q_{\nu_{S_u}}(x_n,r_n)}\abs{\nabla u}^2dx\\
&\leq \into\abs{\nabla u}^2\om dx + O(\tau)
\end{split}
\end{align}
where the last inequality follows from \eqref{bdiff_goes_z} and from the fact that because $E_{1}(u)\leq E_\om(u)<+\infty$, $\nabla u$ is $L^2$ integrable. Moreover, in view of \eqref{bar_u_ready_cont} and by Lebesgue Dominated Convergence Theorem we conclude that
\be\label{L1omcont}
\lim_{\tau\to 0}\into\abs{\bar u_\tau -u}\om\,dx\leq \norm{\om}_{L^\infty}\lim_{\tau\to 0}\into\abs{\bar u_\tau -u}dx=0
\ee
because $\norm{\bar u_\tau}_{L^\infty}\leq\norm{u}_{L^\infty}<+\infty$.\\\\
For simplicity of notation, in the rest of the proof of this lemma we shall abbreviate $Q_{\nu_{S_u}}(x_n, r_n)$ by $Q_n$ and $T_{x_n,\nu_{S_u}}$ by $T_{x_n}$. Note that the jump set of $\bar u_\tau$ is contained by (recall item $4$ in Proposition \ref{cont_ready_coro_limsup})
\begin{enumerate}[1.]
\item
\be\label{major_part_m_jump_cont}
\bigcup_{n=1}^{M_\tau}\fmp{T_{x_n}(t_n)\cap Q_n};
\ee
\item
$$\bigcup_{n=1}^{M_\tau}\partial Q_n\cap \overline{U_n};$$
\item
$S_u\setminus F_\tau$, where $F_\tau$ is defined in \eqref{what_left_F_cont}.\\
\end{enumerate}
The contributions to $S_u$ from $2$ and $3$ are negligible. To be precise,
\begin{align}\label{AT_nimenzhongyuyouyongle}
&\hnmo\fmp{(S_u\setminus F_\tau)\cup\fsp{\bigcup_{n=1}^{M_\tau}\partial Q_n\cap \overline{U_n}}}\\
&\leq \hnmo (S_u\setminus F_\tau)+\sum_{n=1}^{M_\tau}\hnmo(\partial Q_n\cap \overline{U_n})\leq 2\tau+C\tau \sum_{n=1}^\infty r_n^{N-1} \tau\leq O(\tau).
\end{align}
where we used \eqref{cont_sdfsfsdfs}, \eqref{cont_coro_finitesum}, \eqref{qu_left_F_cont}, and the fact that 
\be\label{AT_nimenzhongyuyouyongle_2}
\sum_{n=1}^{M_\tau}\hnmo(\partial Q_n\cap \overline{U_n})\leq 2\tau \sum_{n=1}^{M_\tau}r_n^{N-1}\leq 8\tau \hnmo(S_{u}).
\ee
Hence, again by \eqref{cont_coro_finitesum},
\bes
\hnmo(S_{\bar u_\tau})\leq \sum_{n=1}^{M_\tau} \hnmo(T_{x_n}\cap Q_n)+O(\tau)\leq \sumn r_n^{N-1}+O(\tau)<\infty.
\ees
By \eqref{disjoint_col_c}, let $a_\tau$ denote a quarter of the minimum distance between all cubes in $\mathcal T_\tau$. Let $\e>0$ be such that
\be\label{finite_small_pos}
\e^2+\sqrt{\e}<<\frac{1}{4}\min\flp{\tau,\,a_\tau,\, t_{x_n,r_n} \text{ for } 1\leq n\leq M_\tau}.
\ee
Hence, by item 5 in Proposition \ref{cont_ready_coro_limsup} we have 
\be\label{finite_small_pos2}
\e^2+\sqrt{\e}< t_{x_n,r_n}<\abs{t_n}<\frac{1}4\tau r_n < r_n.
\ee
We set
\bes
u_{\tau,\e}:=(1-\vp_{\e})\bar u_\tau,
\ees
where $\vp_\e$ is such that
\begin{align*}
\vp_\e\in C_c^\infty(\Om;[0,1]),&&\,\,\vp_\e\equiv1\text{ on }(\overline{S_{\bar u_\tau}})_{{\e^2}/4}, &&\text{and  } \vp_\e\equiv0\text{ in }\Om\setminus(\overline{S_{\bar u_\tau}})_{{\e^2}/2}.
\end{align*}
Since $\bar u_\tau\in W^{1,2}(\Om\setminus \overline{S_{\bar u_\tau}})$, we have $\seqe{u_{\tau,\e}}\subset W^{1,2}(\Om)$ because $(1-\vp_\e)(x)= 0$ if $x\in (\overline{S_{\bar u_\tau}})_{\e^2/4}$. Moreover, $\seqe{u_{\tau,\e}}\subset W^{1,2}_\om(\Om)$ and, using Lebesgue Dominated Convergence Theorem and \eqref{L1omcont},
\be\label{recov_goes_1}
\lim_{\tau\to 0}\lime\into \abs{u_{\tau,\e}-u}\om=0
\ee
because $\om\in L^\infty$, $u\in L^\infty$, and $\vp_\e\to 0$ a.e.\\\\
Consider the sequence $\seqe{v_{\tau,\e}}\subset W^{1,2}(\Om)$ given by
\be\label{recovery_two_side_AT}
v_{\tau,\e}(x):=\tilde v_{\e}\circ d_\tau(x)
\ee
where $d_\tau(x):=\operatorname{dist}(x, S_{\bar u_\tau})$ and $\tilde v_{\e}$ is defined by
\be\label{benshen}
\tilde v_\e(t):=
\begin{cases}
0&\text{ if }t\leq \e^2,\\
1-e^{-\frac{1}{2\sqrt{\e}}}&\text{ if }t>\sqrt{\e}+\e^2,
\end{cases}
\ee
and for $\e^2\leq t\leq \sqrt{\e}+\e^2$ we define $\tilde v_\e$ as the solution of the differential equation
\be\label{lo_or_swith}
{\tilde v_\e'(t)}=\frac{1}{2\e}(1-\tilde v_\e(t)).
\ee
with initial condition $\tilde v_\e(\e^2)=0$. An explicit computation shows that 
\be\label{benzun}
\tilde v_\e(t)=-e^{-\frac{1}{2}\frac{t-\e^2}{\e}}+1
\ee
for $\e^2\leq t\leq \sqrt\e+\e^2$ and $\tilde v_\e(\sqrt{\e}+\e^2)=1-\exp\fsp{-1/2\sqrt{\e}}$, and we remark that
\be\label{ele_multi_ago1}
\lime \frac1{\e}e^{-\frac1{2\sqrt\e}}=0,
\ee
and
\be\label{ele_multi_ago2}
-\frac{d}{dt}\fsp{\frac12\fsp{1-\tilde v_\e(t)}^2}=\fsp{1-\tilde v_\e(t)}\tilde v_\e'(t)\geq 0.
\ee
Next, since $\abs{\nabla d_\tau}=1$ a.e. (see \cite{federer2014geometric}, Section $3.2.34$), we have $\seqe{v_{\tau,\e}}\subset W^{1,2}(\Om)$, $0\leq v_{\tau,\e}\leq 1-\operatorname{exp}(-1/2\sqrt{\e})$,
and 
\be\label{recov_goes_o}
v_{\tau,\e}\to 1\text{ in }L^1 \text{ as }\e\to0
\ee
by Lebesgue Dominated Convergence Theorem since $v_{\tau,\e}\to 1$ a.e. by \eqref{benzun}. By \eqref{bar_u_ready_cont2} and since if $\vp_\e(x)\neq 0$ then $d_\tau(x)<\e^2/2$ and so $v_{\tau,\e}(x)=0$,
\be\label{cont_maj_est2}
\into \abs{\nabla u_{\tau,\e}}^2v^2_{\tau,\e}\,\om\,dx \leq  \into \abs{\nabla \bar u_\tau}^2\om\,dx\leq \into\abs{\nabla u}^2\om\,dx+O(\tau).
\ee
Next we prove that
\be\label{cont_maj_est}
\into\fmp{\e\abs{\nabla v_{\tau,\e}}^2+\frac{1}{4\e}(1-v_{\tau,\e})^2}\om\,dx\leq \int_{S_u}\om\,d\hnmo+{O(\e)+O(\tau)}.
\ee
Define
\begin{align*}
L_n:={T_{x_n}\cap Q_n},&&L_n(\e):=\fsp{T_{x_n}\cap Q_n}_{\e},
\end{align*}
and observe that, using Fubini's Theorem,
\begin{align*}
&\int_{L_n(\e^2+\sqrt{\e})}\fmp{\e\abs{\nabla v_{\tau,\e}}^2+\frac{1}{4\e}(1-v_{\tau,\e})^2}\om\,dx \\
&= \int_{\e^2}^{\e^2+\sqrt{\e}}\fmp{\e\abs{\tilde v'_\e(l)}^2+\frac{1}{4\e}(1-\tilde v_\e(l))^2}\int_{\flp{d_\tau(y) = l}\cap L_n(\e^2+\sqrt{\e})}\om(y)\,d\hnmo(y)\,dl\\
&\,\,\,\,\,\,\,+\frac{1}{4\e}\int_{L_n(\e^2)}\om(x)dx,
\end{align*}
where the latter term in the right hand side is of the order $O(\e)$. Next, in view of \eqref{lo_or_swith}, using integration by parts, we have that
\begin{align}\label{whatsleftover}
\begin{split}
\int_{\e^2}^{\e^2+\sqrt{\e}}&\fmp{\e\abs{\tilde v'_\e(l)}^2+\frac{1}{4\e}(1-\tilde v_\e(l))^2}\int_{\flp{d_\tau(y) = l}\cap L_n(\e^2+\sqrt{\e})}\om(y)\,d\hnmo(y)\,dl\\
=& \int_{\e^2}^{\e^2+\sqrt{\e}}\frac{1}{2\e}(1-\tilde v_\e(l))^2\int_{\flp{d_\tau(y) = l}\cap L_n(\e^2+\sqrt{\e})}\om(y)\,d\hnmo(y)\,dl\\
=&- \int_{\e^2}^{\e^2+\sqrt{\e}}\frac{1}{2\e}\frac{d}{dt}\fmp{(1-\tilde v_\e(l))^2}\int_{\flp{d_\tau(y)\leq l}\cap L_n(\e^2+\sqrt{\e})}\om(y)\,dy\,dl \\
&+ \mathcal A^n_\om({{\e^2}+\sqrt\e}) - \mathcal A^n_\om({{\e^2}}),
\end{split}
\end{align}
where 
\begin{align*}
\mathcal A^n_\om(t):=\frac1{2\e}(1-\tilde v_\e(t))^2\int_{\flp{d_\tau(x)\leq t}\cap L_n(\e^2+\sqrt{\e})}\om(y)\,dy.
\end{align*}
By \eqref{finite_small_pos2} and \eqref{ele_multi_ago1} we have
\begin{align}\label{leftover_sm}
\mathcal A^n_\om(\e^2+\sqrt{\e}) &= \frac1{2\e}e^{-\frac1{2\sqrt\e}}\int_{\flp{d_\tau(x)\leq \e^2+\sqrt{\e}}\cap L_n(\e^2+\sqrt{\e})}\om(x)\,dx\\
&\leq \frac1{2\e}e^{-\frac1{2\sqrt\e}}\norm{\om}_{L^\infty}\mathcal L^N(L_n(\e^2+\sqrt{\e}))= \frac1{2\e}e^{-\frac1{2\sqrt\e}}\norm{\om}_{L^\infty}\mathcal L^N((T_{x_n}\cap Q_n)_{\e^2+\sqrt{\e}})\\
&\leq \frac1{2\e}e^{-\frac1{2\sqrt\e}}\norm{\om}_{L^\infty}\fmp{2(\e^2+\sqrt{\e})[r_n+(\e^2+\sqrt{\e})]^{N-1}} \leq O(\e)r_n^{N-1}.
\end{align}
We write
\begin{multline*}
- \int_{\e^2}^{\e^2+\sqrt{\e}}\frac{1}{2\e}\frac{d}{dt}\fmp{(1-\tilde v_\e(l))^2}\int_{\flp{d_\tau(y)\leq l}\cap L_n(\e^2+\sqrt{\e})}\om(y)\,dy\,dl \\
=\int_{\e^2}^{\e^2+\sqrt{\e}}2l\fsp{-\frac{1}{2\e}\frac{d}{dt}\fmp{(1-\tilde v_\e(l))^2}} \fmp{\frac{1}{2l}\int_{\flp{d_\tau(y)\leq l}\cap L_n(\e^2+\sqrt{\e})}\om(x)\,dx}dl.
\end{multline*}
Recalling the notation from Proposition \ref{cont_ready_coro_limsup} and the fact that $\om(x_n)\leq \norm{\om}_{L^\infty}$, we have 
\begin{align*}
\begin{split}
\frac{1}{2l}\int_{\flp{d_\tau(y)\leq l}\cap L_n(\e^2+\sqrt{\e})}\om(x)\,dx&\leq \sup_{t\leq \e^2+\sqrt{\e}}\fsp{\frac{1}{\abs{I(t_n,t)}}\int_{I(t_n,t)}\int_{Q(x_n,r_n)\cap T_{x_n}(l)}{\om(x)}d\hnmo dl}\\
&\leq \int_{S_\tau\cap Q(x_n,r_n)}\om(x)\,d\hnmo+  O(\tau) r_n^{N-1}.
\end{split}
\end{align*}
where by \eqref{finite_small_pos} we could use \eqref{upper_sup_ready_limsup_cont} in the last inequality. Therefore, by \eqref{ele_multi_ago2}
\begin{align}\label{cont_key_step}
\begin{split}
-&\int_{\e^2}^{\e^2+\sqrt{\e}}\frac{1}{2\e}\frac{d}{dt}\fmp{(1-\tilde v_\e(l))^2}\int_{\flp{d_\tau(y)\leq l}\cap L_n(\e^2+\sqrt{\e})}\om(x)\,dx\,dl\\
\leq& 2\fsp{\int_{\e^2}^{\e^2+\sqrt{\e}}-\frac{1}{2\e}\frac{d}{dt}\fmp{(1-\tilde v_\e(l))^2}l\,dl}\fsp{\int_{S_\tau\cap Q(x_n,r_n)}\om(x)\,d\hnmo+  O(\tau) r_n^{N-1}}.
\end{split}
\end{align}
A new integration by parts and by using \eqref{benzun} yields
\begin{align*}
&\int_{\e^2}^{\e^2+\sqrt{\e}}-\frac{1}{2\e}\frac{d}{dt}\fmp{(1-\tilde v_\e(l))^2}l\,dl\\
&=\int_{\e^2}^{\e^2+\sqrt{\e}}\frac{1}{2\e}{(1-\tilde v_\e(l))^2}dl-\frac{1}{2\e}(\e^2+\sqrt{\e})(1-\tilde v_\e(\e^2+\sqrt{\e}))^2+\frac{\e^2}{2\e}(1-\tilde v_\e(\e^2))^2\\
&\leq \int_{\e^2}^{\e^2+\sqrt{\e}}2\e\abs{\tilde v_\e'(l)}^2dl+\frac{\e^2}{2\e}(1-\tilde v_\e(\e^2))^2=\frac12\fsp{1-e^{-\frac1{\sqrt{\e}}}}+\frac{\e}{2}(1-\tilde v_\e(\e^2))^2\\
&\leq \frac12+\frac12\e,
\end{align*}
which, together with \eqref{cont_key_step} and \eqref{cont_sdfsfsdfs2}, gives
\begin{align}\label{whatsleftover2}
-&\int_{\e^2}^{\e^2+\sqrt{\e}}\frac{1}{2\e}\frac{d}{dt}\fmp{(1-\tilde v_\e(l))^2}\int_{\flp{d_\tau(y)\leq l}\cap L_n(\e^2+\sqrt{\e})}\om(x)\,dx\,dl\\
&\leq \int_{S_\tau\cap Q(x_n,r_n)}\om(x)\,d\hnmo+  O(\tau) r_n^{N-1} + \e\norm{\om}_{L^\infty}\hnmo(S_\tau\cap Q(x_n,r_n))+\e O(\tau)r_n^{N-1}\\
&\leq \int_{S_\tau\cap Q(x_n,r_n)}\om(x)\,d\hnmo+O(\tau)r_n^{N-1}+O(\e)O(\tau)r_n^{N-1}.
\end{align}
Hence, in view of \eqref{whatsleftover}, \eqref{leftover_sm}, \eqref{whatsleftover2}, and since $A_\om(\e^2)\geq 0$, we obtain that
\begin{multline}\label{muti_cont_phase1}
\int_{L_n(\e^2+\sqrt{\e})}\fmp{\e\abs{\nabla v_{\tau,\e}}^2+\frac{1}{4\e}(1-v_{\tau,\e})^2}\om\,dx\\
\leq\int_{S_\tau\cap Q(x_n,r_n)}\om(x)d\hnmo +O(\tau)r_n^{N-1}+O(\e)O(\tau) r_n^{N-1}+O(\e)r_n^{N-1}.
\end{multline}
Next we define
\begin{align*}
L_0:={\fsp{S_u\setminus F_\tau}\cup\fsp{\bigcup_{n=1}^{M_\tau}\partial Q_n\cap \overline{U}_n}}&&\text{and}&&L_0(\e):=\fmp{\fsp{S_u\setminus F_\tau}\cup\fsp{\bigcup_{n=1}^{M_\tau}\partial Q_n\cap \overline{U_n}}}_{\e}.
\end{align*}
Since $\om\in L^\infty(\Om)$, we have 
\begin{multline*}
\int_{L_0(\e^2+\sqrt{\e})}\fmp{\e\abs{\nabla v_{\tau,\e}}^2+\frac{1}{4\e}(1-v_{\tau,\e})^2}\om\,dx\\
\leq \norm{\om}_{L^\infty}\int_{L_0(\e^2+\sqrt{\e})}\fmp{\e\abs{\nabla v_{\tau,\e}}^2+\frac{1}{4\e}(1-v_{\tau,\e})^2}\,dx,
\end{multline*}
and we note that the term 
\bes
\int_{L_0(\e^2+\sqrt{\e})}\fmp{\e\abs{\nabla v_{\tau,\e}}^2+\frac{1}{4\e}(1-v_{\tau,\e})^2}\,dx
\ees
is the recovery sequence constructed in \cite{ambrosio1990approximation}, page 1034, Added in Proof. Therefore, recalling that by assumption that $u\in SBV_\om(\Om)\cap L^\infty(\Om)\subset SBV(\Om)\cap L^\infty(\Om)$ and invoking Proposition $5.1$ and $5.3$ in \cite{ambrosio1990approximation} and calculation within, we conclude that 
\begin{align*}
\limsup_{\e\to 0}\int_{L_0(\e^2+\sqrt{\e})}\fmp{\e\abs{\nabla v_{\tau,\e}}^2+\frac{1}{4\e}(1-v_{\tau,\e})^2}\,dx&\leq \limsup_{\e\to 0}\frac{\hnmo\fsp{x\in\Om:\,\,\operatorname{dist}(x, L_0)<\e}}{2\e}\\
&\leq \hnmo(L_0).
\end{align*}
Thus, 
\begin{align}\label{we_left_a_bit_cont}
\begin{split}
\int_{L_0(\e^2+\sqrt{\e})}\fmp{\e\abs{\nabla v_{\tau,\e}}^2+\frac{1}{4\e}(1-v_{\tau,\e})^2}\om\,dx&\leq \norm{\om}_{L^\infty}\fsp{\hnmo(L_0)+O(\e)}\\
&\leq {O(\tau)+O(\e)}.
\end{split}
\end{align}
Furthermore, by \eqref{benshen}
\be\label{whatsleftover3}
\int_{\Om\setminus ({S_{\bar u_\tau}})_{\e^2+\sqrt{\e}}} \fmp{\e\abs{\nabla v_{\tau,\e}}^2+\frac{1}{4\e}(1-v_{\tau,\e})^2}\om\,dx\leq \frac1{4\e}e^{-\frac1{\sqrt{\e}}}\norm{\om}_{L^\infty}\mathcal L^N(\Om)\leq O(\e),
\ee
where in the last inequality we used \eqref{ele_multi_ago1}.\\\\
Since cubes in $\mathcal T_\tau$ are pairwise disjoint, in view of \eqref{muti_cont_phase1}, \eqref{we_left_a_bit_cont}, and \eqref{whatsleftover3} we have that
\begin{align*}
&\into \fmp{\e\abs{\nabla v_{\tau,\e}}^2+\frac{1}{4\e}(1-v_{\tau,\e})^2}\om\,dx\\
=&\int_{(S_{\bar u_\tau})_{\e^2+\sqrt{\e}}}\fmp{\egamma\abs{\nabla v_{\tau,\e}}^2+\frac{1}{4\egamma}(1-v_{\tau,\e})^2}\om\,dx \\
&+ \int_{\Om\setminus (S_{\bar u_\tau})_{\e^2+\sqrt{\e}}} \fmp{\e\abs{\nabla v_{\tau,\e}}^2+\frac{1}{4\e}(1-v_{\tau,\e})^2}\om\,dx\\
\leq& \int_{L_0(\e^2+\sqrt{\e})}\fmp{\e\abs{\nabla v_{\tau,\e}}^2+\frac{1}{4\e}(1-v_{\tau,\e})^2}\om\,dx\\
&+ \sum_{n=1}^{M_\tau}\int_{L_n(\e^2+\sqrt{\e})}\fmp{\e\abs{\nabla v_{\tau,\e}}^2+\frac{1}{4\e}(1-v_{\tau,\e})^2}\om\,dx+O(\e)\\
\leq& O(\e)+O(\tau)+\sum_{n=1}^{M_\tau} \fsp{\int_{S_\tau\cap Q(x_n,r_n)}\om(x)d\hnmo + \fmp{O(\e)+O(\tau)+O(\e)O(\tau)} r_n^{N-1}}\\
\leq& \int_{\bigcup_{n=1}^{M_\tau}(S_\tau\cap Q(x_n,r_n))}\om(x)\,d\hnmo+  \fmp{O(\e)+O(\tau)+O(\e)O(\tau)}\sum_{n=1}^{M_\tau} r_n^{N-1}+O(\tau)+O(\e)\\
\leq& \int_{S_u} \om(x)\,d\hnmo +{O(\e)+O(\tau)}+O(\e)O(\tau),
\end{align*}
where in the last inequality we used \eqref{cont_coro_finitesum}, and this concludes the proof of \eqref{cont_maj_est}. Hence, also in view of \eqref{cont_maj_est2} and \eqref{cont_maj_est}, for each $\tau>0$, we may choose $\e(\tau)$ such that 
\bes
\into \abs{\nabla u_{\tau,\e(\tau)}}^2v^2_{\tau,\e(\tau)}\,\om\,dx \leq \into\abs{\nabla u}^2\om\,dx+O(\tau),
\ees
and
\bes
\into \fmp{\e\abs{\nabla v_{\tau,\e(\tau)}}^2+\frac{1}{4\e}(1-v_{\tau,\e(\tau)})^2}\om\,dx\leq \int_{S_u}\om(x) d\hnmo+O(\tau),
\ees
and we thus constructed a recovery sequence $\flp{(u_\tau,v_\tau)}_{\tau>0}$ given by  
\bes
u_\tau:=u_{\tau,\e(\tau)}\text{ and }v_\tau:=v_{\tau,\e(\tau)}
\ees
which satisfies \eqref{cont_upper_bdd_need} and by \eqref{recov_goes_1} and \eqref{recov_goes_o} we have
\bes
\norm{u_{\tau,\e(\tau)}-u}_{L^1_\om}<\tau\text{ and }\norm{v_{\tau,\e(\tau)}-v}_{L^1}<\tau.
\ees
Hence, we proved Proposition \ref{limsup_n_c}.
\end{proof}

\begin{proof}[Proof of Theorem \ref{ATc_n_case}]
The $\liminf$ inequality follows from Lemma \ref{liminf_part_c}. On the other hand, for any given $u\in GSBV_\om$ such that $E_\om(u)<\infty$, we have, by Lebesgue Monotone Convergence Theorem,
\bes
E_\om(u)=\lim_{K\to\infty} E_\om(K\wedge u\vee -K),
\ees
and a diagonal argument, together with Proposition \ref{limsup_n_c}, concludes the proof.
\end{proof}
\subsection{The Case $\om\in \mathcal W(\Om)\cap SBV(\Om)$}\label{ATA_nd}$\,$\\\\
Consider the functionals
\bes
E_{\om,\e}(u,v):=\int_\Omega v^2\abs{\nabla u}^2\om\,dx +\int_{\Omega}\left[\e\abs{\nabla v}^2+\frac{1}{4\e}{(v-1)^2}\right]\om\,dx,
\ees
for $(u,v)\in W_\om^{1,2}(\Om)\times W^{1,2}(\Om)$, and
\bes
E_\om(u):=\int_\Omega \abs{\nabla u}^2\om\,dx+ \int_{S_u} {\om^-(x)}\,d\mathcal H^{N-1}
\ees
defined for $u\in GSBV_\om(\Om)$.

\begin{theorem}\label{AT_n_case}
Let $\om\in \mathcal W(\Om)\cap SBV(\Om)\cap L^\infty(\Om)$ be given. Let $\mathcal{E}_{\om,\e}$: $L^1_\om(\Om)\times L^1(\Om)\rightarrow[0,+\infty]$ be defined by
\begin{align}\label{esd_weighted_jump}
\mathcal{E}_{\om,\e}(u,v):=
\begin{cases}
E_{\om,\e}(u,v)&\text{if}\,\,(u,v)\in W_\om^{1,2}(\Om)\times W^{1,2}(\Om), \,0\leq v\leq 1,\\
+\infty &\text{otherwise}.
\end{cases}
\end{align}
Then the functionals $\mathcal{E}_{\om,\e}$ $\Gamma$-converge, with respect to the $L^1_\om\times L^1$ topology, to the functional
\begin{align}\label{etd_weighted_jump}
\mathcal{E}_\om(u,v):=
\begin{cases}
E_\om(u)&\text{if}\,\,u\in GSBV_\om(\Om)\text{ and }v=1\,\,a.e.,\\
+\infty &\text{otherwise}.
\end{cases}
\end{align}
\end{theorem}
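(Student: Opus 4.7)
The plan is to mirror the proof of Theorem \ref{ATc_n_case}, splitting into a $\liminf$ proposition and a $\limsup$ proposition built on the one-dimensional results of Section \ref{1_d_jump_intro}. The new feature compared with the continuous-weight case is that the trace $\om^-$ arises naturally from the $1$-D statements: Proposition \ref{liminf_part_1d} produces $\om^-$ as the slice-wise lower bound, while Proposition \ref{1d_jump_reco} achieves $\om^-$ via a one-sided reflection.

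For the $\liminf$ inequality, the bulk term $\into \abs{\nabla u}^2 \om\, dx$ is lower semicontinuous by the slicing-plus-Fatou argument used verbatim in Proposition \ref{liminf_part_c}. For the jump term, I would fix $\tau,\eta>0$, apply Lemma \ref{slicing_single} to extract $S\subset S_u$ with $\hnmo(S_u\setminus S)<\eta$ covered by a countable family of disjoint cubes $\{Q\}$ on which $S\cap Q$ lies on a Lipschitz $(N{-}1)$-graph of constant below $\tau$ and $\hnmo$-a.e.\ $\nu_Q$-slice meets $S$ in exactly one point. Slicing along $\nu_Q$, applying Fatou and Proposition \ref{liminf_part_1d} to the $1$-D slices of $(u_\e,v_\e,\om)$, one obtains on each cube a lower bound $\int_{S\cap Q}\om^-\,d\hnmo$ up to the parametrization factor $(1+\tau^2)^{-1/2}$. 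Here I use that for $\hnmo$-a.e.\ $x\in S_u\cap S_\om$ the normals $\nu_{S_u}(x)$ and $\nu_{S_\om}(x)$ coincide up to sign (two $\hnmo$-rectifiable sets whose intersection has positive $\hnmo$-measure share tangent hyperplanes $\hnmo$-a.e.\ there), so the slice-wise 1-D trace coincides with the $N$-dimensional trace $\om^-(x)$; for $x\in S_u\setminus S_\om$, $\om$ is continuous at $x$ and $\om^-(x)=\om(x)$, so the matching is automatic. Summing over cubes, letting $\tau,\eta\to 0$, and then removing the $L^\infty$ hypothesis on $\om$ by truncation yields the desired bound.

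For the $\limsup$ inequality, I would follow the architecture of Proposition \ref{limsup_n_c} but replace the two-sided reflection \eqref{approx_u_simplex_jump_cont} with a one-sided reflection modeled on \eqref{one_side_recovery_u}--\eqref{one_side_recovery_v}. After selecting good cubes $Q_n=Q_{\nu_{S_u}}(x_n,r_n)$ via an SBV analog of Proposition \ref{cont_ready_coro_limsup} (whose proof carries over, since it only uses $\hnmo(S_u)<\infty$ and density points, both available for $\om\in SBV\cap L^\infty$), I would shift the virtual jump hyperplane to $T_{x_n,\nu_{S_u}}(-2\xi_\e-\e T)$, placed on the $H_{\nu_{S_\om}(x_n)}^-$ side of $x_n$, reflect $u$ across it from the opposite side to obtain $\bar u_\tau$, and define $v_{\tau,\e}$ as in \eqref{recovery_two_side_AT}--\eqref{benzun} so that the trench $\{v_{\tau,\e}\neq 1\}$ lies entirely in $H_{\nu_{S_\om}(x_n)}^-$. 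By Lemma \ref{jump_jiuehuishi}, item $3$, the essential supremum of $\om$ over this shrinking trench tends to $\om^-(x_n)$, which delivers $\om^-$ in place of $\om(x_n)$ in the counterpart of \eqref{muti_cont_phase1}; the remaining estimates \eqref{cont_maj_est2}--\eqref{whatsleftover3} carry over with the now only $L^\infty$ weight.

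The principal obstacle is the geometric coherence of the one-sided trench on $S_u\cap S_\om$: the reflection axis $\nu_{S_u}(x_n)$ must be oriented so that the trench genuinely lies in $H_{\nu_{S_\om}(x_n)}^-$, but this is handled by the tangent-parallelism observation above. Concretely, I would partition the cube family into those $Q_n$ with $\hnmo(Q_n\cap S_\om)=0$, where $\om$ is $\hnmo$-a.e.\ continuous on $Q_n\cap S_u$ so $\om^-=\om$ on the jump and the two-sided construction from Proposition \ref{limsup_n_c} applies unchanged, and those with $\hnmo(Q_n\cap S_\om)>0$, where parallelism of $\nu_{S_u}$ and $\nu_{S_\om}$ permits a coherent choice of orientation so that the one-sided construction above applies. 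A final truncation-and-diagonalization via Lebesgue's monotone convergence theorem, as in the proof of Theorem \ref{ATc_n_case}, removes the $L^\infty$ restriction on $u$ and completes the proof.
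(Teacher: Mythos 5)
Your $\liminf$ argument is essentially the paper's: slice, apply Fatou and Proposition \ref{liminf_part_1d} on lines, and identify the one-dimensional trace with $\om^-(x+t\nu)$ (the paper does this by citing Remark 3.109 of \cite{ambrosio2000functions} rather than by the tangent-parallelism remark, but the content is the same). Your $\limsup$ also has the correct geometric idea, which matches the paper: reflect $u$ one-sidedly so that the trench $\{v_{\tau,\e}\neq 1\}$ sits on the $\om^-$ side of $S_\om$. However, the analytic step that converts this geometry into the bound $\int_{S\cap Q_n}\om^-\,d\hnmo$ is justified incorrectly. You invoke Lemma \ref{jump_jiuehuishi}, item 3, to claim that the essential supremum of $\om$ over the shrinking trench tends to $\om^-(x_n)$. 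That lemma is strictly one-dimensional: it relies on $\om$ being absolutely continuous on each side of a finite jump set, hence having genuine one-sided limits. For $N\geq 2$ and $\om\in SBV(\Om)\cap L^\infty(\Om)$, the trace $\om^-(x_n)$ is only an approximate ($L^1$-average) limit in the sense of Theorem \ref{fine_properties_BV}, part 2; the weight may exceed $\om^-(x_n)+1$ on sets of positive $\mathcal L^N$-measure in every one-sided neighborhood of $x_n$, so the $\esssup$ over the trench need not converge to $\om^-(x_n)$, and the counterpart of \eqref{muti_cont_phase1} does not follow.

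This is exactly the gap the paper closes with Proposition \ref{SBV_Simple_jump_approx} and Proposition \ref{jump_ready_coro_limsup}: a mean-value selection of a good hyperplane $T_{x_n,\nu_{S_\om}}(-t_n)$ at the fixed scale $t_n\sim \tau r_n$ on which $\int|\om-\om^-(x_n)|\,d\hnmo$ is small, followed by a Lebesgue-point argument in the transversal variable, producing the \emph{averaged} estimate \eqref{upper_sup_ready_limsup_jump}; this is then fed into the integration-by-parts computation \eqref{whatsleftover}--\eqref{whatsleftover2} in place of \eqref{upper_sup_ready_limsup_cont}. A related structural point: the virtual jump must be placed at a scale $t_n$ chosen \emph{before} sending $\e\to 0$ (with $\e^2+\sqrt\e\ll t_{x_n,r_n}$ as in \eqref{finite_small_pos}), so that the $\e$-trench stays inside the slab where the averaged estimate holds. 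Your placement at $T_{x_n,\nu_{S_u}}(-2\xi_\e-\e T)$ moves the reflection axis with $\e$, and no uniform-in-location version of the averaged estimate is available to control it. With Proposition \ref{SBV_Simple_jump_approx} substituted for the $\esssup$ claim and the trench pinned at scale $\tau r_n$, the rest of your outline (including the dichotomy between cubes meeting $S_\om$ and cubes where $\om$ is regular, which corresponds to the paper's Step 2) goes through.
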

We start by proving the $\Gamma$-$\liminf$.
\begin{proposition}\label{liminf_part}\emph{($\Gamma$-$\liminf$)}
For $\om\in\mathcal W(\Om)\cap SBV(\Om)$ and $u\in L^1_\om(\Om)$, let
\begin{align*}
E_\om^-(u):=\inf&\flp{\liminf_{\e\to 0} E_{\om,\e}(u_\e,v_\e):\right.\\
&\left.\,\,\,(u_\e,v_\e)\in W^{1,2}_\om(\Om)\times W^{1,2}(\Om), u_\e\to u, v_\e\to1\text{ in }L^1_\om\times L^1,\,0\leq v_\e\leq 1}.
\end{align*}
We have 
\bes
E_\om^-(u)\geq E_\om(u).
\ees
\end{proposition}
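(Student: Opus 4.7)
The plan is to combine the slicing-plus-1D structure used to prove Proposition \ref{liminf_part_c} with the new, ``$\om^-$-producing'' one-dimensional inequality of Proposition \ref{liminf_part_1d}. As in that proof I would assume $M := E_\om^-(u) < +\infty$, pick an admissible sequence $(u_\e,v_\e)$ realizing the infimum, observe that since $\om \geq 1$ the non-weighted Ambrosio--Tortorelli energy stays bounded, and hence (by \cite{ambrosio1990approximation}) $u \in GSBV(\Om)$ with $\hnmo(S_u) < +\infty$. The bulk estimate
\begin{equation*}
\liminf_{\e \to 0} \int_\Om \abs{\nabla u_\e}^2 v_\e^2\, \om\, dx \geq \int_\Om \abs{\nabla u}^2\, \om\, dx
\end{equation*}
is identical to its counterpart in Proposition \ref{liminf_part_c}: it uses only Lemma \ref{lower_drop_energy}, Fubini, Fatou, and Theorem 2.3 of \cite{ambrosio1990approximation}, together with a super-additive set function argument, none of which require continuity of $\om$. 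So the whole effort goes into the jump part.

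Next I would fix a direction $\nu \in \mathcal S^{N-1}$ and an open set $A \subset \Om$, slice via Fubini, and use Fatou to push the liminf inside. Since $\om \in SBV(\Om) \cap L^\infty(\Om)$, the standard SBV slicing theorem yields $\om_{x,\nu} \in SBV(A^1_{x,\nu})$ for $\hnmo$-a.e. $x \in A_\nu$, and $u_{x,\nu} \in GSBV$ for a.e.\ slice with $(u_\e)_{x,\nu} \to u_{x,\nu}$ in $L^1_{\om_{x,\nu}}$, $(v_\e)_{x,\nu} \to 1$ in $L^1$. Applying Proposition \ref{liminf_part_1d} slicewise yields the key lower bound
\begin{equation*}
\liminf_{\e \to 0}\int_A \Bigl[\e \abs{\nabla v_\e}^2 + \tfrac{1}{4\e}(1-v_\e)^2\Bigr]\om\, dx \geq \int_{A_\nu}\sum_{t \in S_{u_{x,\nu}} \cap A^1_{x,\nu}} (\om_{x,\nu})^-(t)\, d\hnmo(x).
\end{equation*}

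Now, given $\tau, \eta > 0$, I would apply Lemma \ref{slicing_single} to $S_u$ to obtain $S \subset S_u$ with $\hnmo(S_u \setminus S) < \eta$ and a countable, pairwise-disjoint family $\mathcal Q$ of cubes $Q = Q_{\nu_{S_u}}(x_0, r_0)$ on each of which $S \cap Q$ is a Lipschitz $(N-1)$-graph with Lipschitz constant $<\tau$, and for which $\hnmo$-a.e.\ slice in the direction $\nu_Q := \nu_{S_u}(x_0)$ meets $S$ in exactly one point. Applying the displayed inequality above to $A = Q$ and $\nu = \nu_Q$, on the ``good" set $T_g(Q)$ of slices (in the notation of the proof of Lemma \ref{slicing_single}) the sole jump of $u_{x,\nu_Q}$ sits at some $t(x)$ with $y_x := x + t(x)\nu_Q \in S \cap Q$. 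The area formula applied to the graph representation of $S \cap Q$, together with $\hnmo\fsp{(T_{x_0,\nu_Q}\cap Q)\setminus T_g(Q)} \to 0$, then converts $\int_{T_g(Q)} \om^-(y_x)\, d\hnmo(x)$ into $\frac{1}{\sqrt{1+\tau^2}}\int_{S \cap Q} \om^-\, d\hnmo$ up to a correction absorbed into the slicing-Lemma error. Summing over $Q \in \mathcal Q$ (disjointness makes the liminf super-additive), using $\hnmo(S_u \setminus S) < \eta$ and $\om \in L^\infty$, one arrives at
\begin{equation*}
\liminf_{\e \to 0}\int_\Om \Bigl[\e \abs{\nabla v_\e}^2 + \tfrac{1}{4\e}(1-v_\e)^2\Bigr]\om\, dx \geq \frac{1}{\sqrt{1+\tau^2}}\Bigl(\int_{S_u}\om^-\, d\hnmo - \norm{\om}_{L^\infty}\eta\Bigr),
\end{equation*}
and letting first $\eta \to 0$ and then $\tau \to 0$ finishes the argument. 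The non-$L^\infty$ case is handled by truncating $\om$ to $\om_k := \om \wedge k$ and appealing to monotone convergence, exactly as at the end of Proposition \ref{liminf_part_c}.

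The main obstacle is the pointwise comparison $(\om_{x,\nu_Q})^-(t(x)) \geq \om^-(y_x)$ needed when summing slicewise contributions. If $y_x \notin S_\om$ then $\om$ is approximately continuous at $y_x$ and the two sides coincide. If $y_x \in S_\om$, the 1D left trace along $\nu_Q$ equals the multi-dimensional trace of $\om$ from the half-space $\flp{z : (z - y_x)\cdot \nu_Q < 0}$, which by \eqref{lower_bdd_2dnd} and standard $SBV$ slicing is either $\om^-(y_x)$ or $\om^+(y_x)$ depending on the sign of $\nu_Q \cdot \nu_{S_\om}(y_x)$, hence $\geq \om^-(y_x)$; the tangential locus where $\nu_Q \perp \nu_{S_\om}$ is $\hnmo$-null on $S_u \cap S_\om$ by rectifiability and can be dropped. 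Making this trace identification precise (so that one does not silently lose a ``$\om^+$ vs.\ $\om^-$" worth of energy) is the delicate technical point; once it is in hand, the rest is the by-now-familiar slicing bookkeeping.
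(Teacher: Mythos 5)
Your proposal is correct and follows essentially the same route as the paper: reduce to the argument of Proposition \ref{liminf_part_c}, apply the one-dimensional $SBV$ lower bound of Proposition \ref{liminf_part_1d} on slices, and then identify the slicewise trace with $\om^-$ on $S_u$. The ``delicate technical point'' you flag at the end is exactly what the paper disposes of by citing Remark 3.109 in \cite{ambrosio2000functions}, and in fact your inequality $(\om_{x,\nu})^-(t)\geq\om^-(x+t\nu)$ is an equality, since both sides are the minimum of the two one-sided traces and hence insensitive to the sign of $\fjp{\nu,\nu_{S_\om}}$.
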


\begin{proof}
Without lose of generality, we assume that $E_\om^-(u)<+\infty$. The proof of this lemma uses the same arguments of the proof of Proposition \ref{liminf_part_c} until the beginning of \eqref{liminf_cont_later_slice}, and we obtain
\bes
\liminf_{\e\to 0}\into \abs{\nabla u_\e}^2v_\e^2\om\,dx\geq \into \abs{\nabla u}^2\om\,dx.
\ees

 By applying Proposition \ref{liminf_part_1d} to the last inequality of \eqref{liminf_cont_later_slice}, we have 
\bes
\liminf_{\e\to 0}\int_A \fsp{{\e}\abs{\nabla v_\e}^2+\frac{1}{4\e}(1-v_\e)^2}\om\,dx\geq  \int_{A_\nu}\sum_{t\in S_{u_{x,\nu}}\cap A_{x,\nu}} {\om^-_{x,\nu}(t)}dx.
\ees
The rest of the proof follows that of Proposition \ref{liminf_part_c} with $\om^-_{x,\nu}$ in place of $\om_{x,\nu}$ and taking into consideration of the fact that $\om^{-}_{x,\nu}(t)=\om^-(x+t\nu)$ (see Remark 3.109 in \cite{ambrosio2000functions}).
\end{proof}

The next lemma is the $SBV$ version of Lemma \ref{cont_arb_dic_slice}. We recall that $I(t_0,t):=(t_0-t,t_0+t)$.
\begin{proposition}\label{SBV_Simple_jump_approx}
let $\tau\in(0,1/4)$ be given, and let $\om\in SBV(\Om)\cap L^\infty(\Om)$ be nonnegative. Then for $\hnmo$ a.e. $x_0\in S_\om$ a point of density one, there exists $r_0:=r_0(x_0)>0$ such that for each $0<r<r_0$ there exist $t_0\in(2\tau r, 4\tau r)$ and $0<t_{0,r}=t_{0,r}(t_0,\tau,x_0,r)<t_0$ such that
\begin{multline}\label{upper_sup_est}
\sup_{0<t\leq t_{0,r}}\frac{1}{\abs{{I(t_0,t)}}}\int_{{I(t_0,t)}}\int_{Q_{\nu_{S_\om}}^\pm(x_0,r)\cap T_{x_0,\nu_{S_\om}}(\pm l)}{\om(x)}d\hnmo(x) d l\\
\leq \int_{S_\om\cap Q_{\nu_{S_\om}}^\pm(x_0,r)}\om^\pm(x)\,d\hnmo+  O(\tau) r^{N-1}.
\end{multline}
\end{proposition}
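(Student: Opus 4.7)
The approach is to adapt the continuous-case argument of Lemma \ref{cont_arb_dic_slice} one-sidedly, replacing continuity of $\om$ at $x_0$ by the one-sided fine traces $\om^\pm(x_0)$ given by item 2 of Theorem \ref{fine_properties_BV}. The base height $t_0$ is pushed into $(2\tau r,4\tau r)$ rather than centred near $0$, so that the hyperplane $T_{x_0,\nu_{S_\om}}(\pm t_0)$ lies strictly on the $\pm$ side of the approximate tangent to $S_\om$ at $x_0$, in a region where $\om\approx\om^\pm(x_0)$. I describe the plan for the $+$ case; the $-$ case is identical with signs swapped.

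First, I select $x_0\in S_\om$ that simultaneously (i) is a point of $\hnmo$-density one in $S_\om$ admitting an approximate tangent plane $T_{x_0,\nu_{S_\om}}$, (ii) satisfies item 2 of Theorem \ref{fine_properties_BV} on the $+$ side, (iii) is a $\hnmo$-Lebesgue point of $\om^+$ restricted to $S_\om$, and (iv) fulfils the projection density property of Lemma \ref{lim_project_density_1}. These properties hold on a $\hnmo$-full subset of $S_\om$. For such $x_0$ and $r$ small (depending on $x_0$ and $\tau$), items (ii)--(iii) yield
\[
\int_{Q^+_{\nu_{S_\om}}(x_0,r)}\abs{\om(y)-\om^+(x_0)}\,dy\leq \tau^2 r^N,\qquad \int_{S_\om\cap Q^+_{\nu_{S_\om}}(x_0,r)}\abs{\om^+-\om^+(x_0)}\,d\hnmo\leq \tau\,r^{N-1},
\]
while (i) and (iv), combined with the slab containment $S_\om\cap Q_{\nu_{S_\om}}(x_0,r)\subset R_{\tau,\nu_{S_\om}}(x_0,r)$, give the density lower bound $\hnmo(S_\om\cap Q^+_{\nu_{S_\om}}(x_0,r))\geq(1-O(\tau))\,r^{N-1}$. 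A Fubini integration of the first display over $l\in(2\tau r,4\tau r)$, followed by a Chebyshev/Mean Value argument and intersection with the a.e.\ set of Lebesgue points of $l\mapsto\int_{Q^+\cap T_{x_0,\nu_{S_\om}}(+l)}\om\,d\hnmo$, singles out $t_0\in(2\tau r,4\tau r)$ and a sufficiently small $t_{0,r}\in(0,t_0)$ for which
\[
\sup_{0<t\leq t_{0,r}}\frac{1}{\abs{I(t_0,t)}}\int_{I(t_0,t)}\int_{Q^+\cap T_{x_0,\nu_{S_\om}}(+l)}\om\,d\hnmo\,dl\leq \om^+(x_0)\,r^{N-1}+O(\tau)\,r^{N-1}.
\]

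To reach the right-hand side of \eqref{upper_sup_est}, I combine the Lebesgue point of $\om^+|_{S_\om}$ with the density lower bound to estimate $\om^+(x_0)\,r^{N-1}\leq\int_{S_\om\cap Q^+_{\nu_{S_\om}}(x_0,r)}\om^+\,d\hnmo+O(\tau)\,r^{N-1}$, and chain this with the preceding display. The main obstacle is the geometric density estimate $\hnmo(S_\om\cap Q^\pm_{\nu_{S_\om}}(x_0,r))/r^{N-1}\to 1$ for $\hnmo$-a.e.\ $x_0$: while the total rectifiable density $\hnmo(S_\om\cap Q_{\nu_{S_\om}}(x_0,r))/r^{N-1}\to 1$ is classical, distributing this mass to each closed half-cube $Q^\pm_{\nu_{S_\om}}(x_0,r)$ (whose common face is the tangent plane of $S_\om$) requires a careful blow-up argument using Lemma \ref{lim_project_density_1} to rule out concentration of $\hnmo\lfloor S_\om$ on the transverse faces of $Q^\pm_{\nu_{S_\om}}(x_0,r)$.
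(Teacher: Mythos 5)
Your overall skeleton --- selecting $x_0$ with the one-sided fine trace property of Theorem \ref{fine_properties_BV}, a $\hnmo$-Lebesgue point of $\om^\pm$ on $S_\om$, and density one; then Fubini over $l$, a mean-value selection of $t_0$, and a Lebesgue point in $l$ to produce $t_{0,r}$ --- is exactly the paper's argument, and those steps are sound. The gap is precisely the ``main obstacle'' you flag at the end, and it is not repairable as stated: the half-cube density lower bound $\hnmo\fsp{S_\om\cap Q^\pm_{\nu_{S_\om}}(x_0,r)}\geq (1-O(\tau))r^{N-1}$ is false in general. Take $S_\om$ to contain the graph of a smooth strictly concave function, e.g. $x_N=-\abs{x'}^2$; at every such point the surface lies in the open half-space strictly below its tangent plane except at the point of tangency, so $\hnmo\fsp{S_\om\cap Q^+_{\nu_{S_\om}}(x_0,r)}=0$ for all $r>0$, on a set of positive (even full) $\hnmo$-measure. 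The obstruction is not concentration of $\hnmo\lfloor S_\om$ on the transverse faces of the half-cube (which Lemma \ref{lim_project_density_1} could control) but the fact that the rectifiable mass may sit entirely in the open ``wrong-side'' half-space while still being squeezed into the slab $R_{\tau,\nu_{S_\om}}(x_0,r)$; weak-$*$ convergence of the blow-ups to $\hnmo\lfloor T_{x_0,\nu_{S_\om}}$ gives only $\limsup$ control on the closed half-cube, no $\liminf$.

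The paper sidesteps this entirely: it never distributes the density between the two half-cubes. Instead it writes $\om^-(x_0)\,r^{N-1}=\om^-(x_0)\,\hnmo\fmp{Q_{\nu_{S_\om}}(x_0,r)\cap T_{x_0,\nu_{S_\om}}(-t_0)}\leq(1+\tau^2)\,\om^-(x_0)\,\hnmo\fmp{Q_{\nu_{S_\om}}(x_0,r)\cap S_\om}$ using the classical density-one property of the \emph{full} cube, and then passes to $(1+\tau^2)\int_{Q_{\nu_{S_\om}}(x_0,r)\cap S_\om}\om^-\,d\hnmo$ via the Lebesgue-point property of $\om^-$ over the full cube, absorbing the factor $(1+\tau^2)$ with $\om\in L^\infty$. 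Consequently the bound actually proved (and the one used downstream in Proposition \ref{jump_ready_coro_limsup}, see \eqref{upper_sup_ready_limsup_jump}) has the full-cube integral $\int_{S_\om\cap Q_{\nu_{S_\om}}(x_0,r)}\om^\pm\,d\hnmo$ on the right-hand side; the half-cube $Q^\pm_{\nu_{S_\om}}(x_0,r)$ appearing on the right of \eqref{upper_sup_est} as printed is a strictly stronger claim than what is proved or needed, and the concave-graph example shows it cannot hold in general. To repair your argument, replace your final step by the full-cube comparison above and state the conclusion with the full-cube integral; everything before that step can be kept.
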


\begin{proof}
For simplicity of notation, in what follows we abbreviate $Q_{\nu_{S_\om}}(x_0,r)$ as $Q(x_0,r)$ and $T_{x_0,\nu_{S_\om}}$ as $T_{x_0}$.\\\\
Since $\hnmo(S_\om)<\infty$, and so $\mu:=\hnmo\lfloor S_\om$ is a nonnegative radon measure, and since $\om^-\in L^1(\Om,\mu)$, it follows that for $\hnmo$ a.e. $x_0\in S_\om$
\be\label{densityoneomneg}
\lim_{r\to 0}\fint_{Q(x_0,r)\cap S_\om}\abs{\om^-(x)-\om^-(x_0)}d\hnmo (x)=0.
\ee
Choose one such $x_0\in S_\om$, also a point of density $1$ of $S_\om$, and let $\tau>0$ be given. Select $r_1>0$ such that for all $0<r<r_1$,
\be\label{length_a_equal_j}
\frac{1}{1+\tau^2}\leq\frac{\mathcal H^{N-1}(S_\om\cap Q(x_0,r))}{r^{N-1}}\leq 1+\tau^2.
\ee
Let $0<r_2<r_1$ be such that, in view of \eqref{densityoneomneg},
\be\label{phase_two}
\int_{Q(x_0,r)\cap S_\om} \abs{ \om^-(x)- \om^-(x_0)}d\hnmo\leq \tau^2 r^{N-1}
\ee
for all $0<r<r_2$, and we observe that
\begin{align}\label{phase_three}
\begin{split}
 \om^-(x_0){\hnmo\fmp{Q(x_0,r)\cap T_{x_0}(-t_0)}}& = \om^-(x_0) \,r^{N-1}\\
 &\leq (1+\tau^2)\,\om^-(x_0)\hnmo\fmp{{Q(x_0,r)\cap S_\om}}.
\end{split}
\end{align}
Since by Theorem \ref{fine_properties_BV} 
\bes
\lim_{r\to 0}\fint_{Q^-(x_0,r)}\abs{ \om(x)- \om^-(x_0)}dx=0,
\ees
we may choose $0<r_3<r_2$ such that 
\bes
\fint_{Q^-(x_0,r)}\abs{ \om(x)- \om^-(x_0)}dx\leq \tau^2,
\ees
for all $0<r<r_3$, and so, since $3.5\tau r<r$, we have
\bes
\int_{2.5\tau r}^{3.5\tau r} \int_{Q^-(x_0,r)\cap T_{x_0}(-t)}\abs{ \om(x)- \om^-(x_0)}d\hnmo(x) dt\leq \int_{Q^-(x_0,r)}\abs{ \om(x)- \om^-(x_0)}dx\leq \tau^2 r^N.
\ees
There exists a set $A\subset(2.5\tau r, 3.5\tau r)$ with positive 1 dimensional Lebesgue measure such that for every $t\in A$, 
\be\label{phase_four}
\int_{Q^-(x_0,r)\cap T_{x_0}(-t)}\abs{ \om(x)- \om^-(x_0)}d\hnmo(x) \leq \frac{\tau^2 r^{N}}{\tau r}=\tau r^{N-1}.
\ee
and choose $t_0\in A$ a Lebesgue point for 
\bes
l\in(-r/2,r/2)\longmapsto \int_{Q^{-}(x_0,r)\cap T_{x_0}(l)}\om\,d\hnmo(x)
\ees
so that 
\be\label{lebesgue_point_line}
\lim_{t\to 0}\frac{1}{\abs{{I(t_0,t)}}}\int_{{I(t_0,t)}}\int_{Q^-(x_0,r)\cap T_{x_0}(-l)}{ \om(x)}d\hnmo(x) dl= \int_{Q^-(x_0,r)\cap T_{x_0}(-t_0)}{ \om(x)}d\hnmo(x) .
\ee
Hence, there exists $t_{0,r}>0$, depending on $t_0$, $\tau$, $r$, and $x_0$, such that $I(t_0,t_{0,r})\subset (2.5\tau r,3.5\tau r)$ and
\begin{multline}\label{phase_one}
\sup_{0<t\leq t_{0,r}}\frac{1}{\abs{{I(t_0,t)}}}\int_{{I(t_0,t)}}\int_{Q^-(x_0,r)\cap T_{x_0}(-l)}{ \om(x)}d\hnmo(x) dl\\
\leq \int_{Q^-(x_0,r)\cap T_{x_0}(-t_0)}{ \om(x)}d\hnmo + \tau r^{N-1}.
\end{multline}
%since $\hnmo({Q^-(x_0,r)\cap T_{x_0}(-t)}) = \hnmo({Q^-(x_0,r)\cap T_{x_0}})$ and $ u^-(x_0)$ is just a fixed constent.\\\\
%
In view of \eqref{phase_one}, \eqref{phase_four}, \eqref{phase_three}, and \eqref{phase_two}, in this order, we have that for every $0<r<r_3$ there exist $t_0\in(2.5\tau r,3.5\tau r)$ and $0<t_{0,r}<t_0$ such that
\begin{align*}
\sup_{0<t\leq t_{0,r}}&\frac{1}{\abs{{I(t_0,t)}}}\int_{{I(t_0,t)}}\int_{Q^-(x_0,r)\cap T_{x_0}(-l)}{ \om(x)}d\hnmo(x) dl \\
\leq&\int_{Q^-(x_0,r)\cap T_{x_0}(-t_0)}{\om(x)}d\hnmo + \tau r^{N-1} \\
\leq&\int_{Q^-(x_0,r)\cap T_{x_0}(-t_0)}\abs{\om(x)-\om^-(x_0)}d\hnmo \\
&+\om^-(x_0)\hnmo\fmp{Q^-(x_0,r)\cap T_{x_0}(-t_0)}+ \tau r^{N-1} \\
\leq& O(\tau) r^{N-1}+(1+\tau^2)\,\om^-(x_0)\hnmo\fmp{{Q(x_0,r)\cap S_\om}}\\
\leq& O(\tau) r^{N-1} +(1+\tau^2)\int_{Q(x_0,r)\cap S_\om} \om^-(x)d\hnmo.
\end{align*}
Since $\om\in L^\infty(\Om)$, we have $\om^-\in L^\infty(S_\om)$ and thus, invoking \eqref{length_a_equal_j},
\bes
\tau^2\int_{Q(x_0,r)\cap S_\om} \om^-(x)d\hnmo\leq O(\tau) \norm{\om}_{L^\infty} \hnmo\fmp{Q(x_0,r)\cap S_\om}\leq O(\tau) r^{N-1},
\ees
and we deduce the $\om^-$ version of \eqref{upper_sup_est}.\\\\
Similarly, we may refine $t_0$, $r_0>0$, and $0<t_{0,r}<t_0$ such that
\bes
\sup_{0<t\leq t_{0,r}}\frac{1}{\abs{{I(t_0,t)}}}\int_{{I(t_0,t)}}\int_{Q^+(x_0,r)\cap T_{x_0}(l)}{ \om(x)}d\hnmo dl\leq  \int_{Q(x_0,r)\cap S_\om} \om^+(x)d\hnmo+O(\tau) r^{N-1}.
\ees
\end{proof}

\begin{proposition}\label{jump_ready_coro_limsup}
Let $\om\in SBV(\Om)\cap L^\infty(\Om)$ be nonnegative and let $\tau\in(0,1/4)$ be given. Then, there exist a set $S\subset S_\om$ and a countable family of disjoint cubes $\mathcal F=\flp{Q_{\nu_{S_\om}}(x_n, r_n)}_{n=1}^{\infty}$, with $r_n< \tau$, such that the following hold:
\begin{enumerate}[1.]
\item
$\hnmo(S_\om\setminus S)<\tau$ and $S\subset \bigcup_{n=1}^\infty Q_{\nu_{S_\om}}(x_n, r_n)$;
\item
\be\label{disjoint_col}
\operatorname{dist}(Q_{\nu_{S_\om}}(x_n, r_n),Q_{\nu_{S_\om}}(x_m,r_m))>0
\ee
for $n\neq m$;
\item
\be\label{jump_coro_finitesum}
\sum_{n=1}^\infty r_n^{N-1}\leq 4\hnmo(S_\om);
\ee
\item
$S\cap Q_{\nu_{S_\om}}(x_n, r_n)\subset R_{\tau/2,\nu_{S_\om}}(x_n,r_n)$;
\item
for each $n\in\mathbb N$, there exists $t_{n}\in(2.5\tau r_n,3.5\tau r_n)$ and $0<t_{x_n,r_n}<t_n$, depending on $\tau$, $r_n$, and $x_n$, such that 
\bes
T_{x_n,\nu_{S_\om}}(-t_n\pm t_{x_n,r_n})\subset Q_{\nu_{S_\om}}^-(x_n,r_n)\setminus R_{\tau/2,\nu_{S_\om}}(x_n,r_n)
\ees
and
\begin{multline}\label{upper_sup_ready_limsup_jump}
\sup_{0<t\leq t_{x_n,r_n}}\frac{1}{\abs{I(t_n,t)}}\int_{I(t_n,t)}\int_{Q_{\nu_{S_\om}}(x_n, r_n)\cap T_{x_n,\nu_\Gamma}( l)}{\om(x)}d\hnmo d l\\
\leq \int_{S\cap Q_{\nu_{S_\om}}(x_n, r_n)}\om^-\,d\hnmo+  C\tau r_n^{N-1},
\end{multline}
where $I(t_n,t):=(-t_n-t, -t_n+t)$.
\end{enumerate}
\end{proposition}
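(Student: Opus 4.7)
The plan is to mimic the strategy of Proposition \ref{cont_ready_coro_limsup}, with Proposition \ref{SBV_Simple_jump_approx} playing the role that Lemma \ref{cont_arb_dic_slice} played in the continuous case.

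First, since $\omega\in\mathcal W(\Om)\cap SBV(\Om)$ forces $\hnmo(S_\om)<\infty$, we may apply Theorem 2.76 in \cite{ambrosio2000functions} exactly as in the proof of Proposition \ref{cont_control_jump} to extract a compact set $S_1\subset S_\om$ with $\hnmo(S_\om\setminus S_1)<\tau/4$ contained in finitely many Lipschitz $(N-1)$-graphs $\Gamma_1,\ldots,\Gamma_M$ whose Lipschitz constants are all smaller than $\tau/(2\sqrt N)$. Then, using that $\hnmo$-a.e.\ point of $S_1$ has density one and invoking Egoroff's theorem, we obtain $S_2\subset S_1$ with $\hnmo(S_1\setminus S_2)<\tau/4$ and a uniform radius $r_1>0$ so that $\hnmo(S_\om\cap Q_{\nu_{S_\om}}(x,r))\leq(1+\tau^2)r^{N-1}$ and $\hnmo(S_\om\cap Q_{\nu_{S_\om}}(x,r))\geq \tfrac12 r^{N-1}$ for every $x\in S_2$ and $0<r<r_1$. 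Next, we slightly shrink each piece $L_i:=S_2\cap \Gamma_i$ to disjoint compact pieces $L_i'$ at positive mutual distance $d>0$, losing at most $\tau/4$ in $\hnmo$-measure, and set $S_3:=\bigcup_{i=1}^M L_i'$.

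Next, I would use Proposition \ref{SBV_Simple_jump_approx} together with a further Egoroff-type restriction. For each $x_0\in S_3$ which is also a point of approximate continuity for the $\hnmo$-density and at which \eqref{upper_sup_est} holds, there is a radius $r_0(x_0)>0$ such that for every $0<r<r_0(x_0)$ we can pick $t_0\in(2.5\tau r,3.5\tau r)$ and $0<t_{0,r}<t_0$ for which \eqref{upper_sup_est} holds. Restricting to a subset $S\subset S_3$ on which this radius threshold is uniform (and controlling the total discarded $\hnmo$-measure by $\tau/4$ via Egoroff) and intersecting with a radius bound smaller than $\min\{\tau,d/2,r_1\}$, we guarantee that $\hnmo(S_\om\setminus S)<\tau$. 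The containment $S\cap Q_{\nu_{S_\om}}(x_n,r_n)\subset R_{\tau/2,\nu_{S_\om}}(x_n,r_n)$ follows because $S$ lies in the union of the disjoint Lipschitz graph pieces whose Lipschitz constants are $<\tau/(2\sqrt N)$, exactly as in the argument used in the continuous case.

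Now I would consider the fine cover $\mathcal F':=\{Q_{\nu_{S_\om}}(x,r):\, x\in S,\ 0<r<\min(r_0(x),\tau)\}$ and apply Besicovitch's Covering Theorem to extract a countable disjoint subfamily $\mathcal F=\{Q_{\nu_{S_\om}}(x_n,r_n)\}_{n=1}^\infty$ still covering $S$. The separation between pieces $L_i'$ and the open-cube nature of the cover yield \eqref{disjoint_col}. The bound \eqref{jump_coro_finitesum} follows from the disjointness of the cubes and the uniform lower bound $\hnmo(S_\om\cap Q_{\nu_{S_\om}}(x_n,r_n))\geq \tfrac12 r_n^{N-1}$ inherited from $S_2$, summing to at most $2\hnmo(S_\om)$, which is well within the factor $4$. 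Finally, for each $n$ property~5 is just the conclusion of Proposition \ref{SBV_Simple_jump_approx} applied at $x=x_n$, $r=r_n$, producing the slice heights $t_n\in(2.5\tau r_n,3.5\tau r_n)$ and $0<t_{x_n,r_n}<t_n$ so that \eqref{upper_sup_ready_limsup_jump} holds; the hyperplane inclusion $T_{x_n,\nu_{S_\om}}(-t_n\pm t_{x_n,r_n})\subset Q^{-}_{\nu_{S_\om}}(x_n,r_n)\setminus R_{\tau/2,\nu_{S_\om}}(x_n,r_n)$ is guaranteed because $t_n\pm t_{x_n,r_n}\in(2\tau r_n,4\tau r_n)\subset(\tau r_n/2,r_n/2)$.

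The only genuine difficulty, and the place that deserves most care, is the orchestration of successive Egoroff steps so that the slice threshold $t_{x_n,r_n}$ coming from Proposition \ref{SBV_Simple_jump_approx} is chosen \emph{before} we freeze the covering family $\mathcal F$, since $t_{x_n,r_n}$ depends on $x_n$ and $r_n$; apart from that, the proof is structurally parallel to Proposition \ref{cont_ready_coro_limsup}, simply swapping $\om$ for $\om^-$ on the right-hand side and using the one-sided half-cubes $Q^{\pm}_{\nu_{S_\om}}$ in place of the full cubes.
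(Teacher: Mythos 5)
Your proposal takes essentially the same route as the paper: the paper's own proof of this proposition is a one-line remark that one repeats the arguments of Proposition \ref{cont_control_jump} and Proposition \ref{cont_ready_coro_limsup} with Proposition \ref{SBV_Simple_jump_approx} substituted for Lemma \ref{cont_arb_dic_slice}, and your write-up is a correct unpacking of exactly that substitution (Lipschitz-graph decomposition, Egoroff steps, Besicovitch covering, and the one-sided slice estimate with $\om^-$). The only blemish is the constant bookkeeping at the end --- the inclusion $(2\tau r_n,4\tau r_n)\subset(\tau r_n/2,r_n/2)$ requires $\tau<1/8$ rather than $\tau<1/4$ --- but this looseness is already present in the paper and does not affect the argument.
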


\begin{proof}
The proof of this proposition uses the same arguments of the proof of Proposition \ref{cont_control_jump} and Proposition \ref{cont_ready_coro_limsup} where we apply Lemma \ref{SBV_Simple_jump_approx} in place of Lemma \ref{cont_arb_dic_slice}.
\end{proof}

\begin{proposition}\label{limsup_n_j}\emph{($\Gamma$-$\limsup$)}
For $\om\in\mathcal W(\Om)\cap SBV(\Om)\cap L^\infty(\Om)$ and $u\in L^1_\om(\Om)\cap L^\infty(\Om)$, let
\begin{align*}
E_\om^+(u):=\inf&\flp{\limsup_{\e\to 0} E_{\om,\e}(u_\e,v_\e):\right.\\
&\left.\,\,\,(u_\e,v_\e)\in W^{1,2}_\om(\Om)\times W^{1,2}(\Om), u_\e\to u\text{ in }L^1_\om,\, v_\e\to1\text{ in }L^1,\,0\leq v_\e\leq 1}.
\end{align*}
We have
\be\label{jump_upper_bdd_need}
E_\om^+(u)\leq E_\om(u).
\ee
\end{proposition}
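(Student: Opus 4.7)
As in Proposition \ref{limsup_n_c}, one first reduces to the case $E_\om(u)<+\infty$, so that by Lemma \ref{compact_energy} and the standing assumption $u\in L^\infty(\Om)$ we have $u\in SBV_\om(\Om)\cap L^\infty(\Om)$ with $\hnmo(S_u)<+\infty$. The plan is to mimic the recovery construction \eqref{approx_u_simplex_jump_cont}--\eqref{benzun}: for each small $\tau>0$, build a modification $\bar u_\tau$ of $u$ whose jump set is a finite union of flat hyperplane pieces sitting inside mutually disjoint cubes that cover $S_u$ up to $\hnmo$-error $O(\tau)$, then set $v_{\tau,\e}:=\tilde v_\e\circ\operatorname{dist}(\cdot,S_{\bar u_\tau})$ and $u_{\tau,\e}:=(1-\vp_\e)\bar u_\tau$. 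The decisive new feature, patterned on the one-dimensional construction of Proposition \ref{1d_jump_reco}, is that on the portion of $S_u$ meeting $S_\om$ the new jump of $\bar u_\tau$ must be pushed strictly onto the $-\nu_{S_\om}$ side of $S_u$, so that the Modica--Mortola tube built around it sees only values of $\om$ close to $\om^-$.

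Given $\tau>0$, split $S_u$ into $S_u^c:=S_u\setminus\overline{S_\om}$ and $S_u^j:=S_u\cap S_\om$. Since $\hnmo(\overline{S_\om}\setminus S_\om)=0$ and $S_u,S_\om$ are both $\hnmo$-rectifiable, $\om$ is approximately continuous in a neighborhood of each point of $S_u^c$, while at $\hnmo$-a.e.\ $x\in S_u^j$ the tangent hyperplanes of $S_u$ and $S_\om$ coincide, i.e.\ $\nu_{S_u}(x)=\pm\nu_{S_\om}(x)$. Apply Proposition \ref{cont_ready_coro_limsup} to $\Gamma=S_u^c$ (with $\om$ continuous in a neighborhood) and Proposition \ref{jump_ready_coro_limsup} localized to $S_u^j$ to extract finite disjoint cube families $\mathcal T_\tau^c$ and $\mathcal T_\tau^j$, with standard offsets $t_n^c\in(-\tau r_n/4,\tau r_n/4)$ on $\mathcal T_\tau^c$ and shifted offsets $t_n^j\in(2.5\tau r_n,3.5\tau r_n)$ on $\mathcal T_\tau^j$, each equipped with its thickness $t_{x_n,r_n}$. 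Shrinking radii if necessary, the two families can be assumed mutually disjoint, and the slice bounds \eqref{upper_sup_ready_limsup_cont} and \eqref{upper_sup_ready_limsup_jump} hold on the respective families.

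Within each $Q_n\in\mathcal T_\tau^c$ take $\bar u_\tau$ exactly from \eqref{approx_u_simplex_jump_cont}. Within each $Q_n\in\mathcal T_\tau^j$, split $Q_n$ along $\nu_{S_\om}(x_n)$ into four slabs separated by $T_{x_n,\nu_{S_\om}}(\tau r_n)$, $T_{x_n,\nu_{S_\om}}(-t_n^j)$, and $T_{x_n,\nu_{S_\om}}(-5\tau r_n)$; put $\bar u_\tau=u$ in the outer top and outer bottom slabs (where $u$ equals $u^+$, respectively $u^-$, since $S_u\subset R_{\tau/2,\nu_{S_\om}}(x_n,r_n)$), reflect $u$ across $T_{x_n,\nu_{S_\om}}(\tau r_n)$ on the upper middle slab so that the reflected argument lies safely above $S_u$, and reflect $u$ across $T_{x_n,\nu_{S_\om}}(-5\tau r_n)$ on the lower middle slab so that the reflected argument lies below the lower reflection plane. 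These two reflections eliminate the original jump across $S_u$ and leave a single flat jump of $\bar u_\tau$ on $T_{x_n,\nu_{S_\om}}(-t_n^j)$, deep inside the $\om^-$-region of $Q_n$. Set $\bar u_\tau=u$ outside $\bigcup\mathcal T_\tau$. By the $2$-to-$1$ Lipschitz character of the reflections and Lebesgue dominated convergence, $\int_\Om|\nabla\bar u_\tau|^2\om\,dx\le\int_\Om|\nabla u|^2\om\,dx+O(\tau)$ and $\bar u_\tau\to u$ in $L^1_\om(\Om)$, exactly as in \eqref{bar_u_ready_cont2}--\eqref{L1omcont}.

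The Modica--Mortola estimate then follows the scheme \eqref{whatsleftover}--\eqref{muti_cont_phase1}: for $\e$ small enough that $\e^2+\sqrt\e<\min_n t_{x_n,r_n}$, on each $Q_n\in\mathcal T_\tau^j$ the support of the $\tilde v_\e$-tube around $T_{x_n,\nu_{S_\om}}(-t_n^j)$ is contained in the slice strip appearing in \eqref{upper_sup_ready_limsup_jump}, so Proposition \ref{jump_ready_coro_limsup} bounds the tube integral by $\int_{S\cap Q_n}\om^-\,d\hnmo+O(\tau)r_n^{N-1}$; on each $Q_n\in\mathcal T_\tau^c$ the identical computation using Proposition \ref{cont_ready_coro_limsup} yields $\int_{S\cap Q_n}\om\,d\hnmo+O(\tau)r_n^{N-1}$, and since $\om^-=\om$ holds $\hnmo$-a.e.\ on $S_u^c$ these contributions also count toward $\int_{S_u}\om^-\,d\hnmo$. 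Summing with \eqref{cont_coro_finitesum} and \eqref{jump_coro_finitesum}, absorbing lateral boundary terms $\partial Q_n\cap\overline{U_n}$ and the residual $S_u\setminus\bigcup\mathcal T_\tau$ as in \eqref{AT_nimenzhongyuyouyongle}--\eqref{whatsleftover3}, and then running the diagonal $\e=\e(\tau)\to 0$ together with the truncation $K\wedge u\vee -K$, $K\to\infty$, deliver \eqref{jump_upper_bdd_need}. The main technical obstacle is the geometric engineering of the two-level reflection on $Q_n\in\mathcal T_\tau^j$: one must verify that the range $t_n^j\in(2.5\tau r_n,3.5\tau r_n)$ places $T_{x_n,\nu_{S_\om}}(-t_n^j)$ strictly outside the $S_u$-strip $R_{\tau/2,\nu_{S_\om}}(x_n,r_n)$ and inside the $\om^-$-half of $Q_n$, and that both reflections send their source slabs into regions where $u$ is single-valued with controlled Dirichlet energy, so that the sharp slice bound \eqref{upper_sup_ready_limsup_jump} can be matched to the $\tilde v_\e$-profile.
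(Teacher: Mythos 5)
Your proposal is correct and follows essentially the same route as the paper: reduce to $E_\om(u)<\infty$ so that $u\in SBV_\om(\Om)\cap L^\infty(\Om)$, cover $S_u$ by disjoint cubes classified according to whether their centers meet $\overline{S_\om}$, relocate the jump by reflection onto the hyperplane $T_{x_n,\nu_{S_\om}}(-t_n)$ with $t_n\in(2.5\tau r_n,3.5\tau r_n)$ so that the Modica--Mortola tube is controlled by \eqref{upper_sup_ready_limsup_jump} (respectively by \eqref{upper_sup_ready_limsup_cont} away from $S_\om$), then sum over cubes, diagonalize in $\e=\e(\tau)$, and truncate. The only deviations are cosmetic: the paper uses a single reflection of the whole band $U_n$ across $T_{x_n,\nu_{S_\om}}(t_n)$ rather than your two-level reflection, and in the mixed case it takes one covering of $S_u$ from Proposition \ref{cont_ready_coro_limsup} and chooses the construction cube by cube, which sidesteps the (minor, fixable) issue you leave open of making the two separately constructed cube families mutually disjoint.
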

\begin{proof}
\underline{Step 1:} Assume $\hnmo((S_\om\setminus S_u)\cup(S_u\setminus S_\om))=0$, i.e., $S_\om$ and $S_u$ coincide $\hnmo$ a.e.\\\\
If $E_\om(u)=\infty$ then there is nothing to prove. If $E_\om(u)<+\infty$ then by Lemma \ref{compact_energy} we have that $u\in GSBV_\om(\Om)$ and $\hnmo(S_u)<+\infty$.\\\\
Fix $\tau\in(0,2/21)$. Applying Proposition \ref{jump_ready_coro_limsup} to $\om$ we obtain a set $S_\tau\subset S_\om$, a countable collection of mutually disjoint cubes $\mathcal F_\tau=\seqn{Q_{\nu_{S_\om}}(x_n, r_n)}$, and corresponding 
\be\label{whereistn}
t_n\in(2.5\tau r_n,3.5\tau r_n)
\ee
and $t_{x_n,r_n}$ for which \eqref{upper_sup_ready_limsup_jump} holds. Extract a finite collection $\mathcal T_\tau=\flp{{Q_{\nu_{S_\om}}(x_n, r_n)}}_{n=1}^{M_\tau}$ from $\mathcal F_\tau$ with $M_\tau>0$ large enough such that 
\be\label{ext_fint_part}
\hnmo\fmp{S_\tau\setminus \bigcup_{n=1}^{M_\tau} Q_{\nu_{S_\om}}(x_n, r_n)}<\tau,
\ee
and we define
\be\label{what_left_F_jump}
F_\tau:=S_\tau\cap \fmp{\bigcup_{n=1}^{M_\tau} Q_{\nu_{S_\om}}(x_n, r_n)}.
\ee
Let $U_n$ be the part of $Q_{\nu_{S_\om}}(x_n, r_n)$ which lies between $T_{x_n,\nu_{S_\om}}(\pm t_n)$, $U_n^+$ be the part above $T_{x_n,\nu_{S_\om}}(t_n)$, and $U_n^-$ be the part below $T_{x_n,\nu_{S_\om}}(-t_n)$.\\\\
We claim that if $x\in U_n$, 
\be\label{jump_right_place}
x+2\,{\operatorname{dist}(x,T_{x_n,\nu_{S_\om}}(t_n))\nu_{S_\om}(x_n)}\in U_n^+.
\ee
Note that 
\bes
\dist\fsp{x, T_{x_n,\nu_{S_\om}}(t_n)} = t_n - \fsp{x-\mathbb P_{x_n,\nu_{S_\om}}(x)}\nu_{S_\om}(x_n),
\ees
and since $x\in U_n$, we have that
\bes
\fsp{x-\mathbb P_{x_n,\nu_{S_\om}}(x)}\nu_{S_\om}(x_n)\in (-t_n, t_n)
\ees
and 
\bes
t_n\leq 2\operatorname{dist}\fsp{x,T_{x_n,\nu_{S_\om}}(t_n)}+ \fsp{x-\mathbb P_{x_n, \nu_{S_\om}}(x)}\nu_{S_\om}(x_n) \leq  3t_n\leq 10.5\tau r_n<\frac12 r_n.
\ees
Hence, following a similar computation in \eqref{big_proj_com}, we deduce \eqref{jump_right_place}.\\\\
Moreover, according to \eqref{whereistn} and the definition of $R_{\tau/2,\nu_{S_\om}}(x_n,r_n)$, we have that 
\be
\fsp{U_n^+\cup U_n^-}\cap R_{\tau/2,\nu_{S_\om}}(x_n,r_n)=\emptyset.
\ee
We define $\bar u_\tau$ as follows (see Figure \ref{figue_approx_u_simplex_jump}):
\be\label{approx_u_simplex_jump}
\bar u_\tau(x):=
\begin{cases}
u(x) & \text{ if }x\in U_n^+\cup U_n^-,\\
u\fsp{x+2{\operatorname{dist}(x,T_{x_n,\nu_{S_\om}}(t_n))\nu_{S_\om}(x_n)}}&\text{ if }x\in U_n,
\end{cases}
\ee
and 
\bes
\bar u_\tau(x):=u(x)\text{ if }x\in \Om\setminus \fsp{\bigcup_{n=1}^{M_\tau} Q_{\nu_{S_\om}}(x_n,r_n)}.
\ees
\begin{figure}[h]
\includegraphics[width=0.8\textwidth]{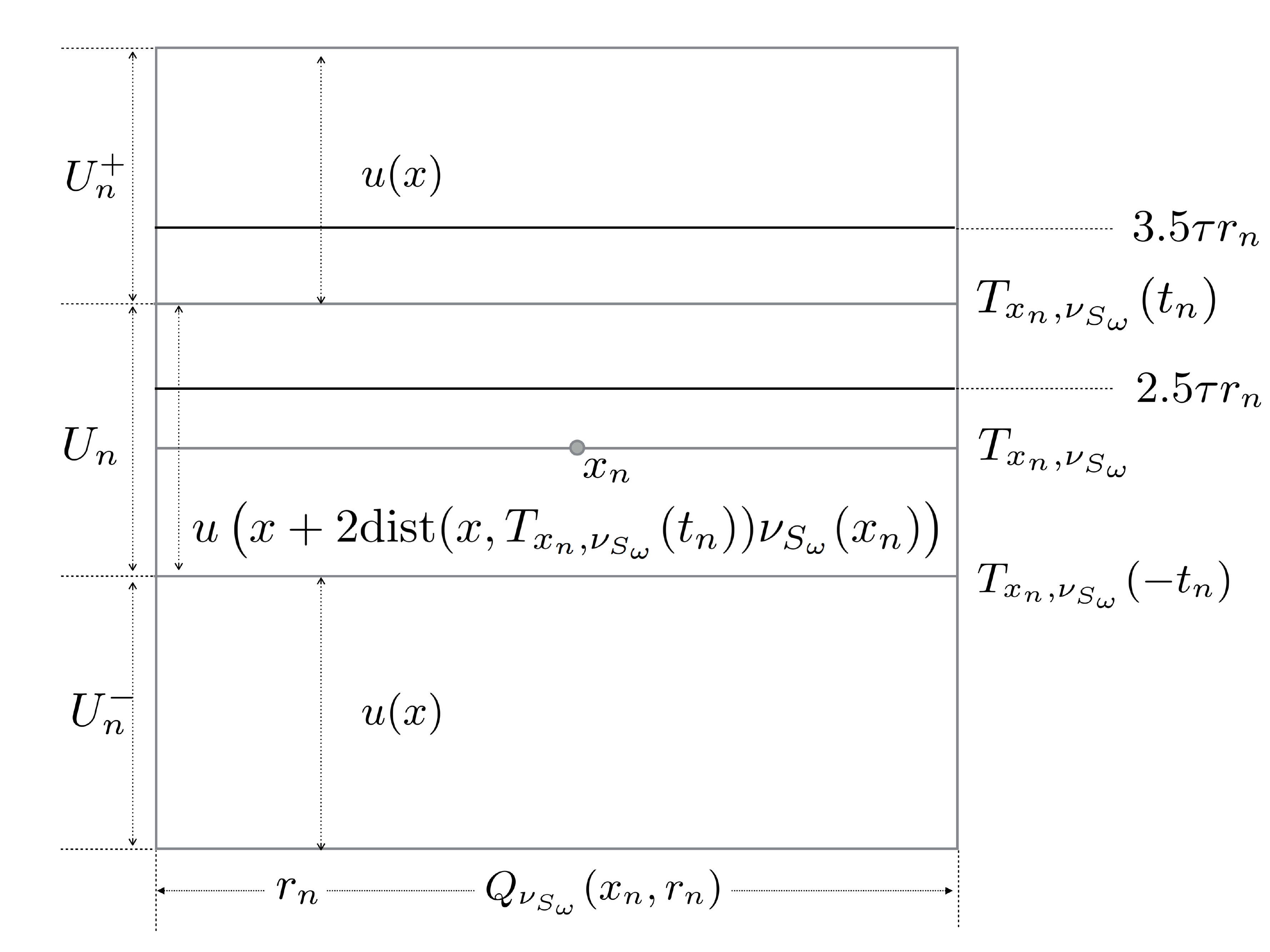}
\caption{Construction of $\bar u_\tau(x)$ in (\ref{approx_u_simplex_jump})}\label{figue_approx_u_simplex_jump}
\end{figure}

Note that the jump set of $\bar u_\tau$ is contained by
\begin{enumerate}[1.]
\item
\be\label{major_part_m_jump}
\bigcup_{n=1}^{M_\tau}\fmp{T_{x_n,\nu_{S_\om}}(-t_n)\cap Q_{\nu_{S_\om}}(x_n,r_n)};
\ee
\item
$$\bigcup_{n=1}^{M_\tau}\partial \fsp{Q_{\nu_{S_\om}}(x_n,r_n)}\cap \overline{U_n};$$
\item
$S_u\setminus F_\tau$, where $F_\tau$ is defined in \eqref{what_left_F_jump}.
\end{enumerate}
The construction of $\seqe{u_\e}\subset W_\om^{1,2}(\Om)$ and $\seqe{v_\e}\subset W^{1,2}(\Om)$ satisfying \eqref{jump_upper_bdd_need} is same as in the proof of Proposition \ref{limsup_n_c}, using \eqref{approx_u_simplex_jump} instead of \eqref{approx_u_simplex_jump_cont}, and at \eqref{cont_key_step} we apply \eqref{upper_sup_ready_limsup_jump} instead of \eqref{upper_sup_ready_limsup_cont}.\\\\
\underline{Step 2:} Suppose that $\hnmo((S_\om\setminus S_u)\cup(S_u\setminus S_\om))>0$. Note that we are only interested in the part $S_u\setminus S_\om$ but not $S_\om\setminus S_u$, because we only need to recover $S_u$.\\\\
We first apply Proposition \ref{cont_ready_coro_limsup} on $S_u$ to obtain a countable family of disjoint cubes $\mathcal F=\seqn{Q_{\nu_{S_u}}(x_n,r_n)}$ such that \eqref{cont_sdfsfsdfs}-\eqref{cont_coro_finitesum} hold. Furthermore, extract a finite collection $\mathcal T_\tau$ from $\mathcal F$ such that \eqref{ext_fint_part} holds. \\\\
We define $\bar u_\tau$ inside each $Q_{\nu_{S_u}}(x_n,r_n)\in \mathcal T_\tau$ as follows (see Figure \ref{figue_approx_u_mix}):
\begin{enumerate}[1.]
\item
if $x_n\in \overline S_\om$, we apply Proposition \ref{SBV_Simple_jump_approx} to obtain item 5 in Proposition \ref{jump_ready_coro_limsup} for this $Q_{\nu_{S_u}}(x_n,r_n)$, and we define $\bar u_\tau$ in this cube in the way of \eqref{approx_u_simplex_jump};
\item
if $x_n\in S_u\setminus\overline S_\om$, we apply Lemma \ref{cont_arb_dic_slice} to obtain item 5 in Proposition \ref{cont_ready_coro_limsup} for this $Q_{\nu_{S_u}}(x_n,r_n)$, and we define $\bar u_\tau$ in this cube in the way of \eqref{approx_u_simplex_jump_cont}.
\end{enumerate}
For points $x$ outside $\mathcal T_\tau$, we define $\bar u_\tau(x):=u(x)$.
\begin{figure}[h]
\includegraphics[width=0.8\textwidth]{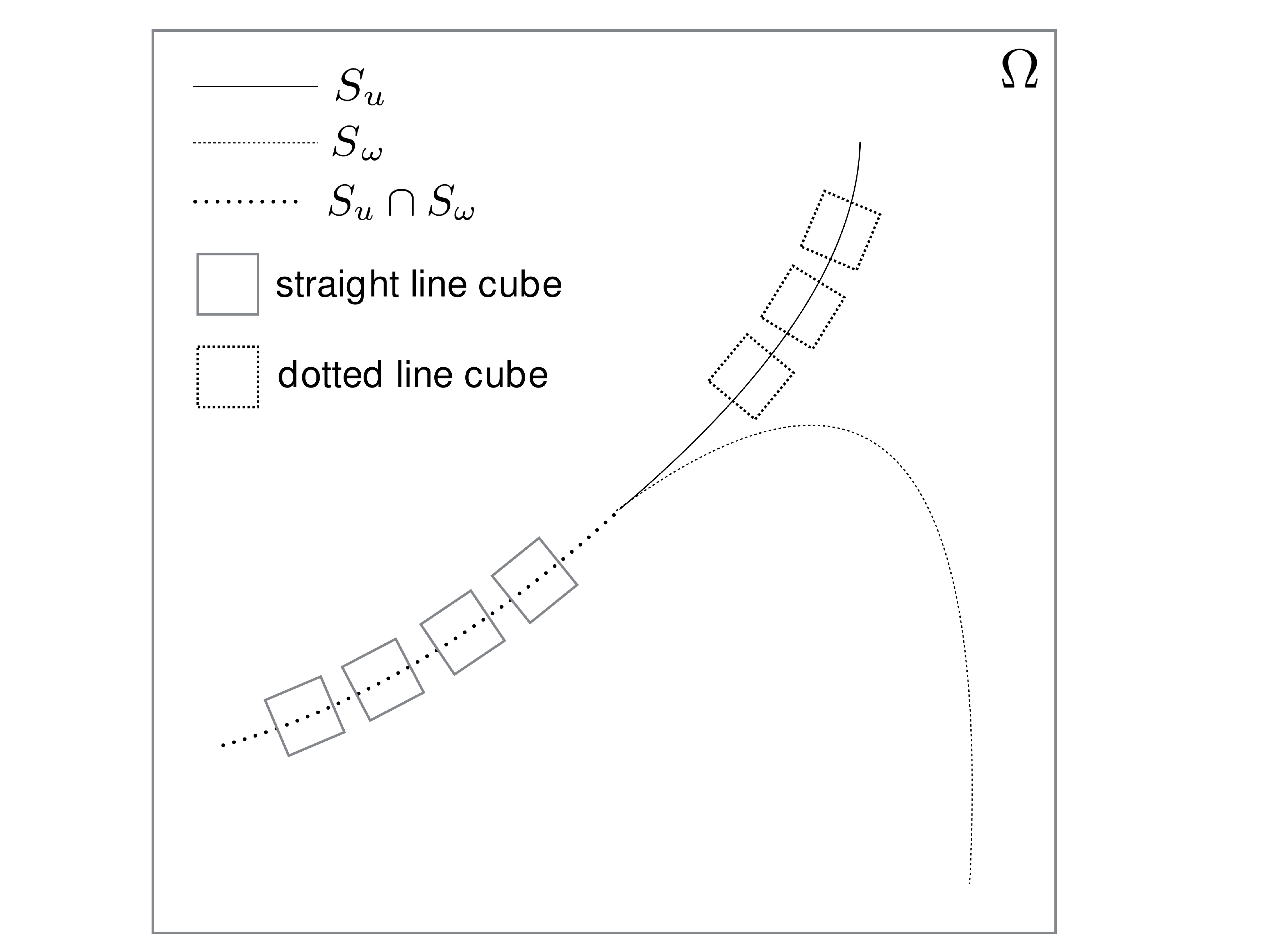}
\caption{Applying (\ref{approx_u_simplex_jump}) in dotted line cube and (\ref{approx_u_simplex_jump_cont}) in straight line cube.}\label{figue_approx_u_mix}
\end{figure}

Reasoning as in Proposition \ref{limsup_n_c} and Proposition \ref{limsup_n_j}, we conclude \eqref{jump_upper_bdd_need}.
\end{proof}

\begin{proof}[Proof of Theorem \ref{AT_n_case}]
The $\liminf$ inequality follows from Proposition \ref{liminf_part}. On the other hand, for any given $u\in GSBV_\om$ such that $E_\om(u)<\infty$, we have, by Lebesgue Monotone Convergence Theorem,
\bes
E_\om(u)=\lim_{K\to\infty} E_\om(K\wedge u\vee -K),
\ees
and a diagonal argument, together with Proposition \ref{limsup_n_j}, yields the $\limsup$ inequality for $u$.
\end{proof}

\setcounter{equation}{0}
\setcounter{theorem}{0}
\appendix
\section*{Appendix}
\renewcommand{\thetheorem}{A.\arabic{theorem}}
\renewcommand{\theequation}{A.\arabic{equation}}

\begin{define}[\cite{ambrosio2004topics}, Definition $4.4.9$]\label{hausdorff_distance}
Let $\mathcal X$ be a metric space. We denote by $\mathcal C_{\mathcal X}$ the family of all nonempty closed subsets of $X$. Then
\bes
d_{\mathcal H}(C,D):=\min\flp{1,h(C,D)},\,\,\,C,D\in \mathcal C_{\mathcal X},
\ees
where 
\bes
h(C,D):=\inf\flp{\delta\in[0,+\infty]:\,C\subset D_\delta\text{ and }D\subset C_\delta},
\ees
is a metric on $\mathcal C_{\mathcal X}$, and is called the \emph{Hausdorff} distance between the set $C$ and $D$ (see \eqref{flatten_set} for definition of $D_\delta$ and $C_\delta$).
\end{define}
Consider $\mathcal X$ to be the interval $(0,1)$ with the Euclidian distance. We remark that for two intervals $[a_1,b_1]$ and $[a_2,b_2]$ in $(0,1)$,
\be\label{jiushizhegejulia}
d_{\mathcal H}([a_1,b_1],\,[a_2,b_2]) =\min\flp{1,\, \max\flp{\abs{a_1-a_2},\abs{b_1-b_2}}}.
\ee
Indeed, the $\delta$-neighborhood of $[a_1,b_1]$ is $[a_1-\delta,b_1+\delta]$, and contains $[a_2,b_2]$ if and only if
$$\delta\ge \max\flp{a_1-a_2, b_2-b_1}.$$
Similarly, the $\delta$-neighborhood of $[a_2,b_2]$ contains $[a_1,b_1]$ if and only if
$$\delta\ge \max\flp{a_2-a_1, b_1-b_2},$$
and we conclude \eqref{jiushizhegejulia}. 
%
%\begin{define}\label{hausdorff_distance_conv}
%If $\seqn{C_n}\subset \mathcal C_E$ is a sequence of closed sets and $C\in \mathcal C_E$, then by writing $C_n\wtoh C$ we will mean that 
%\bes
%\limn d_{\mathcal H}(C,C_n)=0,
%\ees
%that is, $\seqn{C_n}$ converges to $C$ with respect to the Hausdorff metric (see Definition $4.4.11$ in \cite{ambrosio2004topics}).
%\end{define}

\begin{lemma}\label{hausdorff_conv_int}
Let $I_n:=[a_n,b_n]\subset (-1,1)$. Then, up to the extraction of a subsequence,
\bes
I_n\wtoh I_\infty\subset (-1,1),
\ees
where $I_\infty$ is connected and closed in $(-1,1)$, and 
\bes
\mathcal L^1(I_\infty) = \limn \mathcal L^1(I_n).
\ees
Moreover, for arbitrary $K\subset\subset I_\infty$, $K$ must be contained in $I_n$ for $n$ large enough.
\end{lemma}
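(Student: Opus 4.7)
The plan is to extract convergent subsequences of the endpoints and identify $I_\infty$ as the corresponding limit interval, intersected with $(-1,1)$ so that it lives in the right ambient space. Since $\{a_n\}, \{b_n\} \subset [-1,1]$, by Bolzano–Weierstrass I would pass to a (not relabeled) subsequence along which $a_n \to \alpha$ and $b_n \to \beta$ with $-1 \le \alpha \le \beta \le 1$, and define
\[
I_\infty := [\alpha,\beta] \cap (-1,1).
\]
This set is automatically an interval (hence connected) and closed in the subspace topology on $(-1,1)$, which is the correct ambient metric space for the Hausdorff distance defined in Definition~\ref{hausdorff_distance}.

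To establish Hausdorff convergence $d_{\mathcal H}(I_n, I_\infty) \to 0$, I would argue directly from the definition of $d_{\mathcal H}$ rather than quoting \eqref{jiushizhegejulia} verbatim, since that formula was written for intervals $[a,b] \subset (0,1)$ closed in $\mathbb R$. In the generic case $-1 < \alpha \le \beta < 1$ the formula \eqref{jiushizhegejulia} applies directly and yields $d_{\mathcal H}(I_n, I_\infty) \le \max\{|a_n-\alpha|,|b_n-\beta|\} \to 0$. In the boundary cases (say $\alpha = -1$ but $\beta < 1$, with the other cases analogous) the set $I_\infty = (-1,\beta]$ is closed in $(-1,1)$; its $\delta$-neighborhood within $(-1,1)$ is $(-1, \beta+\delta)$ and contains $I_n$ as soon as $\delta > b_n - \beta$, while the $\delta$-neighborhood of $I_n$ in $(-1,1)$ contains $I_\infty$ as soon as $\delta > a_n - (-1) = a_n+1$ and $\delta > \beta - b_n$. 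Both quantities tend to $0$, so $d_{\mathcal H}(I_n,I_\infty) \to 0$ holds again.

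The length statement is then immediate: in every case $\mathcal L^1(I_\infty) = \beta - \alpha = \lim_n (b_n - a_n) = \lim_n \mathcal L^1(I_n)$. For the compact-containment claim, I would observe that if $K \subset\subset I_\infty$, then $K$ stays at positive distance from the ``open sides'' of $I_\infty$ inside $(-1,1)$, which translates into the strict inequalities $\inf K > \alpha$ when $\alpha > -1$ (and automatically $\inf K > -1$ when $\alpha = -1$), and symmetrically $\sup K < \beta$ or $\sup K < 1$. The endpoint convergences $a_n \to \alpha$, $b_n \to \beta$ then force $a_n < \inf K$ and $\sup K < b_n$ for all sufficiently large $n$, whence $K \subset [a_n,b_n] = I_n$.

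The only delicate point is keeping the boundary cases straight when $\alpha$ or $\beta$ lands on $\{-1,1\}$: one must intersect with $(-1,1)$ to produce a legitimate closed set in the Hausdorff space $\mathcal C_{(-1,1)}$, and recompute the Hausdorff distance accordingly. Everything else is a routine consequence of the one-dimensional structure of closed intervals and continuity of endpoints under Hausdorff convergence.
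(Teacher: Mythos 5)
Your proposal is correct and follows essentially the same route as the paper: extract convergent subsequences of the endpoints, define $I_\infty$ as the limit interval (trimmed to be closed in $(-1,1)$ in the boundary cases), and deduce the Hausdorff convergence, the length identity, and the compact-containment claim from the endpoint convergence. Your explicit recomputation of the Hausdorff distance in the boundary cases is a slightly more careful rendering of a step the paper handles implicitly, but it is not a different argument.
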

\begin{proof}
Because $I_n\subset (-1,1)$, we have that $\seqn{a_n}$ and $\seqn{b_n}$ are bounded and so, up to the extraction of a subsequence, there exist
\be\label{liangbiandelai}
a_\infty:=\limn a_n\text{ and } b_\infty: = \limn b_n,
\ee
where $-1\leq a_\infty\leq b_\infty\leq 1$. We define $I_\infty:=[a_\infty,b_\infty]$ if $-1<a_\infty\leq b_\infty<1$, $I_\infty:=(-1,b_\infty]$ if $a_\infty=-1$, and $I_\infty:=[a_\infty, 1)$ if $b_\infty=1$. Hence $I_\infty$ is connected and closed in $(-1,1)$ (in the case in which $a_\infty = b_\infty =-1$, or $a_\infty = b_\infty =1$, we have $I_\infty =\varnothing$ and it is still closed in $(-1,1)$).\\\\
Hence 
\bes
\limn d_{\mathcal H}(I_n,\, I_\infty) =\limn \max\flp{\abs{a_n-a_\infty},\,\abs{b_n-b_\infty}}=0,
\ees
and we have for $I_\infty\neq \varnothing$,
\bes
\mathcal L^1(I_\infty)=b_\infty-a_\infty=\limn(b_n-a_n)=\limn \mathcal L^1(I_n),
\ees
as desired. \\\\
Next, if $K\subset \subset I_\infty$ then $K\subset(\alpha,\beta)$ for some $\alpha$, $\beta$ such that $a_\infty<\alpha<\beta< b_\infty$. By \eqref{liangbiandelai} choose $N$ large enough such that for all $n\geq N$, 
\bes
a_n<\alpha<\beta<b_n,
\ees
so that $K\subset I_n$ for all $n\geq N$.
\end{proof}

\begin{lemma}\label{osl_small_contral_proof}
Let $\flp{v_\e}_{\e> 0}\subset W^{1,2}(I)$ be such that $0\leq v_\e\leq 1$, $v_\e\to1$ in $L^1(I)$ and pointwise a.e., and
\be\label{up_lmt_goes1}
\limsup_{\e\to0}\int_{I}\left[\frac{\e}{2}\abs{ v_\e'}^2+\frac{1}{2\e}(v_\e-1)^2\right]\,dx <\infty.
\ee
Then for arbitrary $0<\eta<1$ there exists an open set $H_\eta\subset I$ satisfying: 
\begin{enumerate}[1.]
\item
the set $I\setminus H_\eta$ is a collection of finitely many points in $I$;
\item
for every set $K$ compactly contained in $H_\eta$, we have $K\subset B_\e^\eta$ for $\e>0$ small enough, where 
\be\label{1_d_level_set_proof}
B_\e^\eta:=\flp{x\in I:\,v^2_\e(x)\geq \eta}.
\ee
%\item
%$H_\eta$ has at most $k$ connected components, and $k=k(\eta)$ does not depend on $\e$.
\end{enumerate}
\end{lemma}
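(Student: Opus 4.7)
The plan is to combine a Modica--Mortola-type equipartition argument, which bounds the number of ``wells'' of $v_\e$ below a safe level, with the Hausdorff compactness supplied by Lemma \ref{hausdorff_conv_int} to identify the finitely many limit locations of these wells.

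Fix an intermediate threshold $\eta' := (1+\sqrt\eta)/2 \in (\sqrt\eta, 1)$. Since $v_\e \in W^{1,2}(I) \hookrightarrow C(I)$, the set $\{v_\e < \eta'\}$ is open and its connected components are open intervals; I call such a component $J$ \emph{significant} if it meets $\{v_\e \le \sqrt\eta\}$. The first step is to bound the number $N_\e$ of significant components uniformly in $\e$. Young's inequality gives
\begin{equation*}
\tfrac{\e}{2}\abs{v_\e'}^2 + \tfrac{1}{2\e}(1-v_\e)^2 \;\ge\; \abs{v_\e'}(1-v_\e),
\end{equation*}
and on any interior significant $J$ the continuous function $v_\e$ must cross every level $t \in (\sqrt\eta,\eta')$ at least twice (by IVT on either side of a point where $v_\e \le \sqrt\eta$). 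The area formula then yields
\begin{equation*}
\int_J \abs{v_\e'}(1-v_\e)\,dx \;\ge\; 2\int_{\sqrt\eta}^{\eta'}(1-t)\,dt \;=:\; c \;>\; 0,
\end{equation*}
with at least $c/2$ if $J$ touches $\partial I$. Combined with \eqref{up_lmt_goes1}, this forces $N_\e \le N$ for some $N = N(\eta, M)$ for all sufficiently small $\e$.

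Next I enumerate the significant components of $v_\e$ as $I_\e^1, \ldots, I_\e^{N_\e}$ and pass to a (not relabeled) subsequence along which $N_\e \equiv N_0 \le N$; applying Lemma \ref{hausdorff_conv_int} successively in each index $i$ yields a further subsequence along which each closure $\overline{I_\e^i}$ Hausdorff-converges to a closed subinterval $I_\infty^i \subset \overline I$. From $\int_I (1-v_\e)^2\,dx \le 2\e(M+1)$ and Chebyshev I obtain $\mathcal L^1(\{v_\e < \eta'\}) \to 0$, hence $\mathcal L^1(I_\e^i) \to 0$, and Lemma \ref{hausdorff_conv_int} then gives $\mathcal L^1(I_\infty^i) = 0$. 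Each $I_\infty^i$ is therefore a single interior point $x_i \in I$ or (if the limit sits on $\partial I$) empty; let $F \subset I$ be the resulting finite set of nontrivial limit points and define $H_\eta := I \setminus F$.

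Property $1$ is immediate from the construction. For property $2$, fix $K \subset\subset H_\eta$, so that $\delta_0 := \min_{x \in F}\dist(K,x) > 0$. By the Hausdorff convergence there exists $\e_0 = \e_0(K) > 0$ such that, for $\e < \e_0$, every $\overline{I_\e^i}$ lies in the $\delta_0/2$-neighborhood of its limit point and is therefore disjoint from $K$. Any $x \in K$ is then outside every significant component, so either $v_\e(x) \ge \eta'$, giving $v_\e^2(x) \ge (\eta')^2 > \eta$, or $x$ lies in a non-significant component of $\{v_\e < \eta'\}$ on whose closure $v_\e > \sqrt\eta$ by definition; in either case $v_\e^2(x) > \eta$ and thus $K \subset B_\e^\eta$. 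The delicate part of the argument is the equipartition bound in the first paragraph: the choice of $\eta'$ strictly between $\sqrt\eta$ and $1$ and the use of the area formula (rather than any monotonicity of $v_\e$) are what give a fixed energy cost $c > 0$ per significant well independent of $\e$; once $N_\e \le N$ is in hand, the Hausdorff compactness step and the separation $\delta_0 > 0$ make the conclusion routine.
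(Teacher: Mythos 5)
Your proof is correct, and it rests on the same two pillars as the paper's argument: Young's inequality to extract the $\e$-independent bound $\int_I \abs{v_\e'}(1-v_\e)\,dx\leq M+1$, the coarea formula to count transitions, and Lemma \ref{hausdorff_conv_int} plus a Chebyshev estimate on $\int_I(1-v_\e)^2\,dx$ to identify the limiting configuration. Where you genuinely differ is in the counting step and in which family of intervals you track. The paper works with $c_\e=(1-v_\e)^2$, averages the Banach indicatrix over levels in $(\sigma,\delta)$ to select a generic level $\delta_\e$ with $\mathcal H^0(\flp{c_\e=\delta_\e})\leq (2M+1)/(\delta-\sigma)$, and then follows the boundedly many components of the \emph{good} set $A_\e^{\delta_\e}=\flp{c_\e\leq\delta_\e}$, defining $H_\eta$ as the union of the interiors of their Hausdorff limits (which has full measure and finitely many components). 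You instead assign a fixed Modica--Mortola cost $c=2\int_{\sqrt\eta}^{\eta'}(1-t)\,dt$ to each \emph{significant well} of $\flp{v_\e<\eta'}$ and follow the bad intervals, defining $H_\eta$ as the complement of their finitely many interior limit points; the intermediate threshold $\eta'\in(\sqrt\eta,1)$ is exactly what makes the per-well cost positive and lets you dismiss the non-significant components for free. Both routes deliver the same conclusion with comparable effort; your per-well estimate is perhaps more transparent about \emph{why} the exceptional set is finite, while the paper's level-selection trick avoids introducing $\eta'$ and hands you the good set directly as a sublevel set. Two small points to tidy up, neither fatal: your conclusion holds only along the subsequence on which the Hausdorff limits are taken (the paper's proof has the same feature, and the proposition is indeed only invoked subsequentially in Proposition \ref{liminf_part_1d_c}); and when a significant component drifts to $\partial I$ its Hausdorff limit is empty rather than a point of $F$, so $\delta_0$ should be taken smaller than $\operatorname{dist}(K,\partial I)$ as well as $\min_{x\in F}\operatorname{dist}(K,x)$.
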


\begin{proof}
Choose a constant $M>0$ such that
\bes
M\geq \limsup_{\e\to0}\int_{I}\left[\frac{\e}{2}\abs{ v_\e'}^2+\frac{1}{2\e}(v_\e-1)^2\right]\,dx\geq \limsup_{\e\to 0}\int_I \abs{v_\e'}\abs{1-v_\e}\,dx =\limsup_{\e\to 0} \frac12\int_I\abs{c_\e'}dx,
\ees
where $c_\e(x):=(1-v_\e(x))^2$. Note that by \eqref{up_lmt_goes1}, $c_\e\to 0$ in $L^1(I)$. Fix $\sigma$, $\delta$ with
\be\label{given_delta}
0<\sigma<\delta<1.
\ee
By the co-area formula we have, for $0<\e<\e_0$ with $\e_0$ sufficiently small,
\bes
2M+1\geq \int_I\abs{c_\e'(x)}dx = \int_{-\infty}^\infty \mathcal H^0(\flp{x:\, c_\e(x)=t})\,dt\geq \int_\sigma^\delta \mathcal H^0(\flp{x:\, c_\e(x)=t})\,dt.
\ees
Hence, for each $\e>0$ there exist $\delta_\e\in (\sigma,\delta)$ such that 
\be\label{at_most_finite_com}
\frac{2M+1}{\delta-\sigma}\geq  \mathcal H^0(\flp{x:\, c_\e(x)=\delta_\e}).
\ee
Define, for a fixed $r>0$,
\bes
A_\e^r:=\flp{x\in I:\,c_\e(x)\leq r}.
\ees
Since $v_\e\in W^{1,2}(I)$, $v_\e$ is continuous and so is $c_\e$, therefore $A_\e^{\delta_\e}$ is closed and has at most $(2M+1)/(\delta-\sigma)+1$ connected components because of \eqref{at_most_finite_com} and in view of the continuity of $c_\e$. Note that the number $(2M+1)/(\delta-\sigma)$ does not depend on $\e>0$. \\\\
For $\e\in(0,\e_0)$ and $k\in\mathbb N$ depending only on $\delta-\sigma$ and $M$, we have
\begin{enumerate}[1.]
\item
$A_\e^{\delta_\e} = \bigcup_{i=1}^k I_\e^i$, where each $I_\e^i$ is a closed interval or $\varnothing$;
\item
$\text{for all } i<j$, $\max\flp{x:\,x\in I_\e^i}<\min \flp{x:\,x\in I_\e^j}$.
\end{enumerate}
By Lemma \ref{hausdorff_conv_int}, up to the extraction of a subsequence, for each $i\in \flp{1,2,\ldots, k}$ let $I_0^i$ be the \emph{Hausdorff} limit of the $I_\e^i$ as $\e\to 0$, i.e., $I_\e^i\wtoh I_0^i$, with $I_0^i$ is connected and closed in $I$, and for all $i<j$, $\max I_0^i\leq \min I_0^j$.\\\\
Set 
\be\label{osl_small_contral_proof_limiandegongshi}
T_\delta:=\bigcup_{i=1}^k(I_0^i)^\circ\text{ and }\,T_{\delta,\e}:=\bigcup_{i=1}^k(I_\e^i)^\circ,
\ee
where by $(\cdot)^\circ$ we denote the interior of a set. Since
\bes
I\setminus A_\e^{\delta_\e}\subset\flp{x\in I:\,\,c_\e(x)\geq \sigma}
\ees
and $c_\e\to 0$ in $L^1(I)$, by Chebyshev's inequality we have 
\bes
\lim_{\e\to 0}\mathcal L^1(T_{\delta,\e})=\lime \mathcal L^1(A_\e)=2.
\ees
Moreover, since $T_{\delta,\e}\wtoh T_\delta$, by Lemma \ref{hausdorff_conv_int} we have
\bes
\mathcal L^1(T_{\delta}) = \sum_{i=1}^k \mathcal L^1(I_0^i)^\circ=\sum_{i=1}^k\lime \mathcal L^1(I_\e^i)^\circ=\lime \sum_{i=1}^k \mathcal L^1(I_\e^i)^\circ=\lim_{\e\to 0}\mathcal L^1(T_{\delta,\e})=2.
\ees
Thus $\abs{I\setminus T_\delta}=0$. Moreover, since $T_\delta$ has at most $k$ connected components, $I\setminus T_\delta$ is a finite collection of points in $I$.\\\\
Next, let $K\subset\subset T_\delta$ be a compact subset. We claim that $K$ must be contained in $A_\e^{\delta_\e}$ for $\e>0$ small enough. Recall $I_0^i$ and $I_\e^i$ from \eqref{osl_small_contral_proof_limiandegongshi}. Define $K_i:=K\cap (I_0^i)^\circ$ for $i=1,\ldots, k$. Then $K_i\subset \subset (I_o^i)^\circ$ for each $i$, and so by Lemma \ref{hausdorff_conv_int} there exists $\e_i>0$ such that for all $0<\e<\e_i$, $K_i\subset I_\e^i$. Define 
\bes
\e':=\min_{i\in\flp{1,\ldots,k}}\flp{\e_i}.
\ees
For $0<\e<\e'$ we have $K_i\subset I^i_\e$, and so
\bes
K=\bigcup_{i=1}^k K_i\subset \bigcup_{i=1}^k I_\e^i=A_\e^{\delta_\e}.
\ees
Finally, given $\eta\in(0,1)$, set $\delta:=\fsp{1-\sqrt{\eta}}^2$ with $H_\eta:=T_{(1-\sqrt{\eta})^2}$ and $B_\e^\eta:=A_\e^{(1-\sqrt{\eta})^2}$, and properties 1 and 2 are satisfied.
\end{proof}

\section*{Acknowledgements}
The authors wish to acknowledge the Center for Nonlinear Analysis where this work was carried out. The research of both authors was partially funded by the National Science Foundation under Grant No. DMS - 1411646.

\bibliographystyle{abbrv}
\bibliography{PV_WG_MS}{}

\begin{thebibliography}{10}

\bibitem{ambrosiocompactness}
L.~Ambrosio.
\newblock A compactness theorem for a new class of functions of bounded
  variation.
\newblock {\em Boll. Un. Mat. Ital. B (7)}, 3(4):857--881, 1989.

\bibitem{ambrosio1989variational}
L.~Ambrosio.
\newblock Variational problems in {${\rm SBV}$} and image segmentation.
\newblock {\em Acta Appl. Math.}, 17(1):1--40, 1989.

\bibitem{ambrosio1990existence}
L.~Ambrosio.
\newblock Existence theory for a new class of variational problems.
\newblock {\em Arch. Rational Mech. Anal.}, 111(4):291--322, 1990.

\bibitem{ambrosio2000functions}
L.~Ambrosio, N.~Fusco, and D.~Pallara.
\newblock {\em Functions of bounded variation and free discontinuity problems}.
\newblock Oxford Mathematical Monographs. The Clarendon Press, Oxford
  University Press, New York, 2000.

\bibitem{ambrosio2003some}
L.~Ambrosio and V.~Magnani.
\newblock Weak differentiability of {BV} functions on stratified groups.
\newblock {\em Math. Z.}, 245(1):123--153, 2003.

\bibitem{ambrosio2004special}
L.~Ambrosio, M.~Miranda, Jr., and D.~Pallara.
\newblock Special functions of bounded variation in doubling metric measure
  spaces.
\newblock 14:1--45, 2004.

\bibitem{ambrosio2004topics}
L.~Ambrosio and P.~Tilli.
\newblock {\em Topics on analysis in metric spaces}, volume~25 of {\em Oxford
  Lecture Series in Mathematics and its Applications}.
\newblock Oxford University Press, Oxford, 2004.

\bibitem{ambrosio1990approximation}
L.~Ambrosio and V.~M. Tortorelli.
\newblock Approximation of functionals depending on jumps by elliptic
  functionals via {$\Gamma$}-convergence.
\newblock {\em Comm. Pure Appl. Math.}, 43(8):999--1036, 1990.

\bibitem{attouch1984variational}
H.~Attouch.
\newblock {\em Variational convergence for functions and operators}.
\newblock Applicable Mathematics Series. Pitman (Advanced Publishing Program),
  Boston, MA, 1984.

\bibitem{aubert2006mathematical}
G.~Aubert and P.~Kornprobst.
\newblock {\em Mathematical problems in image processing}, volume 147 of {\em
  Applied Mathematical Sciences}.
\newblock Springer, New York, second edition, 2006.
\newblock Partial differential equations and the calculus of variations, With a
  foreword by Olivier Faugeras.

\bibitem{baldi2001weighted}
A.~Baldi.
\newblock Weighted {BV} functions.
\newblock {\em Houston J. Math.}, 27(3):683--705, 2001.

\bibitem{baldi2003gamma}
A.~Baldi and B.~Franchi.
\newblock A {$\Gamma$}-convergence result for doubling metric measures and
  associated perimeters.
\newblock {\em Calc. Var. Partial Differential Equations}, 16(3):283--298,
  2003.

\bibitem{baldi2005mumford}
A.~Baldi and B.~Franchi.
\newblock Mumford-{S}hah-type functionals associated with doubling metric
  measures.
\newblock {\em Proc. Roy. Soc. Edinburgh Sect. A}, 135(1):1--23, 2005.

\bibitem{bellettini1999bv}
G.~Bellettini, G.~Bouchitt{\'e}, and I.~Fragal{\`a}.
\newblock B{V} functions with respect to a measure and relaxation of metric
  integral functionals.
\newblock {\em J. Convex Anal.}, 6(2):349--366, 1999.

\bibitem{bouchitte1996energies}
G.~Bouchitte, G.~Buttazzo, and P.~Seppecher.
\newblock Energies with respect to a measure and applications to
  low-dimensional structures.
\newblock {\em Calc. Var. Partial Differential Equations}, 5(1):37--54, 1997.

\bibitem{bovik2010handbook}
A.~C. Bovik.
\newblock {\em Handbook of image and video processing}.
\newblock Academic press, 2010.

\bibitem{braides2002gamma}
A.~Braides.
\newblock {\em {$\Gamma$}-convergence for beginners}, volume~22 of {\em Oxford
  Lecture Series in Mathematics and its Applications}.
\newblock Oxford University Press, Oxford, 2002.

\bibitem{capogna1994geometric}
L.~Capogna, D.~Danielli, and N.~Garofalo.
\newblock The geometric {S}obolev embedding for vector fields and the
  isoperimetric inequality.
\newblock {\em Comm. Anal. Geom.}, 2(2):203--215, 1994.

\bibitem{chan2006total}
T.~Chan, S.~Esedoglu, F.~Park, and A.~Yip.
\newblock Total variation image restoration: overview and recent developments.
\newblock In {\em Handbook of mathematical models in computer vision}, pages
  17--31. Springer, New York, 2006.

\bibitem{chen2013revisiting}
Y.~Chen, T.~Pock, R.~Ranftl, and H.~Bischof.
\newblock Revisiting loss-specific training of filter-based mrfs for image
  restoration.
\newblock In {\em Pattern Recognition}, pages 271--281. Springer, 2013.

\bibitem{chen2014insights}
Y.~Chen, R.~Ranftl, and T.~Pock.
\newblock Insights into analysis operator learning: From patch-based sparse
  models to higher order mrfs.
\newblock {\em IEEE Transactions on Image Processing}, 23(3):1060--1072, March
  2014.

\bibitem{dal1992variational}
G.~Dal~Maso, J.-M. Morel, and S.~Solimini.
\newblock A variational method in image segmentation: existence and
  approximation results.
\newblock {\em Acta Math.}, 168(1-2):89--151, 1992.

\bibitem{david1989strong}
G.~David and S.~Semmes.
\newblock Strong {$A_\infty$} weights, {S}obolev inequalities and
  quasiconformal mappings.
\newblock 122:101--111, 1990.

\bibitem{de1989existence}
E.~De~Giorgi, M.~Carriero, and A.~Leaci.
\newblock Existence theorem for a minimum problem with free discontinuity set.
\newblock {\em Arch. Rational Mech. Anal.}, 108(3):195--218, 1989.

\bibitem{reyes2015structure}
J.~C. De~Los~Reyes, C.-B. Sch{\"o}nlieb, and T.~Valkonen.
\newblock The structure of optimal parameters for image restoration problems.
\newblock {\em J. Math. Anal. Appl.}, 434(1):464--500, 2016.

\bibitem{domke2012generic}
J.~Domke.
\newblock Generic methods for optimization-based modeling.
\newblock In {\em AISTATS}, volume~22, pages 318--326, 2012.

\bibitem{domke2013learning}
J.~Domke.
\newblock Learning graphical model parameters with approximate marginal
  inference.
\newblock {\em IEEE Transactions on Pattern Analysis and Machine Intelligence},
  35(10):2454--2467, Oct 2013.

\bibitem{ekeland1976convex}
I.~Ekeland and R.~T{\'e}mam.
\newblock {\em Convex analysis and variational problems}, volume~28 of {\em
  Classics in Applied Mathematics}.
\newblock Society for Industrial and Applied Mathematics (SIAM), Philadelphia,
  PA, english edition, 1999.
\newblock Translated from the French.

\bibitem{evans2015measure}
L.~C. Evans and R.~F. Gariepy.
\newblock {\em Measure theory and fine properties of functions}.
\newblock CRC press, 2015.

\bibitem{federer2014geometric}
H.~Federer.
\newblock {\em Geometric measure theory}.
\newblock Die Grundlehren der mathematischen Wissenschaften, Band 153.
  Springer-Verlag New York Inc., New York, 1969.

\bibitem{fonseca2015modern}
I.~Fonseca and G.~Leoni.
\newblock {\em Modern methods in the calculus of variations: Sobolev spaces}.
\newblock Springer Monographs in Mathematics. Unpublished yet, 2015.

\bibitem{franchi1997approximation}
B.~Franchi, R.~Serapioni, and F.~Serra~Cassano.
\newblock Approximation and imbedding theorems for weighted {S}obolev spaces
  associated with {L}ipschitz continuous vector fields.
\newblock {\em Boll. Un. Mat. Ital. B (7)}, 11(1):83--117, 1997.

\bibitem{franchi2003regular}
B.~Franchi, R.~Serapioni, and F.~Serra~Cassano.
\newblock Regular hypersurfaces, intrinsic perimeter and implicit function
  theorem in {C}arnot groups.
\newblock {\em Comm. Anal. Geom.}, 11(5):909--944, 2003.

\bibitem{leoni2009first}
G.~Leoni.
\newblock {\em A first course in {S}obolev spaces}, volume 105 of {\em Graduate
  Studies in Mathematics}.
\newblock American Mathematical Society, Providence, RI, 2009.

\bibitem{2016ArRMA.219.1383L}
G.~Leoni and R.~Murray.
\newblock Second-order {$\Gamma$}-limit for the {C}ahn-{H}illiard functional.
\newblock {\em Arch. Ration. Mech. Anal.}, 219(3):1383--1451, 2016.

\bibitem{lions1992denoising}
P.~Lions, S.~Osher, and L.~Rudin.
\newblock Denoising and deblurring images using constrained nonlinear partial
  differential equations.
\newblock {\em Preprint, Cognitech, Inc}, pages 2800--28, 1993.

\bibitem{liu2016optimal}
P.~Liu.
\newblock The bi-level learning and uniqueness of optimal spatially dependent
  parameters for image reconstruction problem.
\newblock {\em In preparation}.

\bibitem{liu2016weightedhigher}
P.~Liu.
\newblock The weighted {$\Gamma$}-convergence of higher order singular
  perturbation models.
\newblock {\em In preparation}.

\bibitem{maggi2012sets}
F.~Maggi.
\newblock {\em Sets of finite perimeter and geometric variational problems},
  volume 135 of {\em Cambridge Studies in Advanced Mathematics}.
\newblock Cambridge University Press, Cambridge, 2012.
\newblock An introduction to geometric measure theory.

\bibitem{mattila1999geometry}
P.~Mattila.
\newblock {\em Geometry of sets and measures in {E}uclidean spaces}, volume~44
  of {\em Cambridge Studies in Advanced Mathematics}.
\newblock Cambridge University Press, Cambridge, 1995.
\newblock Fractals and rectifiability.

\bibitem{mumford1985boundary}
D.~Mumford and J.~Shah.
\newblock Boundary detection by minimizing functionals.
\newblock In {\em IEEE Conference on Computer Vision and Pattern Recognition},
  volume~17, pages 137--154. San Francisco, 1985.

\bibitem{rudin1993segmentation}
L.~Rudin.
\newblock Segmentation and restoration using local constraints.
\newblock {\em Technical Repport}, 36, 1993.

\bibitem{rudin1994total}
L.~I. Rudin and S.~Osher.
\newblock Total variation based image restoration with free local constraints.
\newblock In {\em Image Processing, 1994. Proceedings. ICIP-94., IEEE
  International Conference}, volume~1, pages 31--35 vol.1, Nov 1994.

\bibitem{rudin1992nonlinear}
L.~I. Rudin, S.~Osher, and E.~Fatemi.
\newblock Nonlinear total variation based noise removal algorithms.
\newblock {\em Phys. D}, 60(1-4):259--268, 1992.
\newblock Experimental mathematics: computational issues in nonlinear science
  (Los Alamos, NM, 1991).

\bibitem{tappen2007utilizing}
M.~F. Tappen.
\newblock Utilizing variational optimization to learn markov random fields.
\newblock In {\em 2007 IEEE Conference on Computer Vision and Pattern
  Recognition}, pages 1--8, June 2007.

\bibitem{tappen2007learning}
M.~F. Tappen, C.~Liu, E.~H. Adelson, and W.~T. Freeman.
\newblock Learning gaussian conditional random fields for low-level vision.
\newblock In {\em 2007 IEEE Conference on Computer Vision and Pattern
  Recognition}, pages 1--8, June 2007.

\end{thebibliography}

\end{document}